\documentclass[11pt]{amsart}

\usepackage{mathtools}
\usepackage{amsmath}
\usepackage{amssymb, stmaryrd, bm}
\usepackage{amsthm}
\usepackage{graphicx}
\usepackage{tikz}
\usepackage{xcolor}
\usepackage{enumitem}
\usepackage{algorithm}

\usetikzlibrary{calc, backgrounds}

\usepackage[foot]{amsaddr}
\usepackage[pdftex,colorlinks,backref=page,citecolor=blue]{hyperref}
\mathtoolsset{showonlyrefs}

\allowdisplaybreaks

\usepackage[margin=1.2in]{geometry}
\usepackage{bookmark}

\def\E{\mathbb{E}}
\def\var{\mathbb{Var}}
\def\cov{\textrm{Cov}}

\def\N{\mathbb{N}}

\def\E{\mathbb{E}}
\def\R{\mathbb{R}}
\def\P{\mathbb{P}}

\def\eps{\varepsilon}
\def\del{\delta}

\def\cB{\mathcal {B}}

\def\cE{\mathcal {E}}

\def\cI{\mathcal {I}}

\def\1{\mathbf{1}}

\def\lam {\lambda}

\def\tce{t_c + \eps}
\def\tce2{t_c + \frac{\eps}{2}}

\def\var{\textup{var}}
\def\cov{\textup{cov}}

\DeclareMathOperator{\aut}{aut}

\def\bsf#1{\mathbf{\mathsf{#1}}}

\newtheorem*{theorem*}{Theorem}
\newtheorem{theorem}{Theorem}
\numberwithin{theorem}{section}
\newtheorem{lemma}[theorem]{Lemma}
\newtheorem{cor}[theorem]{Corollary}
\newtheorem{defn}[theorem]{Definition}
\newtheorem*{defn*}{Definition}

\newtheorem*{prop*}{Proposition}
\newtheorem{conj}{Conjecture}
\newtheorem*{conj*}{Conjecture}
\newtheorem{claim}[theorem]{Claim}

\newtheorem*{fact*}{Fact}

\newtheorem*{obs*}{Observation}
\newtheorem{obs}[theorem]{Observation}
\newtheorem{condition}{Condition}
\newtheorem{definition}{Definition}

\newtheorem{assumption}{Assumption}

\numberwithin{equation}{section}

\subjclass[2020]{05C80, 05C30, 60F10}

\begin{document}
\title[Non-existence probabilities and lower tails in the critical regime]{Non-existence probabilities and lower tails in the critical regime via Belief Propagation }

\author{Matthew Jenssen}
\author{Will Perkins}
\author{Aditya Potukuchi}
\author{Michael Simkin}

\address{King's College London, Department of Mathematics}
\email{matthew.jenssen@kcl.ac.uk}

\address{Georgia Institute of Technology, School of Computer Science}
\email{math@willperkins.org}

\address{Max Planck Institute for Software Systems}
\email{apotu@mpi-sws.org}

\address{Massachusetts Institute of Technology, Department of Mathematics}
\email{msimkin@mit.edu}

\date{\today}

\maketitle

\begin{abstract}
    We compute the logarithmic asymptotics of the non-existence probability (and more generally the lower-tail probability)  for a wide variety of combinatorial problems for a range of parameters  in the `critical regime' between the regime amenable to hypergraph container methods and that amenable to Janson's inequality.  Examples include lower tails and non-existence probabilities for  subgraphs of random graphs and  for $k$-term arithmetic progressions in random sets of  integers. 

    Our methods apply in the general framework of estimating the probability that a $p$-random subset of vertices in a $k$-uniform hypergraph induces significantly fewer hyperedges than expected.  We show that under some simple structural conditions on the hypergraph and an upper bound on $p$ determined by a phase transition in the hard-core model on the infinite $k$-uniform, $\Delta$-regular, linear hypertree, this probability can be accurately approximated  by the Bethe free energy evaluated at the unique fixed point of a Belief Propagation operator on the hypergraph. 
\end{abstract}

\section{Introduction}
\label{secIntro}

Many problems in probabilistic and enumerative combinatorics and non-linear large deviations in probability theory can be formulated in terms of the probability that a $p$-random subset of vertices of a hypergraph induces no (or few) edges.  Techniques to estimate this probability fall into two broad categories, applicable for different ranges of $p$.  When $p$ is small (as a function of parameters of the hypergraph such as its average degree and size of its hyperedges), we are in the `Poisson paradigm' and tools like Janson's Inequality give accurate bounds.  When $p$ is large, on the other hand, the global structure of the hypergraph becomes important, and techniques like the regularity lemma and the method of hypergraph containers are effective.
What happens in between these two regimes?  Very few results are known here, even for special cases.  

We introduce a new method, based on message-passing algorithms for statistical physics models on hypergraphs, that gives logarithmic asymptotics of the non-existence and lower-tail probabilities in a portion of the `critical regime' under mild local sparsity conditions on the hypergraph.  The formula is obtained from the Bethe free energy evaluated at the unique fixed point of a Belief Propagation operator associated to the hypergraph.  Such formulas have been shown to be asymptotically correct for some spin models on sparse random (and locally treelike) graphs; here we establish their validity in a deterministic, combinatorial setting.  Our proofs utilize a technique from approximate counting and sampling: contraction on a computational hypertree. One novelty of our analysis is that we show  contraction assuming only   the local sparsity condition along with weak spatial mixing (Gibbs uniqueness) on the infinite hypertree. 

We will state our main results in generality below in Section~\ref{subsecHypergraphIntro}, but first we give some  applications of the results to widely studied problems.

\subsection{$H$-free graphs}
\label{subsecHfreeIntro}

For a given graph $H$, two central questions in combinatorics are `how many $H$-free graphs are there on $n$ vertices and $m$ edges?' and the closely related question `what is the probability that the random graph $G(n,p)$ contains no copy of $H$?'  The answers to these questions involve some of the main results and techniques in probabilistic and extremal combinatorics, including Turán-type theorems, regularity methods, hypergraph containers, and Janson's inequality.  

Taking the case of triangles, $H=K_3$, as an example, we briefly review what is known.  In perhaps the first extremal combinatorics theorem, Mantel~\cite{mantel1907problem} proved that a triangle-free graph on $n$ vertices can have at most $\lfloor n^2/4 \rfloor$ edges and that this is achieved by a balanced, complete bipartite graph.  This extremal structure is reflected in the counting problem: Erd\H{o}s, Kleitman, and Rothschild~\cite{erdosasymptotic} showed that almost every triangle-free graph is bipartite, and hence the asymptotic enumeration problem for triangle-free graphs (equivalently, the probability that $G(n,1/2)$ is triangle-free) reduces to the elementary task of asymptotically enumerating bipartite graphs. This behavior persists for far smaller values of $p$.  

One can ask for different degrees of accuracy in approximating $\P (\mathbf X_H=0)$ (where $\mathbf X_H$ is the number of copies of a subgraph $H$ and the probability is with respect to either $G(n,p)$ or $G(n,m)$).  A first step would be to approximate the order of the exponent, that is, determine $\log \P (\mathbf X_H =0)$ up to constant factors.  This was completely resolved in~\cite{janson1987uczak} (and in the greater generality of the lower-tail problem described below in Section~\ref{subsecLowerTailsIntro}).  A next step would be to find first-order asymptotics of $\log \P(\mathbf X_H=0)$, a task known in large deviation theory as computing the \textit{rate function}.  This will be our goal in this paper; as we will see shortly, this has been resolved except in what we will call the `critical regime'.  Finally, even more ambitiously one can ask for first-order asymptotics of $\P(\mathbf X_H=0)$; this is known for cliques for regimes of $p,m$ in which almost all $K_{r+1}$-free graphs are $r$-partite~\cite{osthus2003densities,balogh2016typical} (and somewhat beyond this in the case of triangles~\cite{jenssen2023evolution}). This is also known when $p$ is a small polynomial factor below the critical density~\cite{wormald1996perturbation,stark2018probability,mousset2020probability}. 

We now state some known results on logarithmic asymptotics precisely. 
Define the $2$-density of a graph $H$ to be 
\begin{equation}
    m_2(H)= \max_{F \subseteq H, |V(F)| \ge 3} \frac{|E(F)|-1}{|V(F)|-2} \,.
\end{equation}
A graph $H$ with $|E(H)| \geq 3$ is \textit{strictly $2$-balanced} if  this maximum is uniquely achieved at $F= H$.  For instance, a clique $K_r$ (with $r \ge 3$) is strictly $2$-balanced while a clique with a pendant edge is not.

Here and in what follows, along with the usual $o, O, \omega, \Omega, \Theta$ asymptotic notation, we write  $f(n) \sim g(n)$ if $\frac{f(n)}{g(n)} \to 1$ as $n \to \infty$.  We use $\P_p, \E_p$ to denote probabilities and expectations with respect to $G(n,p)$ and we use $\P_m, \E_m$ to denote probabilities and expectations with respect to $G(n,m)$.
\begin{theorem}[\cite{janson1987uczak,luczak2000triangle,balogh2015independent,saxton2015hypergraph}]
    \label{thmHfreePrevios}
    Let $H$ be a strictly $2$-balanced graph with chromatic number  $\chi(H) \ge 3$.  Then
    \begin{itemize}
        \item If $p = o \left(n^{-1/m_2(H)} \right)$, then $\log \P_p(\mathbf X_H=0) \sim - \E_p \mathbf X_H $.
         \item If $ p =\omega \left ( n^{-1/m_2(H)} \right )$, then $\log \P_p(\mathbf X_H=0) \sim   \frac{n^2}{2 (\chi(H)-1)}  \log (1-p)$.
    \end{itemize}
   Similarly,
    \begin{itemize}
    \item If $m =o \left ( n^{2-1/m_2(H)} \right)$, then $\log \P_m(\mathbf X_H=0) \sim  - \E_m \mathbf X_H$. 
    \item If $m= o(n^2)$ and  $ m = \omega \left ( n^{2-1/m_2(H)} \right)$, then $\log \P_m(\mathbf X_H=0) \sim m \log \left(1 - \frac{1}{\chi(H)-1}   \right) $.
    \end{itemize}
\end{theorem}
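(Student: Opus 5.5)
This theorem collects known results, so the plan is to assemble it from the cited tools rather than build a new argument. There are two regimes, and each needs a matching lower and upper bound on $\log \P(\mathbf X_H=0)$.

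\textbf{Sparse regime, $p=o(n^{-1/m_2(H)})$.} Here I would use Janson's inequality.

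For the lower bound, Harris/FKG applied to the decreasing events ``copy $H'$ absent'' gives
\[\P_p(\mathbf X_H=0)\ge \prod_{H'}(1-p^{e(H)}) = \exp\bigl(-(1+o(1))\E_p\mathbf X_H\bigr),\]
since $p^{e(H)}\to 0$.

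For the upper bound, Janson's inequality gives
\[\P_p(\mathbf X_H=0)\le \exp(-\mu+\Delta),\]
where $\mu=\E_p\mathbf X_H$ and $\Delta$ sums $p^{2e(H)-e(F)}$ over ordered pairs of copies sharing a subgraph $F$ with at least one edge. Each overlap type $F\subsetneq H$ contributes
\[\Delta_F/\mu=\Theta\bigl(n^{v(H)-v(F)}p^{e(H)-e(F)}\bigr).\]
I would show $\Delta_F=o(\mu)$ using strict $2$-balancedness:
\begin{itemize}
\item for $v(F)\ge 3$, one has $\frac{e(H)-e(F)}{v(H)-v(F)}>m_2(H)$;
\item for $F$ a single edge ($v(F)=2$), the bound holds with equality, $\frac{e(H)-1}{v(H)-2}=m_2(H)$.
\end{itemize}
In both cases $n^{v(H)-v(F)}p^{e(H)-e(F)}=o(1)$ once $p=o(n^{-1/m_2(H)})$.

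The $G(n,m)$ version follows by transferring from $G(n,p)$ with $p=m/\binom n2$. Absence of $H$ is monotone decreasing, so $\P_m$ is sandwiched between $\P_{p_\pm}$ for $p_\pm=p(1\pm o(1))$, up to $1/\mathrm{poly}$ factors. Those factors are negligible against $\mu\to\infty$; if $\mu$ is bounded, Poisson convergence handles it instead.

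\textbf{Dense regime, $p=\omega(n^{-1/m_2(H)})$.} Write $r=\chi(H)-1$.

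For the lower bound, take a balanced $r$-partite vertex partition and require $G$ to contain no edges inside the parts. Such a graph is $H$-free, and the forbidden pairs number about $n^2/(2r)$. This gives $\P_p\ge(1-p)^{(1+o(1))n^2/(2r)}$. In $G(n,m)$, counting $m$-edge subgraphs of the Turán graph gives
\[\binom{(1-1/r)N}{m}\Big/\binom{N}{m}\approx(1-1/r)^m \quad\text{for } m=o(N),\ N=\binom n2.\]

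For the upper bound, apply the hypergraph container theorem (Balogh--Morris--Samotij, Saxton--Thomason) to the $e(H)$-uniform hypergraph on $E(K_n)$ whose edges are the copies of $H$. The codegree conditions needed hold exactly when $p\gg n^{-1/m_2(H)}$. This yields a family of $\exp(o(pn^2))$ containers $C$, each covering every $H$-free graph, each with $o(n^{v(H)})$ copies of $H$. By the supersaturated Erd\H{o}s--Stone theorem, each container then satisfies $|C|\le(1-1/r+o(1))\binom n2$. A union bound gives
\[\P_p(G\subseteq C\text{ for some }C)\le e^{o(pn^2)}(1-p)^{(1/r-o(1))\binom n2}.\]
Since $-\log(1-p)\ge p$, the error term is absorbed. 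The $G(n,m)$ count is the same computation with $\binom{|C|}{m}/\binom Nm\le(1-1/r+o(1))^m$ against $e^{o(m)}$ containers.

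\textbf{Main obstacle.} The hard step is the container upper bound. One has to check that the co-degree conditions hold all the way down to $p=\omega(n^{-1/m_2(H)})$ with only $e^{o(pn^2)}$ containers. This is again where strict $2$-balancedness enters, and it is the content of the cited works. The regime $p$ near $1$ (with $\log(1-p)$ large) also needs care, but the containers' count is still subexponential relative to $(1-p)^{n^2}$.
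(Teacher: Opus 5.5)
Since the paper only quotes this theorem from the cited works, there is no proof in the paper to compare against. Your outline (Harris plus Janson below the threshold; the $r$-partite construction, containers and Erd\H{o}s--Simonovits supersaturation above it) is the standard route those works take. Your sparse $G(n,p)$ argument is correct, including the use of strict $2$-balancedness to get $n^{v(H)-v(F)}p^{e(H)-e(F)}=o(1)$ for every overlap type $F$.

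\textbf{The gap: the container count in the dense regime.} The problem is in the dense-regime upper bound, at exactly the step you defer to the literature. The container theorems of Balogh--Morris--Samotij and Saxton--Thomason do not give $\exp(o(pn^2))$ containers throughout $p=\omega(n^{-1/m_2(H)})$. The codegree hypothesis constrains $\tau$, not $p$: it fixes $\tau=C_\eps n^{-1/m_2(H)}$. The containers are indexed by fingerprints $T$ with $|T|\le\tau N$, $N=\binom n2$, so what is guaranteed is $\exp(O(n^{2-1/m_2(H)}\log n))$ containers. A union bound over containers therefore only proves the claim for $p\gg n^{-1/m_2(H)}\log n$ (resp.\ $m\gg n^{2-1/m_2(H)}\log n$). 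This leaves a window just above the threshold uncovered.

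\textbf{The fix: use the fingerprints directly.} Every $H$-free $G$ satisfies $T\subseteq G\subseteq C(T)$ for some admissible $T$. The events $\{T\subseteq G\}$ and $\{G\subseteq C(T)\}$ involve disjoint sets of pairs, so
\[
\P_p(\mathbf X_H=0)\le\sum_{s\le\tau N}\binom{N}{s}p^s\,(1-p)^{(1/r-o(1))N}\le \exp\bigl(O(\tau N\log(ep/\tau))\bigr)\,(1-p)^{(1/r-o(1))N},
\]
and $\tau N\log(ep/\tau)=o(pN)$ precisely because $\tau=o(p)$. In $G(n,m)$ the same device gives
\[
\sum_{s\le\tau N}\binom{N}{s}\binom{|C|}{m-s}\le\binom{|C|}{m}\exp\bigl(O(\tau N\log(m/(\tau N)))\bigr),
\]
with $\tau N=o(m)$. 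What removes the $\log n$ is the weight $p^{|T|}$ (resp.\ the $|T|$ edges already spent), not a smaller container family.

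\textbf{A smaller point: the sparse $G(n,m)$ transfer.} The lower bound there is
\[
\P_m(\mathbf X_H=0)\ge\P_{p_+}(\mathbf X_H=0)-\P(\mathrm{Bin}(N,p_+)<m),
\]
which is an additive comparison, not a $1/\mathrm{poly}$ factor. A multiplicative version fails in general, e.g.\ for the property of having no edges. The additive bound suffices here only because $\mu/m\asymp n^{v(H)-2}p^{e(H)-1}\to0$ in this regime. That lets you take $p_+=p(1+\eps)$ with $\eps\to0$ and $\eps^2m\gg\mu$, and you should verify this explicitly.
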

(If $m = \Theta(n^2)$ the logarithm of the probability is also asymptotic to logarithm of the probability of being $(\chi(H)-1)$-partite with a slightly more complicated formula).
Thus, on the level of the rate function, the probability of $H$-freeness has been completely resolved apart from the regime $p = \Theta \left( n^{-1/m_2(H)}\right) $ and $m = \Theta \left( n^{2-1/m_2(H)}\right)$.  As the nature of the rate function (and the typical structure of the conditional distribution) changes here, we call this the \textit{critical regime}, and below in Corollary~\ref{corPhaseTransition} we prove a result justifying this terminology. 

In~\cite{jenssen2024lower}, the current  authors proved an asymptotic formula for the logarithm of the probability of triangle-freeness (and the lower-tail problem discussed below in Section~\ref{subsecLowerTailsIntro}) in $G(n,p)$ with $p \sim c/\sqrt{n}$ for $c < e^{-1/2}$ and for $G(n,m)$ with  $m \sim b n^{3/2}$ with $b <\frac{1}{2}\sqrt{W(2/e)}  \approx 0.3402$.   The method of~\cite{jenssen2024lower}, however,  is very specific to triangles in random graphs, and does not apply to other subgraphs $H$ or to the other non-existence and lower-tail problems described below.

Here we prove an asymptotic formula for the logarithm of the  probability of $H$-freeness for all strictly $2$-balanced $H$ inside their respective critical regimes.  Let $\aut(H)$ be the set of automorphisms of the graph $H$, and let $h = |V(H)|$, $k= |E(H)|$.   Let
\begin{equation}
    \Delta_H = \frac{2k (n-2)_{h-2}}{|\aut(H)|} \sim \frac{2k}{| \aut(H) |} n^{h-2} \,,
\end{equation}
where  $(n)_j = n (n-1) \cdots (n-j+1)$.
The parameter $\Delta_H$ is the number of copies of $H$ in the complete graph $K_n$ containing a specified edge (not to be confused with the maximum degree of $H$).  We use $\Delta_H$ to parameterize the problem to match the more general main result of Section~\ref{subsecHypergraphIntro} below. Finally, define
\begin{equation}
\label{eqXast0}
    x_k^\ast(c) =  \left( \frac{ W((k-1)c^{k-1})}{k-1}   \right )^{\frac{1}{k-1}}  \,,
\end{equation}
where $W(\cdot)$ is the Lambert-W function, the inverse of $ye^y$.

\begin{theorem}\label{thmHFree}
   Let $H$ be a strictly $2$-balanced graph with $k$ edges and fix $c < \left(\frac{e} {k-1} \right)^{ \frac{1}{k-1}}$.  If 
   \begin{equation}
       p \sim c \cdot \Delta_H^{-\frac{1}{k-1}} \sim c \cdot \left(  \frac{|\aut(H)|}{2k}\right)^{\frac{1}{k-1}} n^{- \frac{1}{m_2(H)}} \,,
   \end{equation} 
   then
\begin{equation}
   \lim_{n \to \infty} \Delta_H^{ \frac{1}{k-1}} \binom{n}{2}^{-1}  \log \P_p (\mathbf X_H=0)  =  x^\ast +\left(1-\frac{1}{k} \right)(x^\ast)^k - c  \,,
 \end{equation}
 where $x^\ast = x_k^\ast(c)$ as defined in~\eqref{eqXast0}.
    Moreover, with $b < (k-1)^{- \frac{1}{k-1}}$ fixed, and 
    \begin{equation}
        m \sim b \cdot \Delta_H^{-\frac{1}{k-1}} \binom{n}{2} \sim b \cdot  \left(  \frac{|\aut(H)|}{2k}\right)^{\frac{1}{k-1}}   n^{2- \frac{1}{m_2(H)}} \,,
    \end{equation}
    there holds
    \begin{equation}
        \lim_{n \to \infty} \Delta_H^{ \frac{1}{k-1}} \binom{n}{2}^{-1} \log \P_m (\mathbf X_H=0)  = -  \frac{b^k}{k} \,.
\end{equation}
In other words,
\begin{equation}
    \log \P_m (\mathbf X_H=0) \sim - \E_m \mathbf X_H \,.
\end{equation}
\end{theorem}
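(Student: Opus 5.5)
Theorem~\ref{thmHFree} should be a specialization of the general hypergraph result promised in Section~\ref{subsecHypergraphIntro}. Let $\mathcal H$ be the $k$-uniform hypergraph with vertex set $E(K_n)$ (so $N=\binom n2$ vertices) and one hyperedge for each copy of $H$ in $K_n$. Then $\mathbf X_H=0$ is exactly the event that the $p$-random vertex subset is independent in $\mathcal H$. The hypergraph $\mathcal H$ is $\Delta_H$-regular, and it is nearly linear. Two copies of $H$ sharing at least two edges span a subgraph whose $2$-density constraint, by strict $2$-balancedness, forces the number of such pairs through a given edge to be $o(\Delta_H^2)$ (and in fact $O(n^{2h-5})$-type bounds). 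This gives the local sparsity conditions: codegrees are $o(\Delta_H)$, and short Berge cycles through a vertex are rare. The first step is to verify these structural conditions carefully, with the counting done via $F=H\cup H'$ for overlapping copies, using $m_2(H)$ strictness.

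Next I would apply the general theorem. Its conclusion should be that $\log\P(\text{independent})\sim N\Delta^{-1/(k-1)}\cdot\Phi(c)$, where $\Phi$ is the Bethe free energy at the BP fixed point for the hard-core model with activity $\lambda=p/(1-p)$. The condition on $c$ is the uniqueness threshold on the $k$-uniform $\Delta$-regular linear hypertree, rescaled.

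The main computational step is then to evaluate $\Phi$ at the homogeneous fixed point, which regularity makes available. Let $x$ be the rescaled occupation message. The fixed point reads $x=c\exp(-x^{k-1})$ in the limit, since each of the $\Delta$ hyperedges kills the vertex with probability about $(x\Delta^{-1/(k-1)})^{k-1}$. Hence $(k-1)x^{k-1}e^{(k-1)x^{k-1}}=(k-1)c^{k-1}$, which gives $x=x_k^\ast(c)$. The uniqueness condition $c<(e/(k-1))^{1/(k-1)}$ should be exactly where this scalar recursion stays contracting; at $c$ equal to that value one gets $W(e)=1$, i.e. $(k-1)x^{k-1}=1$. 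Expanding the Bethe free energy (vertex terms $\log(1-p+pe^{-\dots})$ minus $(k-1)$ times the edge terms) to leading order in $\Delta^{-1/(k-1)}$ gives $x+(1-1/k)x^k-c$. This is routine Taylor expansion, provided I track the $-c$ coming from $\log(1-p)$.

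For $G(n,m)$, I would transfer from $G(n,p)$ in the standard way. Write $\P_p(\mathbf X_H=0)=\sum_m \P(\mathrm{Bin}=m)\P_m(\mathbf X_H=0)$. Because $\log\P_m(\mathbf X_H=0)$ has monotonicity and a concavity-type property in $m$ (or via a direct argument), $\log\P_p \approx \max_m[\log\P(\mathrm{Bin}(N,p)=m)+\log\P_m]$. Legendre inversion then gives the $G(n,m)$ rate. Explicitly, optimizing over $b$ via $-N\Delta^{-1/(k-1)}[b\log(b/c)-b+c + b^k/k]$, the optimum sits at $b=x^\ast$. This requires $b=c e^{-b^{k-1}}$, matching the fixed point and reproducing the formula. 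Invertibility holds since $b\mapsto c$ is increasing exactly for $b<(k-1)^{-1/(k-1)}$, which is the stated range. I expect the main obstacle to be the structural verification for general strictly $2$-balanced $H$, namely bounding the number of short Berge cycles in $\mathcal H$. The transfer step also needs care, and I would first try proving the $G(n,m)$ statement directly from the lower-tail version of the general theorem if available.
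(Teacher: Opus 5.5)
\paragraph{The $G(n,p)$ statement.}
Your plan here is the paper's. You encode copies of $H$ as the $\Delta_H$-regular $k$-uniform hypergraph $G^H$ on $N=\binom n2$ vertices, check that it is asymptotically tree-like, and apply Theorem~\ref{thmMain} with $\eta=0$, $\zeta=1$. That theorem outputs $x^\ast+(1-\frac1k)(x^\ast)^k-c$ directly; your Bethe expansion is a correct consistency check.

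The gap is in what you propose to verify. Definition~\ref{def:tree} requires $\Delta_\ell(G^H)=o(\Delta^{(k-\ell)/(k-1)})$ for every $2\le\ell\le k-1$, and $\Gamma(G^H)=o(\Delta)$. No Berge-cycle count enters. Your $o(\Delta_H^2)$ bound on pairs of copies through an edge sharing a second edge only yields $\Delta_2=o(\Delta)$, which is far weaker than $\Delta_2=o(\Delta^{(k-2)/(k-1)})$. These exponents are exactly what the argument consumes. A node of the Weitz tree may lie in only $o(\Delta^{(\ell-1)/(k-1)})$ edges shrunk to size $\ell$ (Lemma~\ref{lemWeitzTreeStructure}), because each such edge costs a factor $1-\Theta(\Delta^{-(\ell-1)/(k-1)})$.

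The correct count is Lemma~\ref{lemSubgraphHypergraph}:
\begin{itemize}
\item Any $\ell$ edges of $K_n$ spanning $v_F\ge 3$ vertices lie in at most $n^{h-v_F}$ copies.
\item Strict $2$-balancedness gives $h-v_F<h-2-\frac{\ell-1}{m_2(H)}=\frac{k-\ell}{m_2(H)}$.
\item Meanwhile $\Delta^{(k-\ell)/(k-1)}=\Theta(n^{(k-\ell)/m_2(H)})$.
\end{itemize}
For $\Gamma$ you also need a fact you never mention: a strictly $2$-balanced $H$ is $2$-connected, so any $k-1$ edges of a copy span all $h$ of its vertices. This forces $e_1\cup e_2$ into the copy and gives $\Gamma=O(n^{h-3})$. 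If $H$ had a pendant edge, $\Gamma$ could be of order $\Delta$.

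\paragraph{The $G(n,m)$ statement.}
Here you take a different route from the paper, and as written the inversion step is unjustified. No concavity of $m\mapsto\log\P_m(\mathbf X_H=0)$ is available; independent-set counts by size need not even be unimodal in general. The relation $\log\P_p\approx\max_m[\cdots]$ holds trivially but does not locate the maximizer. Only the upper bound comes for free: $\P_m\le\P_p/\P(\mathrm{Bin}(N,p)=m)$, taken at $c=be^{b^{k-1}}$.

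The missing ingredient is concentration of $|\mathbf S|$ under $\P_p(\cdot\mid\mathbf X_H=0)$ near $x^\ast(c)N\Delta^{-1/(k-1)}$. You can get it in either of two ways:
\begin{itemize}
\item from the variance bound of Theorem~\ref{thmMarginals};
\item from your $G(n,p)$ formula alone, by tilting. One has $\E_p[e^{\theta|\mathbf S|}\mathbf 1\{\mathbf X_H=0\}]=(1-p+pe^{\theta})^N\P_{p'}(\mathbf X_H=0)$ with $p'=pe^\theta/(1-p+pe^\theta)\sim ce^{\theta}\Delta^{-1/(k-1)}$. The rate $\varphi(c)=x^\ast+(1-\frac1k)(x^\ast)^k-c$ satisfies $c\varphi'(c)+c=x^\ast(c)$, which identifies the concentration point.
\end{itemize}
With concentration in hand, the lower bound goes as follows.
\begin{enumerate}
\item Pick $c$ with $x^\ast(c)=b+\epsilon$.
\item By pigeonhole, find $m'\ge m$ with $\P_p(|\mathbf S|=m'\mid\mathbf X_H=0)\ge 1/(2N+2)$.
\item Use $\P_m(\mathbf X_H=0)\ge\P_{m'}(\mathbf X_H=0)$, valid because the event is decreasing.
\item Let $\epsilon\to0$.
\end{enumerate}

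Repaired this way, your route is simpler than the paper's. The paper proves the exact-size estimate $\P_p(|\mathbf S|=m\mid\mathbf X\le\eta\E\mathbf X)=e^{-o(N\Delta^{-1/(k-1)})}$ (Lemma~\ref{lemZtoP}) via the vertex deletion and insertion constructions of Lemma~\ref{lemTransferGnm}. That version needs no monotonicity and treats all $\eta$ uniformly.

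Finally, $b\mapsto be^{b^{k-1}}$ is increasing for every $b>0$, so that is not the reason for the range. The restriction $b<(k-1)^{-1/(k-1)}$ is exactly the condition $be^{b^{k-1}}<(e/(k-1))^{1/(k-1)}$. In other words, it ensures the matching $c$ lies in the range where the $G(n,p)$ result holds.
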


 Theorem~\ref{thmHFree} implies that the $H$-freeness problem (for non-bipartite, strictly $2$-balanced graphs $H$) undergoes a phase transition in the critical regime, in the sense of a non-analyticity of the rate function.

Define the $H$-free rate functions for $G(n,p)$ and $G(n,m)$ respectively as
\begin{align}
    \varphi_{H}(c) &= \liminf_{n \to \infty} n^{\frac{1}{m_2(H)} -2} \log \P_p (\mathbf X_H=0) \,, \quad \text{with } p = c n^{- \frac{1}{m_2(H)}} \,, \intertext{and}
     \widehat \varphi_{H}(b) &= \liminf_{n \to \infty} n^{\frac{1}{m_2(H)} -2} \log \P_m (\mathbf X_H=0) \,, \quad \text{with } m = b n^{2- \frac{1}{m_2(H)}} \,.
\end{align}

\begin{cor}
    \label{corPhaseTransition}
    For each strictly $2$-balanced $H$ with $\chi(H)\geq 3$, there exist $c_H^\ast \in (0, \infty)$ and $b_H^\ast \in (0,\infty)$ so that $\varphi_{H}$ and $\widehat \varphi_{H}$ are non-analytic at $c_H^\ast$ and $b_H^\ast$ respectively.
\end{cor}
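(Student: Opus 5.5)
The idea is to compare the two regimes: Theorem~\ref{thmHFree} gives an explicit analytic formula for small $c$ (resp.\ $b$), while Theorem~\ref{thmHfreePrevios}, upgraded by monotonicity, forces a different behaviour for large $c$. We translate Theorem~\ref{thmHFree} into the normalisation of $\varphi_H$. Write $p = c n^{-1/m_2(H)}$ and $\alpha_H = \left(\frac{|\aut(H)|}{2k}\right)^{1/(k-1)}$, so $c = c'\alpha_H$ with $c'$ the constant of Theorem~\ref{thmHFree}. Since $\Delta_H^{1/(k-1)}\binom n2^{-1} \sim 2\alpha_H^{-1} n^{1/m_2(H)-2}$, for $c' < (e/(k-1))^{1/(k-1)}$ we get
\begin{equation}
\varphi_H(c) = \tfrac{\alpha_H}{2}\Bigl(x^\ast(c/\alpha_H) + \bigl(1-\tfrac1k\bigr) x^\ast(c/\alpha_H)^k - c/\alpha_H\Bigr).
\end{equation}
This is real-analytic on the interval in question, since $W$ is analytic on $(-1/e,\infty)$ and the argument of $W$ is positive. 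Likewise $\widehat\varphi_H(b) = -\frac{\alpha_H}{2k}(b/\alpha_H)^k$ for small $b$, a polynomial $-\kappa b^k$ with $\kappa>0$. Both formulas extend analytically to all of $(0,\infty)$; call the extensions $F$ and $\widehat F$.

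Next we show $\varphi_H$ cannot equal $F$ for all large $c$. Take $p = c n^{-1/m_2(H)}$ with $c$ large. We need a lower bound on $\P_p(\mathbf X_H=0)$ valid uniformly in the critical regime. The bipartite lower bound $\P_p(\mathbf X_H = 0) \ge \P(G \text{ is } (\chi(H)-1)\text{-partite}) \ge (1-p)^{n^2/(2(\chi(H)-1))}$ gives
\begin{equation}
\varphi_H(c) \ge -\frac{c}{2(\chi(H)-1)}.
\end{equation}
This holds because $\log(1-p)\sim -p$ and the relevant scale is $n^{2-1/m_2(H)}\cdot$ const. We then compare with $F$ as $c\to\infty$. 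Since $W(y)\sim \log y$, $x^\ast$ grows only like $(\log c)^{1/(k-1)}$, so $F(c) = -\frac{c}{2} + o(c)$. As $\chi(H)\ge3$, we have $\frac{1}{2(\chi(H)-1)}\le \frac14<\frac12$, so $F(c) < \varphi_H(c)$ for $c$ large. In the $m$ case, $\widehat F(b) = -\kappa b^k$ with $k\ge 3$ decays superlinearly. Meanwhile the $G(n,m)$ partite bound gives $\widehat\varphi_H(b) \ge b\log(1-\frac1{\chi(H)-1}) - o(1)\cdot b$, which is linear in $b$, so again $\widehat\varphi_H(b) > \widehat F(b)$ for large $b$. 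Here one must check that $\P_m(\text{partite})$ is at least $\exp(m\log(1-\frac{1}{\chi-1})(1+o(1)))$: this follows from counting graphs with all edges across a fixed balanced partition, using $m=o(n^2)$.

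Now $\varphi_H$ agrees with the analytic $F$ on $(0,c_0)$ but differs from it at some larger $c$. If $\varphi_H$ were analytic on all of $(0,\infty)$, the identity theorem would force $\varphi_H\equiv F$, a contradiction. Set
\begin{equation}
c_H^\ast=\sup\{c: \varphi_H = F \text{ on } (0,c)\}.
\end{equation}
This is finite and positive. At $c_H^\ast$, $\varphi_H$ is not analytic: otherwise it would agree with $F$ on a neighbourhood of $c_H^\ast$ by the identity theorem applied on the left side, contradicting maximality. (Continuity of $\varphi_H$ is not even needed; non-analyticity means no convergent power series represents $\varphi_H$ near that point.) The same argument gives $b_H^\ast$.

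The main obstacle is the normalisation bookkeeping and confirming that $F$ does not coincidentally grow like the partite bound. This reduces to the asymptotic $x^\ast = o(c^{1/k})$, so that $(x^\ast)^k = o(c)$, which follows from $W(y) \sim \log y$.
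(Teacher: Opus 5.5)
Your proposal is correct and follows the same route as the paper. You lower-bound $\varphi_H$ and $\widehat\varphi_H$ by the probability of being $(\chi(H)-1)$-partite, show that the analytic small-parameter formulas from Theorem~\ref{thmHFree} fall below these linear bounds for large $c$ (resp.\ $b$), and conclude by uniqueness of analytic continuation. Your asymptotic $F(c)=-c/2+O\bigl((\log c)^{k/(k-1)}\bigr)$ and the supremum argument locating $c_H^\ast$ only make explicit what the paper asserts. The constant $\kappa$ you give for $\widehat\varphi_H$ is off by a factor $2^k$, since $\binom n2\sim n^2/2$, but as you note only $\kappa>0$ matters.
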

The case of $H= K_3$ was proved in~\cite{jenssen2024lower}.
Here we require $\chi(H) \ge 3$ to ensure that for $c$ or $b$ sufficiently large, typical $H$-free graphs are approximately $(\chi(H)-1)$-partite (as shown via the method of hypergraph containers~\cite{balogh2015independent,saxton2015hypergraph}; see also~\cite{luczak2000triangle,kohayakawa1997k}).  The proof of Corollary~\ref{corPhaseTransition} is then quite simple: the formula for the rate function for small $c$ given by Theorem~\ref{thmHFree} is an analytic function of $c$; yet the lower bound on the rate function given by considering $(\chi(H)-1)$-partite graphs shows that this formula cannot hold for large $c$; by uniqueness of analytic continuation the $H$-free rate function must have a non-analytic point. 

We leave it as an open problem to show that  $\varphi_{H}$ and $\widehat \varphi_{H}$ have unique points of non-analyticity and to determine their locations.

\subsection{Lower tails for subgraphs}
\label{subsecLowerTailsIntro}

A more general question is to estimate the probability that $G(n,p)$ or $G(n,m)$ has significantly fewer copies of a subgraph $H$ than expected.  This is the \textit{lower-tail} problem.  In particular, we will be interested in the logarithmic asymptotics of 
\begin{equation}
    \P ( \mathbf X_H \le \eta \E \mathbf X_H)
\end{equation}
where $\eta \in [0,1)$ is fixed and the probability and expectation is with respect to either $G(n,p)$ or $G(n,m)$.

Here again much is known outside of the critical regime; for $p = o \left ( n^{-1/m_2(H)} \right)$ the lower tail exhibits Poisson-like behavior~\cite{janson1987uczak,janson1990poisson,janson2016lower}; while for $p =\omega \left ( n^{-1/m_2(H)} \right)$ the rate function is given by the solution of an entropy maximization problem over graphons~\cite{chatterjee2011large,kozma2023lower}.  This maximization problem is only partially solved however~\cite{zhao2017lower}.

We now generalize Theorem~\ref{thmHFree} to the lower-tail problem. First we introduce some notation.  Generalizing~\eqref{eqXast0} (the $\zeta=1$ case), define 
\begin{equation}
\label{eqxastdef}
      x_k^\ast(c,\zeta) =  \left( \frac{W((k-1)c^{k-1}\zeta)}{(k-1)\zeta } \right)^{\frac{1}{k-1}} \,.
\end{equation}
Let 
\begin{equation}
\label{eqEtaStarDef}
\eta_k^\ast = e^{-k/(k-1)} \,,
\end{equation}
and define $\overline c_k:[0,1]\to \R\cup\{\infty\}$ as follows:
\begin{equation}
\label{eqOverlineCeta}
\overline c_k(\eta) =
\begin{cases} 
\left( \frac{e}{(k-1)(1-\eta/\eta^\ast_k)} \right)^{\frac{1}{k-1}}   & \text{if } \eta < \eta_k^\ast\\
+ \infty & \text{if } \eta \geq \eta_k^\ast\, . 
\end{cases}
\end{equation}

\begin{theorem}\label{thmHLowerTail}
   Let $H$ be a strictly $2$-balanced graph with $k$ edges. 
 Fix $c>0$ and $\eta\in [0,1)$ satisfying $c<\overline c_k(\eta)$.
If
 \begin{equation}
       p \sim c \cdot \Delta_H^{-\frac{1}{k-1}}  \,,
   \end{equation} 
   then 
\begin{align}
\label{eqMainThmAsymptotics}
  \lim_{n\to\infty}  \Delta_H^{ \frac{1}{k-1}} \binom{n}{2}^{-1} \log \P_p(\mathbf X_H\leq \eta \E \mathbf X_H) &=  x^\ast + (x^\ast)^k  \left ( 1- \frac{1}{k} \right) \zeta   - \log (1-\zeta) \frac{\eta c^k}{k} - c \, , 
\end{align}
where $\zeta$ is the unique solution in $[0,1]$ to the equation
 \begin{equation}
 \label{eqZetaDef}
    (1-\zeta) (x_k^\ast(c,\zeta))^k= \eta c^k\, ,
 \end{equation}
 and $x^\ast = x_k^\ast(c,\zeta)$ is given by~\eqref{eqxastdef}.
    
    Similarly, fix $\eta \in [0,1)$ and  suppose $b$ satisfies 
     \begin{equation}
 (k-1)  b^{k-1} (1-\eta)  < 1 \,.
    \end{equation}
    If $m \sim b \cdot \Delta_H^{-\frac{1}{k-1}} \binom{n}{2}$, then
    \begin{equation}
   \lim_{n \to \infty}\Delta_H^{ \frac{1}{k-1}} \binom{n}{2}^{-1}\log \P_m (\mathbf X_H \le \eta \E \mathbf X_H )  =  -  \frac{b^k (1- \eta +\eta \log \eta)}{k} \,.
\end{equation}
\end{theorem}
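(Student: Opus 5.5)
The plan is to deduce Theorem~\ref{thmHLowerTail} from the general hypergraph result announced in Section~\ref{subsecHypergraphIntro}. That result estimates the probability that a $p$-random vertex subset of a $k$-uniform, nearly $\Delta$-regular hypergraph induces few edges, and expresses it through the Bethe free energy at the BP fixed point.

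First I set up the hypergraph $\mathcal H_n$. Its vertex set is $E(K_n)$, and its hyperedges are the edge sets of the copies of $H$ in $K_n$. This hypergraph is $k$-uniform and exactly $\Delta_H$-regular. Because $H$ is strictly $2$-balanced, it is locally sparse. Concretely, the number of pairs of hyperedges that share at least two vertices, or that close short cycles, is $o$ of the relevant scale. The reason is that a union of two overlapping copies of $H$ has density strictly exceeding $m_2(H)$, so such unions are rare at the scale $p\sim c\Delta_H^{-1/(k-1)}$, where each vertex lies in $\Theta(1)$ "effective" hyperedges.

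Next I handle the lower-tail event itself. I introduce a tilt, namely the hard-core-type measure with activity $\lambda = p/(1-p)$ and a penalty $(1-\zeta)$ per induced hyperedge. I then use the general theorem to compute the partition function $Z(\zeta)$ to first order: $\log Z(\zeta) \approx \binom n2 \Delta_H^{-1/(k-1)}\,\Phi(c,\zeta)$, where $\Phi$ is the Bethe free energy. On the $\Delta$-regular linear hypertree, the BP fixed point is a scalar equation. Writing $x = p\cdot(\text{occupation ratio})\cdot\Delta_H^{1/(k-1)}$, the fixed point reads $x = c\exp(-\zeta x^{k-1})$ in the limit. This gives $(k-1)\zeta x^{k-1}e^{(k-1)\zeta x^{k-1}} = (k-1)\zeta c^{k-1}$, so $x = x_k^\ast(c,\zeta)$. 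The Bethe formula evaluated at this point is $x^\ast + (1-1/k)\zeta (x^\ast)^k - c$.

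The lower-tail asymptotic then follows from a standard Legendre/Chernoff argument. The derivative in $\zeta$ of the free energy gives the tilted expected edge density $(1-\zeta)(x^\ast)^k/k$. I choose $\zeta$ by \eqref{eqZetaDef} so that this equals $\eta c^k/k$. The upper bound is Markov's inequality: $\P(\mathbf X_H\le \eta\E\mathbf X_H)\le (1-\zeta)^{-\eta\E \mathbf X_H} \E[(1-\zeta)^{\mathbf X_H}]$, and $\E\mathbf X_H\sim \binom n2 \Delta_H^{-1/(k-1)}c^k/k$. This produces exactly the $-\log(1-\zeta)\eta c^k/k$ term. For the lower bound, I need concentration of $\mathbf X_H$ under the tilted measure around its mean $(1-\zeta)(x^\ast)^k/k$. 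I get this from convexity in $\zeta$: the derivative of the limiting free energy is continuous, so perturbing $\zeta$ slightly and applying the upper bound argument shows the tilted measure puts mass $1-o(1)$ on $\mathbf X_H\le(\eta+\delta)\E\mathbf X_H$. Letting $\delta\to0$ then uses continuity of the formula in $\eta$. Uniqueness of $\zeta\in[0,1]$ follows because $(1-\zeta)x_k^\ast(c,\zeta)^k$ is strictly decreasing.

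The main obstacle is verifying the hypothesis of the general theorem, namely Gibbs uniqueness on the infinite hypertree for all $\zeta'\in[0,\zeta]$. Uniqueness requires $|\partial_x$ of the two-step map$|<1$ at the fixed point, i.e.\ $(k-1)\zeta'(x^\ast)^{k-1}<1$, equivalently $W((k-1)\zeta' c^{k-1})<1$, i.e.\ $(k-1)\zeta' c^{k-1}<e$. I must show that the solution $\zeta$ of \eqref{eqZetaDef} satisfies this whenever $c<\overline c_k(\eta)$. At the boundary, $W=1$ gives $(x^\ast)^{k-1}=1/((k-1)\zeta)$, hence $x^{\ast k} = c^k e^{-k/(k-1)}$. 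The equation $(1-\zeta)x^{\ast k}=\eta c^k$ then gives $1-\zeta = \eta/\eta^\ast_k$. This matches \eqref{eqOverlineCeta} via $c^{k-1} = e/((k-1)\zeta)$, and monotonicity handles the interior.

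For $G(n,m)$, I transfer by conditioning on the number of edges. I optimize $\log\P_p(\cdot)-\log\P_p(e(G)=m)$ over $p$ near $m/\binom n2$. Only the $\zeta$-tilted measure's edge count matters, and this yields $-b^k(1-\eta+\eta\log\eta)/k$, under the analogous uniqueness condition $(k-1)b^{k-1}(1-\eta)<1$.
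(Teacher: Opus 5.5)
Your reduction is the paper's: encode copies of $H$ as the $k$-uniform hypergraph $G^H$ on $E(K_n)$ and apply Theorems~\ref{thmMain} and~\ref{thmMainGnm}. Once those theorems are available, the \emph{only} thing left to prove is that $G^H$ satisfies Definition~\ref{def:tree} (plus regularity). That is exactly the step you leave as a heuristic. The hypotheses are worst-case degree and codegree bounds, not averaged counts of overlapping pairs. They also contain no cycle condition: for $H=K_3$, every vertex of $G^H$ lies on $\Theta(n^2)$ Berge $3$-cycles. Concretely, you need three things.

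(a) Exact $\Delta_H$-regularity. This also gives $|E|/|V|=\Delta/k$ and $\Delta=\Theta(n^{h-2})\to\infty$.

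(b) The $\ell$-degree bounds. Take any fixed set of $\ell\in\{2,\dots,k-1\}$ edges of $K_n$, spanning a graph $F$ on $v_F\ge 3$ vertices. At most $O(n^{h-v_F})$ copies of $H$ contain it. Strict $2$-balancedness gives $(\ell-1)/(v_F-2)<m_2(H)=(k-1)/(h-2)$, i.e.\ $h-v_F<(k-\ell)/m_2(H)$, whereas $\Delta^{\frac{k-\ell}{k-1}}=\Theta(n^{(k-\ell)/m_2(H)})$. Your ``union of two overlapping copies is denser than $m_2(H)$'' is algebraically the same inequality, but it has to be applied to each fixed $\ell$-set.

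(c) The codegree bound $\Gamma(G^H)=o(\Delta)$. You never address this, and it needs an extra structural input: a strictly $2$-balanced graph is $2$-connected. Hence any $k-1$ edges of a copy of $H$ determine its vertex set. Both copies counted in $\Gamma(e_1,e_2)$ must then contain the at least three vertices of $e_1\cup e_2$, giving $O(n^{h-3})=o(\Delta)$.

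This is Lemma~\ref{lemSubgraphHypergraph}, and together with the two general theorems it is the entire proof.

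Conversely, most of what you wrote is already inside Theorem~\ref{thmMain}, whose hypothesis is verbatim $c<\overline c_k(\eta)$. That this forces $(k-1)\zeta c^{k-1}<e$ is Lemma~\ref{claim:zeta}, and the tilt and Markov upper bound are in Lemma~\ref{lemZtoP}, so uniqueness is not the obstacle.

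As a re-derivation, your convexity (G\"artner--Ellis) route to concentration of $\mathbf X$ under the tilted measure is a valid alternative to the variance bounds of Theorem~\ref{thmMarginals}, part~\ref{item:marg4}, in this regular setting. It works because the limiting free energy is analytic in $\zeta$ and the uniqueness condition is open. You must make it two-sided by perturbing $\zeta$ in both directions, since the lower bound must recover the factor $(1-\zeta)^{-\eta\E\mathbf X}$. This is unnecessary only when $\eta=0$, where $\mathbf X=0$ under the hard-core measure.

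Your $G(n,m)$ paragraph only yields the upper bound $\P_m(\mathbf X\le L)\le \P_p(\mathbf X\le L)/\P_p(|\mathbf S|=m)$, with $L=\eta\E_m\mathbf X$. The lower bound requires $\P_p(|\mathbf S|=m\mid \mathbf X\le L)=e^{-o(N\Delta^{-1/(k-1)})}$, with $N=\binom n2$. This must hold at $\zeta=1-\eta$ and $p\sim be^{(1-\eta)b^{k-1}}\Delta^{-\frac{1}{k-1}}$, which is a constant factor above $m/\binom n2$, chosen so that $x_k^\ast(c,\zeta)=b$. That is the point-probability estimate of Lemma~\ref{lemTransferGnm}. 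Theorem~\ref{thmMainGnm} already provides it under exactly the hypothesis $(k-1)b^{k-1}(1-\eta)<1$, so you should simply invoke it.
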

Just as the probability of $H$-freeness in $G(n,m)$ exhibited   Poisson behavior  in Theorem~\ref{thmHFree}, the lower-tail probability in $G(n,m)$ matches the Poisson lower tail.

\subsection{Avoiding $k$-APs}

A $k$-term arithmetic progression ($k$-AP) of integers is a set $\{a, a+b, \dots a+ (k-1)b \}$ where $a, b \in \mathbb N$.  A central topic in arithmetic combinatorics is to understand the maximum density of a subset of the integers $\{1, \dots, n \}$ with no $k$-AP; Szemer{\'e}di's Theorem~\cite{szemeredi1975sets} states that this maximum density is vanishing as $n \to \infty$; proving better bounds on this density remains a very active area of research~\cite{kelley2023strong,leng2024improved}.  Here we will consider the probability that a random subset of the integers $\{1, \dots, n\}$ of a given density has no $k$-AP.

Let $[n] = \{ 1, \dots, n \}$ and let $[n]_p$ be a random subset of $[n]$ formed by including each element independently with probability $p$.  Here we use $\P_p, \E_p$ to denote probabilities and expectations with respect to $[n]_p$.  For $k \ge 3$, let $\mathbf X_k$  denote the number of $k$-APs in $[n]_p$. Again, Janson's Inequality~\cite{janson1987uczak} and the method of hypergraph containers~\cite{balogh2015independent,saxton2015hypergraph} determine the asymptotics of the logarithm of the non-existence probability when $p$ is sufficiently small or large respectively:
\begin{align}
\log \P_p ( \mathbf X_k=0) &\sim - \E_p \mathbf X_k   \quad \text{when} \quad p \ll n^{-\frac{1}{k-1}} \\
    \log \P_p ( \mathbf X_k=0) &\sim n \log (1-p)  \quad \text{when} \quad p \gg n^{-\frac{1}{k-1}} \,.
\end{align}

Here we give the asymptotics of the logarithm of the non-existence probability when $p \sim c n^{-\frac{1}{k-1}}$ and $c$ is a sufficiently small constant.  Unlike the case of subgraphs in random graphs which is symmetric with respect to edges of the complete graph, this problem is not symmetric with respect to elements of $[n]$, and so the asymptotic formula for the log probability is  more complicated. We start with some definitions.

Fix $k\ge 3$, fix $c>0$, and define the operator $\Phi^{(k)}_c$ on the set of measurable functions $x: [0,1] \to [0,c]$ as follows:
\begin{equation}
\label{eqPhikc}
    \Phi^{(k)}_c x(t) = c \exp \left [ - \sum_{j=1}^k  \int_{0}^{ \min \{\frac{t}{j-1} , \frac{1-t}{k-j} \}}   \,\prod_{i=1}^{j-1}  x(t -(j-i) s)  \cdot \prod_{i=j+1}^k x(t + (i-j)s)  \, ds \right]  \,.
\end{equation}

We first show $\Phi^{(k)}_c$ has a unique functional fixed point.
\begin{lemma}
\label{lemkAPfixedpoint}
   Fix $k \ge 3$ and let $\alpha_k:= \frac{1}{2}\sum_{i=1}^{k}
\min\left(
 \frac{1}{i-1},
 \frac{1}{k-i}\right)$.  For $c <  \left(\frac{e} {(k-1)\alpha_k} \right)^{ \frac{1}{k-1}}$, there is a unique  function $x_{k,c}^* : [0,1] \to [0,c]$ so that 
   \begin{equation}
        \Phi^{(k)}_c x_{k,c}^* = x_{k,c}^* \,.
   \end{equation}
   Moreover the function $ x_{k,c}^*$ is a continuous function on $[0,1]$ and for each $t \in[0,1]$, $x_{k,c}^*(t)$ is an analytic function of $c$ on the interval $\left(0, \left(\frac{e} {(k-1)\alpha_k} \right)^{ \frac{1}{k-1}}  \right) $.
\end{lemma}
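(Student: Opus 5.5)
Write $\Phi=\Phi^{(k)}_c$ and set $y=\log(x/c)\le 0$. We want a unique fixed point, continuity, and analyticity in $c$. The constant $\alpha_k$ should appear as a bound on the total "measure of APs through $t$": for each $t$,
\[
\sum_{j=1}^k \min\Bigl\{\tfrac{t}{j-1},\tfrac{1-t}{k-j}\Bigr\}\le \alpha_k ,
\]
or at least this holds at the point where the sum is maximised. If $x\le c$ everywhere, each integrand is at most $c^{k-1}$, so
\[
\Phi x(t)\ge c\,e^{-\alpha_k c^{k-1}} =: \ell .
\]
Thus $\Phi$ maps the set $\mathcal{X}=\{x:\ \ell\le x\le c\}$ into itself. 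Moreover $\Phi$ is antimonotone: $x\le x'$ pointwise implies $\Phi x\ge \Phi x'$.

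\textbf{Uniqueness via a scalar comparison.} The plan mirrors the uniqueness argument for the hard-core/BP fixed point on the $\Delta$-regular hypertree, which the threshold $\bigl(\tfrac{e}{(k-1)\alpha_k}\bigr)^{1/(k-1)}$ clearly matches.
\begin{itemize}
\item Iterate from $x_0\equiv c$. By antimonotonicity, the even iterates $x_{2n}$ decrease and the odd iterates $x_{2n+1}$ increase, and $x_{2n+1}\le x_{2n}$.
\item Let $u=\lim x_{2n}$ and $v=\lim x_{2n+1}$, which exist by monotone convergence. These satisfy $\Phi u=v$ and $\Phi v=u$.
\item Every fixed point lies between $v$ and $u$, so it suffices to show $u=v$.
\item Compare with the scalar map $f(z)=c\exp(-\alpha_k z^{k-1})$. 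By induction, $\sup x_{2n}\le a_{2n}$ and $\inf x_{2n+1}\ge a_{2n+1}$, where $a_0=c$ and $a_{m+1}=f(a_m)$. This uses the bound on the integration range above.
\item For $c<\bigl(\tfrac{e}{(k-1)\alpha_k}\bigr)^{1/(k-1)}$, the map $f$ has a unique fixed point and no 2-cycle. This is the standard hard-core uniqueness computation: at a 2-cycle one would need $|f'|\ge1$ somewhere, and at $f(z)=z$ we have $|f'(z)|=(k-1)\alpha_k z^{k-1}$, with $z e^{\alpha_k z^{k-1}}=c$, which is $<1$ exactly below the stated threshold.
\end{itemize}

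The main obstacle is that the scalar comparison only bounds $\sup u$ and $\inf v$; it does not by itself give $u=v$ pointwise. I would instead work with $w=\log u-\log v\ge0$. By the mean value theorem,
\[
w(t)\le \sum_{j}\int \Bigl(\sum_{i\ne j} w(\cdot)\Bigr)\prod_{i\ne j}\max(u,v)\,ds ,
\]
which gives $\|w\|_\infty\le (k-1)\alpha_k\,\bar u^{\,k-1}\,\|w\|_\infty$. This naive bound uses $\bar u\le c$ and is too weak. To reach the sharp threshold I would use a two-step (alternating) estimate, bounding $\|w\|$ through the product $u^{k-1}\,\cdot\,(\text{stuff})$ evaluated along the extremal pair $(a_{\rm even},a_{\rm odd})$ of $f$. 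This is the analogue of proving contraction for $f\circ f$ with a potential function, as in the hypertree contraction arguments the paper uses elsewhere. If the sharp constant resists, I would settle for this potential-function contraction of $\Phi^2$ in sup norm.

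\textbf{Continuity and analyticity.} Continuity of $x^*_{k,c}$ follows because $\Phi$ maps bounded measurable functions into continuous ones: the integration limits and the integrands vary continuously in $t$ by dominated convergence. For analyticity in $c$, I would apply the implicit function theorem in the Banach space $C[0,1]$ to the map $F(c,x)=x-\Phi_c x$. This map is real-analytic in $(c,x)$, being a composition of exp, multiplication and bounded linear integral operators. The derivative $I-D_x\Phi_c$ is invertible at the fixed point because the contraction estimate above gives $D_x\Phi_c$ (or its square) norm $<1$. The implicit function theorem then gives an analytic branch $c\mapsto x^*_{k,c}$, which coincides with the unique fixed point, so each evaluation $x^*_{k,c}(t)$ is analytic in $c$.
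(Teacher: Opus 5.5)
\textbf{There is a genuine gap: the contraction estimate that reaches the stated threshold is never supplied.}

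Most of your scaffolding is sound. The antimonotone iteration from $x_0\equiv c$ produces limits $u,v$ with $\Phi u=v$, $\Phi v=u$, and these sandwich every fixed point. Continuity comes from $\Phi$ outputting continuous functions, and analyticity from the implicit function theorem. The paper does the same after rescaling $x=\alpha_k^{-1/(k-1)}f$, $c'=\alpha_k^{1/(k-1)}c$ to the operator $F^{(k)}_{c',1}$. But the step that carries all the content is showing $u=v$ for \emph{every} $c<\bigl(\tfrac{e}{(k-1)\alpha_k}\bigr)^{1/(k-1)}$.

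Your one-step mean-value bound has factor $(k-1)\alpha_k c^{k-1}$. It therefore only covers $c<((k-1)\alpha_k)^{-1/(k-1)}$, missing the stated range by a factor $e^{1/(k-1)}$; falling back on it proves a weaker lemma.

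The scalar comparison does not rescue this.
\begin{itemize}
\item The claim $\sup x_{2n}\le a_{2n}$ is false. The quantity $W(t):=\sum_j\min\{\frac{t}{j-1},\frac{1-t}{k-j}\}$ is only bounded \emph{above} by $\alpha_k$, with equality only at $t=\frac12$, while $W(0)=\frac{1}{k-1}$. So you get $\Phi x\ge c\,e^{-\alpha_k(\sup x)^{k-1}}$, but only $\Phi x(t)\le c\,e^{-W(t)(\inf x)^{k-1}}$. For instance, with $k=3$ and $c<\frac12$ one checks $x_2(0)\ge c\,e^{-c^2/2}>a_2$.
\item The mean value theorem only shows that a $2$-cycle of $f$ forces $|f'(\xi)|=1$ at some $\xi$ between its two points. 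That is not contradicted by $|f'(z^\ast)|<1$ at the fixed point.
\item A ``potential function along the extremal pair'' is not an inequality that applies to non-constant, $t$-dependent $u,v$.
\end{itemize}

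\textbf{The missing idea} is a two-step bound in logarithmic coordinates that holds uniformly over all inputs, so no extremal pair is needed. Let $w(t,j)=\min\{\frac{t}{j-1},\frac{1-t}{k-j}\}$ and $I_j=\{1-j,\dots,k-j\}\setminus\{0\}$.

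Perturb $x\mapsto xe^{\varepsilon h}$. The derivative of $\log\Phi x(r)$ is
$-\sum_m\int_0^{w(r,m)}\bigl(\prod_{p\in I_m}x(r+ps)\bigr)\bigl(\sum_{q\in I_m}h(r+qs)\bigr)\,ds$.
In absolute value this is at most $(k-1)\|h\|_\infty\log\frac{c}{\Phi x(r)}$: the sensitivity of the inner step is bounded by its own exponent.

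Feed this into the outer step with $Y_{j,s}=\prod_{i\in I_j}\Phi x(t+is)\in(0,c^{k-1}]$. Using $z\log(a/z)\le a/e$ and $W(t)\le\alpha_k$,
\[
\bigl|D\log\Phi^2x(t)[h]\bigr|\le(k-1)\|h\|_\infty\sum_{j=1}^k\int_0^{w(t,j)}Y_{j,s}\log\frac{c^{k-1}}{Y_{j,s}}\,ds\le\frac{(k-1)\alpha_k c^{k-1}}{e}\,\|h\|_\infty .
\]
The right-hand constant is $<1$ exactly in the stated range. Hence $\Phi^2$ is a strict contraction for the metric $\|\log(\cdot)-\log(\cdot)\|_\infty$. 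Since $u$ and $v$ are both fixed points of $\Phi^2$, they coincide.

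The same operator-norm bound gives the invertibility of $I-D\Phi^2$ that your implicit function theorem step needs. This estimate is Lemma~\ref{lemfkContractionNonReg2}, the continuum version of Lemma~\ref{lemfkContractionNonReg}, and it is precisely where the factor $e$ in the threshold comes from.
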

For $k=3$, $c =1$, we have plotted  $x_{k,c}^*$ in Figure~\ref{fig:3APdensity}.

\begin{figure}
    \centering
    \includegraphics[width=0.7\linewidth]{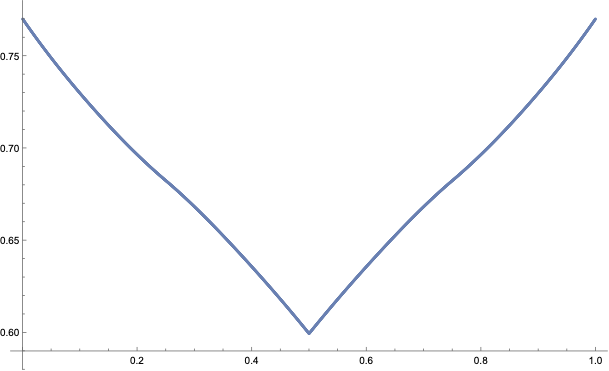}
    \caption{The function $x_{3,1}^*$ from Lemma~\ref{lemkAPfixedpoint} with $k=3$, $c=1$, representing the density of elements in a random $3$-AP-free subset of $[n]$ chosen with prior density $n^{-1/2}$.}
    \label{fig:3APdensity}
\end{figure}

Our main result on the probability of the non-existence of arithmetic progressions is the following.  
\begin{theorem}
    \label{thmApavoidInterval}
      Fix $k \ge 3$ and  $c <  \left(\frac{e} {(k-1)\alpha_k} \right)^{ \frac{1}{k-1}}$. Then with $p \sim c n^{-\frac{1}{k-1}}$,
    \begin{equation}
       \lim_{n\to\infty} n^{- \frac{k-2}{k-1} } \cdot \log \P_p ( \mathbf X_k=0) = \int_0^1 \int_0^c  \frac{x^*_{k,t}(s)}{t}\, dt  \, ds  - c  \,.
    \end{equation}
    Moreover, for each $j \in [n]$,
    \begin{equation}
    n^{\frac{1}{k-1}} \cdot \P_p \left ( j \in [n]_p \big | \mathbf X_k=0 \right)   =  x_{k,c}^*(j/n) +o(1)  \,.
    \end{equation}
\end{theorem}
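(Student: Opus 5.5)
The plan is to derive Theorem~\ref{thmApavoidInterval} from the general hypergraph result of Section~\ref{subsecHypergraphIntro}, applied to the $k$-uniform hypergraph $\mathcal H_n$ on vertex set $[n]$ whose hyperedges are the $k$-APs. The general theorem says the following. Suppose the hypergraph is locally sparse (bounded codegrees, few short Berge cycles), has maximum degree $\Delta$, and $p\Delta^{1/(k-1)}$ lies below the hard-core uniqueness threshold $(e/(k-1))^{1/(k-1)}$ on the $\Delta$-regular linear $k$-uniform hypertree. Then $\log \P(\text{independent})$ equals the Bethe free energy at the unique BP fixed point, up to $o$ of the leading term. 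Moreover, the BP marginals approximate the conditional marginals.

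\textbf{Hypotheses.} First I would check these for $\mathcal H_n$. Two distinct elements lie in at most $O(k^2)$ common $k$-APs, so codegrees are bounded, while degrees are $\Theta(n)$. The number of pairs of $k$-APs sharing two or more elements, and of short Berge cycles through a vertex, is $O(1)$ per vertex, i.e.\ $o(\Delta)$. Next, the degree of $j=tn$ is $d(t)n+O(1)$, where
\begin{equation}
d(t)=\sum_{j=1}^k \min\Bigl\{\tfrac{t}{j-1},\tfrac{1-t}{k-j}\Bigr\},
\end{equation}
counting the position $j$ of the element in the AP and the admissible differences $s$. Its maximum is attained at $t=1/2$ and equals $2\alpha_k\cdot\frac12\cdot 2$ in the normalization of the statement. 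The non-uniform degrees are why the threshold involves $\alpha_k$: with $p=cn^{-1/(k-1)}$, the effective parameter $p\Delta_{\max}^{1/(k-1)}$ stays below the uniqueness threshold exactly when $c<(e/((k-1)\alpha_k))^{1/(k-1)}$. If the general theorem is stated for regular hypergraphs, I would need a version allowing maximum degree $\Delta$ with vertex-dependent degrees. The contraction argument on the computational hypertree only uses the upper bound on degrees, so I expect this version to be available.

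\textbf{Passage to the continuum.} Next I would identify the BP fixed point. In the hard-core BP with activity $\lambda=p/(1-p)$, the message/marginal at $j$ has the form $p\exp(-\sum_{e\ni j}\prod_{i\in e\setminus j}x_i)(1+o(1))$, because each product is $O(n^{-1})$. Rescaling $x_j=n^{-1/(k-1)}y(j/n)$ and writing the sum over APs through $j$ as a Riemann sum over differences $s$ turns the discrete fixed-point equation into $y=\Phi^{(k)}_c y+o(1)$. Contraction of the discrete BP map gives uniqueness and stability of its fixed point. Comparing with the continuous fixed point $x^*_{k,c}$ from Lemma~\ref{lemkAPfixedpoint}, and using continuity of $x^*_{k,c}$ together with a Lipschitz/contraction estimate for $\Phi^{(k)}_c$ in sup norm, shows $\sup_j |n^{1/(k-1)}x_j-x^*_{k,c}(j/n)|=o(1)$. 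Combined with the marginal statement of the general theorem, this yields the second claim.

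\textbf{Free energy.} For the first claim I would avoid evaluating the Bethe free energy directly. Instead I would use the identity $\frac{d}{dc}\log\P=\ldots$, i.e.\ interpolation in $p$. Write $\log\P_p(\mathbf X_k=0)=\log Z(\lambda)-n\log(1+\lambda)$, and note that $\lambda\frac{d}{d\lambda}\log Z=\sum_j \P(j\in[n]_p\mid \mathbf X_k=0)$. Integrating in $c'$ from $0$ to $c$, with every $c'$ in the admissible range and the marginal estimate uniform in $c'$, gives
\begin{equation}
n^{-\frac{k-2}{k-1}}\log Z\to\int_0^1\int_0^c \frac{x^*_{k,t}(s)}{t}\,dt\,ds,
\end{equation}
while $n\log(1+\lambda)\sim c n^{(k-2)/(k-1)}$ produces the $-c$ term.

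\textbf{Main obstacle.} The hardest step is uniformity. The marginal approximation must hold uniformly in $j$, including near the endpoints $t\approx 0,1$, where degrees degenerate, and uniformly in $c'$ on compact subintervals. On top of that, the general BP theorem has to be available for non-regular hypergraphs with only a maximum-degree bound. I would handle the endpoints by noting that there $x\approx c$ and the errors are trivially bounded, so they contribute $o(1)$ after integration.
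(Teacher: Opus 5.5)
Your route is the paper's. You check that the $k$-AP hypergraph is asymptotically tree-like (Lemma~\ref{lemAPhypergraph}) and apply the non-regular marginal theorem; Theorem~\ref{thmMarginals} is exactly the maximum-degree-only version you hoped for. You then pass from the discrete BP fixed point to $x^*_{k,c}$ by contraction, and get $\log Z$ by integrating the marginals in the activity (Theorem~\ref{thmMarginals}\ref{item:marg2.5}) plus dominated convergence. Normalizing by $n$ instead of $\Delta$ is a harmless rescaling that yields $\Phi^{(k)}_c$ directly, without the $\alpha_k$ factors.

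\textbf{The step that fails as written.} The claim that ``the Riemann sum turns the discrete fixed-point equation into $y=\Phi^{(k)}_c y+o(1)$'' does not go through unaided. Take $y$ to be the step function of the discrete fixed point. Over a cell $s\in[(r-1)/n,r/n]$, the factor $y(t+is)$ runs over up to $|i|+1$ consecutive sites, whereas the lattice sum uses only the site $j+ir$. These agree only if $y$ varies slowly. This is why the paper first proves $L(f_n^*)=o(1)$ (Lemma~\ref{lem:discLip}, Corollary~\ref{cor:lip}) before using Lemma~\ref{lem:hgstep}.

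Your appeal to continuity of $x^*_{k,c}$ points to a cleaner fix: compare in the other direction.
\begin{itemize}
\item Set $\bar x_j=x^*_{k,c}(j/n)$. By uniform continuity of $x^*_{k,c}$, the lattice sums of $\prod_i x^*_{k,c}(j/n+is/n)$ converge to the corresponding integrals uniformly in $j$.
\item Hence $\bar x$ is an $o(1)$-approximate fixed point of the discrete map. Since that map is uniformly Lipschitz in sup norm, $\bar x$ is also an approximate fixed point of its square.
\item The discrete contraction of Lemma~\ref{lemfkContractionNonReg} then gives $\|\log\bar x-\log f_n^*\|_\infty=o(1)$.
\end{itemize}
This needs no regularity estimate for $f_n^*$. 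Contraction of the continuous operator is then used only for existence, uniqueness and continuity of $x^*_{k,c}$ (Lemma~\ref{lemkAPfixedpoint}).

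\textbf{Side claims to correct.} None of these is load-bearing.
\begin{itemize}
\item Definition~\ref{def:tree} has no short-cycle condition, and your count is false anyway. Through a fixed $j$, the APs $\{j,j+d,\dots,j+(k-1)d\}$ and $\{j,j+2d,\dots,j+2(k-1)d\}$ share $j$ and $j+2d$, giving $\Theta(n)$ such pairs per vertex.
\item What must actually be checked is $\Delta_\ell=O(1)$ (your codegree remark), $|E|/|V|=\Omega(\Delta)$ (your degree remark), and the $(k-1)$-codegree bound $\Gamma=O(1)$. You omit the last one; it needs the short case analysis of Lemma~\ref{lemAPhypergraph}(3).
\item The maximum of $d(t)$ is $d(1/2)=\alpha_k$, not $2\alpha_k$. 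Your threshold is consistent with $\alpha_k$.
\item Your ``main obstacle'' is not one. The first and last terms of $d(t)$ sum to $1/(k-1)$, so degrees never degenerate, and $x^*_{k,c}(0)<c$.
\item The error terms in Theorem~\ref{thmMarginals} depend only on global quantities, so they are uniform in the vertex.
\item Uniformity in $c'$ is also unnecessary. Stochastic domination (Lemma~\ref{lemStochDom}) bounds the integrand by $1$, so dominated convergence suffices.
\end{itemize}
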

The second statement of the theorem is about marginal probabilities in the conditional measure: how likely a specified integer is to be in a random subset given that the subset is $k$-AP free.  We note that due to the lack of symmetry in the problem, the marginals in Theorem~\ref{thmApavoidInterval} are not uniform; see e.g.\ Figure~\ref{fig:3APdensity}.

The formula for the rate function given in Theorem~\ref{thmApavoidInterval}  involves solving a functional fixed point equation for each $t \in [0,c]$, or alternatively, a two-dimensional functional fixed-point equation. In Section~\ref{secAPs} we give an alternative formula written  in terms of  a single one-dimensional functional fixed point. 

Unlike in the case of $H$-freeness (with $\chi(H)\ge 3$) we do not prove the existence of a phase transition for $k$-AP-freeness.  In fact, we conjecture there is no phase transition for this problem or for $C_{2k}$-freeness in $G(n,p)$ for  $k\ge 2$.  
\begin{conj}
    \label{conjNoPhase}
    The following limits exist and define analytic functions of $c$ on $(0,\infty)$:
    \begin{align}
        \phi_{k-\mathrm{AP}}(c) &= \lim_{n \to \infty} n^{-\frac{k-2}{k-1}} \log \P_p [ \mathbf X_k=0], \, \, \text{ for } k\ge 3, p= c n^{-\frac{1}{k-1}} \\
        \phi_{C_{2k}}(c) &= \lim_{n \to \infty} n^{- \frac{2k}{2k-1}} \log \P_p [ \mathbf X_{C_{2k}}=0], \, \, \text{ for } k\ge 2,  p= c n^{- \frac{2k-2}{2k-1}} \,.
    \end{align}
\end{conj}
What the two problems have in common, and the reason for our conjecture, is that in both cases the extremal construction is sparse; i.e. any $C_{2k}$-free graph on $n$ vertices has $o(n^2)$ edges and any $k$-AP-free subset of $[n]$ has $o(n)$ elements. Hypergraph container methods have been used to show that the non-existence probabilities in the supercritical regimes match, on the level of logarithms, that of having no edges or no elements. We conjecture that there is an analytic interpolation from the formulas we have proved to these supercritical formulas. We note that the $k$-AP and $C_{2k}$ problems are chosen as illustrative examples, and we expect Conjecture~\ref{conjNoPhase} to hold for a wide class of arithmetic patterns and graphs. For example, we expect an analogue of the conjecture to hold with $C_{2k}$ replaced by any strictly $2$-balanced bipartite graph.

\subsection{Independent and sparse sets in hypergraphs}
\label{subsecHypergraphIntro}

A hypergraph $G=(V,E)$ consists of a set of vertices $V$ and a set $E$ of non-empty subsets of vertices, called edges (or hyperedges). In a $k$-uniform hypergraph, each edge has cardinality $k$.  A $2$-uniform hypergraph is a usual graph.  An \textit{independent set} in a hypergraph is a set of vertices  $I \subseteq V$ so that for all $e \in E$, $e \nsubseteq I$; that is, $I$ induces no edges.  

The above problems about $H$-free graphs and $k$-AP-free sets, along with many other combinatorial problems,  can be phrased in terms of independent sets  in hypergraphs.    Consequently, methods for understanding independent sets in hypergraphs can be very general and powerful tools in combinatorics, see e.g.~\cite{kleitman1982number,janson1987uczak,janson1990poisson,balogh2018method,saxton2015hypergraph,mousset2020probability}. The corresponding lower-tail problems can be phrased in terms of vertex subsets of hypergraphs inducing few edges. In this section we will state our main results in this generality.

As an example, consider the family of $H$-free graphs on $n$ vertices.  This family is in one-to-one correspondence with the set of independent sets of  the hypergraph $G_{H}$, with $\binom{n}{2}$ vertices representing the edges of the complete graph $K_n$ and an $e(H)$-uniform hyperedge for each set of edges of $K_n$ that forms a copy of $H$.

We now describe the setting of our main results. 
Let $G = (V,E)$ be a $k$-uniform hypergraph.  For $p \in (0,1)$, let $V_p$ denote a random subset of $V$ chosen by including each vertex independently with probability $p$.  Let $\mathbf X = | \{ e \in E : e \subseteq V_p \}|$ be the number of edges induced by $V_p$.  We will be interested in the probability that $V_p$ is an independent  set, that is 
\begin{equation}
\label{eqnoexistenceDef}
    \P_p(\mathbf X =0) \,,
\end{equation}
 as well as the more general lower-tail probability $\P_p( \mathbf X \le \eta \E \mathbf X)$
for $\eta \in [0,1)$ fixed (noting that $\E \mathbf X = |E| p^k$).  In~\cite{mousset2020probability}  the probability in~\eqref{eqnoexistenceDef} is called the `probability of non-existence in a binomial random subset'.

Our results will apply to a class of hypergraphs that  resemble -- in terms of degrees and codegrees --  linear hypertrees (a hypergraph is linear if two edges intersect in at most one vertex).

 Given $S\subseteq V(G)$ define $d_G(S)=|\{e\in E(G):S\subseteq e\}|$.
For $\ell\geq 1$, define the maximum $\ell$-degree of $G$ to be
\begin{equation}
\label{eqDeltaL}
   \Delta_\ell(G)= \max_{S\subseteq V(G): |S|=\ell} d_G(S)\, . 
\end{equation}
The maximum $1$-degree is the usual notion of maximum vertex degree, and we denote this by $\Delta(G)$.  We denote the minimum vertex degree of $G$ by $\delta(G)$.
Next, the $(k - 1)$-codegree of a pair of distinct vertices $v, v'$, denoted $\Gamma(v,v')$, is the number of tuples $S\in \binom{V(G)}{k-1}$ such that $S\cup\{v\}$ and $S\cup\{v'\}$ are both in $E(G)$. Define the maximum $(k-1)$-codegree to be
\begin{equation}
\label{eqGammaG}
 \Gamma(G)=\max_{v\neq v'} \Gamma(v,v')\, .   
\end{equation}

We now define a notion of asymptotically tree-like and approximately regular hypergraphs.
\begin{definition}\label{def:tree}
    We say a sequence $(G_n)$  of $k$-uniform hypergraphs is \emph{asymptotically tree-like} if the following conditions hold for $G=G_n$ as $n \to \infty$:
    \begin{enumerate}
        \item $\Delta(G) \to \infty$.
          \item For each $\ell \in \{2, \dots, k-1\}$, $\Delta_{\ell}(G)=o( \Delta(G)^{\frac{k-\ell}{k-1}})$.
        \item $\Gamma(G)=o(\Delta(G))$.
        \item $|E(G)|/|V(G)|=\Omega(\Delta(G))$.
    \end{enumerate}
    We further say that $G$ is \emph{approximately regular} if  $\del(G) = (1-o(1)) \Delta(G)$.
\end{definition}

For brevity, we will often say that $G$ is asymptotically tree-like (or approximately regular) when the underlying sequence of hypergraphs is clear from the context.   Intriguingly, very similar conditions on degrees and co-degrees of  hypergraphs appear in at least two other contexts: in the work of Bennett and Bohman~\cite{bennett2016note} on the random greedy algorithm for finding a large independent set in a hypergraph and in the setting of hypergraph containers~\cite{balogh2015independent,saxton2015hypergraph}. The fourth condition, on the average degree, is convenient for the proofs but could be removed.

Our main results will hold under the condition that the probability $p$ is small enough as a function of the vertex degree $\Delta$, the uniformity  of the hyperedges  $k$, and the strength  of the lower-tail event  $\eta$.   We treat the approximately regular case first.

Let us recall the definitions of $x_k^\ast(c,\zeta), \eta_k^\ast$, and $\overline c_k(\eta)$ from~\eqref{eqxastdef}, \eqref{eqEtaStarDef}, \eqref{eqOverlineCeta}:
\begin{align}
      x_k^\ast(c,\zeta) &=  \left( \frac{W((k-1)c^{k-1}\zeta)}{(k-1)\zeta } \right)^{\frac{1}{k-1}} \\
      \eta_k^\ast &= e^{-k/(k-1)} \\
      \overline c_k(\eta) &=
\begin{cases} 
\left( \frac{e}{(k-1)(1-\eta/\eta^\ast_k)} \right)^{\frac{1}{k-1}}   & \text{if } \eta < \eta_k^\ast\\
+ \infty & \text{if } \eta \geq \eta_k^\ast\, .
\end{cases} 
\end{align}

 We  parameterize $p \sim c \cdot \Delta^{-\frac{1}{k-1}}$ (this corresponds to the critical regime in the applications above) and require that $c<\overline c_k(\eta)$. We recall that we let $\mathbf X = | \{ e \in E : e \subseteq V_p \}|$ denote the number of edges induced by a $p$-random subset of $V$.
 
\begin{theorem}
    \label{thmMain}
    Fix $k \ge 2$, $\eta \in[0,1)$, and $0<c<\overline c_k(\eta)$. 
    Let $G_n=(V_n,E_n)$ be a sequence of asymptotically tree-like and approximately regular $k$-uniform hypergraphs with maximum degree $\Delta=\Delta(n)$.  Suppose $p \sim c \Delta^{-\frac{1}{k-1}}$.  Then
\begin{align}
  \lim_{n\to\infty} \Delta^{\frac{1}{k-1}} |V_n|^{-1} \log \P_p ( \mathbf X \le \eta \E \mathbf X) 
  &= x^\ast + (x^\ast)^k  \left ( 1- \frac{1}{k} \right) \zeta  - \log (1-\zeta) \frac{\eta c^k}{k}  - c \,, 
\end{align}
where  $\zeta$ is the unique solution in $[0,1]$ to the equation
 \begin{equation}
 \label{eqZetaDef2}
    (1-\zeta) (x^\ast)^k= \eta c^k\, ,
 \end{equation}
and $x^\ast = x^\ast_k(c,\zeta)$.
 
 In particular,   if $c <  \left(\frac{e} {k-1} \right)^{ \frac{1}{k-1}}$ and  $p \sim c \Delta^{-\frac{1}{k-1}}$, then (taking $\zeta =1$),
    \begin{align}
       \lim_{n\to\infty} \Delta^{\frac{1}{k-1}} |V_n|^{-1}  \log \P_p ( \mathbf X =0)
       &= x^\ast +\left(1-\frac{1}{k} \right)(x^\ast)^k - c \,.
    \end{align}
\end{theorem}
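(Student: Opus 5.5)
The plan is to first treat the non-existence case $\eta=0$ through the hard-core model on $G$, and then reduce the lower tail to it by an exponential tilt. For $\lambda>0$ and $\beta\ge 0$, let
\[
Z_G(\lambda,\beta)=\sum_{S\subseteq V}\lambda^{|S|}e^{-\beta e(S)},
\]
where $e(S)$ is the number of edges induced by $S$. Then
\[
\P_p(\mathbf X=0)=(1-p)^{|V|}Z_G(\lambda,\infty),\qquad \lambda=\frac{p}{1-p}.
\]
For the lower tail, the exponential Chebyshev bound $\P(\mathbf X\le \eta\E\mathbf X)\le e^{\beta\eta\E\mathbf X}\,\E e^{-\beta\mathbf X}$ gives the upper bound. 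For the matching lower bound I would tilt to the Gibbs measure with $\beta$ chosen so that the tilted mean of $\mathbf X$ is $\eta\E\mathbf X$, and show $\mathbf X$ concentrates there (via the BP marginals and a variance bound from correlation decay). The parameter $\zeta$ corresponds to $\zeta=1-e^{-\beta}$. The term $-\log(1-\zeta)\eta c^k/k$ is exactly $\beta\eta\E\mathbf X$ normalized by $|V|\Delta^{-1/(k-1)}$, since $\E\mathbf X\approx |V|\Delta p^k/k$. So everything reduces to computing $\log Z_G(\lambda,\beta)$ to first order, uniformly in $\beta\in[0,\infty]$.

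To compute it, I will interpolate in the activity and write
\[
\log Z=\int_0^{\lambda}\frac{\E_{t}|S|}{t}\,dt=\int_0^\lambda \sum_v \frac{\mu_{t}(v\in S)}{t}\,dt .
\]
This mirrors the double integral in Theorem~\ref{thmApavoidInterval}. The task is then to show that each marginal satisfies
\[
\mu_t(v\in S)\,\Delta^{1/(k-1)}=x^\ast(t\Delta^{1/(k-1)})+o(1)
\]
uniformly for all but $o(|V|)$ vertices.

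For the marginals, the local picture is a linear hypertree. A vertex $v$ lies in about $\Delta$ edges, and asymptotic tree-likeness ($\Delta_\ell$ and $\Gamma$ small) makes these edges essentially disjoint apart from $v$. Each edge ``kills'' $v$ with weight $(1-e^{-\beta})\prod_{u\in e\setminus v}\mu(u\in S\mid v\text{ removed})$. With marginals of order $\Delta^{-1/(k-1)}$ this gives the BP recursion
\[
x=c\exp(-\zeta x^{k-1}),\qquad\text{i.e. } x=x_k^\ast(c,\zeta).
\]
Rigorously, I would build a Weitz-style computational hypertree: the ratio $\mu(v\in S)/\mu(v\notin S)$ equals an exact product over edges at $v$ of terms involving conditional ratios in smaller hypergraphs. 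I would then show that these recursions contract, so the depth-$L$ truncation determines the marginal up to $o(1)$ relative error independent of boundary conditions. Codegree conditions (2)--(3) ensure that at depth $O(1)$ the computational tree has only $o(\Delta)$ deviations per node, and approximate regularity makes the tree look like the $\Delta$-regular one. Condition (4) and averaging handle exceptional vertices. The constant terms $x^\ast+(1-\frac1k)\zeta(x^\ast)^k-c$ should then come out by integrating $x^\ast(t)/t$ in $t$, using $W$ identities; equivalently one evaluates the Bethe free energy at the fixed point.

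The main obstacle is the contraction step. Uniqueness on the infinite hypertree, which is the threshold $c<\overline c_k(\eta)$ (for $\zeta=1$ this is $(k-1)c^{k-1}<e$, the $W$ branch point), must be upgraded to contraction of the two-step BP map on the actual computational tree with approximate messages. I would first try a potential-function argument, working with $\log$-ratios rescaled by $\Delta^{1/(k-1)}$. In the scaling limit the map is $x\mapsto c\exp(-\zeta x^{k-1})$, whose derivative at the fixed point has modulus $(k-1)\zeta (x^\ast)^{k-1}=W(\cdot)<1$, so the two-step map is a strict contraction near the fixed point. I would then show that iterates from arbitrary boundary conditions enter that neighbourhood after $O(1)$ steps, using monotonicity: the map is decreasing, so even and odd iterates bracket the fixed point, and under uniqueness the limits of the even and odd iterates coincide. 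Finally, the error terms from non-tree edges must be absorbed into the $o(1)$ slack uniformly in $\beta$. The condition $\eta<\eta_k^\ast$ determining $\overline c_k$ presumably encodes where $\zeta(\eta,c)$ keeps $W((k-1)c^{k-1}\zeta)<1$, so I would verify that relation along the way.
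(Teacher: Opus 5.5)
Your outline follows the paper's architecture. It uses the edge-penalty partition function with $\zeta=1-e^{-\beta}$ and the upper bound $\P_p(\mathbf X\le T)\le(1-p)^{|V|}(1-\zeta)^{-T}Z_G(\lam,\zeta)$. It computes $\log Z_G=\int_0^\lam \E_t|\mathbf S|\,dt/t$, gets the marginals from the Weitz hypertree, whose bounded-depth structure is controlled by the codegree conditions (Lemmas~\ref{lemBencs} and~\ref{lemWeitzTreeStructure}), and evaluates the Bethe free energy at the fixed point.

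Where you differ is the contraction step. The paper proves a global estimate (Lemma~\ref{lemfkContractionNonReg}): on any $k$-uniform hypergraph of maximum degree $\Delta$, $F^2$ is Lipschitz in the $\log$-sup-norm with constant $(k-1)\zeta c^{k-1}/e$, via $z\log(a/z)\le a/e$. It then compares the rescaled tree messages with the lifted BP fixed point depth by depth.

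Your monotone sandwich also suffices in the approximately regular case. Let $a_\ell,b_\ell$ be the minimum and maximum of the rescaled messages over nodes at depth $\ell$, and let $f(x)=ce^{-\zeta x^{k-1}}$. The tree-structure lemma and approximate regularity give $a_\ell\ge f(b_{\ell+1})-o(1)$ and $b_\ell\le f(a_{\ell+1})+o(1)$, starting from $[0,c+o(1)]$ at a fixed depth $L$. Since $f$ has no $2$-cycle when $(k-1)\zeta c^{k-1}<e$, the scalar sandwich collapses to $x_k^\ast(c,\zeta)$ after $O(1)$ steps, so your potential-function and local-contraction step is not even needed.

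Compared with the paper, this is more elementary and ties the threshold directly to tree uniqueness. However, it relies on the limit object being a single scalar. The paper's global contraction is what handles non-constant fixed points (Theorem~\ref{thmNonRegularRate} and the $k$-AP application), uniqueness of the BP fixed point on $G$ itself, and analyticity via invertibility of $I-D\Phi$ (Lemma~\ref{lem:diffx}).

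Two steps need tightening.

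First, concentration of $\mathbf X$ at $(1+o(1))\eta\E\mathbf X$ under the tilted measure does not by itself show that $\{\mathbf X\le\eta\E\mathbf X\}$ has tilted probability $e^{-o(|V|\Delta^{-1/(k-1)})}$. You can tilt to $\eta-\eps$ and use continuity of the limiting formula in $\eta$; this is legitimate here since $c<\overline c_k(\eta-\eps)$ for small $\eps$, and the case $\eta=0$ is exact. Alternatively, use a transfer argument as in Lemma~\ref{lemTransferGnm}.

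Second, the variance bound cannot come from decay in graph distance, since in the subgraph applications essentially all pairs of vertices are within distance $2$. What is needed is that conditioning on $O(1)$ occupied vertices changes each marginal only by a factor $1+o(1)$, apart from the $(1-\zeta)$ factor for adjacent pairs when $k=2$. You get this by running the computational-tree argument on $G\ominus U$, as in Lemma~\ref{lemWeitzTreeFP}.
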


The threshold $\overline c_k(\eta)$ corresponds to the Gibbs uniqueness (or weak spatial mixing) threshold of a statistical physics model  on the infinite $k$-uniform, $\Delta$-regular linear hypertree. This connection is explained below in Section~\ref{secMethods}.

We can also consider the case of choosing $S\subseteq V$ by selecting exactly $m$ vertices uniformly at random from the hypergraph; we denote probabilities in this model by $\P_m$ and as before let $\mathbf{X}$ denote the number of edges induced by this random set of vertices.  In this case we parameterize $m = b |V_n| \Delta ^{- \frac{1}{k-1}}$ and require that $b$ is  small enough as a function of $\eta$ to obtain a formula for the rate function.  
\begin{theorem}
    \label{thmMainGnm}
    Fix $k \ge 2$, $\eta \in [0,1)$, and $b>0$ satisfying 
    \begin{equation}
    \label{eqBcondition}
      (k-1)  b^{k-1} (1-\eta)  < 1 \,.
    \end{equation}
     Let $G_n=(V_n,E_n)$ be a sequence of asymptotically tree-like and approximately regular $k$-uniform hypergraphs with maximum degree $\Delta=\Delta(n)$.  Suppose $m \sim b |V_n| \Delta^{-\frac{1}{k-1}}$.  Then
\begin{equation}
   \lim_{n\to\infty} \Delta^{\frac{1}{k-1}} |V_n|^{-1}  \log \P_m ( \mathbf X \le \eta \E \mathbf X) =   -  \frac{b^k (1- \eta +\eta \log \eta)}{k}  \,.
\end{equation}
In particular, if $b < (k-1)^{- \frac{1}{k-1}}$,
\begin{equation}
   \lim_{n\to\infty} \Delta^{\frac{1}{k-1}} |V_n|^{-1}  \log \P_m ( \mathbf X =0) =   -  \frac{b^k }{k}  \,.
\end{equation}
\end{theorem}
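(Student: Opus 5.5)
We derive Theorem~\ref{thmMainGnm} from Theorem~\ref{thmMain} by passing between the $p$-model and the $m$-model, which amounts to a Legendre-type transform of the rate function in $c$. Write $N=|V_n|$ and $\mathbf Y=|V_p|$. Define the lower-tail event in both models with the same absolute threshold $\eta |E| (m/N)^k$; since $|E|p^k$ and the $m$-model mean agree to first order when $m\sim pN$, this changes only $\eta$ by a factor $1+o(1)$. We will handle such perturbations by monotonicity in $\eta$ and continuity of the limiting formulas.

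\textbf{Decomposition.} Conditioned on $\mathbf Y=m'$, $V_p$ is a uniform $m'$-set. Hence
\[
\P_p(\mathbf X\le t)=\sum_{m'}\P(\mathbf Y=m')\,\P_{m'}(\mathbf X\le t).
\]
The function $m'\mapsto \P_{m'}(\mathbf X\le t)$ is non-increasing, by the standard coupling that adds a uniformly random vertex to a uniform set. For $m'=\beta N\Delta^{-1/(k-1)}$ we have
\[
\log\P(\mathbf Y=m')=-N\Delta^{-1/(k-1)}\bigl(\beta\log(\beta/c)-\beta+c+o(1)\bigr).
\]
Let $\psi(\beta)=\lim \Delta^{1/(k-1)}N^{-1}\log\P_{\beta N\Delta^{-1/(k-1)}}(\mathbf X\le \eta'\,\E\mathbf X_{m})$, where $\E\mathbf X_m$ refers to the target $m=bN\Delta^{-1/(k-1)}$. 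Laplace's principle then gives
\[
F(c)=\sup_\beta\bigl[\psi(\beta)-\beta\log(\beta/c)+\beta-c\bigr].
\]
The left side $F(c)$ is known from Theorem~\ref{thmMain}, taken with the appropriate $\eta$ relative to $|E|p^k$, and is valid for all $c<\overline c_k(\cdot)$.

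\textbf{Inversion.} The upper bound on $\P_m$ comes from restricting the sum to $m'\le m$ and using monotonicity:
\[
\P_p(\mathbf X\le t)\ge \P(\mathbf Y\le m)\,\P_m(\mathbf X\le t).
\]
Choose $c=b$ (plus $\eps$ if needed). Then $\P(\mathbf Y\le m)\ge 1/3$, so $\log\P_m\le \log\P_p$. It remains to verify the identity
\[
F(b)\;=\;-\frac{b^k(1-\eta+\eta\log\eta)}{k}
\]
when $p=b\Delta^{-1/(k-1)}$ and the threshold is $\eta b^k|E|\Delta^{-k/(k-1)}$. This does not hold in general, because the $p$-model can lower the count cheaply by reducing $|V_p|$.

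The cleaner route is therefore to take the lower bound from a tilted $p$-model: choose $c$ so that the \emph{conditional} typical size of $V_p$ given the lower-tail event is $\approx m$. Under the BP analysis, this conditional density is $x^\ast$ (the marginal, as in Theorem~\ref{thmApavoidInterval}). So we choose $c$ with $x_k^\ast(c,\zeta)=b$, where the event threshold is $\eta b^k$ in absolute units. Then
\[
\log\P_m\ge \log\P_p(\text{event})-\log\P_p\bigl(\mathbf Y\approx m\mid\text{event}\bigr)^{-1}-\log\P(\mathbf Y=m)+\ldots
\]
Rather than track this directly, I will run the Legendre duality: the map $c\mapsto F(c)$ determines $\psi$ as its concave conjugate on the range $\beta=x^\ast(c)$, provided $\psi$ is concave (equivalently, the supremum is attained uniquely). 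The algebra should then show
\[
\psi(\beta)=F(c)+\beta\log(\beta/c)-\beta+c \quad\text{evaluated at } \beta=x^\ast,
\]
and simplify, using $x^{k-1}e^{(k-1)\zeta x^{k-1}}=c^{k-1}$ and $(1-\zeta)x^k=\eta c^k$, to $-\beta^k(1-\eta+\eta\log\eta)/k$. This is the Poisson answer, with $\zeta=1-\eta$ in the $m$-normalization. The condition $(k-1)b^{k-1}(1-\eta)<1$ is precisely $W(\cdot)<1$, i.e.\ $c<\overline c_k$, so that the required $c$ lies in the range where Theorem~\ref{thmMain} applies.

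\textbf{Main obstacle.} The hardest step is rigorously extracting $\psi(\beta)$ for the single value $\beta=b$ from knowledge of $F$ alone. The matching bounds work as follows. The upper bound on $\psi(b)$ uses $F(c)\ge \psi(b)-b\log(b/c)+b-c$ for every $c$, and then optimizes over $c$. The lower bound needs $\psi$ concave, or at least a proof that the conditional size concentrates at $x^\ast N\Delta^{-1/(k-1)}$. I would obtain this concentration by differentiating $F$ in $c$ (the derivative of a log-partition function gives the conditional mean of $\mathbf Y$) and by bounding the variance via convexity. Analyticity of $F$ in $c$ on the uniqueness interval makes derivative passage to the limit legitimate, by convexity of $\log$-MGFs. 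Combining this concentration with monotonicity in $m'$ yields the matching lower bound.
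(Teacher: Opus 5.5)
Your plan is correct. The upper bound coincides with the paper's: use $\P_p(\mathbf X\le t)\ge \P_p(|\mathbf S|=m)\,\P_m(\mathbf X\le t)$ at the tuned value $c^\ast=b\,e^{(1-\eta)b^{k-1}}$, $\zeta=1-\eta$, which is exactly the choice you land on. Your lower bound, however, takes a genuinely different route.

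The paper writes $\P_m(\mathbf X\le L)=\P_p(|\mathbf S|=m\mid \mathbf X\le L)\,\P_p(\mathbf X\le L)/\P_p(|\mathbf S|=m)$. It then proves directly, in the second part of Lemma~\ref{lemZtoP}, that the conditional point probability is $e^{-o(N\Delta^{-1/(k-1)})}$. That proof uses concentration of $|\mathbf S|$ and $|E(\mathbf S)|$ under $\mu_{G,\lam,\zeta}$ (Theorem~\ref{thmMarginals}) and the combinatorial surgery of Lemma~\ref{lemTransferGnm}.

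You instead use Theorem~\ref{thmMain} only as a black box on a neighbourhood of $c^\ast$. You combine it with convexity in $\log c$ of $\log\sum_{|E(S)|\le t}\lam^{|S|}$ and the monotone coupling in $m'$. This does go through:
\begin{itemize}
\item With the absolute threshold $\eta b^k$ held fixed, the constraints $x=ce^{-\zeta x^{k-1}}$ and $(1-\zeta)x^k=\eta b^k$ make the $d\zeta$ terms cancel. Hence $\frac{d}{d\log c}\bigl(F(c)+c\bigr)=x^\ast$. You assert this via the BP heuristic, but it is a two-line calculus check.
\item Exponential Chebyshev, using convergence of the normalized log-MGF at $\log c\pm h$, then gives exponential concentration of $|V_p|$, given the event, around $x^\ast N\Delta^{-1/(k-1)}$.
\item No variance bound and no concavity of $\psi$ is needed. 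You can drop the Laplace-principle formula for $\psi$ entirely; its existence is not known a priori.
\end{itemize}

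One detail you should make explicit. Since $m'\mapsto \P_{m'}(\mathbf X\le t)$ is non-increasing, run the concentration at some $c>c^\ast$ so that the concentration window lies above $m$. Then let $c\downarrow c^\ast$, using continuity of $F$ and $x^\ast$.

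What your route buys is portability: it transfers any differentiable $p$-model rate function to the $m$-model. It also makes the size-adjusting constructions (Claims~\ref{clmDownwardsConstruction}--\ref{clmUpwardsConstruction}) unnecessary for this theorem. The paper's route avoids the derivative computation and the limiting argument in $c$. It also yields the stronger pointwise fact that the tuned $p$-model, conditioned on the lower tail, has size exactly $m$ at subexponential cost.
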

In other words, in this range of parameters $\log \P_m (\mathbf X=0) \sim - \E_m \mathbf X$.

\subsection{Non-regular hypergraphs}

Next we consider hypergraphs that are asymptotically tree-like but not necessarily approximately regular.  Since the structure of such hypergraphs can be quite complex, the formula for log probability will  be more complicated than that in the approximately regular case.  This formula will be in terms of a fixed point of a functional mapping the set of real-valued functions on vertices of $G$ to itself.

Let $G=(V,E)$ be a $k$-uniform hypergraph with maximum degree $\Delta$. Define the function $F_{c,\zeta}^{G}: [0,c]^{V}\to [0,c]^V$  by
\begin{equation}\label{eq:BPF}
    \left(F_{c,\zeta}^{G} (\mathbf x)\right)_v = c \cdot \exp\left(- \frac{\zeta}{\Delta} \sum_{e\ni v}\prod_{u\in e \backslash\{v\}} x_u\right) \,.
\end{equation}
This is an approximate version of the \textit{Belief Propagation (BP) operator} (see, e.g.,~\cite{mezard2009information}) adapted to our setting.  Along with the Belief Propagation operator comes the \textit{Bethe Free Energy}, a function of $G$, $c$, $\zeta$, and $\mathbf x \in [0,c]^V$, defined in our setting by
\begin{equation}\label{eq:Bdef}
    \mathcal B^G_{c,\zeta}(\mathbf x) =  - \frac{\zeta}{\Delta} \sum_{e \in E}  \prod_{u \in e} x_u - \sum_{v \in V} x_v \left[\log \frac{x_v}{c}  -1\right] \,.
\end{equation}

  As a first step, we  show that if $\zeta$ and $c$ are small enough, then there is a unique BP fixed point.  We will use the following condition often.
\begin{condition}
    \label{CondZeta}
    Fix $k \ge 2, c>0$.  Let $\zeta_n \in (0,1]$ so that \begin{equation}
        \limsup_{n \to \infty} \zeta_n (k-1) c^{k-1} <e \,.
    \end{equation}
\end{condition}

\begin{lemma}
\label{lemNonRegularZeroUniqueFixed}
Let $k, c, \zeta_n$ satisfy Condition~\ref{CondZeta}.  For any sequence $G_n = (V_n,E_n)$ of   asymptotically tree-like $k$-uniform hypergraphs, for large enough $n$ there is a unique $\mathbf x^\ast \in [0,c]^{V_n}$ so that 
    \begin{equation}
         F_{c,\zeta_n}^{G_n} (\mathbf x^\ast) = \mathbf x^\ast \,.
    \end{equation}
\end{lemma}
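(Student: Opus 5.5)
The plan is to argue via the antimonotone structure of $F=F^{G}_{c,\zeta}$ (with $G=G_n$, $\zeta=\zeta_n$). \emph{Existence} is immediate from Brouwer's theorem, since $F$ is a continuous self-map of the compact convex set $[0,c]^{V}$. For \emph{uniqueness}, note that $F$ is coordinatewise decreasing: increasing any $x_u$ increases each exponent $\sum_{e\ni v}\prod_{u\in e\setminus\{v\}}x_u$. Hence $F\circ F$ is monotone increasing.

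Iterating $F\circ F$ from the bottom vector $\mathbf 0$ and from the top vector $c\mathbf 1$ produces, by monotonicity and compactness, a least and a greatest fixed point $\mathbf a\le \mathbf b$ of $F\circ F$. They satisfy $F(\mathbf a)=\mathbf b$ and $F(\mathbf b)=\mathbf a$, and every fixed point of $F$ lies in $[\mathbf a,\mathbf b]$. So it suffices to show $\mathbf a=\mathbf b$.

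A priori bounds come first. Each vertex lies in at most $\Delta$ edges, so every coordinate of $F(\mathbf x)$ lies in $[c e^{-\zeta c^{k-1}},c]$, and in particular $\mathbf a$ and $\mathbf b$ do too. Next, set $r_v=\log(b_v/a_v)\ge 0$ and $R=\max_v r_v$. From $\mathbf b=F(\mathbf a)$ and $\mathbf a=F(\mathbf b)$,
\begin{equation}
r_v=\frac{\zeta}{\Delta}\sum_{e\ni v}\Bigl(\prod_{u\in e\setminus\{v\}}b_u-\prod_{u\in e\setminus\{v\}}a_u\Bigr).
\end{equation}
A crude bound on the right side gives contraction only when $\zeta(k-1)c^{k-1}<1$, because it uses the worst case $b_u\approx c$. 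To reach the sharp threshold $e$, I would instead compare with the scalar recursion $f(x)=ce^{-\zeta x^{k-1}}$ on the $\Delta$-regular linear hypertree. For $f$, one checks that $f\circ f$ has no $2$-cycle exactly when $|f'(x^\ast)|=W((k-1)c^{k-1}\zeta)<1$, i.e.\ when $(k-1)c^{k-1}\zeta<e$. This is where Condition~\ref{CondZeta} enters.

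The main step is to transfer this scalar statement to $G$ by a two-level (computational hypertree) expansion. Write $b_v=c\exp(-\frac{\zeta}{\Delta}\sum_{e\ni v}\prod_{u}a_u)$ and substitute $a_u=c\exp(-\frac{\zeta}{\Delta}\sum_{e'\ni u}\prod_{w}b_w)$. The tree-like hypotheses are used to argue that the edges $e'\ni u$ for different $u\in e\setminus\{v\}$, $e\ni v$, are essentially disjoint and do not return to $v$: conditions (2) and (3) of Definition~\ref{def:tree} ($\Delta_\ell=o(\Delta^{(k-\ell)/(k-1)})$ and $\Gamma=o(\Delta)$) bound the overlapping contributions by $o(1)$. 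Up to $o(1)$ errors, the pair $(\mathbf a,\mathbf b)$ is then dominated by a pair of scalar sequences obeying the $f$-recursion, with the degree deficit $d(v)<\Delta$ acting only as a smaller effective $\zeta$, which helps.

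Concretely, I would show that $\bar a=\min_v a_v$ and $\bar b=\max_v b_v$ satisfy $\bar b\le f(\bar a)+o(1)$ and $\bar a\ge f(\bar b)-o(1)$, using that $\prod$ over $k-1$ vertices is monotone. Combined with the strict contraction of $f\circ f$ near its unique fixed point, this makes the gap $\bar b-\bar a$ equal to $o(1)$. A final local step then upgrades this to exact equality. Near $x^\ast$ the linearization of $F\circ F$ has weighted operator norm at most about $W((k-1)c^{k-1}\zeta)^2+o(1)<1$ in the $\ell^\infty$ norm on log-ratios, so applying the $R$ recursion above twice gives $R\le\lambda R$ with $\lambda<1$, forcing $R=0$.

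I expect the main obstacle to be the global step: ruling out a non-uniform $2$-cycle $(\mathbf a,\mathbf b)$ that is far from constant. A hypothetical first attempt would be an extremal-vertex argument, taking $v$ maximizing $b_v$ and its neighbours minimizing $a_u$. The difficulty is that min/max comparisons lose information when degrees vary. If that fails, I would instead run the contraction in a degree-weighted metric, $\max_v r_v\cdot(\text{weight depending on } a_v)$, mirroring the potential-function method used for hard-core uniqueness on trees.
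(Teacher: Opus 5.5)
Your reduction to a least and a greatest fixed point $\mathbf a\le\mathbf b$ of $F\circ F$, with $F(\mathbf a)=\mathbf b$ and $F(\mathbf b)=\mathbf a$, is sound. The gap is in the global step meant to show $\mathbf a=\mathbf b$: it fails for the hypergraphs in the lemma, which are not assumed approximately regular.

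From $b_v=F(\mathbf a)_v$ you only get $b_v\le c\exp(-\zeta\frac{d(v)}{\Delta}\bar a^{k-1})$. This is $f(\bar a)+o(1)$ only when $d(v)=(1-o(1))\Delta$, and in general the claimed inequality $\bar b\le f(\bar a)+o(1)$ is false. Take a vertex with $d(v)\le\epsilon\Delta$, which is permitted since only the average degree must be $\Omega(\Delta)$. It has $b_v\ge ce^{-\epsilon\zeta c^{k-1}}$. Since $\bar a\ge ce^{-\zeta c^{k-1}}$, this exceeds $f(\bar a)$ by a constant once $\epsilon<e^{-(k-1)\zeta c^{k-1}}$. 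So a degree deficit does not act as a harmless smaller $\zeta$; it pushes $b_v$ up, which is the wrong direction for your comparison. The failure comes from degree variation, not from overlaps or short cycles, so the conditions $\Delta_\ell=o(\cdot)$ and $\Gamma=o(\Delta)$ cannot repair it.

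Your final local step has the same defect. It linearizes around a constant vector $x^\ast$, but for non-regular $G$ the fixed point is genuinely non-constant (as in the $k$-AP application), so $\mathbf a,\mathbf b$ need not be near any constant. The degree-weighted fallback is left unspecified. As written, your argument only covers the approximately regular case.

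The missing idea is that the sharp threshold $e$ can be obtained globally and exactly, with no comparison to the scalar map and no tree structure. Consider the Jacobian of $\mathbf u\mapsto\log F^2(\exp\mathbf u)$. All its entries are nonnegative. Write $F_u=F(\mathbf x)_u$ and $F_{e\setminus v}=\prod_{u\in e\setminus\{v\}}F_u$. Then its $v$-th row sum is exactly
\[
\frac{\zeta(k-1)}{\Delta}\sum_{e\ni v}F_{e\setminus v}\log\frac{c^{k-1}}{F_{e\setminus v}}\, .
\]
This holds because $\log(c^{k-1}/F_{e\setminus v})=\frac{\zeta}{\Delta}\sum_{u\in e\setminus\{v\}}\sum_{g\ni u}\prod_{y\in g\setminus\{u\}}x_y$ is precisely the total inner sensitivity; the factor $k-1$ counts the coordinates $w\in g\setminus\{u\}$.

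By $z\log(a/z)\le a/e$ and $d(v)\le\Delta$, this row sum is at most $\zeta(k-1)c^{k-1}/e<1$ at every $\mathbf x$. This is the quantitative version of the trade-off you noticed: when $F_u\approx c$ your crude bound is poor, but then $u$'s own sensitivity is near zero.

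It follows that $F^2$ is a strict contraction in $\|\log(\cdot)\|_\infty$ for any $k$-uniform hypergraph of maximum degree at most $\Delta$. No tree-like hypotheses and no $o(1)$ errors are needed. Plugged into your own $R$-recursion via the mean value theorem between $\log\mathbf a$ and $\log\mathbf b$, it gives $R\le\zeta(k-1)c^{k-1}e^{-1}R$, hence $R=0$. This is exactly the paper's proof (Lemma~\ref{lemfkContractionNonReg}). The phrase ``large enough $n$'' is needed only to ensure $\zeta_n(k-1)c^{k-1}<e$.
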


We remark that if $G$ is $\Delta$-regular and $x^\ast$ is the unique fixed point of the equation 
\begin{align}\label{eq:1dBP}
x=c e^{-\zeta x^{k-1}}\, .
\end{align}
then $F^G_{c,\zeta}$ has the constant vector $\mathbf x^\ast\equiv x^\ast$ as a fixed point. We note that $x^\ast=x^\ast_k(c,\zeta)$ where $x^\ast_k(c,\zeta) $ is the quantity introduced at~\eqref{eqxastdef}.

We next show that when $c$ is small enough, there is a choice of $\zeta$ so that the BP fixed point `achieves' the desired lower-tail event. Define $c_k:[0,1)\to \R$ by 
\begin{align}\label{eq:ckdef}
c_k(\eta)=\left(\frac{e}{(1-\eta)(k-1)} \right)^{\frac{1}{k-1}}\, .
\end{align}
Note that $c_k(0) = \overline c_k(0)$, while for $\eta>0$, $c_k(\eta)< \overline c_k(\eta)$.
\begin{lemma}
    \label{lemZetaNonRegular}
    Fix $k \ge 2$, $\eta \in[0,1)$, and $0<c<c_k(\eta)$.  For any sequence $G_n = (V_n,E_n)$ of   asymptotically tree-like $k$-uniform hypergraphs, for $n$ large enough, there exists $\zeta=\zeta(n,c,\eta) \in (0,1]$ satisfying Condition~\ref{CondZeta} so that
    \begin{align}
    (1 - \zeta)  \sum_{e \in E_n} \prod_{u \in e} x^\ast_u   &=(1+o(1)) \eta c^k | E_n| \,,
    \end{align}
    where $\mathbf x^\ast = \mathbf x^\ast(c,\zeta)$ is as guaranteed by Lemma~\ref{lemNonRegularZeroUniqueFixed}. If, in addition, $G_n$ is approximately regular, then the same conclusion holds under the weaker assumption $0<c<\overline c_k(\eta)$.
\end{lemma}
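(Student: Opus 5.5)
The plan is an intermediate value argument in $\zeta$. For $\zeta$ satisfying Condition~\ref{CondZeta}, let $\mathbf x^\ast(\zeta)$ be the unique fixed point of $F^{G_n}_{c,\zeta}$ from Lemma~\ref{lemNonRegularZeroUniqueFixed}, and set
\begin{equation}
g_n(\zeta) = \frac{(1-\zeta)\sum_{e\in E_n}\prod_{u\in e}x^\ast_u(\zeta)}{c^k|E_n|}\,.
\end{equation}
We want some $\zeta$ with $g_n(\zeta)=\eta+o(1)$, and in fact we will get $g_n(\zeta)=\eta$ exactly.

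\emph{Continuity.} Fix a compact interval $J=[\zeta_-,\zeta_+]\subset(0,1]$ with $\zeta_+(k-1)c^{k-1}<e$. For large $n$, Lemma~\ref{lemNonRegularZeroUniqueFixed} applies uniformly on $J$. We first show that $\zeta\mapsto\mathbf x^\ast(\zeta)$ is continuous on $J$; $V_n$ is finite, so this is a statement in finite dimensions. The set $\{(\zeta,\mathbf x):F_{c,\zeta}(\mathbf x)=\mathbf x\}$ is closed, because $F$ is jointly continuous. It lies inside the compact set $J\times[0,c]^{V_n}$, and each fibre is a single point. A compact closed graph of a function defines a continuous function, so $g_n$ is continuous on $J$.

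\emph{Endpoints in the general case ($c<c_k(\eta)$).} For the lower endpoint, every fixed point satisfies $x_v\ge c\,e^{-\zeta c^{k-1}d(v)/\Delta}\ge c\,e^{-\zeta c^{k-1}}$. Hence
\begin{equation}
g_n(\zeta)\ge(1-\zeta)e^{-k\zeta c^{k-1}}\,,
\end{equation}
which exceeds $\eta<1$ once $\zeta=\zeta_-$ is a small enough fixed constant. For the upper endpoint, note that $c<c_k(\eta)$ is equivalent to $e/((k-1)c^{k-1})>1-\eta$. So we may fix $\zeta_+\in\bigl(1-\eta,\min\{1,e/((k-1)c^{k-1})\}\bigr)$, and $\zeta_+$ then satisfies Condition~\ref{CondZeta}. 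Using only the trivial bound $x_u\le c$ we get $g_n(\zeta_+)\le1-\zeta_+<\eta$. The intermediate value theorem now gives $\zeta\in J$ with $g_n(\zeta)=\eta$ exactly. This case is easy.

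\emph{The approximately regular case ($c<\overline c_k(\eta)$).} Now $\zeta$ must go up to nearly $\zeta_{\max}=e/((k-1)c^{k-1})$ when this is $\le1$, and the trivial bound no longer suffices. First we show that $\mathbf x^\ast(\zeta)$ is uniformly close to the constant vector $x^\ast_k(c,\zeta)$, uniformly for $\zeta\in J$. Approximate regularity makes the constant vector an approximate fixed point: each coordinate is off by $o(1)$, coming from $d(v)/\Delta=1-o(1)$. To upgrade this to closeness of the actual fixed point, I would reuse the stability behind the uniqueness proof of Lemma~\ref{lemNonRegularZeroUniqueFixed}, namely contraction of (an iterate of) $F$ in a suitable metric. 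This is the main obstacle, since closeness must hold up to $\zeta$ near the uniqueness threshold.

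Granting this closeness, together with $|E_n|\sim\Delta|V_n|/k$, we get
\begin{equation}
g_n(\zeta)=(1-\zeta)\,\frac{x^\ast_k(c,\zeta)^k}{c^k}+o(1)\,.
\end{equation}
At $\zeta=\zeta_{\max}$ we have $W(e)=1$, so $x^{\ast\,k-1}=c^{k-1}/e$ and the main term equals $(1-\zeta_{\max})\eta_k^\ast$. This is $<\eta$ exactly when $c<\overline c_k(\eta)$, and by continuity it stays $<\eta$ for some admissible $\zeta_+$ slightly below $\zeta_{\max}$. If instead $\zeta_{\max}>1$, take $\zeta_+=1$, where the value is $0$. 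Near $0$ the main term exceeds $\eta$. Continuity of $g_n$ then again yields $\zeta$ with $g_n(\zeta)=\eta$.
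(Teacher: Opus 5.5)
Your argument is correct in substance, and in the general case it is the paper's argument. Both use the intermediate value theorem between a small $\zeta$ and $\zeta\approx 1-\eta$, with only $x^\ast_u\le c$ at the top end. Your closed-graph continuity argument and your explicit bound $g_n(\zeta)\ge(1-\zeta)e^{-k\zeta c^{k-1}}$ are fine substitutes for the paper's appeal to Lemma~\ref{lem:diffx} and to continuity at $\zeta=0$.

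One step fails as written. For $\eta=0$ your interval $\bigl(1-\eta,\min\{1,e/((k-1)c^{k-1})\}\bigr)$ is $(1,1)=\emptyset$, so you have no upper endpoint in precisely the non-existence case. Take $\zeta_+=1-\eta$ instead, as the paper does. It is admissible because $c<c_k(\eta)$ says exactly $(1-\eta)(k-1)c^{k-1}<e$, and the non-strict bound $g_n(\zeta_+)\le\eta$ is all the intermediate value theorem needs. For $\eta=0$ this is simply $\zeta=1$, where $g_n(1)=0$.

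In the approximately regular case, what you call the main obstacle is not one. Your lower endpoint is covered by the general bound, and continuity of $g_n$ is a per-$n$ statement. So you need closeness to the constant vector only at the single fixed $\zeta_+$. Since $\zeta_+(k-1)c^{k-1}<e$ with a fixed gap, that closeness is Lemma~\ref{lemRegularFixedPoint} applied with $\zeta_n\equiv\zeta_+$.

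With this, your route is correct but differs from the paper's, which uses no intermediate value argument here. The paper takes the $n$-independent root $\zeta^\ast$ of $(1-\zeta)x^\ast_k(c,\zeta)^k=\eta c^k$. It verifies $(k-1)\zeta^\ast c^{k-1}<e$ in Lemma~\ref{claim:zeta}, by showing that $\zeta^\ast c^{k-1}$ is increasing in $c$ and comparing with its value at $c=\overline c_k(\eta)$ (or its limit as $c\to\infty$). It then reads off the asymptotic identity from Lemma~\ref{lemRegularFixedPoint}.

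Your single evaluation at $\zeta_{\max}$ via $W(e)=1$ is more elementary. Combined with the strict monotonicity of $h(\zeta)=(1-\zeta)(x^\ast_k(c,\zeta)/c)^k$, it even gives a one-line proof of the inequality in Lemma~\ref{claim:zeta}: $h(\zeta_{\max})<\eta=h(\zeta^\ast)$ forces $\zeta^\ast<\zeta_{\max}$. It also yields exact equality.

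The price is that your $\zeta_n$ depends on $n$. The proof of Theorem~\ref{thmMain} uses that in the regular case one may take $\zeta=\zeta^\ast$ to get the explicit rate. With your version you would have to add that $\zeta_n\to\zeta^\ast$. This follows from $g_n\to h$ uniformly on $J$ together with strict monotonicity of $h$. That is where your uniform closeness on $J$ is actually needed, and it holds because $J$ stays a fixed distance below the threshold.
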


Our main result of this section expresses the rate function in terms of the Bethe free energy evaluated at the  BP fixed point associated to this value of $\zeta$.  
\begin{theorem}
    \label{thmNonRegularRate}
      Fix $k \ge 2$, $\eta \in[0,1)$, and $0<c<c_k(\eta)$.   For any sequence of  asymptotically tree-like $k$-uniform hypergraphs $G_n=(V_n,E_n)$ of maximum degree $\Delta=\Delta(n) $, if  $p \sim c \Delta^{-\frac{1}{k-1}}$, then
     \begin{align}\label{eq:mainBethe}
  \log \P_p ( \mathbf X \le \eta \E \mathbf X)  \sim  \Delta^{-\frac{1}{k-1}}\mathcal B^{G_n}_{c,\zeta}(\mathbf x^\ast) -   \log (1-\zeta)\eta \E\mathbf X  -   p|V_n| \,.
    \end{align}
    where $\zeta=\zeta(n,c,\eta)$ and $\mathbf x^\ast= \mathbf x^\ast(\zeta)$ are given by Lemmas~\ref{lemNonRegularZeroUniqueFixed} and~\ref{lemZetaNonRegular}.  

    In particular,  if $c <  \left(\frac{e} {k-1} \right)^{ \frac{1}{k-1}}$, then
    \begin{align}\label{eq:mainBethe2}
       \Delta^{\frac{1}{k-1}} |V_n|^{-1}  \log \P_p ( \mathbf X =0)
       &=  |V_n|^{-1} \mathcal B^{G_n}_{c,1}(\mathbf x^\ast)    -c +o(1)\,,
    \end{align}
    where $\mathbf x^\ast$ is the unique fixed point of $F_{c,1}^{G_n}$.
\end{theorem}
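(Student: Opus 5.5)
The plan is to reduce the lower-tail probability to a partition-function computation for a hard-core-type model on $G_n$ with activity $\lambda=p/(1-p)$ and edge penalty $e^{-\beta}$ per induced edge, where $1-\zeta = e^{-\beta}$. For $\beta\ge 0$ set
\begin{equation}
Z(\beta)=\E_p\, e^{-\beta \mathbf X} .
\end{equation}
The upper bound on $\P_p(\mathbf X\le \eta\E\mathbf X)$ comes from Markov/Chernoff: $\P_p(\mathbf X\le \eta \E \mathbf X)\le e^{\beta\eta\E\mathbf X} Z(\beta)$. This produces the term $-\log(1-\zeta)\eta\E\mathbf X$ in~\eqref{eq:mainBethe}.

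For the lower bound we tilt. Under the measure $\mu_\beta$ proportional to $e^{-\beta\mathbf X}\P_p$, we show that $\mathbf X$ concentrates around $(1+o(1))\Delta^{-k/(k-1)}(1-\zeta)\sum_e\prod_{u\in e}x_u^\ast$. Lemma~\ref{lemZetaNonRegular} makes this equal to $(1+o(1))\eta\E\mathbf X$. A small perturbation of $\zeta$, which is allowed by the continuity of the fixed point in $\zeta$, then places us just below the threshold. The standard change-of-measure bound $\P(A)\ge Z(\beta) e^{\beta \eta\E\mathbf X}\mu_\beta(A)$ finishes the argument, once we know $\mu_\beta(A)=e^{-o(\Delta^{-1/(k-1)}|V_n|)}$. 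The case $\eta=0$, $\zeta=1$, $\beta=\infty$ is handled directly as the hard-core model.

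The core claim is then
\begin{equation}
\log Z(\beta) = \Delta^{-\frac1{k-1}}\mathcal B^{G_n}_{c,\zeta}(\mathbf x^\ast) - p|V_n| + o\bigl(\Delta^{-\frac1{k-1}}|V_n|\bigr).
\end{equation}
We prove it by interpolation over the vertex order, writing $\log Z$ as a telescoping sum of log marginal ratios. Fix an ordering $v_1,\dots,v_N$ and let $Z_i$ be the partition function with $v_1,\dots,v_i$ forced out. Then $\log Z = -\sum_i \log(1-\mu^{(i-1)}(v_i\in S))$, and each occupation marginal is $O(\Delta^{-1/(k-1)})$. Alternatively, we differentiate $\log Z$ in $c$, scaling $p=c\Delta^{-1/(k-1)}$, and integrate from $0$. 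By the envelope property at a fixed point of $F$, $\frac{d}{dc}$ of $\mathcal B$ is $\sum_v x_v^\ast/c$. So it suffices to show that the marginal $\mu(v\in S)$ equals $(1+o(1))\Delta^{-1/(k-1)}x^\ast_v$ uniformly in $v$ and along the $c$-path. This uses the fact that $\mathcal B$ is stationary at fixed points of $F_{c,\zeta}$, which we check by differentiating~\eqref{eq:Bdef}, and that $F$ has a unique fixed point along the whole path, by Lemma~\ref{lemNonRegularZeroUniqueFixed}.

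The main obstacle is the marginal estimate. We write the exact recursion for $\mu(v\in S)$ on the computational (self-avoiding) hypertree rooted at $v$. The tree-like conditions of Definition~\ref{def:tree} enter here: small $\ell$-degrees and codegree $\Gamma=o(\Delta)$ ensure that, to depth $L$, cycles contribute negligibly. In the resulting recursion, the product over an edge of other vertices' marginals has size $\Delta^{-1}\prod x_u$, so a vertex with $\Delta$ incident edges receives the factor $\exp(-\frac{\zeta}{\Delta}\sum_{e\ni v}\prod x_u)$. This is exactly $F$, with the normalised marginals $\Delta^{1/(k-1)}\mu$ playing the role of $\mathbf x$. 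We then need contraction: the iterates of the tree recursion, started from the boundary values $0$ and $c$, must converge to the same value up to error $o(1)$. We would prove this by showing that $F^{2}$ is a contraction in a suitable potential metric (for instance after the change of variables $\log x$). The condition $\zeta(k-1)c^{k-1}<e$ is precisely the condition that the derivative of the two-step map $x\mapsto ce^{-\zeta(ce^{-\zeta x^{k-1}})^{k-1}}$ has modulus below $1$, by a Lambert-$W$ computation. A non-regular $G$ only lowers the effective degree, which should only help via monotonicity. Errors from non-tree parts, and from the approximation $1-(1-\zeta)\cdot$small $\approx$ exponential, are $o(1)$ per vertex. This uses $\Delta\to\infty$ and the $\ell$-degree bounds, which make the contribution of overlapping edges lower order.

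Finally, concentration of $\mathbf X$ under $\mu_\beta$ follows from the same marginal estimates applied to edge events, together with a second-moment/correlation-decay bound on covariances. The rewriting $-p|V_n|$ and the $\E\mathbf X$ term then give~\eqref{eq:mainBethe}. The specialization~\eqref{eq:mainBethe2} is the case $\eta=0$, where the Chernoff tilt is replaced by exact conditioning.
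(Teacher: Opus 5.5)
Your upper bound (Chernoff with $1-\zeta=e^{-\beta}$), your evaluation of $\log Z_G(\lam,\zeta)$ (integrate $\E|\mathbf S|/t$ over the activity and use stationarity of $\mathcal B^{G}_{c,\zeta}$ at fixed points of $F^G_{c,\zeta}$), and your marginal estimate (Weitz hypertree, local structure from the tree-like conditions, contraction of $F^2$ in $\log$ coordinates) are essentially the paper's proof. One small correction there: a Lambert-$W$ computation only gives the derivative at the fixed point. You need a global bound for the inhomogeneous vector map. The relevant quantity is $\sup_{s>0}(k-1)\zeta c^{k-1}se^{-s}=(k-1)\zeta c^{k-1}/e$, which is the paper's $z\log(a/z)\le a/e$, and it happens to give the same threshold.

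The real difference is the lower bound. The paper keeps the same $\zeta$ and proves $\mu_{G,\lam,\zeta}(|E(\mathbf S)|\in[T-\eps N\Delta^{-1/(k-1)},T])\ge e^{-o(N\Delta^{-1/(k-1)})}$, with $T=\eta\E\mathbf X$. It does this via the combinatorial point-probability Lemma~\ref{lemTransferGnm}: delete random vertices to shed edges, then add open vertices to restore the size. This is needed because concentration only puts $|E(\mathbf S)|$ at $(1+o(1))T$, possibly above $T$. You instead retune the tilt so that concentration lands strictly below $T$. That is a valid and lighter route for the $\P_p$ statement. The paper's lemma, however, also yields Theorem~\ref{thmMainGnm} and the conditional statement in Lemma~\ref{lemZtoP}, which your route does not.

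As written, your change-of-measure inequality is backwards. On $A$ one has $e^{\beta\mathbf X}\le e^{\beta\eta\E\mathbf X}$, so $\P_p(A)\le Z(\beta)e^{\beta\eta\E\mathbf X}\mu_\beta(A)$. To fix it, take $\zeta'$ from Lemma~\ref{lemZetaNonRegular} at $\eta'=(1-\eps)\eta$; this is allowed because $c_k$ is continuous and increasing, so $c<c_k(\eta')$ for small $\eps$. Then whp under $\mu_{\beta'}$ one has $\mathbf X\in[(\eta'-\eps)\E\mathbf X,\eta\E\mathbf X]$, hence $\log\P_p(A)\ge\log Z(\beta')+\beta'(\eta'-\eps)\E\mathbf X-o(1)$.

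You still have to compare this with $\log Z(\beta)+\beta\eta\E\mathbf X$. Continuity of $\mathbf x^\ast$ in $\zeta$ alone does not do it. A direct perturbation of $\zeta$ would also need the edge density to drop by an amount bounded below uniformly in $n$. Perturbing $\eta$ instead gives no control on $|\zeta'-\zeta|$. Convexity of $\beta\mapsto\log Z(\beta)$ closes the gap. Its derivative at $\beta$ is $-\E_{\mu_\beta}\mathbf X=-(1+o(1))\eta\E\mathbf X$, so $\log Z(\beta')\ge\log Z(\beta)-(1+o(1))\eta\E\mathbf X\,(\beta'-\beta)$. For $\eta>0$ both $\beta,\beta'\le-\log\eta'+o(1)$, so the total loss is $O(\eps)N\Delta^{-1/(k-1)}+o(N\Delta^{-1/(k-1)})$. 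Letting $\eps\to0$ finishes the argument. The case $\eta=0$ is exact, as you note.
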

Note that Theorem~\ref{thmMain} gives the same conclusion for nearly regular hypergraphs under the weaker condition $c<\overline c_k(\eta)$, since it can be checked that the constant function $x^\ast \equiv x_k^\ast(c,\zeta)$ from~\eqref{eqxastdef} is indeed a fixed point of $F_{c,\zeta}^{G}$ in the nearly regular case.
We also note that the limit as $n\to\infty$ of the RHS of~\eqref{eq:mainBethe2} need not exist in general. 
 When the sequence of hypergraphs has additional structure, one can use this theorem to prove the existence of the limit (as in the case of avoiding $k$-APs in Theorem~\ref{thmApavoidInterval}).

\subsection{Methods}
\label{secMethods}

Here we outline the methods we use to prove our results.

We first reformulate the main problem in terms of an appropriate statistical physics model.  The $\eta =0$ (non-existence) case is especially clean and instructive. Let $G$ be a hypergraph on $N$ vertices.  Let $\cI(G)$ denote the set of all independent sets of $G$.   The \textit{hard-core model} on $G$ at activity $\lam \ge 0$ is the distribution $\mu_{G,\lam}$ on $\cI(G)$ defined by
\begin{align}
   \mu_{G,\lam}(I) &= \frac{\lam^{|I|}}{Z_G(\lam)} 
   \intertext{where}
   Z_G(\lam) &= \sum_{I \in \cI(G)} \lam^{|I|} \,.
\end{align}
We call $\mu_{G,\lam}$ the Gibbs measure and $Z_G(\lam)$ the partition function.  The partition function is directly related to  probability of non-existence defined in~\eqref{eqnoexistenceDef}.  If we set $p = \frac{\lam}{1+\lam}$, then we have the identity
\begin{equation}
\label{eqHardCoreProbIdentity}
    \P_p (\mathbf X =0) = (1-p)^{N} Z_G(\lam) \,,
\end{equation}
and so estimating the logarithm of the non-existence probability reduces to estimating $\log Z_G(\lam)$.   For the more general case of the lower-tail problem, one can relate the logarithm of the lower-tail probability to the partition function of a more general edge-penalty Gibbs measure that penalizes (but does not necessarily forbid) hyperedges. For $\lam \ge 0$, $\zeta \in[0,1]$, the edge-penalty model is  distribution on subsets $S\subseteq V(G)$ given by:
\begin{align} \label{eq:mulamzetadef}
   \mu_{G,\lam,\zeta}(S) &= \frac{\lam^{|S|}(1-\zeta)^{|E(S)|}}{Z_G(\lam,\zeta)} 
\end{align}
where $E(S)=\{e\in E(G) : e\subseteq S\}$ and
\begin{align}\label{eq:Zlamzetadef}
   Z_G(\lam,\zeta) &= \sum_{S\subseteq V} \lam^{|S|}(1-\zeta)^{|E(S)|} \,.
\end{align}
This distribution penalizes fully occupied edges by a factor $(1-\zeta)$; taking $\zeta=1$ forbids fully occupied edges and recovers the  hard-core model.

We remark that Condition~\ref{CondZeta} corresponds to the
uniqueness threshold for the edge-penalty model on the infinite $\Delta$-regular linear hypertree. More precisely, on the
infinite $\Delta$-regular linear hypertree, under the critical scaling
$\lambda\sim c\Delta^{-1/(k-1)}$, the BP recursion with constant messages converges to the map $x\mapsto c e^{-\zeta x^{k-1}}$.
This map has a unique fixed point for all $c,\zeta$, but the fixed point is
locally stable when $(k-1)\zeta c^{k-1}<e$ and unstable when
$(k-1)\zeta c^{k-1}>e$. Equivalently, $(k-1)\zeta c^{k-1}=e$ is the threshold for uniqueness of the associated Gibbs measure on the infinite hypertree.

There is a close connection between the value of the log partition function $\log Z_{G}(\lam,\zeta)$ and its \textit{vertex marginals}, the probabilities $\mu_{G,\lam,\zeta}(v \in \mathbf S)$, $v \in V$.  
Computing or estimating marginals in statistical physics models is an important task in many different fields, from physics, to Bayesian statistics, to algorithms, and a variety of approaches to the problem have been proposed, including Markov chain Monte Carlo, variational approaches, and message passing algorithms (see, e.g.,~\cite{koller2009probabilistic,mezard2009information} for background). 

Belief Propagation is an iterative message-passing algorithm that aims to express marginals in terms of a fixed point of an operator relating `messages' between adjacent vertices of a hypergraph (given in our setting by~\eqref{eq:BPF}). The Bethe free energy~\eqref{eq:Bdef} is a functional of these messages that is intended to approximate the log partition function.

One of the main results on Belief Propagation from statistical physics and statistics is that Belief Propagation and the Bethe free energy are exact on trees; that is, if the underlying graph or hypergraph has no cycles, then the BP operator has a unique fixed point. Moreover, evaluating the Bethe free energy at this fixed point gives the exact value of the log partition function~\cite{yedidia2003understanding,mezard2009information}.  Understanding when such a statement is approximately correct in a graph or hypergraph with cycles is a major area of study~\cite{tatikonda2002loopy,dembo2013factor,dembo2014replica}. 

To prove our main results, we will show that BP and the Bethe free energy are approximately correct when $G$ is asymptotically tree-like and $\lam$ and $\zeta$ are small enough.

To do this we need to do three things: 
\begin{enumerate}
    \item Show that the BP operator $F_{c,\zeta}^{G}$ has a unique fixed point in our parameter regime.
    \item Show that this fixed point (a vector indexed by $V(G)$) approximates the vertex marginals of $\mu_{G,\lam,\zeta}$ well.
    \item Show that evaluating the Bethe free energy $\mathcal B^G_{c,\zeta}$ at this fixed point approximates $\log Z_G(\lam,\zeta)$ well.  
\end{enumerate}

To do the first two steps, we use algorithmic ideas from the field of approximate counting and sampling.

As a computational problem, with the hypergraph $G$ as an input, the algorithmic tractability of estimating marginals depends on the parameters of the problem (the uniformity $k$, the maximum degree $\Delta$, the activity parameter $\lam$).  For the case of the hard-core model on graphs ($k=2$), a beautiful connection between computational tractability and statistical physics phase transitions has been established.  We briefly describe this connection since the concepts involved are at the heart of the method we use in this paper. 

Define 
\begin{equation}
    \label{eqLamcD}
    \lam_c(\Delta) = \frac{ (\Delta-1)^{\Delta-1}} { (\Delta-2)^\Delta }  \,.
\end{equation}

This value marks a statistical physics phase transition for the hard-core model on the infinite $\Delta$-regular tree: when $\lam < \lam_c(\Delta)$, there is a unique infinite volume Gibbs measure and when $\lam >\lam_c(\Delta)$ there are multiple distinct Gibbs measures~\cite{kelly1985stochastic} (see also~\cite{georgii2011gibbs} for precise definitions and context). 

Weitz gave an efficient (polynomial-time for fixed $\Delta$) algorithm to estimate marginals $\mu_{G,\lam} ( v \in \mathbf I)$ for any graph $G$ of maximum degree $\Delta$ when $\lam<\lam_c(\Delta)$~\cite{weitz2006counting}.  The complementary hardness result of Sly~\cite{sly2010computational} with extensions in~\cite{sly2014counting,galanis2016inapproximability}, is that there is no polynomial-time algorithm to approximate marginals in the hard-core model on graphs of maximum degree $\Delta$ when $\lam > \lam_c(\Delta)$ unless NP$=$RP.  Thus the computational threshold coincides with the physics phase transition. 

Weitz's algorithm has several components.  The first is the construction of a \textit{computational tree},  $T_v(G)$. This tree is a pruned tree of self-avoiding walks in $G$ starting from a distinguished vertex $v$, and it has the property that for any $\lam$, the hard-core marginal of the root $r$ in $T_v(G)$ is exactly equal to the marginal of $v$ in $G$; that is, $\mu_{T_v(G),\lam}(r \in \mathbf I) = \mu_{G,\lam}(v \in \mathbf I)$.  This is promising since a marginal in a tree can be computed in linear time via message passing algorithms;  however, in general $T_v(G)$ can be exponentially large in the size of $G$.  

To overcome this, Weitz proves that in the hard-core model on the $\Delta$-regular tree, \textit{weak spatial mixing}  implies \textit{strong spatial mixing}.  Weak spatial mixing is the decay as $L \to \infty$ of the dependence of the marginal of the root on boundary conditions imposed at depth $L$ of the tree; this is equivalent to uniqueness of the infinite volume Gibbs measure and holds when $\lam < \lam_c(\Delta)$.  Strong spatial mixing is decay in the difference in marginals at the root between two boundary conditions in the distance from the root to the first disagreement in the boundary conditions.  For the hard-core model, this is equivalent to weak spatial mixing on all subtrees of the infinite $\Delta$-regular tree.  In general strong spatial mixing is much stronger than weak spatial mixing, and so proving their equivalence for the hard-core model on the tree is very significant.   See~\cite{dyer2004mixing} for background on weak and strong spatial mixing. 

The algorithmic significance is that if $G$ has maximum degree $\Delta$, then $T_v(G)$ is a subtree of the $\Delta$-regular tree, and so when $\lam<\lam_c(\Delta)$,  weak spatial mixing holds for the hard-core model on $T_v(G)$, and this in turn means that to approximate the marginal of the root, one can truncate the tree at a certain depth, run a message passing algorithm on the truncated tree, and obtain a good approximation to the true marginal.  The size of the truncated computational tree is exponential only in its depth, not in the size of $G$; truncating at depth $O(\log n)$ gives a polynomial-time algorithm.

The threshold $\lam_c(\Delta)$ is not only the phase transition point on the tree and the algorithmic tractability threshold for graphs of maximum degree $\Delta$, it is also a threshold for complex zero-freeness~\cite{peters2019conjecture}, fast mixing of local Markov chains (Glauber dynamics)~\cite{anari2021spectral}, and more.  A very similar picture extends to more general anti-ferromagnetic two-spin models on graphs~\cite{sinclair2014approximation}.

For the hard-core model on hypergraphs, however, the picture is much more complicated and it seems that none of these thresholds coincide.   For example, consider the infinite $k$-uniform, $\Delta$-regular linear hypertree when $k \ge 3$.  The weak spatial mixing threshold is at $\lam = \Theta( \Delta^{ - \frac{1}{k-1}})$~\cite{bezakova2019approximation}. The strong spatial mixing threshold, however, is at most the strong spatial mixing threshold for graphs, $\lam_c(\Delta) \sim \frac{e}{\Delta}$.  So while Weitz's algorithmic approach can be applied to hypergraphs~\cite{liu2014fptas,bezakova2019approximation}, it is only applicable for $\lam$ far below the tree uniqueness threshold.  Nevertheless, some good behavior of the model does extend past the strong spatial mixing threshold in the bounded-degree setting, including mixing of Markov chains~\cite{hermon2019rapid,jenssen2024sampling} and complex zero-freeness~\cite{liu2025phase}.

The main idea of our proofs is that if a hypergraph has large maximum degree and locally resembles a linear hypertree, then weak spatial mixing on the tree is enough for Weitz's method to work approximately.  In particular, in our setting (aiming for logarithmic asymptotics on hypergraphs with growing degree) it is enough to approximate the vertex marginals with relative error that vanishes as $\Delta \to \infty$.  In particular, for an approximately regular and asymptotically tree-like hypergraph, each marginal matches, to first order, the marginal of the root of the infinite regular hypertree.  This is no longer true for an irregular hypergraph, but the principle is the same: a message-passing algorithm (Belief Propagation) gives the correct marginals to first order.  Thus the general result (Theorem~\ref{thmNonRegularRate}) is stated in terms of the unique fixed point of a Belief Propagation operator.

To connect the marginals to the log partition function,  we can use an identity  relating the derivative of $\log Z_G(\lam,\zeta)$ to $\E_{G,\lam,\zeta}|\mathbf S|$ the expected size of the set $\mathbf S$ drawn from $\mu_{G,\lam,\zeta}$:
\begin{equation}
 \log Z_G(\lam,\zeta) = \int_{0}^\lam \frac{\E_{G,t,\zeta}|\mathbf S|}{t } \, dt  = \int_{0}^\lam  \left (\frac{1}{t} \sum_{v \in V(G)} \mu_{G,t,\zeta}(v \in \mathbf S)\right) \,  dt \,.
\end{equation}

We can then use this identity along with the fixed point conditions to show that the Bethe free energy gives the first-order asymptotics of $\log Z_G(\lam,\zeta)$.

The methods of~\cite{jenssen2024lower} also involved reformulating non-existence and lower-tail problems in terms of a statistical physics model. There, the  cluster expansion was applied after a step of `local conditioning' which established a relation between vertex marginals in the $3$-uniform hypergraph encoding triangles and the vertex marginals in an appropriate graph.  This approach is fundamentally limited to the case of triangles in random graphs. Attempting to apply it to larger cliques or other subgraphs would require applying the cluster expansion for the hard-core model on hypergraphs in a regime where convergence fails~\cite{galvin2024zeroes,zhang2025hypergraph}. Moreover, the local conditioning step uses the specific structure of the triangle; the method fails even for other problems that can be formulated via a $3$-uniform hypergraph such as avoiding $3$-APs.

The use of computational trees for approximate counting and sampling algorithms has proliferated since Weitz's work,  e.g.~\cite{gamarnik2007correlation,li2013correlation,sinclair2014approximation,liu2019fisher,shao2021contraction,chen2023strong}, including~\cite{liu2014fptas,bezakova2019approximation} for hypergraph hard-core models.

Proving the accuracy of Belief Propagation and the Bethe free energy is an important topic in the study of spin models on random graphs.  For example, the works~\cite{gerschenfeld2007reconstruction,dembo2010ising,dembo2013factor,SS14,dembo2014replica,galanis2016ferromagnetic,coja2018belief} carry this out for specific models and specific parameter regimes; some require the graph be random while others work under the condition of few short cycles.  

Local sparsity conditions and other approximate tree-like conditions for hypergraphs arise often in extremal combinatorics. Very similar maximum co-degree conditions arise in the method of hypergraph containers~\cite{balogh2015independent,saxton2015hypergraph}. In~\cite{bennett2016note}, Bennett and Bohman   prove a lower bound on the size of an independent set produced by the random greedy algorithm applied to a $k$-uniform hypergraph satisfying some degree and co-degree conditions  nearly identical to the `asymptotically tree-like' conditions  we require in our main results.

\subsection{Outline}

    In Section~\ref{secPrelim} we give some preliminaries on hypergraphs, Gibbs measures, and partition functions. In Section~\ref{secWeitzConstruction}, we describe the construction of the Weitz hypertree and prove a structural lemma about this construction for asymptotically tree-like hypergraphs. In Section~\ref{secNonRegular} we show that under the conditions of the main results, there is a unique BP fixed point for the appropriate Gibbs measure. In Section~\ref{subsecVariances} we express vertex and edge marginals, bound variances, and approximate the log partition function in terms of this unique BP fixed point.  In Section~\ref{secLowerTailPartition} we relate  lower-tail probabilities to partition functions to prove the main results.  In Section~\ref{secSubgraphs} we derive the results about lower-tails for subgraph counts in random graphs from the main results.  In Section~\ref{secAPs} we do the same for the results on $k$-APs.  Appendix~\ref{App:Contract} contains the proof of a contraction lemma for $k$-APs.

\section{Preliminaries: Hypergraphs and Gibbs measures}
\label{secPrelim}
Here we introduce some basic facts and terminology related to hypergraphs and  Gibbs measures. 

\subsection{Hypergraph basics and terminology}
For our proofs it will be convenient to work with \emph{multihypergraphs}. A multihypergraph is a pair $G=(V,E)$ where $V$ is a set and $E$ is a multiset whose elements are subsets of $V$. We denote by $V$  the set of vertices of the hypergraph and $E$  the set of hyperedges. We highlight that we allow for the empty hyperedge here (with multiplicity). If all hyperedges appear with multiplicity one, then we refer to $G$ as a (simple) hypergraph. If $|e|=k$ for all $e\in E$ then we say that $G$ is $k$-uniform. 

We consider the following ways to modify a multihypergraph $G=(V,E)$.
\begin{enumerate}
\item For $U\subseteq V$, we let $G - U$ denote the submultihypergraph of $G$ induced by $V\setminus U$, that is, the multihypergraph on vertex set $V\setminus U$ and edge multiset $\{e\in E: e\cap U=\emptyset\}$.
\item For $U\subseteq V$, we let $G\ominus U$ denote the multihypergraph obtained by removing the elements of $U$ from each edge of $G$. Formally,  $G\ominus U$ is the hypergraph on vertex set $V\setminus U$ and edge multiset $\{e\setminus U: e\in E\}$.
\item If $F\subseteq E$ (as multisets), we let $G-F$ denote the subhypergraph of $G$ obtained by deleting the edges in $F$, that is, $G-F$ is the hypergraph on vertex set $V$ and edge multiset $E\backslash F$.
\end{enumerate}

 A \textit{self-avoiding walk} (SAW) of length $\ell$ from $v$ to $u$ in $G$ is an alternating sequence $(v_1, e_1, v_2, e_2, \dots, e_{\ell},v_{\ell+1})$ of vertices and edges from $G$ with the following properties:
\begin{itemize}
    \item $v_1 = v$, $v_{\ell+1}=u$
    \item $v_1,\ldots, v_{\ell+1}$ and $e_1,\ldots, e_{\ell}$ are all distinct\footnote{Here we treat copies of the same edge in a multihypergraph as distinct.}
    \item $\{v_i,v_{i+1}\}\subseteq e_i$ for $i=1,\ldots, \ell$.
\end{itemize}
Note that for $v\in V$, we count $(v)$ as a SAW of length $0$. 

A linear multihypergraph $G=(V,E)$ is one in which any two distinct edges $e,f \in E$ intersect in at most one vertex. A linear hypertree is a linear multihypergraph in which there is a unique SAW between any distinct pair of vertices, see Figure~\ref{fig:hypertree}. We note that this definition of linear hypertree allows for edges of size $1$ to have multiplicity $>1$.

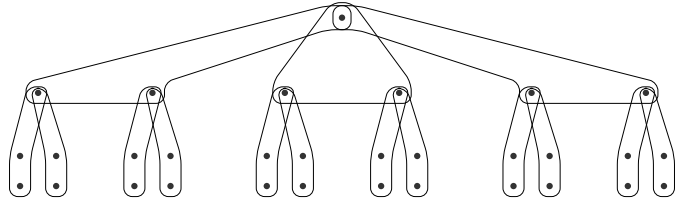
\begin{figure}[htbp]
  \centering
  \scalebox{0.4}{
\begin{tikzpicture}[
    vertex/.style={
        circle,
        fill=black!80,
        draw=none,
        inner sep=0pt,
        minimum size=6pt
    },
    hyperedge/.style={
        rounded corners=8pt,
        draw=black,
        thick,
        fill=none
    },
]

\node[vertex] (r) at (1.7, 0) {};

% --- Depth-1 vertices ---
\node[vertex] (v1) at (-8.4, -2.5) {};
\node[vertex] (v2) at (-4.6, -2.5) {};
\node[vertex] (v3) at (-0.2, -2.5) {};
\node[vertex] (v4) at (3.6, -2.5) {};
\node[vertex] (v5) at (8.0, -2.5) {};
\node[vertex] (v6) at (11.8, -2.5) {};

\node[vertex] (v1a1) at (-9.0, -4.6) {};
\node[vertex] (v1a2) at (-9.0, -5.6) {};
\node[vertex] (v1b1) at (-7.8, -4.6) {};
\node[vertex] (v1b2) at (-7.8, -5.6) {};

\node[vertex] (v2a1) at (-5.2, -4.6) {};
\node[vertex] (v2a2) at (-5.2, -5.6) {};
\node[vertex] (v2b1) at (-4.0, -4.6) {};
\node[vertex] (v2b2) at (-4.0, -5.6) {};

\node[vertex] (v3a1) at (-0.8, -4.6) {};
\node[vertex] (v3a2) at (-0.8, -5.6) {};
\node[vertex] (v3b1) at (0.4, -4.6) {};
\node[vertex] (v3b2) at (0.4, -5.6) {};

\node[vertex] (v4a1) at (3.0, -4.6) {};
\node[vertex] (v4a2) at (3.0, -5.6) {};
\node[vertex] (v4b1) at (4.2, -4.6) {};
\node[vertex] (v4b2) at (4.2, -5.6) {};

\node[vertex] (v5a1) at (7.4, -4.6) {};
\node[vertex] (v5a2) at (7.4, -5.6) {};
\node[vertex] (v5b1) at (8.6, -4.6) {};
\node[vertex] (v5b2) at (8.6, -5.6) {};

\node[vertex] (v6a1) at (11.2, -4.6) {};
\node[vertex] (v6a2) at (11.2, -5.6) {};
\node[vertex] (v6b1) at (12.4, -4.6) {};
\node[vertex] (v6b2) at (12.4, -5.6) {};

\begin{scope}[on background layer]

    % e1: {r, v1, v2} — extends left
    \draw[hyperedge]
        ($(r) + (0.3, 0.4)$) --
        ($(r) + (-0.5, 0.4)$) --
        ($(v1) + (-0.4, 0.35)$) --
        ($(v1) + (-0.4, -0.35)$) --
        ($(v2) + (0.4, -0.35)$) --
        ($(v2) + (0.4, 0.35)$) --
        ($(r) + (-0.5, -0.4)$) --
        ($(r) + (0.3, -0.4)$) -- cycle;

    % e2: {r, v3, v4} — extends center
    \draw[hyperedge]
        ($(r) + (-0.35, 0.5)$) --
        ($(v3) + (-0.4, 0.35)$) --
        ($(v3) + (-0.4, -0.35)$) --
        ($(v4) + (0.4, -0.35)$) --
        ($(v4) + (0.4, 0.35)$) --
        ($(r) + (0.35, 0.5)$) -- cycle;

    % e3: {r, v5, v6} — extends right
    \draw[hyperedge]
        ($(r) + (-0.3, -0.4)$) --
        ($(r) + (0.5, -0.4)$) --
        ($(v5) + (-0.4, 0.35)$) --
        ($(v5) + (-0.4, -0.35)$) --
        ($(v6) + (0.4, -0.35)$) --
        ($(v6) + (0.4, 0.35)$) --
        ($(r) + (0.5, 0.4)$) --
        ($(r) + (-0.3, 0.4)$) -- cycle;

    % Depth-1 hyperedges (vertical capsules)
    \draw[hyperedge]
        ($(v1) + (-0.35, 0.2)$) --
        ($(v1a1) + (-0.35, 0.0)$) --
        ($(v1a2) + (-0.35, -0.35)$) --
        ($(v1a2) + (0.35, -0.35)$) --
        ($(v1a1) + (0.35, 0.0)$) --
        ($(v1) + (0.35, 0.2)$) -- cycle;

    \draw[hyperedge]
        ($(v1) + (-0.25, 0.15)$) --
        ($(v1b1) + (-0.35, 0.0)$) --
        ($(v1b2) + (-0.35, -0.35)$) --
        ($(v1b2) + (0.35, -0.35)$) --
        ($(v1b1) + (0.35, 0.0)$) --
        ($(v1) + (0.25, 0.15)$) -- cycle;

    \draw[hyperedge]
        ($(v2) + (-0.35, 0.2)$) --
        ($(v2a1) + (-0.35, 0.0)$) --
        ($(v2a2) + (-0.35, -0.35)$) --
        ($(v2a2) + (0.35, -0.35)$) --
        ($(v2a1) + (0.35, 0.0)$) --
        ($(v2) + (0.35, 0.2)$) -- cycle;

    \draw[hyperedge]
        ($(v2) + (-0.25, 0.15)$) --
        ($(v2b1) + (-0.35, 0.0)$) --
        ($(v2b2) + (-0.35, -0.35)$) --
        ($(v2b2) + (0.35, -0.35)$) --
        ($(v2b1) + (0.35, 0.0)$) --
        ($(v2) + (0.25, 0.15)$) -- cycle;

    \draw[hyperedge]
        ($(v3) + (-0.35, 0.2)$) --
        ($(v3a1) + (-0.35, 0.0)$) --
        ($(v3a2) + (-0.35, -0.35)$) --
        ($(v3a2) + (0.35, -0.35)$) --
        ($(v3a1) + (0.35, 0.0)$) --
        ($(v3) + (0.35, 0.2)$) -- cycle;

    \draw[hyperedge]
        ($(v3) + (-0.25, 0.15)$) --
        ($(v3b1) + (-0.35, 0.0)$) --
        ($(v3b2) + (-0.35, -0.35)$) --
        ($(v3b2) + (0.35, -0.35)$) --
        ($(v3b1) + (0.35, 0.0)$) --
        ($(v3) + (0.25, 0.15)$) -- cycle;

    \draw[hyperedge]
        ($(v4) + (-0.35, 0.2)$) --
        ($(v4a1) + (-0.35, 0.0)$) --
        ($(v4a2) + (-0.35, -0.35)$) --
        ($(v4a2) + (0.35, -0.35)$) --
        ($(v4a1) + (0.35, 0.0)$) --
        ($(v4) + (0.35, 0.2)$) -- cycle;

    \draw[hyperedge]
        ($(v4) + (-0.25, 0.15)$) --
        ($(v4b1) + (-0.35, 0.0)$) --
        ($(v4b2) + (-0.35, -0.35)$) --
        ($(v4b2) + (0.35, -0.35)$) --
        ($(v4b1) + (0.35, 0.0)$) --
        ($(v4) + (0.25, 0.15)$) -- cycle;

    \draw[hyperedge]
        ($(v5) + (-0.35, 0.2)$) --
        ($(v5a1) + (-0.35, 0.0)$) --
        ($(v5a2) + (-0.35, -0.35)$) --
        ($(v5a2) + (0.35, -0.35)$) --
        ($(v5a1) + (0.35, 0.0)$) --
        ($(v5) + (0.35, 0.2)$) -- cycle;

    \draw[hyperedge]
        ($(v5) + (-0.25, 0.15)$) --
        ($(v5b1) + (-0.35, 0.0)$) --
        ($(v5b2) + (-0.35, -0.35)$) --
        ($(v5b2) + (0.35, -0.35)$) --
        ($(v5b1) + (0.35, 0.0)$) --
        ($(v5) + (0.25, 0.15)$) -- cycle;

    \draw[hyperedge]
        ($(v6) + (-0.35, 0.2)$) --
        ($(v6a1) + (-0.35, 0.0)$) --
        ($(v6a2) + (-0.35, -0.35)$) --
        ($(v6a2) + (0.35, -0.35)$) --
        ($(v6a1) + (0.35, 0.0)$) --
        ($(v6) + (0.35, 0.2)$) -- cycle;

    \draw[hyperedge]
        ($(v6) + (-0.25, 0.15)$) --
        ($(v6b1) + (-0.35, 0.0)$) --
        ($(v6b2) + (-0.35, -0.35)$) --
        ($(v6b2) + (0.35, -0.35)$) --
        ($(v6b1) + (0.35, 0.0)$) --
        ($(v6) + (0.25, 0.15)$) -- cycle;

\end{scope}

\end{tikzpicture}
}
\caption{A 3-uniform  linear hypertree.}
  \label{fig:hypertree}
\end{figure}

\subsection{Gibbs measures and partition functions}
For $\lam\geq 0$ and $\zeta\in[0,1]$ and hypergraph $G$, recall the definition of the Gibbs measure $\mu_{G,\lam,\zeta}$ and partition function $Z_G(\lam,\zeta)$ from~\eqref{eq:mulamzetadef} and~\eqref{eq:Zlamzetadef}. These objects naturally extend to the setting of multihypergraphs where now we interpret the parameter $E(S)=\{e\in E(G) : e\subseteq S\}$ with multiplicity.

For $v\in V$, let $Z^{\text{in}}_v(G,\lam,\zeta), Z^{\text{out}}_v(G,\lam,\zeta)$ denote the contribution to the partition function from sets containing, avoiding $v$ respectively. More formally, 
\begin{align}
    Z^{\text{in}}_v(G,\lam,\zeta)= \sum_{S\subseteq V: v\in S} \lam^{|S|}(1-\zeta)^{|E(S)|}
\end{align}
and $Z^{\text{out}}_v(G,\lam,\zeta)=Z_G(\lam,\zeta)-Z^{\text{in}}_v(G,\lam,\zeta)$. For $U\subseteq V$, we let 
\begin{align}
    Z^{\text{in}}_U(G,\lam,\zeta)= \sum_{S\subseteq V: U\subseteq S} \lam^{|S|}(1-\zeta)^{|E(S)|}\, .
\end{align}
For $v\in V$, let 
\begin{align}\label{eq:Rvdef}
R_v(G,\lam,\zeta)=\frac{Z^{\text{in}}_v(G,\lam,\zeta)}{Z^{\text{out}}_v(G,\lam,\zeta)} = \frac{\mu_{G,\lam,\zeta}(v\in \mathbf{S})}{\mu_{G,\lam,\zeta}(v\notin \mathbf{S})}\, .
\end{align}

We record the following basic observations.
\begin{obs}\label{obs:identities}
    Let $G=(V,E)$ be a multihypergraph with $u,v\in V$ and $e\in E$. We have
    \begin{enumerate}
        \item $Z^{\text{in}}_v(G,\lam,\zeta)=\lam Z_{G\ominus v}(\lam,\zeta)$. \label{item:vominus}
        \item $Z^{\text{out}}_v(G,\lam,\zeta)= Z_{G-v}(\lam,\zeta)$. \label{item:vminus}
        \item $Z_{G}(\lam,\zeta)= Z_{G-\{e\}}(\lam,\zeta) - \zeta Z_e^{\text{in}}(G-\{e\},\lam,\zeta)$.\label{item:edelete}
        \item $\mu_{G,\lam,\zeta}(u\in \mathbf S\mid v\in \mathbf S)=\mu_{G\ominus v,\lam,\zeta}(u\in \mathbf S )$. \label{item:condcontract}
    \end{enumerate}
\end{obs}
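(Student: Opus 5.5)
These four identities follow directly from the definitions of $Z_G(\lam,\zeta)$, $Z^{\text{in}}_v$, $Z^{\text{out}}_v$ and $\mu_{G,\lam,\zeta}$, and we check each one by a bijection of summands or by expanding the edge weight. Throughout, $E(S)$ is counted with multiplicity.

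For \eqref{item:vominus}, write each $S\ni v$ as $S=S'\cup\{v\}$ with $S'\subseteq V\setminus\{v\}$, so $\lam^{|S|}=\lam\cdot\lam^{|S'|}$. An edge $e\in E$ satisfies $e\subseteq S$ if and only if $e\setminus\{v\}\subseteq S'$. The map $e\mapsto e\setminus\{v\}$ is exactly how the edge multiset of $G\ominus v$ is formed, so $|E_G(S)|=|E_{G\ominus v}(S')|$. This includes edges that become empty or lose multiplicity distinctions, since multiplicity is preserved and the empty edge is always contained in $S'$. Summing over $S'$ gives $\lam Z_{G\ominus v}(\lam,\zeta)$.

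For \eqref{item:vminus}, note that if $v\notin S$ then no edge containing $v$ lies in $S$. Hence $|E_G(S)|=|E_{G-v}(S)|$, and the sum over $S\subseteq V\setminus\{v\}$ is $Z_{G-v}(\lam,\zeta)$.

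For \eqref{item:edelete}, split each summand of $Z_G$ according to whether $e\subseteq S$. If $e\not\subseteq S$, the weight equals the weight in $G-\{e\}$. If $e\subseteq S$, the weight is $(1-\zeta)$ times the weight in $G-\{e\}$. Therefore
\begin{equation}
Z_G=Z_{G-\{e\}}-\zeta\sum_{S\supseteq e}\lam^{|S|}(1-\zeta)^{|E_{G-\{e\}}(S)|}=Z_{G-\{e\}}-\zeta Z^{\text{in}}_e(G-\{e\},\lam,\zeta).
\end{equation}

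For \eqref{item:condcontract}, if $u=v$ both sides equal $1$; this uses the convention that $v$ is present in $G\ominus v$ in a trivial sense, or we simply restrict to $u\neq v$. For $u\neq v$, the conditional probability equals $Z^{\text{in}}_{\{u,v\}}(G)/Z^{\text{in}}_v(G)$. Applying the bijection from \eqref{item:vominus} to both numerator and denominator gives
\begin{equation}
\frac{Z^{\text{in}}_{\{u,v\}}(G)}{Z^{\text{in}}_v(G)}=\frac{\lam Z^{\text{in}}_u(G\ominus v)}{\lam Z_{G\ominus v}}=\mu_{G\ominus v,\lam,\zeta}(u\in\mathbf S).
\end{equation}

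There is no real obstacle here. The only point needing care is bookkeeping with multisets: empty edges and repeated edges created by $\ominus$ must be kept with their multiplicity, so that the edge counts match exactly.
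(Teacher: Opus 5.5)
Your verification is correct and is exactly the routine check from the definitions that the paper leaves to the reader; the paper records these identities as an observation without proof. One small slip: for $u=v$ in part (4) the right-hand side is $0$, not $1$, because $v\notin V(G\ominus v)$, so the identity only holds for $u\neq v$, which is your fallback reading and the only case the paper uses.
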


In light of this observation we describe the operation $G\ominus v$ as \emph{setting $v$ to be occupied} and the operation $G - v$ as \emph{setting $v$ to be unoccupied}.

With this observation in hand, we record a tree recursion for hypertrees adapted from~\cite{bencs2023optimal} to the setting of the $\mu_{G,\lam,\zeta}$.  Note that we employ the usual convention that the empty product is equal to $1$.

\begin{lemma}\label{lem:tree_recursion2}
Let $T=(V,E)$ be a linear hypertree and let $v\in V$. Suppose that $\{e_1,\dots,e_d\}$ is the set of edges incident to $v$. For $u\in N_G(v)$, let
$T_u$ be the connected component of $u$ in $T-v$. Then we have
\[
R_{v}(T,\lambda,\zeta)=\lambda\prod_{i=1}^d\left(1-\zeta\prod_{u\in e_i\backslash\{v\}}\frac{R_u(T_u,\lambda,\zeta)}{1+R_u(T_u,\lambda,\zeta)}\right).
\]
\end{lemma}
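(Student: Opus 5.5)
We compute $R_v(T,\lam,\zeta)=Z^{\text{in}}_v/Z^{\text{out}}_v$ directly, using Observation~\ref{obs:identities} together with the fact that in a linear hypertree, deleting $v$ disconnects the branches hanging off the edges $e_1,\dots,e_d$. By items~\eqref{item:vominus} and~\eqref{item:vminus},
\[
R_v(T,\lam,\zeta)=\lam\,\frac{Z_{T\ominus v}(\lam,\zeta)}{Z_{T-v}(\lam,\zeta)}.
\]
The multihypergraphs $T\ominus v$ and $T-v$ have the same vertex set. Their edges agree except at the $e_i$: $T-v$ drops each $e_i$ entirely, while $T\ominus v$ keeps the shrunken edges $e_i\setminus\{v\}$. (If $|e_i|=1$, then $e_i\setminus\{v\}$ is the empty edge, which is always occupied. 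It contributes a factor $(1-\zeta)$, matching the empty product convention.)

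The first step is to peel off the edges $e_i\setminus\{v\}$ one at a time by repeatedly applying item~\eqref{item:edelete}. Since $T$ is a linear hypertree, the sets $e_i\setminus\{v\}$ are pairwise disjoint. Moreover, every vertex of $T-v$ lies in exactly one component $T_u$ with $u\in\bigcup_i (e_i\setminus\{v\})$, and distinct $u$ give distinct components; otherwise there would be two SAWs between some pair of vertices. Hence $T-v$ is the disjoint union $\bigsqcup_{u}T_u$, and $Z_{T-v}=\prod_u Z_{T_u}$.

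The cleanest way to finish is to expand instead of peeling inductively. Writing $(1-\zeta)^{\1[e_i\setminus\{v\}\subseteq S]}=1-\zeta\,\1[e_i\setminus\{v\}\subseteq S]$, we get
\[
Z_{T\ominus v}=\sum_{S}\lam^{|S|}(1-\zeta)^{|E_{T-v}(S)|}\prod_{i=1}^d\Bigl(1-\zeta\,\1[e_i\setminus\{v\}\subseteq S]\Bigr).
\]
Dividing by $Z_{T-v}$ turns this into an expectation under $\mu_{T-v,\lam,\zeta}$, which is a product measure over the components $T_u$. Under this product measure the events $\{e_i\setminus\{v\}\subseteq \mathbf S\}$ depend on disjoint sets of components, so the expectation factorizes as
\[
\prod_i\Bigl(1-\zeta\,\mu_{T-v}(e_i\setminus\{v\}\subseteq\mathbf S)\Bigr),
\]
and each inner probability is $\prod_{u\in e_i\setminus\{v\}}\mu_{T_u}(u\in\mathbf S)$. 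Finally, $\mu_{T_u}(u\in\mathbf S)=R_u(T_u)/(1+R_u(T_u))$ by~\eqref{eq:Rvdef}.

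The only real obstacle is the structural claim: the components $T_u$ are distinct for distinct $u\in\bigcup_i(e_i\setminus\{v\})$ and together cover $T-v$. Both facts follow from uniqueness of SAWs. Two neighbours $u\neq u'$ lying in a common component would produce two distinct SAWs from $u$ to $u'$: one through $v$, and one inside $T-v$. Coverage holds assuming $T$ is connected; if $T$ is not connected, the other components of $T$ contribute the same factor to numerator and denominator and cancel. Multiplicities of size-one edges at $v$ are handled automatically by the empty product.
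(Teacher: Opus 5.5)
Your proof is correct and is essentially the paper's argument. The paper splits $T\ominus v$ into the disjoint trees $T_i$ (the union of the $T_u$ for $u\in e_i\setminus\{v\}$, together with the edge $e_i\setminus\{v\}$) and applies the edge-deletion identity of Observation~\ref{obs:identities} to each $T_i$; this is the same computation as your expansion $(1-\zeta)^{\1[A]}=1-\zeta\,\1[A]$ combined with the independence of the components $T_u$ under $\mu_{T-v,\lam,\zeta}$.

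One small slip: if $u\neq u'$ lie in the same edge $e_i$, there is no SAW from $u$ to $u'$ through $v$, since it would reuse $e_i$. In that case the second SAW contradicting uniqueness should be $(u,e_i,u')$ itself, which differs from any SAW inside $T-v$ because $e_i\notin E(T-v)$.
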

\begin{proof}
For brevity we write $R_{v}(G)$ in place of $R_{v}(G,\lam,\zeta)$ and similarly drop $\lam, \zeta$ from partition function notation. By Observation~\ref{obs:identities} part \eqref{item:vominus},
\[
R_{v}(T)=\frac{Z_{v}^\textrm{in}(T)}{Z_{v}^\textrm{out}(T)}=\frac{\lambda  Z(T\ominus v) }{Z(T-v) }
\]
Note that $T-v$ consists of the disjoint union of the trees $T_u$ where $u\in N_G(v)$. Moreover, $T\ominus v$ is the union of $T-v$ and the edges $e_i'=e_i\backslash\{v\}$ for $i=1,\ldots,d$. $T\ominus v$ is therefore a disjoint union of trees $T_1,\ldots, T_d$ where $T_i$ is the union $\cup_{u\in e_i'}T_u$ with the edge $e_i'$ added\footnote{It may be the case that $e_i=\{v\}$ for some $i$ in which case $T_i=(\emptyset, \{\emptyset\})$ i.e. the hypergraph with empty vertex set and a single hyperedge which is the empty hyperedge. In this case $Z(T_i)=1-\zeta$.}. Note that by Observation~\ref{obs:identities} part~\eqref{item:edelete},
\[
Z(T_i)= Z(T_i-e_i')-\zeta\cdot Z_{e_i'}^{\textrm{in}}(T_i-e_i')\, ,
\]
and
\[
Z(T_i-e_i')=\prod_{u\in e_i'}Z(T_u) \quad\text{and}\quad Z_{e_i'}^{\textrm{in}}(T_i-e_i')=\prod_{u\in e_i'}Z^{\textrm{in}}_{u}(T_u)\, ,
\]
for all $i$.
We conclude that
\begin{align}
R_{v}(T)= \lam    \prod_{i=1}^d\frac{Z(T_i)}{Z(T_i-e_i')}
&=
\lam    \prod_{i=1}^d\left(1-\zeta \frac{Z_{e_i'}^{\textrm{in}}(T_i-e_i')}{Z(T_i-e_i')} \right)\\
&=
\lam    \prod_{i=1}^d\left(1-\zeta \prod_{u\in e_i'}\frac{Z^{\textrm{in}}_{u}(T_u)}{Z(T_u)} \right)\\
&=
\lambda   \prod_{i=1}^d\left(1-\zeta\prod_{u\in e_i'}\frac{R_u(T_u)}{1+R_u(T_u)}\right)\, .
\end{align}
\end{proof}

We will also make use of the following simple fact. Given a set $V$ and $p\in[0,1]$, we let $\mu_{V,p}$ denote the law of the $p$-random set $V_p$.
\begin{lemma}\label{lemStochDom}
For every (multi)hypergraph $G=(V,E)$ and every $\lam \ge 0$, $\zeta\in [0,1]$, the distribution $\mu_{G,\lam,\zeta}$ is stochastically dominated by $\mu_{V,p}$ where $p=\lam/(1+\lam)$. That is, there is a coupling of $\mathbf S\sim \mu_{G,\lam,\zeta}$ and $\mathbf S' \sim \mu_{V,p}$ such that $\mathbf S\subseteq \mathbf S'$ with probability 1. 
\end{lemma}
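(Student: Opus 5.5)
The plan is to build the coupling one vertex at a time, using the fact that every conditional occupation probability under $\mu_{G,\lam,\zeta}$ is at most $p$. Fix an ordering $v_1,\dots,v_N$ of $V$.

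The first step is the following claim. For any $i$ and any configuration $\sigma$ of $v_1,\dots,v_{i-1}$ with positive probability,
\[
\mu_{G,\lam,\zeta}(v_i\in\mathbf S\mid \sigma)\le p .
\]
To see this, condition on the full configuration $\tau$ of all vertices other than $v_i$. Write $m$ for the number of edges (with multiplicity) $e\ni v_i$ with $e\setminus\{v_i\}\subseteq\tau$. Then
\[
\frac{\mu(v_i\in\mathbf S\mid\tau)}{\mu(v_i\notin\mathbf S\mid\tau)}=\lam(1-\zeta)^m\le\lam ,
\]
since adding $v_i$ multiplies the weight by $\lam$ and by $(1-\zeta)$ for each newly completed edge. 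Hence $\mu(v_i\in\mathbf S\mid\tau)\le \lam/(1+\lam)=p$. The bound for $\mu(v_i\in\mathbf S\mid\sigma)$ follows by averaging over the $\tau$ that extend $\sigma$. The empty edge and edges of size $1$ cause no trouble: an empty edge contributes a constant factor $(1-\zeta)$ to every weight, and size-$1$ edges are covered by the same count $m$.

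The second step constructs the coupling sequentially. Take i.i.d.\ uniform random variables $U_1,\dots,U_N$ on $[0,1]$. Put $v_i\in\mathbf S'$ iff $U_i\le p$. Put $v_i\in\mathbf S$ iff $U_i\le \mu(v_i\in\mathbf S\mid \mathbf S\cap\{v_1,\dots,v_{i-1}\})$.

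By the chain rule, $\mathbf S$ has law $\mu_{G,\lam,\zeta}$. By independence of the $U_i$, $\mathbf S'$ has law $\mu_{V,p}$. By the claim, the threshold used for $\mathbf S$ is always at most $p$, so $v_i\in\mathbf S$ implies $v_i\in\mathbf S'$. Hence $\mathbf S\subseteq\mathbf S'$ almost surely.

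The only step needing care is the single-site bound. It relies on $\zeta\in[0,1]$, which makes $(1-\zeta)^m\le1$, so that adding a vertex never increases the weight by more than the factor $\lam$. The rest is the standard sequential coupling. Alternatively, one could invoke Holley's criterion, but the direct construction avoids needing any monotonicity of $\mu$.
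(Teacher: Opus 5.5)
Your proof is correct and takes the same route as the paper: a sequential vertex-by-vertex coupling in which every conditional inclusion probability under $\mu_{G,\lam,\zeta}$ is at most $p=\lam/(1+\lam)$. Your step of conditioning first on the full configuration of all other vertices (odds $\lam(1-\zeta)^m\le\lam$) and then averaging is the more careful justification, since the paper's displayed expression $\lam(1-\zeta)^{t(v)}/(1+\lam(1-\zeta)^{t(v)})$, with $t(v)$ counting only edges to earlier vertices, is only an upper bound on the conditional probability given the past, not its exact value.
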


\begin{proof}
Sample $\mathbf S \sim\mu_{G,\lam,\zeta}$ by sampling one vertex of $G$ at a time with the correct conditional probability given the previous history. The probability of including a vertex $v$ is then
\[
\frac{\lam (1 - \zeta)^{t(v)}}{1+\lam (1 - \zeta)^{t(v)}}\leq \frac{\lam}{1+\lam}=p \,,
\]
where $t(v)$ denotes the number of edges of $G$ that $v$ forms with vertices included in previous steps.
 We can therefore couple the sampling with a vertex-by-vertex sampling of $\mu_{V,p}$ so that a vertex is present in the sample from $\mu_{G,\lam,\zeta}$ only if it is present in the sample from $\mu_{V,p}$.  
\end{proof}

\section{The Weitz hypertree}
\label{secWeitzConstruction}

\subsection{Construction of the Weitz hypertree}

In this section we present the construction of the Weitz hypertree, described above in the introduction.  This construction appears explicitly in~\cite[Section 2.2]{bencs2023optimal} and implicitly in~\cite{liu2014fptas}. In~\cite{bencs2023optimal} the Weitz hypertree is defined inductively; here we give a more constructive description. Given a multihypergraph $G=(V,E)$ of maximum degree $\Delta$ and a distinguished vertex $v\in V$, we will construct a rooted linear hypertree, $T_v(G)$, of maximum degree at most $\Delta$ with the property that for all $\lam, \zeta$, the marginal of $v$ in the edge-penalty model on $G$ is exactly equal to the marginal of the root in the edge-penalty model on $T_v(G)$.

Given a multihypergraph $G = (V,E)$ and a vertex $v \in V$,
we can construct a linear hypertree of self-avoiding walks $T_{\mathrm{SAW},v}(G)$, with vertices of this hypertree labeled by vertices of $G$.   To avoid confusion we will refer to vertices of $G$ as \emph{vertices}, and vertices of $T_{\mathrm{SAW},v}(G)$ as \emph{nodes}; each node will be labeled with a vertex of $G$, but vertices may appear as labels of multiple nodes. Furthermore, we use $u,v,w\ldots$ to denote vertices of $G$ and $\bsf{u},\bsf{v},\bsf{w},\ldots$ to denote nodes of the tree, and $\bsf{r}$ to denote the root node of the tree, when the context is clear. To define the construction formally, we first introduce some notation.

Given a SAW $\bsf{w}=(v_1,e_1,\ldots, v_{\ell})$ in $G$, we let $\pi(\bsf{w})=v_\ell$, the terminating vertex of $\bsf{w}$. We say that a SAW $\bsf{w}'$ \emph{extends} $\bsf{w}$, denoted $\bsf{w}\prec \bsf{w}'$, if $\bsf{w}'=(v_1,e_1,\ldots, v_{\ell}, e_\ell, v_{\ell+1})$. We now define $T_{\mathrm{SAW},v}(G)$: it is the hypergraph whose set of nodes is the set of all SAWs in $G$ starting from $v$ and $\{\bsf{w}, \bsf{w}_1,\ldots, \bsf{w}_{t}\}$ ($t\geq 1$) forms an edge if $\bsf{w} \prec \bsf{w}_i$ for all $i$; $\bsf{w}_1,\ldots, \bsf{w}_{t}$ differ only in their final coordinate; and $\{\pi(\bsf{w}), \pi(\bsf{w}_1),\ldots, \pi(\bsf{w}_{t})\}\in E$. Moreover, if $\{\pi(\bsf w)\}\in E$ with multiplicity $m$, then $\{\bsf w\}$ is an edge of $T_{\mathrm{SAW},v}(G)$ with multiplicity $m$.

We think of $\pi$ as a labeling of the nodes of $T_{\mathrm{SAW},v}(G)$ by vertices in $G$.
Note that if $\bsf{e}=\{\bsf{w}_1, \ldots, \bsf{w}_k\}$ is an edge of $T_{\mathrm{SAW},v}(G)$ then $\pi(\bsf{e})=\{\pi(\bsf{w}_1), \ldots, \pi(\bsf{w}_k)\}$ is an edge of $G$ and we refer to $\pi(\bsf{e})$ as the label of $\bsf{e}$. We call the node $\bsf{r}$ where $\bsf{r}=(v)$, the \emph{root} of $T_{\mathrm{SAW},v}(G)$.
If for $\bsf{w}, \bsf{w}'$ there exists a sequence $\bsf{w}=\bsf{w}_1\prec \ldots \prec \bsf{w}_\ell=\bsf{w}'$ we say that $\bsf{w}'$ is a \emph{descendant} of $\bsf{w}$ and $\bsf{w}$ is an \emph{ancestor} of $\bsf{w}'$. We refer to the tree induced by $\bsf{w}$ and its descendants as the \emph{subtree rooted at $\bsf{w}$}. Moreover, we refer to the edge of $T_{\mathrm{SAW},v}(G)$ that contains $\bsf{w}$ and none of its descendants as the \emph{parent edge} of $\bsf{w}$. We call the unique node $\bsf{w}'$ in the parent edge of $\bsf{w}$ such that $\bsf{w}'\prec \bsf{w}$ the \emph{parent node} of $\bsf{w}$. If the SAW $\bsf{w}$ has length $\ell$ we say $\bsf{w}$ is at \emph{depth} $\ell$ in $T_{\mathrm{SAW},v}(G)$. 
We note that by construction $T_{\mathrm{SAW},v}(G)$ is a linear hypertree. Moreover, the maximum size of an edge in $T_{\mathrm{SAW},v}(G)$ is at most that in $G$ and the same holds for the maximum degree.

The \textit{Weitz hypertree}, $T_v(G)$, is a linear hypertree which we obtain from $T_{\mathrm{SAW},v}(G)$ by operations of removing edges and removing nodes from edges (i.e.\ setting nodes to be occupied).  These operations will depend on two orderings: an arbitrary ordering of $V$ and an arbitrary ordering of $E$, both of which we denote by $<$. The node and edge operations are as follows. 

    Suppose $\bsf{w}$ has a parent edge $\bsf{e}$ with label $\pi(\bsf{e})=\{u_1,\ldots, u_k\}$ and parent vertex $\bsf{w}'$ and $f_1, \ldots, f_d$ are the edges of $G$ incident to $\pi(\bsf{w}')$. Then the node/edge operations on $T_{\mathrm{SAW},v}(G)$ at $\bsf{w}$ are
     \begin{enumerate}
      \item\label{op1}  set any descendant of $\bsf{w}$ with label $u_i$ where $u_i< \pi(\bsf{w})$ to be occupied,
      
      \item\label{op2} delete all edges in the subtree rooted at $\bsf{w}$ with label $f_i$ where  $f_i< \pi(\bsf{e})$.
     \end{enumerate}

It will be useful to make the following convention: when we set a node to be occupied in the above procedure, any edge that gets contracted as a result retains its original label.
To obtain $T_v(G)$ from $T_{\mathrm{SAW},v}(G)$ we apply the operations~\eqref{op1} and ~\eqref{op2} at each (non-root) node $\bsf{w}$ of $T_{\mathrm{SAW},v}(G)$ (unless it has been set to occupied by a previous operation).
The final tree $T_v(G)$ is the connected component of the root $\bsf{r}$ after performing these operations.  We note that if $G$ is a linear hypertree and $v\in V(G)$, then both  $T_{\mathrm{SAW},v}(G)$ and $T_v(G)$ are isomorphic to $G$.

The significance of the Weitz hypertree is that the marginal of the root  in the edge-penalty model on $T_v(G)$ is equal to the marginal of $v$ in the edge-penalty  model on $G$. This is proved (for simple hypergraphs) in~\cite{bencs2023optimal} for the hard-core model i.e.\ the case $\zeta =1$. We give the proof of the extension to the edge-penalty model here. 

\begin{lemma}
\label{lemBencs}
Let $G = (V,E)$ be a multihypergraph and let $v \in V$. For any $\lam \ge 0$, $\zeta \in[0,1]$ we have,
    \[ \mu_{G,\lam, \zeta} ( v \in \mathbf S) = \mu_{T_v(G),\lam, \zeta} ( \bsf{r} \in \mathbf S) \, ,\]
    where $\bsf{r}$ is the root of $T_v(G)$.
\end{lemma}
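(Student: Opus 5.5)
The proof goes by induction, following the inductive definition of the Weitz hypertree in~\cite{bencs2023optimal}. Throughout we work with the ratio $R_v(G,\lam,\zeta)$ from~\eqref{eq:Rvdef}; since the marginal is $R/(1+R)$, it suffices to show $R_v(G)=R_{\bsf r}(T_v(G))$. The induction is on $|V|+|E|$, counting edges with multiplicity. In the base case $v$ has no incident edges of size at least two. Then $G$ restricted to the component of $v$ is a star of singleton edges $\{v\}$ with multiplicities, and both sides equal $\lam(1-\zeta)^{m}$, where $m$ is the multiplicity of $\{v\}$.

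For the inductive step, let $f_1<\dots<f_d$ be the edges incident to $v$, ordered by $<$. By Observation~\ref{obs:identities}\eqref{item:vominus}--\eqref{item:vminus}, we have $R_v(G)=\lam Z(G\ominus v)/Z(G-v)$. We telescope over the edges by passing through the hypergraphs $G_i$, obtained from $G\ominus v$ by keeping only the contracted edges $f_1',\dots,f_i'$ among those formerly at $v$, where $f_j'=f_j\setminus\{v\}$. Then $G_0=G-v$ and $G_d=G\ominus v$, and
\[
R_v(G)=\lam\prod_{i=1}^d \frac{Z(G_i)}{Z(G_{i-1})}.
\]
By Observation~\ref{obs:identities}\eqref{item:edelete}, each factor equals $1-\zeta\,\mu_{G_{i-1}}(f_i'\subseteq \mathbf S)$.

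We expand this joint probability by the chain rule over the vertices $u_1<\dots<u_{k-1}$ of $f_i'$. By Observation~\ref{obs:identities}\eqref{item:condcontract}, it becomes $\prod_j \mu_{H_{i,j}}(u_j\in\mathbf S)$, where $H_{i,j}$ is $G_{i-1}$ with $u_1,\dots,u_{j-1}$ set to occupied. Each $H_{i,j}$ has strictly smaller $|V|+|E|$ than $G$, because $v$ has been removed. So the induction hypothesis gives $\mu_{H_{i,j}}(u_j\in\mathbf S)=\mu_{T_{u_j}(H_{i,j})}(\text{root}\in\mathbf S)$. These are exactly the two operations of the Weitz construction. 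Setting the smaller-labelled siblings $u_1,\dots,u_{j-1}$ to occupied is operation~\eqref{op1}. Discarding the edges $f_{i+1},\dots,f_d$ at $v$ and keeping the contracted $f_1',\dots,f_{i-1}'$ is governed by operation~\eqref{op2} together with the fact that occupied nodes contract edges.

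The remaining task is to check that $T_{u_j}(H_{i,j})$ is isomorphic, as a labelled rooted hypertree, to the subtree of $T_v(G)$ rooted at the child node of $\bsf r$ through $f_i$ labelled $u_j$ (after its own operations are applied). Once this holds, Lemma~\ref{lem:tree_recursion2} applied to $T_v(G)$ at $\bsf r$ gives exactly the product above, with $R/(1+R)$ of each child equal to the root marginal of its subtree. Singleton edges $\{v\}$ contribute factors $1-\zeta$ on both sides, consistent with the footnote to Lemma~\ref{lem:tree_recursion2}.

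\textbf{Main obstacle.} This isomorphism check is where I expect the difficulty. Self-avoiding walks in $H_{i,j}$ from $u_j$ must correspond to self-avoiding walks in $G$ of the form $(v,f_i,u_j,\dots)$, and the occupied and deleted structures must match. In particular, a walk returning toward $v$ in $G$ corresponds, in $H_{i,j}$, to a walk using a contracted edge $f_l'$ with $l<i$, which was left in place by operation~\eqref{op2}. Likewise, the nodes labelled by the occupied siblings $u_1,\dots,u_{j-1}$ must match operation~\eqref{op1}. My plan is to formalise this as a bijection of self-avoiding walks that preserves labels, after stating the convention for what the operations mean inside a subtree. The multiplicity convention handles multiedges and contracted edges that shrink to singletons.
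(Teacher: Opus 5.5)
Your architecture matches the paper's proof. You pass to the ratios $R$, telescope $R_v(G)=\lam Z(G\ominus v)/Z(G-v)$ over the edges at $v$ via Observation~\ref{obs:identities}, and expand each edge-occupation probability by the chain rule over its vertices in increasing order, which does match operation~\eqref{op1}. You then identify each conditioned hypergraph with a root subtree of $T_v(G)$ by a label-preserving bijection of self-avoiding walks, which is the paper's Claim~\ref{claim:subtree}.

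However, your telescoping runs in the wrong direction relative to operation~\eqref{op2}, so the isomorphism you plan to verify is false. Your $i$-th factor is computed in $G_{i-1}$, which keeps the contracted $f_1',\dots,f_{i-1}'$ and deletes $f_i,\dots,f_d$. Operation~\eqref{op2} at a child $\bsf r^i_j$ of the root, whose parent edge is labelled $f_i$, does the opposite. It deletes from that subtree every edge labelled $f_l<f_i$, and keeps, with the $v$-node contracted, those labelled $f_l>f_i$. So the subtree at $\bsf r^i_j$ corresponds to $(G-\{f_1,\dots,f_i\})\ominus\{v,u_1,\dots,u_{j-1}\}$, not to your $H_{i,j}$. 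The two telescopings have the same product but different individual factors.

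A triangle already shows this. Take $f_1=\{v,a\}<f_2=\{v,b\}$, a third edge $\{a,b\}$, and $\zeta=1$. Your first factor uses $\mu_{G-v}(a\in\mathbf S)=\lam/(1+2\lam)$. In $T_v(G)$, the subtree at $(v,f_1,a)$ is a single edge to the node $(v,f_1,a,\{a,b\},b)$. That node lies in the $f_2$-edge with its $v$-node contracted, i.e.\ a singleton, so it is forced out, and the root marginal is $\lam/(1+\lam)$.

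The fix is to telescope the other way, as the paper does: $R_v(G)=\lam\prod_i Z((G-E_{i-1})\ominus v)/Z((G-E_i)\ominus v)$ with $E_i=\{f_1,\dots,f_i\}$. The $i$-th factor is then $1-\zeta\,\mu_{(G-E_i)\ominus v}(f_i'\subseteq\mathbf S)$. Equivalently, run your scheme with the edges at $v$ listed in decreasing order.

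Beyond that, the isomorphism is the real content of the proof, and you only outline it. With the orientation corrected, your plan coincides with the paper's, which has three parts:
\begin{enumerate}
\item The map $(u_j,h_1\setminus U,v_2,\dots)\mapsto(v,f_i,u_j,h_1,v_2,\dots)$ embeds $T_{\mathrm{SAW},u_j}$ of the conditioned hypergraph into $T_{\mathrm{SAW},v}(G)$, after deleting edges labelled in $\{f_1,\dots,f_i\}$ and contracting nodes labelled in $U=\{v,u_1,\dots,u_{j-1}\}$.
\item This map is onto the component of $\bsf r^i_j$, because any node whose root path passes through a contracted node or a deleted edge is cut off from $\bsf r^i_j$.
\item Operations \eqref{op1}--\eqref{op2} at deeper nodes agree in both trees, because the vertex and edge orderings are inherited from $G$.
\end{enumerate}
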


\begin{proof}[Proof of Lemma~\ref{lemBencs}]
We begin by observing that we may assume that $G$ does not contain $\{v\}$ as an edge. Indeed, suppose that $G$ contains the edge $\{v\}$ with multiplicity $m\geq 1$ and let $G'$ denote $G$ with all copies of the edge $\{v\}$ removed. Let  $T=T_v(G)$ and note that $T$ contains the edge $\{\bsf{r}\}$ with multiplicity $m$ by definition. We let $T'$ denote $T$ with all edge $\{\bsf{r}\}$ removed. We observe that $R_v(G,\lam,\zeta)=(1-\zeta)^mR_v(G',\lam,\zeta)$ and $R_\bsf{r}(T,\lam,\zeta)=(1-\zeta)^mR_\bsf{r}(T',\lam,\zeta)$. Moreover, $T'=T_v(G')$ since no SAW in $G$ can contain the edge $\{v\}$ and $\bsf r$ is the only node in $T$ with label $v$. It therefore suffices to show that $R_v(G',\lam,\zeta)= R_\bsf{r}(T_v(G'),\lam,\zeta)$ i.e. we may assume that $m=0$.

For brevity, let us denote $R_v(G) = R_v(G,\lam,\zeta)$. When the context is clear, we also drop $\lam, \zeta$ from partition function notation. 

Our goal is to show that $R_v(G)=R_{\bsf{r}}(T)$. We proceed by induction on $|V|$. If $V=\{v\}$ then $G$ and $T$ both have one vertex and no edges so the result follows. Suppose then that $|V|\geq 2$. Let $v\in V$ and let $e_1<e_2<\ldots<e_d$ denote the edges of $G$ incident to $v$. Let us denote $e_i=\{v,v^i_1,\ldots, v^i_{\ell_i}\}$ where $v^i_1<\ldots< v^i_{\ell_i}$  and let $E_i=\{e_1,\ldots, e_i\}$ for $i\in [d]$. Noting that $G-v=G-E_d\ominus v$ we have
 \begin{align}\label{eq:RvG}
R_v(G)=\lam\cdot \frac{ Z(G\ominus v)}{Z((G-E_d)\ominus v)}=\lam\cdot \prod_{i=1}^d \frac{Z((G-E_{i-1})\ominus v)}{Z((G-E_{i})\ominus v)}\, .
 \end{align}

 Let $\hat{G}_i=(G-E_{i-1})\ominus v$ and $\hat{e}_i := e_i \setminus \{v\}$, and note that part~\eqref{item:edelete} in Observation~\ref{obs:identities} gives us 
 \begin{align}\label{eq:RvG2}
\frac{Z((G-E_{i-1})\ominus v)}{Z((G-E_{i})\ominus v)}=\frac{Z(\hat{G}_i)}{Z(\hat{G}_i-\{\hat{e}_i\})}=1-\zeta\cdot\frac{ Z_{\hat{e}_i}^{\text{in}}(\hat{G}_i-\{\hat{e}_i\})}{Z(\hat{G}_i-\{\hat{e}_i\})}.
\end{align}

We now consider $R_{\bsf{r}}(T)$. Note that in $T$, the root $\bsf{r}$ has incident edges $\bsf{e}_1',\ldots, \bsf{e}_d'$ where $\pi(\bsf{e}_i')=e_i$ for each $i$. Write $\bsf{e}'_i=\{\bsf{r},\bsf{r}^i_1,\ldots, \bsf{r}^i_{\ell_i}\}$ where $\pi(\bsf{r}^i_j)=v^i_j$. Let $T=T_v(G)$, then by  Lemma~\ref{lem:tree_recursion2}
\begin{align}
R_{\bsf{r}}(T)= \lam \cdot\prod_{i=1}^d\left(1-\zeta\prod_{j=1}^{\ell_i}\frac{R_{\bsf{r}^{i}_j}(T_j^i)}{1+R_{\bsf{r}^{i}_j}(T_j^i)}\right)
\end{align}
where $T_j^i$ is the component of $\bsf{r}_j^i$ in $T-\bsf{r}$.
Comparing this to~\eqref{eq:RvG} and~\eqref{eq:RvG2} we see that it suffices to show  
\begin{align}\label{eq:treejump}
\frac{ Z_{\hat{e}_i}^{\text{in}}(\hat{G}_i-\{\hat{e}_i\})}{Z(\hat{G}_i-\{\hat{e}_i\})} = \prod_{j=1}^{\ell_i}\frac{R_{\bsf{r}^{i}_j}(T_j^i)}{1+R_{\bsf{r}^{i}_j}(T_j^i)}\quad \forall i\in [d].
\end{align}
Let $G_j^i=(G-E_i)\ominus\{v,v_1^i\ldots, v_{j-1}^i\}$ and let $S^i_{j}=\{v_1^i,\ldots, v^i_{j}\}$. Then 
\begin{align}\label{eq:einGprime}
    \frac{ Z_{\hat{e}_i}^{\text{in}}(\hat{G}_i-\{\hat{e}_i\})}{Z(\hat{G}_i-\{\hat{e}_i\})}=\prod_{j=1}^{\ell_i} \frac{ Z_{S^i_j}^{\text{in}}(\hat{G}_i-\{\hat{e}_i\})}{Z_{S^i_{j-1}}^{\text{in}}(\hat{G}_i-\{\hat{e}_i\})} = \prod_{j=1}^{\ell_i} \frac{ Z_{v^i_{j}}^{\text{in}}(G_j^i)}{Z(G_j^i)}= \prod_{j=1}^{\ell_i}  \frac{R_{v^i_{j}}(G_j^i)}{1+R_{v^i_{j}}(G_j^i)} \, ,
\end{align}
where we used Observation~\ref{obs:identities} part~\eqref{item:vominus} for the penultimate equality. The result follows once we prove the following claim. 

\begin{claim}\label{claim:subtree}
Let $G_j^i$ inherit the ordering of its vertices and edges from $G$ in the natural way. Then there exists an isomorphism
\[
T_{v_j^i}(G_j^i) \cong T_j^i\, .
\]
which maps the root of $T_{v_j^i}(G_j^i)$ to $\bsf{r}_{j}^i$. 
\end{claim}
Indeed, then by the induction hypothesis and Claim~\ref{claim:subtree}, we have 
\[
R_{v^i_{j}}(G_j^i) = R_{\bsf{r}_j^i}(T_j^i)\, .
\]
Our desired equality~\eqref{eq:treejump} then follows from~\eqref{eq:einGprime}. It remains to prove Claim~\ref{claim:subtree}.

\begin{proof}[Proof of Claim~\ref{claim:subtree}]
For brevity, let $F=E_i$ and let $U=\{v,v_1^i\ldots, v_{j-1}^i\}$ so that $G_j^i=(G-F)\ominus U$. Moreover, let $\tilde F=\{f\in E(T_{\mathrm{SAW}, v}(G)): \pi(f)\in F\}$ and $\tilde U=\{u\in V(T_{\mathrm{SAW}, v}(G)): \pi(u)\in U\}$, the set of edges and vertices in $T_{\mathrm{SAW}, v}(G)$ whose label lies in $F$ and $U$ respectively.
We will first show that $T_{\mathrm{SAW}, v_j^i}(G_j^i)$ is isomorphic to the component containing $\bsf{r}_j^i$ in $(T_{\mathrm{SAW}, v}(G)-\tilde F) \ominus \tilde U$. The result will then follow quickly. 

Note that every node of $T_{\mathrm{SAW}, v_j^i}(G_j^i)$ is of the form 
\[
\bsf{w}=(v_1,h_1\backslash U,\ldots, h_\ell\backslash U,v_{\ell+1})
\]
where $v_1=v_j^i$, each $h_i$ is an edge of $G-F$, and each $v_i\notin U$. We will show that the map 
\[
\phi: \bsf{w} \mapsto (v,e_i,v_1,h_1,\ldots, h_\ell,v_{\ell+1})
\]
is our desired isomorphism.  First note that $\phi(\bsf{w})$ is indeed a SAW in $G$ with label $v_{\ell+1}\notin U$ so that $\phi(\bsf{w})$ is a node of $(T_{\mathrm{SAW}, v}(G)-\tilde F) \ominus \tilde U$. Moreover, $\phi$ is injective and maps the root of $T_{\mathrm{SAW}, v_j^i}(G_j^i)$ to $\bsf{r}_{j}^i=(v,e_i,v_j^i)$. Now, any edge of $T_{\mathrm{SAW}, v_j^i}(G_j^i)$ is of the form
\[
\Gamma=\{\bsf{w}\}\cup \{(\bsf{w},h\backslash U,x) : x\in h\backslash (U \cup \pi(\bsf{w})) \}
\]
where $\bsf{w}=(v_1,h_1\backslash U,\ldots, h_\ell\backslash U,v_{\ell+1})$ is a node of $T_{\mathrm{SAW}, v_j^i}(G_j^i)$, $h\in E(G)-F$ is distinct from $e_i,h_1,\ldots, h_\ell$ and each choice of $x$ is distinct from $v,v_1,\ldots, v_{\ell+1}$. Moreover, none of $v,v_1,\ldots, v_{\ell+1}$ belong to $U$. It follows that 
\[
\{\phi(\bsf{w})\}\cup \{(\phi(\bsf{w}),h,x) : x\in h\backslash \pi(\bsf{w}) \}
\]
is an edge of $T_{\mathrm{SAW}, v}(G)$ and its label is $h\notin F$ and so it is in fact an edge of $T_{\mathrm{SAW}, v}(G)-\tilde F$. Removing the elements of this edge whose label belongs to $U$ we obtain
\[
\phi(\Gamma)= \{\phi(\bsf{w})\}\cup \{(\phi(\bsf{w}),h,x) : x\in h\backslash (U \cup \pi(\bsf{w}) \}
\]
and so $\phi(\Gamma)$ is an edge of $(T_{\mathrm{SAW}, v}(G)-\tilde F) \ominus \tilde U$. We conclude that $\phi$ is an isomorphism from $T_{\mathrm{SAW}, v_j^i}(G_j^i)$ to a connected subgraph of $(T_{\mathrm{SAW}, v}(G)-\tilde F) \ominus \tilde U$ containing $\bsf{r}_j^i$. We now show that this subgraph is in fact the entire component containing $\bsf{r}_j^i$. For this it suffices to show that any edge of this component is the image of an edge of $T_{\mathrm{SAW}, v_j^i}(G_j^i)$ under $\phi$.

Note that any edge in the component of $\bsf{r}_j^i$ in $(T_{\mathrm{SAW}, v}(G)-\tilde F) \ominus \tilde U$ is of the form 
\[
\Lambda=\{\bsf{w}\}\cup \{(\bsf{w},h,x) : x\in h\backslash (U \cup \pi(\bsf{w})) \}
\]
where $\bsf{w} =(v,e_1,v_1,h_1,\ldots, v_{\ell+1})$ is a node of $T_{\mathrm{SAW}, v}(G)$, $v_1=v_j^i$ and $h\notin F$. We will show that each $v_i\notin U$ and each $h_i\notin F$. Indeed, let $P$ denote the path from $\bsf{r}_j^i$ to $\bsf{w}$ in $T_{\mathrm{SAW}, v}(G)$. We note that if $X=(v,e_i,v_1,h_1,\ldots,v_i)$ for some $i$ then $X$ is a node of $P$ and $P\ominus \{X\}$ is disconnected. We conclude that $X\notin \tilde U$ else $\bsf{w}$ is not in the component of $\bsf{r}_j^i$ in $(T_{\mathrm{SAW}, v}(G)-\tilde F) \ominus \tilde U$. Thus $v_i \notin U$ for all $i$ as claimed. Note also that $P$ consists of edges $f_1,\ldots, f_\ell$ (say) where the label of $f_i$ is $h_i$. Moreover $P-\{f_i\}$ is disconnected for any $i$. It follows that each $h_i\notin F$ else $\bsf{w}$ is not in the component of $r_j^i$ in $(T_{\mathrm{SAW}, v}(G)-\tilde F) \ominus \tilde U$. It follows that
\[
\phi^{-1}(\bsf{w})=(v_1,h_1\backslash U,\ldots, h_\ell\backslash U,v_{\ell+1})
\]
is a node of $T_{\mathrm{SAW}, v_j^i}(G_j^i)$ and 
\[
\phi^{-1}(\Lambda)=
\{\phi^{-1}(\bsf{w})\}\cup \{(\phi^{-1}(\bsf{w}),h\backslash U,x) : x\in h\backslash( U\cup \pi(\bsf{w})) \}
\]
is an edge of $T_{\mathrm{SAW}, v_j^i}(G_j^i)$. We conclude that $T_{\mathrm{SAW}, v_j^i}(G_j^i)$ is isomorphic to the component containing $\bsf{r}_j^i$ in $(T_{\mathrm{SAW}, v}(G)-\tilde F)\ominus \tilde U$. 

Finally we note that given $T_{\mathrm{SAW}, v}(G)$, to obtain the subtree of $T_v(G)$ rooted at $\bsf{r}_j^i$ we apply operations~\eqref{op1} and~\eqref{op2} with $\bsf{w}=\bsf{r}_j^i$ and obtain the subtree of $(T_{\mathrm{SAW}, v}(G)-\tilde F) \ominus \tilde U$ rooted at $\bsf{r}_j^i$. We then apply operations ~\eqref{op1} and~\eqref{op2} at every other node of this subtree. If $G_j^i$ inherits the ordering of its vertices and edges from $G$ then these operations are identical to the operations performed to obtain $T_{v_j^i}(G_j^i)$ from $T_{\mathrm{SAW}, v_j^i}(G_j^i)$. This completes the proof.  
\end{proof}

\end{proof}

\subsection{Weitz tree of asymptotically tree-like hypergraphs}
Recall that for a hypergraph $G=(V,E)$ and $v\in V$ we write $E_G(v)$ for the set of $e\in E$ that contain $v$.

\begin{lemma}
\label{lemWeitzTreeStructure}
Fix $k\geq 2$ and let $G$ be an asymptotically tree-like $k$-uniform hypergraph with maximum degree $\Delta$. Let $U\subseteq V(G)$ be such that $v\notin U$, $|U|=O(1)$. 
Then the Weitz hypertree $T=T_v(G\ominus U)$ has the following properties: for every fixed $L>0$ and any node $\bsf{w}$ at depth $L$ with $\pi(\bsf w)=w$,
\begin{enumerate}
    \item $|\pi(E_T(\bsf w)) \triangle E_{G}(w)|=o(\Delta)$. \label{tree0}
    \item $\bsf{w}$ has degree $d_G(w) - o(\Delta)$.\label{tree1}
    \item for each $\ell \in \{2, \dots, k-1\}$, $\bsf{w}$ is in  $o( \Delta^{\frac{\ell-1}{k-1}}  )$ edges of size $\ell$.\label{tree2}
     \item $\bsf{w}$ has $o(\Delta)$ neighbors contained in an edge of size $1$.\label{tree3}
\end{enumerate}
\end{lemma}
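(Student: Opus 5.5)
The plan is to trace how a node $\bsf w$ at fixed depth $L$ arises from $T_{\mathrm{SAW},v}(G\ominus U)$. Then I will bound, using only the degree and codegree conditions of Definition~\ref{def:tree}, how many of the edges of $G$ at $w=\pi(\bsf w)$ are lost, shrunk, or created along the way. Write $\bsf w=(v_1,e_1,\dots,e_L,v_{L+1})$ with $v_1=v$, $v_{L+1}=w$, and let $A=U\cup\{v_1,\dots,v_L\}$. Then $|A|=O(1)$, since $|U|=O(1)$ and $L$ is fixed.

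In $T_{\mathrm{SAW},v}(G\ominus U)$, the edges at $\bsf w$ other than its parent edge are in bijection with edges $h\ni w$ of $G\ominus U$ other than $e_L$. Each such child edge consists of $\bsf w$ together with the extensions of $\bsf w$ by $h$ to the vertices $x\in h\setminus(\{w\}\cup A)$. An edge $h\in E_G(w)$ is therefore faithfully represented, as a $k$-set, unless $h$ meets $A\setminus\{w\}$. The number of edges of $G$ containing $w$ and some fixed $a\neq w$ is at most $\Delta_2(G)=o(\Delta^{(k-2)/(k-1)})=o(\Delta)$ (for $k=2$ this count is at most $1$). Summing over $a\in A$ gives $O(1)\cdot o(\Delta)$ edges at $w$ that are shrunk or absent. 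An edge $h$ can be truncated to size exactly $\ell$ only if $|h\cap A|\ge k-\ell+1$ (one more since $w\in h$), which gives at most $\binom{|A|}{k-\ell}\Delta_{k-\ell+1}(G)$ edges of size $\ell$. By condition (2), this is $o(\Delta^{(\ell-1)/(k-1)})$, matching item~\eqref{tree2}; for $\ell=1$ the same count bounds the size-$1$ edges.

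Next I will account for the Weitz operations~\eqref{op1} and~\eqref{op2} applied at the $L$ ancestors of $\bsf w$ (and at $\bsf w$ itself). Operation~\eqref{op1} at an ancestor occupies nodes whose labels lie in some fixed edge $e_j$, so it only further shrinks edges at $\bsf w$ that meet $e_j\setminus\{w\}$. This is a set of $O(kL)=O(1)$ vertices, so the same $\Delta_2$ and $\Delta_{k-\ell+1}$ counting applies with $A$ enlarged by these labels. Operation~\eqref{op2} at an ancestor $\bsf w_j$ deletes edges in the subtree whose label is one of the $d(\pi(\bsf w_{j-1}))\le\Delta$ edges at the parent vertex. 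The deleted edges incident to $\bsf w$ have labels containing both $w$ and $\pi(\bsf w_{j-1})$. Their number is therefore at most $\Delta_2=o(\Delta)$ per ancestor, provided $\pi(\bsf w_{j-1})\neq w$, which holds by self-avoidance. Nodes set to occupied or lying outside the root's component never alter $\bsf w$'s own edge set beyond these effects. Together these bounds give item~\eqref{tree0}: $\pi(E_T(\bsf w))\subseteq E_G(w)$ up to the parent edge, with only $o(\Delta)$ edges missing. Item~\eqref{tree1} then follows, because the degree equals the number of surviving labels.

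For item~\eqref{tree3}, a neighbor $\bsf x$ of $\bsf w$ lies in a size-$1$ edge $\{\bsf x\}$ either because $\{x\}\in G\ominus U$, or because $\bsf x$ has a child edge truncated to a singleton. Within $\bsf x$'s own edge set, the second case means an edge containing $x$ whose other $k-1$ vertices are all occupied, i.e. lie in the $O(1)$-size set $A'=A\cup\{w\}\cup(\text{labels of }\bsf x\text{'s parent edge})$. In particular this requires an edge $h\ni x$ with $h\setminus\{x\}\subseteq A'$. I expect this to be the main obstacle, because the parent edge of $\bsf x$ depends on $\bsf x$. The way around it is to split on whether $h\setminus\{x\}$ contains a vertex of $A\cup\{w\}$ other than the siblings of $\bsf x$. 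If it does, then $x$ lies in an edge meeting $A\cup\{w\}$ that also meets an edge $f$ at $w$ ($f$ being the label of $\bsf x$'s parent edge). The count of such $x$ over all $f\in E_G(w)$ is controlled by the codegree $\Gamma(G)=o(\Delta)$ together with $\Delta_2$. If it does not, $h$ would be an edge $\ne f$ meeting $f$ in at least $2$ vertices, and the number of such configurations is bounded via $\Delta_2$. Finally I will check that each node-labelled neighbor is counted only $O(1)$ times, which gives $o(\Delta)$.
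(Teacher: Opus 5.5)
Items (1)--(3) follow the paper's proof. The removable labels lie in the $O(1)$-size set $B=U\cup\bigcup_{i\le L}e_i$, which contains $w$ and the path vertices. Shrinkage and operation~(2) deletions are charged to $\Delta_2(G)$. An edge of size $\ell$ is counted by choosing its $k-\ell$ removed labels in $B$ and then an edge through them and $w$, which gives $\Delta_{k-\ell+1}(G)=o(\Delta^{(\ell-1)/(k-1)})$. Your parenthetical should read $|h\cap(A\cup\{w\})|\ge k-\ell+1$, but the bound you write is correct.

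Item (4) is only sketched, and the case split as written does not close. Fix a child edge of $\bsf{w}$ with label $f$, a node $\bsf{x}$ in it with label $x$, and an edge $h\ni x$, $h\neq f$, that collapses to $\{\bsf{x}\}$. Then $h\setminus\{x\}\subseteq B\cup(f\setminus\{w,x\})$.

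\emph{Your second case is empty.} If $h\setminus\{x\}$ has no non-sibling vertex, then $h\setminus\{x\}\subseteq f\setminus\{w,x\}$, a set of size at most $k-2$, which is impossible. The $\Delta_2$ bound you propose for it would not work anyway. Nothing stops every one of the $\approx\Delta$ edges $f$ at $w$ from meeting some other edge in two vertices, so ``bounded via $\Delta_2$'' gives no $o(\Delta)$ bound on the number of bad $f$.

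\emph{Your first case merges two situations.} They need different tools, and ``codegree together with $\Delta_2$'' does not handle the second.

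The right parameter is $a=|h\setminus(f\setminus\{w\})|$, the number of vertices of $h$, other than $x$, that are not siblings of $\bsf{x}$. All of them lie in $B$, and $a\ge 1$ by the cardinality remark above.
\begin{itemize}
\item If $a=1$, write $h\setminus(f\setminus\{w\})=\{p\}$. Then $h\setminus\{p\}=f\setminus\{w\}$, and $p\neq w$ since otherwise $h=f$. So the number of such $f$ is at most $\sum_{p\in B}\Gamma(w,p)\le |B|\,\Gamma(G)=o(\Delta)$. This is where the codegree condition enters.
\item If $a\ge 2$, then $2\le a\le k-1$. Choose $P=h\setminus(f\setminus\{w\})\subseteq B$ in $O(1)$ ways. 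Then choose $h\supseteq P$ in at most $\Delta_a(G)$ ways. Then choose $f\supseteq \{w\}\cup(h\setminus P)$, a set of size $k-a+1\ge 2$, in at most $\Delta_{k-a+1}(G)$ ways. The total is $o(\Delta^{\frac{k-a}{k-1}})\cdot o(\Delta^{\frac{a-1}{k-1}})=o(\Delta)$.
\end{itemize}
In the second bullet, using $\Delta_2$ for both factors gives only $o(\Delta^{2(k-2)/(k-1)})$, which is not $o(\Delta)$ once $k\ge 4$. So you genuinely need the complementary pair $\Delta_a,\Delta_{k-a+1}$ from condition~(2) of Definition~\ref{def:tree}. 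Singleton edges already present in $G\ominus U$ are covered, since $U\subseteq B$.

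This trichotomy $a=0$ (impossible), $a=1$ (codegree), $a\ge 2$ (product of higher-degree bounds) is exactly the paper's split according to $t+k-r$ in its ``endangered edges'' argument. With it, multiplying by the at most $k$ choices of $x\in f$ gives the $o(\Delta)$ bound on neighbours.
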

\begin{proof}
Let $\bsf{w}=(v_1,e_1,\ldots,e_L, v_{L+1})$ be a node of $T'=T_{\mathrm{SAW},v}(G\ominus U)$ and let $w=v_{L+1}$ denote its label. Any non-parent edge $\bsf f$ of $T'$ containing $\bsf{w}$ has the form
\[
\bsf f=\{\bsf{w}\}\cup \{(\bsf{w},e_{L+1},x):x\in e_{L+1}\backslash w\}
\]
where $w\in e_{L+1}\in E(G\ominus U)$ and for all $i\leq L$ 
\begin{enumerate}[label=(\roman*)]
\item $e_{L+1}\neq e_i$ ,
\item $v_i\notin e_{L+1}$.
\end{enumerate}
Note that $\pi(\bsf f)=e_{L+1}$. Given $v_i$, $i\leq L$, there are at most $d_{G\ominus U}(v_i,w)\leq \Delta_2(G)=o(\Delta)$ edges incident to $w$ in $G\ominus U$ that contain $v_i$. We conclude that there are at least 
  \begin{align}\label{eq:localpi0}
 d_{G\ominus U}(w)-L-L\cdot o(\Delta) = d_G(w)-o(\Delta) 
 \end{align}
 choices for $e_{L+1}$ that satisfy (i) and (ii) where we used that $ d_{G\ominus U}(w)=d_G(w)$. On the other hand, there are clearly at most $d_{G\ominus U}(w)$ choices for $e_{L+1}$ and so we conclude that
 \begin{align}\label{eq:localpi}
 |\pi(E_{T'}(\bsf w)) \triangle E_{G\ominus U}(w)|=o(\Delta)
 \end{align}
and the degree of $\bsf{w}$ in $T'$ is $d_G(w)-o(\Delta)$.

Next we observe that for each $u\in U$ at most $d_G(u,w)\leq \Delta_2(G)$ edges in $E_G(w)$ contain $u$. We conclude that 
 \begin{align}\label{eq:localpi2}
|E_G(w)\triangle E_{G\ominus U}(w)|\leq 2|U| \Delta_2(G)=o(\Delta)\, .
\end{align}

Now we bound the number of edges in $E_{T'}(\bsf w)$ that shrink by an application of step~\eqref{op1} or that get deleted by an application of step~\eqref{op2} in the construction of $T$ from $T'$. If an edge in $E_{T'}(\bsf w)$ shrinks, then there is a vertex $u\in e_i$ where $i\leq L$ and a node with label $u$ has been set to occupied. There are at most $k L$ choices for $u$ and for each such $u$, at most $d_{G\ominus U}(u,w)\leq \Delta_2(G)$ edges in $E_{T'}(\bsf w)$ shrink when vertices with label $u$ are set to occupied. We conclude that at most $kL\Delta_2(G \ominus U)\leq kL\Delta_2(G) =o(\Delta)$ edges in $E_{T'}(\bsf w)$ shrink by an application of step~\eqref{op1}.

If an edge $E_{T'}(\bsf w)$ is removed by step~\eqref{op2}, then the edge was removed due to an ancestor $\bsf{w}'$ with label $w'$ (say) and the removed edge has a label $f\in E(G\ominus U)$ where $w,w'\in f$.  The total number of edges incident to $\bsf{w}$ removed in this way is therefore at most $L \Delta_2(G\ominus U)= o(\Delta)$ since $\bsf{w}$ has at most $L$ ancestors and the label of each ancestor shares at most $\Delta_2(G\ominus U)$ edges with $w$ in $G\ominus U$. Taken together with~\eqref{eq:localpi0},~\eqref{eq:localpi} and~\eqref{eq:localpi2}, this establishes claims~\eqref{tree0} and \eqref{tree1}.

We prove~\eqref{tree3} next. Suppose a non-parent edge $\bsf{f}$ incident to $\bsf{w}$ contains a node $\bsf{w}'$ with label $w'$ that is contained in an edge of size $1$. Let $\bsf{f}'$ denote the edge of $T_{\mathrm{SAW},v}(G\ominus U)$ that this edge of size $1$ came from and let $f'= \{w',w_1, \ldots, w_{r-1}\}\in E(G\ominus U)$ denote its label. In particular, the $r-1$ labels $\{w_1, \ldots, w_{r-1}\}$ were set to occupied by applications of step~\eqref{op1}. We note that each $w_i$ must be either (i) contained in $e_i$ for some $i\leq L$ or (ii) contained in $f\backslash \{w,w'\}$ where $f$ is the label of $\bsf{f}$. Suppose $w_1, \ldots, w_{t}$ are of the first type and $w_{t+1},\ldots ,w_{r-1}$ are of the second. In this event we say that $w_1, \ldots, w_{t}$ \emph{endangered} the edge $\bsf{f}$. 

We now bound the number of edges incident to $\bsf w$ a given $w_1, \ldots, w_{t}$ can endanger. 
If $w_1, \ldots, w_{t}$ endangers an edge with label $f\in E(G\ominus U)$ then there must exist $f'\in E(G\ominus U)$ of the form $f'=\{w',w_1,\ldots, w_{r-1}\}$ where $r-1\geq t$ and $f'\backslash\{w_1, \ldots, w_{t}\}\subseteq f\backslash \{w\}$. Moreover, by the definition $G\ominus U$, there exists $U'\subseteq U$ such that $h=f'\cup U'\in E(G)$. Observe also that $h\supseteq \{w_1,\ldots, w_t\}\cup U'$ and $|U'|=k-r$. Note that $t+k-r\geq 1$ else $|f'|=|h|=k$ and $f'\subseteq f\backslash\{w\}$, a contradiction since $|f\backslash\{w\}|\leq k-1$.

Suppose first that $t+k-r\geq 2$. Then, given $U'$ (for which there are $\leq 2^{|U|}$ choices), there are at most $\Delta_{t+k-r}(G)$ choices for $h$ and therefore $f'$. Given $f'$ there are at most $\Delta_{r-t+1}(G)$ choices for $f$ (indeed $f$ must contain $w$ and the $r-t$ vertices of $f'\backslash \{w_1, \ldots, w_{t}\}$). This gives at most $2^{|U|}\Delta_{t+k-r}(G)\Delta_{r-t+1}(G)=o(\Delta)$ (here we use $t\geq 2$) total choices for $f$. If instead  $t+k-r=1$, then $|f'\backslash \{w_1,\ldots, w_t\}|=r-t=k-1$ so that the condition $f'\backslash\{w_1, \ldots, w_{t}\}\subseteq f\backslash \{w\}$ implies that $|f|=k$ i.e.\ $f\in E(G)$. Either $t=1, U'=\emptyset$ in which case $h\backslash\{w_1\}=f'\backslash\{w_1\}= f\backslash \{w\}$ or $t=0, U'=\{u\}$ in which case $h\backslash\{u\}= f'=f\backslash \{w\}$. There are therefore at most $(|U|+1)\Gamma(G)=o(\Delta)$ choices for $f$.

Finally we note that since there are at most $L k$ vertices contained in the edges $e_1,\ldots, e_L$, there are at most $2^{Lk}$ possible choices for the set $\{w_1, \ldots, w_{t}\}$.  We conclude that there are $o(\Delta)$ edges incident to $\bsf{w}$ that are endangered and therefore $o(\Delta)$ neighbors of $\bsf{w}$ contained in an edge of size $1$. This concludes the proof of~\eqref{tree3}. 

For part \eqref{tree2} suppose that $\bsf{w}$ is contained in an edge $\bsf{f}$ of size $\ell$ and let $\bsf{f}'$ denote the edge in $T_{\mathrm{SAW},v}(G\ominus U)$ that $\bsf{f}$ came from. Then the label of $\bsf{f}'$ must have the form $f'=\{w,w_1,\ldots, w_{r-1}\}\in E(G\ominus U)$ where each $w_j$ with $j\leq r-\ell$ is contained in $e_i$ for some $i\leq L$. Moreover, there exists $U'\subseteq U$, $|U'|=k-r$, such that $f'\cup U'\in E(G)$. As before there are at most $2^{Lk+|U|}$ choices for $\{w_1,\ldots, w_{r-\ell}\}\cup U'$ and given such a set there are at most $\Delta_{k-\ell+1}(G)=o(\Delta^{\frac{\ell-1}{k-1}})$ choices for the label $f'$. The result follows. 
\end{proof}

\section{Contraction, Uniqueness, and Fixed Points}
\label{secNonRegular}

Let $G=(V,E)$ be a (simple) hypergraph with maximum degree $\Delta$. Recall~\eqref{eq:BPF} that for $c>0, \zeta\in[0,1]$ we define the function $F_{c,\zeta}^{G}: \mathbb{R}^{V}\to \mathbb{R}^V$ given by 
\begin{equation}
   \left(F_{c,\zeta}^{G} \left(\mathbf{x}\right)\right)_v = c \cdot \exp\left(- \frac{\zeta}{\Delta} \sum_{e \ni v}\prod_{u\in e \backslash\{v\}} x_u\right) \, ,
\end{equation}
for $v\in V$. The main goal of this section is to prove the following lemma which expresses vertex marginals $\mu_{G,\lam,\zeta}(v \in \mathbf S)$ in the regime $\lam  =(1+o(1)) c \cdot\Delta^{- \frac{1}{k-1}}$ in terms of a fixed point of $F_{c,\zeta}^{G}$. In fact, we provide an expression for the vertex marginals under $\mu_{G \ominus U,\lam,\zeta}$ where $U\subseteq V$ has constant size. We recall from Observation~\ref{obs:identities} that the measure $\mu_{G \ominus U,\lam,\zeta}$ is equivalent to the measure $\mu_{G,\lam,\zeta}$ conditioned on the event that the vertices in $U$ are occupied. 

Recall that we say $k, c, \zeta_n$ satisfy Condition~\ref{CondZeta} if $k \ge 2, c>0$ are fixed and $\zeta_n \in (0,1]$ satisfies 
\(
\limsup_{n \to \infty} \zeta_n (k-1) c^{k-1} <e. 
    \)

\begin{lemma}
\label{lemWeitzTreeFP}
Let $k, c, \zeta=\zeta_n$ satisfy Condition~\ref{CondZeta}. Let $G =G_n$ be a sequence of asymptotically tree-like $k$-uniform hypergraphs with maximum degree $\Delta$. Let $v\in V(G)$, $U\subseteq V(G)$ be such that $v\notin U$, $|U|=O(1)$. If  $\lam > 0$ and $\mathbf{x}^\ast=(x^\ast_v:v\in V(G)) \in \R^{V(G)}$ satisfy
 \begin{enumerate}
 \item $\lam  =(1+o(1)) c \cdot\Delta^{- \frac{1}{k-1}} $, and
 \item $\mathbf{x}^\ast=F_{c,\zeta}^G\left(\mathbf{x}^\ast\right)$,
 \end{enumerate} 
 then
    \begin{equation}
        \mu_{G \ominus U,\lam,\zeta}(v \in \mathbf S) = (1+o(1))\cdot(1-\zeta)^{m_v} \cdot x_v^\ast \cdot \Delta^{-\frac{1}{k-1}} \, ,
    \end{equation}
    where $m_v$ denotes the multiplicity of the edge $\{v\}$ in the multihypergraph $G\ominus U$.
\end{lemma}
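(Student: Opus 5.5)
We plan to reduce the marginal to a root marginal on the Weitz hypertree and then compare the tree recursion there with the BP operator $F^G_{c,\zeta}$, using contraction in a suitable metric.

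\textbf{Step 1: pass to the tree.} By Lemma~\ref{lemBencs}, $\mu_{G\ominus U,\lam,\zeta}(v\in\mathbf S)=\mu_{T,\lam,\zeta}(\bsf r\in\mathbf S)$ with $T=T_v(G\ominus U)$. The copies of $\{v\}$ in $G\ominus U$ (there are $m_v$ of them) give a factor $(1-\zeta)^{m_v}$ in $R_{\bsf r}$. It therefore suffices to treat $T$ with these edges removed and show $R_{\bsf r}=(1+o(1))x_v^\ast\Delta^{-1/(k-1)}$; since $R=O(\Delta^{-1/(k-1)})=o(1)$, the marginal and $R$ agree to first order.

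\textbf{Step 2: normalized recursion.} For each node $\bsf w$ write $y_{\bsf w}=\Delta^{1/(k-1)}R_{\bsf w}(T_{\bsf w})$, where $T_{\bsf w}$ is the subtree rooted at $\bsf w$. Lemma~\ref{lem:tree_recursion2} gives
\[
y_{\bsf w}=(1+o(1))\,c\prod_{\bsf f\ni\bsf w}\Bigl(1-\zeta\prod_{\bsf u\in\bsf f\setminus\bsf w}\tfrac{R_{\bsf u}}{1+R_{\bsf u}}\Bigr).
\]
For a full $k$-edge the inner product is $(1+o(1))\Delta^{-1}\prod y_{\bsf u}$. Since all $y\le c(1+o(1))$ by Lemma~\ref{lemStochDom}, we get $y_{\bsf w}=(1+o(1))c\exp\bigl(-\frac{\zeta}{\Delta}\sum_{\bsf f}\prod y_{\bsf u}\bigr)$ up to the contribution of shrunken edges.

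We control the shrunken edges with Lemma~\ref{lemWeitzTreeStructure}. An edge of size $\ell\in\{2,\dots,k-1\}$ contributes $O(\Delta^{-(\ell-1)/(k-1)})$, and there are $o(\Delta^{(\ell-1)/(k-1)})$ of them, so their total is $o(1)$ in the exponent. Edges of size $1$ touching $\bsf w$ itself can only be the $m_v$ root edges, which were already handled. Neighbors sitting in size-$1$ edges have tiny $y$ but number only $o(\Delta)$, and only an upper bound on their messages is needed, so they cost only $o(1)$. By part~(\ref{tree0}) of Lemma~\ref{lemWeitzTreeStructure}, the edge labels at $\bsf w$ differ from $E_G(w)$ in $o(\Delta)$ edges, each contributing $O(1/\Delta)$. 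Hence, to depth any fixed $L$, the recursion at $\bsf w$ equals $F^G_{c,\zeta}$ evaluated at the labels $\pi(\bsf u)$, up to a $1+o(1)$ factor.

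\textbf{Step 3: contraction.} This is the main obstacle. Define $\Psi$ on $[0,c]^V$ by $\Psi(\mathbf x)_w=c\exp\bigl(-\frac{\zeta}{\Delta}\sum_{e\ni w}\prod_{u\in e\setminus w}x_u\bigr)$; this is $F^G_{c,\zeta}$ itself. Plain $\ell_\infty$ contraction fails for $c$ near the uniqueness threshold. We would therefore use a potential $\phi(x)=\log x$ (or the substitution $x=c e^{-z}$), and show that the two-step map, or the map in a weighted norm $\max_w|\phi(x_w)-\phi(x'_w)|$, contracts whenever $\zeta(k-1)c^{k-1}<e$. We would first try one-step contraction in $\log$ coordinates: the derivative of $\log\Psi(\mathbf x)_w$ in $\log x_u$ is $-\frac{\zeta}{\Delta}\sum\prod x$, which is bounded by $(k-1)\zeta\cdot\frac{1}{\Delta}\sum_{e\ni w}\prod x$. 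One would bound this by $(k-1)\zeta c^{k-1}e^{-1}$-type quantities via $z e^{-z}$-type bounds after two iterations, mirroring the scalar map $x\mapsto ce^{-\zeta x^{k-1}}$, whose derivative at its fixed point is $-(k-1)\zeta x^{k-1}=-W(\cdot)<1$. Alternating-sign monotonicity gives sandwiching: even iterates from the boundary values $0$ and $c$ bracket the true value. So it suffices to show that the upper and lower two-step orbits of $\Psi$ converge to a common point, namely $\mathbf x^\ast$.

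\textbf{Step 4: conclude.} Truncate $T$ at depth $2L$ with boundary values $0$ and $c$. The true $y_{\bsf r}$ lies between the two resulting root values. Each of these equals $(1+o(1))$ times $\Psi^{2L}$ applied to a constant vector, evaluated at $v$, with the errors from Step 2 accumulating over only $O(L)$ levels. By Step 3 both are within $\varepsilon(L)\to0$ of $x_v^\ast$. Letting $n\to\infty$ and then $L\to\infty$ gives the result.
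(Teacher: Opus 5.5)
Your proposal is correct. Steps 1--3 match the paper: the reduction via Lemma~\ref{lemBencs} with the factor $(1-\zeta)^{m_v}$ split off at the root, the normalized tree recursion with shrunken edges and edges meeting size-one nodes absorbed into $o(1)$ errors via Lemma~\ref{lemWeitzTreeStructure}, and the two-step contraction in logarithmic coordinates. That contraction is exactly Lemma~\ref{lemfkContractionNonReg}, so you can cite it; your sketch of its proof via $z\log(a/z)\le a/e$ is the paper's argument.

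The real difference is how you close. The paper neither truncates nor uses monotonicity. It prunes to the $k$-uniform tree $T''$, removing edges of size $<k$ and edges containing a node that lies in a size-one edge. It then observes that two vectors on $T''$ are approximate fixed points of $F^{T''}_{c,\zeta}$: the true normalized messages $\mathbf y$, and the lifted fixed point $\mathbf x_{\bsf w}=x^\ast_{\pi(\bsf w)}$. Finally it iterates the localized contraction $\|\mathbf y-\mathbf x\|_\ell\le(1-\delta)\|\mathbf y-\mathbf x\|_{\ell+2}+o(1)$, starting from the crude bound $\zeta c^{k-1}+o(1)$.

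You instead use that the recursion is nonincreasing in each child's ratio. This brackets the root between the depth-$2L$ truncations with boundary ratios $0$ and $\lam$. Constant boundary data depends only on labels, so an induction over the $2L$ levels lifts these finite computations to $\Psi^{2L}(\mathbf 0)_v=\Psi^{2L-1}(\mathbf c)_v$ and $\Psi^{2L}(\mathbf c)_v$ on $G$. The global contraction of $\Psi^2$ on $G$ then sends both to $x^\ast_v$, uniformly in $n$.

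The sandwich is what lets you avoid the true messages deep in the tree, which are not functions of labels (two nodes with the same label need not carry the same message). The paper handles those messages directly by contracting on the tree itself. That costs a depth-localized version of the contraction, but needs no monotonicity, so it would survive for non-monotone recursions. Your route is the classical Weitz argument and uses the contraction lemma only in its stated global form.

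One correction to Step~2: it is false that only the root can lie in an edge of size $1$. Non-root nodes can, in two ways: from size-one edges of $G\ominus U$ (possible when $|U|\ge k-1$), and from nodes being set occupied by operation~(1). This is why part~(4) of Lemma~\ref{lemWeitzTreeStructure} is needed. Also, such nodes' messages carry a factor $(1-\zeta)^m$, which need not be small. None of this hurts your scheme, provided you claim the recursion, and the $\Psi$-approximation in the Step~4 induction, only at nodes not lying in size-one edges. The other nodes then enter their parent's recursion only through the $o(\Delta)$ exceptional edges, where the trivial upper bound suffices; this is precisely the paper's pruning to $T''$. Finally, the upper boundary value should be $\lam\Delta^{1/(k-1)}=c(1+o(1))$ rather than $c$, which is cosmetic.
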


Before turning to the proof, we need some more notation and some preliminary lemmas. Given a vector $\mathbf{x}\in \R_{>0}^V$, we let $\log\left(\mathbf{x}\right)=(\log(x_v):v\in V)$ and define $\exp(\mathbf x)$ similarly.

\begin{lemma}
\label{lemfkContractionNonReg}
    Fix $k \ge 2$, $c > 0$, and $\zeta \in (0,1]$ satisfying $\zeta(k-1)c^{k-1} < e$. Let $G=(V,E)$ be a $k$-uniform hypergraph of maximum degree at most $\Delta$. Then for every $\mathbf{x},\mathbf{y} \in \mathbb R_{>0}^V$,
    \begin{equation}
     \left\| \log(F^2 (\mathbf{x})) - \log(F^2(\mathbf{y}))\right\|_\infty    \le (1- \del) \left\| \log(\mathbf{x}) - \log(\mathbf{y})\right\|_\infty 
    \end{equation}
    where $F=F_{c,\zeta}^{G}$ and $\del=1-\zeta c^{k-1}(k-1)e^{-1}$. In particular, $F$ has a unique fixed point.
\end{lemma}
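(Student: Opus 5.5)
The plan is to differentiate along a geodesic in logarithmic coordinates, and to use the elementary bound $s e^{-s} \le e^{-1}$ to absorb the unboundedness of the inputs. One application of $F$ is not contractive, because the inputs $\mathbf x$ may be arbitrarily large. The point of taking two steps is that after one application of $F$ every coordinate has the form $c e^{-s_u}$ with $s_u \ge 0$.

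\textbf{Setup.} Fix $\mathbf x,\mathbf y\in\R^V_{>0}$ and put $\varepsilon=\|\log\mathbf x-\log\mathbf y\|_\infty$. For $t\in[0,1]$ interpolate geometrically, $\mathbf x^t=\exp\bigl((1-t)\log\mathbf x+t\log\mathbf y\bigr)$. Then $\bigl|\tfrac{d}{dt}\log x^t_u\bigr|\le\varepsilon$ for every $u$. Define
\[
s_u(t)=\frac{\zeta}{\Delta}\sum_{e\ni u}\prod_{w\in e\setminus\{u\}}x^t_w \;\ge 0,
\]
so that $F(\mathbf x^t)_u=c\,e^{-s_u(t)}$.

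\textbf{Multiplicative control of $s_u$.} Differentiating each product gives
\[
\frac{d}{dt}\prod_{w\in e\setminus\{u\}} x^t_w=\Bigl(\prod_{w\in e\setminus\{u\}} x^t_w\Bigr)\sum_{w\in e\setminus\{u\}}\frac{d}{dt}\log x^t_w .
\]
Each such sum has at most $k-1$ terms, each bounded by $\varepsilon$, so
\[
\Bigl|\tfrac{d}{dt}s_u(t)\Bigr|\le (k-1)\,\varepsilon\, s_u(t).
\]

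\textbf{The second step.} For an edge $e\ni v$ set $S_e(t)=\sum_{u\in e\setminus\{v\}}s_u(t)$. Then
\[
\log F^2(\mathbf x^t)_v=\log c-\frac{\zeta}{\Delta}\sum_{e\ni v}c^{k-1}e^{-S_e(t)}.
\]
By the previous step, $|\tfrac{d}{dt}S_e(t)|\le(k-1)\varepsilon S_e(t)$. Hence
\[
\Bigl|\frac{d}{dt}\log F^2(\mathbf x^t)_v\Bigr|\le\frac{\zeta}{\Delta}\sum_{e\ni v}c^{k-1}(k-1)\,\varepsilon\,S_e e^{-S_e}.
\]
Now use $S e^{-S}\le e^{-1}$ for $S\ge0$, together with the fact that $v$ lies in at most $\Delta$ edges. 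The right-hand side is therefore at most $\zeta c^{k-1}(k-1)e^{-1}\varepsilon=(1-\del)\varepsilon$. Integrating over $t\in[0,1]$ and taking the maximum over $v$ gives the claimed inequality.

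This step, where the unbounded factor $S_e$ produced by the chain rule is cancelled by $e^{-S_e}$, is the crux of the argument. It is also exactly where the constant $e$ in Condition~\ref{CondZeta} comes from. The remaining checks are routine: differentiability holds because all quantities are positive and smooth in $t$.

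\textbf{Uniqueness.} $F$ maps $\R^V_{>0}$ into the box $B=[c\,e^{-\zeta c^{k-1}},c]^V$, using $\prod_{u\in e\setminus\{v\}}x_u\le c^{k-1}$ on $(0,c]^V$. The set $\log B$ is complete in the sup norm, and $F^2$ is a strict contraction there with $1-\del<1$. By Banach's fixed point theorem, $F^2$ has a unique fixed point $\mathbf z$ in $B$. Since every fixed point of $F$ in $\R^V_{>0}$ lies in $B$ and is a fixed point of $F^2$, $F$ has at most one fixed point. For existence, note that $F(\mathbf z)$ is also a fixed point of $F^2$, so $F(\mathbf z)=\mathbf z$; alternatively, apply Brouwer's theorem to $F$ on $B$.
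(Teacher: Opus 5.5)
Your proof is correct and is essentially the paper's argument. The paper bounds the $\ell_1$-norm of the gradient of $\mathbf z\mapsto\log F^2(e^{\mathbf z})_v$ using the chain rule and the inequality $z\log(a/z)\le a/e$. That is exactly your bound on the derivative along the log-linear path combined with $Se^{-S}\le e^{-1}$, once you substitute $z=c^{k-1}e^{-S_e}$.

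One small slip in your uniqueness paragraph: $F$ maps $(0,c]^V$, not all of $\R^V_{>0}$, into $B$, so it is $F^2$ that maps $\R^V_{>0}$ into $B$. Your Banach argument still goes through unchanged, since $F(B)\subseteq B$ and every fixed point of $F$ lies in $B$.
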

\begin{proof}
Fix $v\in V$, and let $f(\mathbf{x})=(F^2(\mathbf{x}))_v$. Our goal is to show that
\begin{equation}\label{eq:psigoal}
    | \log(f (\mathbf{x})) - \log(f(\mathbf{y}))|    \le (1- \del) \left\| \log(\mathbf{x}) - \log(\mathbf{y})\right\|_\infty 
    \end{equation}
    for every $\mathbf{x},\mathbf{y} \in \R_{>0}^V$.
We let $g(\mathbf x)=\log(f(\exp(\mathbf x)))$ so that it suffices to show 
\begin{align}\label{eq:gGoal}
|g(\mathbf{x})-g(\mathbf{y})|\leq (1-\delta)\|\mathbf{x}-\mathbf{y}\|_\infty
\end{align}
for all $\mathbf{x},\mathbf{y} \in \R ^V$. 
By the fundamental theorem of calculus
\[
g(\mathbf{x})-g(\mathbf{y})=\int_{0}^1\nabla g(\mathbf y+\theta(\mathbf x-\mathbf y))\cdot(\mathbf x- \mathbf y)\, d\theta
\]
and so to prove~\eqref{eq:gGoal} it suffices to show that
\begin{align}\label{eq:inftytoinfty}
\|\nabla g(\mathbf z)\|_1=\sum_{w\in V} 
 \left| \frac{\partial g}{\partial z_w}(\mathbf{z}) \right| \le 1-\delta
\end{align}
for all $\mathbf{z}\in \R^{V}$.  

Now fix $\mathbf{z}\in \R^{V}$ and let $\mathbf x=\exp(\mathbf z)$. By the chain rule,
\[
\frac{\partial g}{\partial z_w}(\mathbf z)
    = \frac{x_w}{f(\mathbf x)}\,
      \frac{\partial f}{\partial x_w}(\mathbf x)\, .
\]
It is therefore enough to show that 
\begin{align}
\sum_{w\in V} \frac{x_w}{f(\mathbf x)}
      \left|\frac{\partial f}{\partial x_w}(\mathbf x)\right | \leq 1-\delta\, .
\end{align}

For $S\subseteq V$, let $ x_S=\prod_{u\in S}x_u$. Let $F_u=(F(\mathbf x))_u$ for $u\in V$ and let $F_S=\prod_{u\in S}F_u$.
 Note that
 \[
 f(\mathbf x)= c \exp\left(-\frac{\zeta}{\Delta} \sum_{e\in E(v)} F_{e\backslash\{v\}}\right)\, 
 \]
and so for $w\in V$,
 \[
 \frac{1}{f(\mathbf x)}
      \frac{\partial f}{\partial x_w}(\mathbf x) = -\frac{\zeta}{\Delta} \sum_{e\in E(v)} \frac{\partial}{\partial x_w} F_{e\backslash\{v\}}\, .
 \]
 Next observe that
 \[
 \frac{\partial}{\partial x_w} F_{e\backslash\{v\}} = F_{e\backslash\{v\}}\sum_{u\in e\backslash\{v\}} \frac{1}{F_u} \frac{\partial F_u}{\partial x_w}\, ,
 \]
 and 
 \[
 \frac{1}{F_u} \frac{\partial F_u}{\partial x_w} = -\frac{\zeta}{\Delta} \sum_{g\in E(u): w\in g\backslash\{u\}} \frac{x_{g\backslash\{u\}}}{x_w}\, .
 \]
 Putting everything together we have 
 \[
  \frac{x_w}{f(\mathbf x)}
      \left|\frac{\partial f}{\partial x_w}(\mathbf x)\right |
      =
      \frac{\zeta^2}{\Delta^2} \sum_{e\in E(v)}
     F_{e\backslash\{v\}}
      \sum_{u\in e\backslash\{v\}} \sum_{g\in E(u): w\in g\backslash\{u\}} x_{g\backslash\{u\}}
 \]
 and so
 \[
 \sum_{w\in V} \frac{x_w}{f(\mathbf x)}
      \left|\frac{\partial f}{\partial x_w}(\mathbf x)\right |
      =
        \frac{\zeta^2}{\Delta^2}(k-1) \sum_{e\in E(v)}
     F_{e\backslash\{v\}}
      \sum_{u\in e\backslash\{v\}} \sum_{g\in E(u)} x_{g\backslash\{u\}}
 \]
 Next note that
 \[
 F_{e\backslash\{v\}}=c^{k-1}\exp\left(-\frac{\zeta}{\Delta}\sum_{u\in e\backslash\{v\}}\sum_{g\in E(u)} x_{g\backslash\{u\}} \right) \,,
 \]
 and so
 \[
  \sum_{w\in V} \frac{x_w}{f(\mathbf x)}
      \left|\frac{\partial f}{\partial x_w}(\mathbf x)\right |
      =
      \frac{\zeta}{\Delta}(k-1)\sum_{e\in E(v)}F_{e\backslash\{v\}} \log\left(c^{k-1}/F_{e\backslash\{v\}}\right)\leq \zeta c^{k-1}(k-1)e^{-1}=1-\delta\, ,
 \]
 where for the first inequality we used that $z \log(a/z)\leq a/e$ for $a\geq z>0$ and $|E(v)|\leq \Delta$.
\end{proof}

\begin{proof}[Proof of Lemma~\ref{lemWeitzTreeFP}]
Let $T=T_v(G \ominus U)$ denote  the Weitz hypertree of $G \ominus U$ at $v$ and let $\bsf{r}$ denote its root.

Let $T'$ denote the tree $T$ with any copy of the edge $\{\bsf r\}$ removed. For $\bsf v\in V(T)$, let 
\[
y(\bsf v)= R_{\bsf v}(T',\lam,\zeta)\cdot \Delta^{\frac{1}{k-1}}\, .
\]

Note that $R_{\bsf r}(T,\lam,\zeta)=R_v(G\ominus U,\lam,\zeta)=(1+o(1))\mu_{G\ominus U,\lam, \zeta} ( v \in \mathbf S)$ by Lemma~\ref{lemBencs} and $R_{\bsf r}(T,\lam,\zeta)=(1-\zeta)^{m_v}R_{\bsf r}(T',\lam,\zeta)$ and so our goal is to show that
\[
y(\bsf r)= \mathbf x^\ast_v +o(1)\, .
\]

For a node $\bsf w$ in $T'$, let $T_{\bsf w}$ denote the subtree of $T'$ rooted at $\bsf w$ and let $R(\bsf w)=R_\bsf w(T_\bsf w,\lam,\zeta)$. 

Now fix a node $\bsf w$ in $T'$ such that $\{\bsf w\}\notin E(T')$.
Suppose that $\{\bsf e_1,\dots,\bsf e_d\}$ is the set of incident edges at $\bsf w$ in $T_{\bsf w}$. 
  Using Lemma~\ref{lem:tree_recursion2} we have
\begin{align}\label{eq:tree-rec}
R(\bsf w)&=
\lambda  \prod_{i=1}^d\left(1-\zeta\prod_{\bsf u \in \bsf e_i\backslash\{\bsf w\}}\frac{R(\bsf u)}{1+R(\bsf u)}\right)\, .
\end{align}
We note that $R(\bsf u)=O\left(\Delta^{-\frac{1}{k-1}}\right)$ for all $\bsf u$. For $\ell\in\{2,\ldots,k\}$, let $I_\ell=\{i: |\bsf e_i|=\ell\}$ and recall that  $|I_\ell|=o\left(\Delta^{\frac{\ell-1}{k-1}}\right)$ for $\ell\leq k-1$ by Lemma~\ref{lemWeitzTreeStructure}. It follows that if $\ell\leq k-1$, then
\[
\prod_{i\in I_\ell}
\left(1-\zeta\prod_{\bsf u \in \bsf e_i\backslash\{\bsf w\}}\frac{R(\bsf u)}{1+R(\bsf u)}\right)=\left(1-O\left(\Delta^{-\frac{\ell-1}{k-1}}\right)\right)^{|I_\ell|}=1-o(1).
\]
Let $I_k'\subseteq I_k$ denote the set of $i$ such that $\bsf e_i\backslash\{ \bsf w\}$ contains no vertex contained in an edge of size $1$. Then $|I_k\backslash I_k'|=o(\Delta)$ by Lemma~\ref{lemWeitzTreeStructure} and so 
\[
\prod_{i\in I_k\backslash I_k'}
\left(1-\zeta\prod_{\bsf u \in \bsf e_i\backslash\{\bsf w\}}\frac{R(\bsf u)}{1+R(\bsf u)}\right)=\left(1-O\left(\Delta^{-1}\right)\right)^{|I_k\backslash I_k'|}=1-o(1).
\]

On the other hand, $|I'_k|=(1-o(1))d\leq \Delta$ and
\begin{align}
\prod_{i\in I'_k}
\left(1-\zeta\prod_{\bsf u \in \bsf e_i\backslash\{\bsf w\}}\frac{R(\bsf u)}{1+R(\bsf u)}\right)
&=
\prod_{i\in I'_k}(1+O(\Delta^{-2}))
\exp\left(-\zeta\prod_{\bsf u \in \bsf e_i\backslash\{\bsf w\}}R(\bsf u) \right)\\
&=
(1+o(1))
\exp\left(-\zeta\sum_{i\in I'_k}\prod_{\bsf u \in \bsf e_i\backslash\{\bsf w\}}R(\bsf u) \right)\, .
\end{align}
Returning to~\eqref{eq:tree-rec} we conclude that
\begin{align}\label{eq:yw}
y(\bsf w) = c\cdot\exp\left(-\frac{\zeta}{\Delta}\sum_{i\in I'_k}\prod_{\bsf u \in \bsf e_i\backslash\{\bsf w\}}y(\bsf u) \right)+o(1)\, .
\end{align}
Now consider the subtree $T''$ of $T'$ obtained by removing all edges of size $<k$, removing all edges that contain a node $\bsf v$ such that $\{\bsf v\}\in E(T')$, and then keeping only the connected component of $\bsf r$ in the resulting graph. Note that if $\bsf w\in V(T'')$, then $E_{T''}(\bsf w)=\{\bsf e_i: i\in I_k'\}\cup \{\bsf f\}$ where $\bsf f$ is the parent edge of $\bsf w$ in $T'$ (if $\bsf w \neq \bsf r$). Letting $\mathbf y \in [0,c]^{V(T'')}$ denote the vector given by $\mathbf y_{\bsf v}=y(\bsf v)$, and letting $F=F_{c,\zeta}^{T''}$, 
we conclude that 
\begin{align}\label{eq:yfixed}
\mathbf y_{\bsf w} = F(\mathbf y)_{\bsf w}+o(1)\, .
\end{align}
It follows by continuity of $F$ that
\[
\mathbf y_{\bsf w}= F^2(\mathbf y)_{\bsf w}+o(1).
\]
Now let $\mathbf x\in (0,c]^{V(T'')}$ be given by $\mathbf x_{\bsf v}=\mathbf x^\ast_{\pi(\bsf v)}$. By Lemma~\ref{lemWeitzTreeStructure} and the definition of $\mathbf x, \mathbf x^\ast$,
\[
 F(\mathbf x)_{\bsf w}=F_{c,\zeta}^{G}(\mathbf x^\ast)_{\pi(\bsf w)}+o(1)=\mathbf x^\ast_{\pi(\bsf w)}+o(1)= \mathbf x_{\bsf w} +o(1)\,,
\]
and so 
\[
\mathbf x_{\bsf w}=F^{2}(\mathbf x)_{\bsf w}+o(1).
\]
It follows that 
\begin{align}\label{eq:F2}
| \log(\mathbf y)_{\bsf w} - \log(\mathbf x)_{\bsf w} | = |\log(F^2(\mathbf y))_{\bsf w}-\log(F^2(\mathbf x))_{\bsf w}| + o(1)\, .
\end{align}

For $\ell \in \N$, define the following metric on $(0,c]^{V(T'')}$:
\[
\|\mathbf a - \mathbf b\|_{\ell}:=\sup \{|\log(\mathbf a_\bsf v)- \log(\mathbf b_\bsf v)|: d(\bsf r, \bsf v)\leq \ell \}\, .
\]
Note that if $\bsf w$ is at depth $\ell$, then $ F^2(\cdot)_{\bsf w}$ depends only on coordinates at depth $\leq \ell+2$. It follows from~\eqref{eq:F2} and Lemma~\ref{lemfkContractionNonReg} that
\begin{align}\label{eq:xyelldist}
\| \mathbf y - \mathbf x\|_\ell = \|  F^2(\mathbf y) - F^2(\mathbf x)\|_{\ell} +o(1) \leq (1-\delta)\| \mathbf y - \mathbf x\|_{\ell+2} +o(1)\, ,
\end{align}
where $\delta:=1-\limsup_{n\to\infty}\zeta_n(k-1)c^{k-1}e^{-1}$ and $\delta>0$ by assumption.
Note that since, $\mathbf y_{\bsf v}\in(0,c]$ for all $\bsf v$, then by~\eqref{eq:yfixed}, $\mathbf y_{\bsf v}\geq c\exp(-\zeta c^{k-1})+o(1)$ for all $v$ and similarly for $\mathbf x$. We conclude that $\|\mathbf y -\mathbf x\|_\ell \leq \zeta c^{k-1}+o(1)$ for all $\ell$.

Iterating the inequality~\eqref{eq:xyelldist} $L$ times where $L$ is the least integer for which $(1-\delta)^L\zeta c^{k-1}<\eps/2$ we conclude that
\[
\| \mathbf y - \mathbf x\|_\ell<(1-\delta)^L\| \mathbf y - \mathbf x\|_{\ell+2L}+o(1)<\eps\, ,
\] 
and so in particular 
\[
|\log (\mathbf y_\bsf r) - \log (\mathbf x_\bsf r)|
=
|\log (\mathbf y_{\bsf r}) - \log (\mathbf x^\ast_v)|
<\eps
\]
as desired. 
\end{proof}

\subsection{Facts about the fixed points} 
In this short section we collect some simple facts about the fixed point $\mathbf x^\ast $ guaranteed by Lemma~\ref{lemfkContractionNonReg}.

\begin{lemma}
\label{lem:diffx}
    Fix $k \ge 2$. Let $G=(V,E)$ be a $k$-uniform hypergraph of maximum degree at most $\Delta$. Let 
    \[
R=\{(c,\zeta): c>0, \zeta\in (0,1), \zeta(k-1)c^{k-1} < e\}\, .
    \]
   Let $\mathbf x^\ast:R\to \mathbb{R}^V$ where $\mathbf x^\ast(c,\zeta)$ is the unique fixed point of $F_{c,\zeta}^G$. Then $\mathbf x^\ast$ is analytic on $R$.
\end{lemma}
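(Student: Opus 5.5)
The natural route is the (real-analytic) implicit function theorem, applied in logarithmic coordinates. Since $G$ is finite, $V$ is a finite set and everything lives in $\R^V$. Define $\Psi:\R^V\times R\to\R^V$ by
\[
\Psi(\mathbf z,c,\zeta)=\mathbf z-\log\bigl(F^G_{c,\zeta}(\exp(\mathbf z))\bigr),
\]
so that componentwise
\[
\Psi(\mathbf z,c,\zeta)_v=z_v-\log c+\frac{\zeta}{\Delta}\sum_{e\ni v}\exp\Bigl(\sum_{u\in e\setminus\{v\}}z_u\Bigr).
\]
This is manifestly real-analytic (indeed entire in $\mathbf z$ and $\zeta$, analytic in $c>0$). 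By Lemma~\ref{lemfkContractionNonReg} the fixed point $\mathbf x^\ast(c,\zeta)$ exists, is unique and lies in $(0,c]^V$. Hence $\mathbf z^\ast(c,\zeta)=\log\mathbf x^\ast(c,\zeta)$ is the unique zero of $\Psi(\cdot,c,\zeta)$. Analyticity of $\mathbf x^\ast$ then follows from analyticity of $\mathbf z^\ast$ by composing with $\exp$.

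The key step is invertibility of the Jacobian $D_{\mathbf z}\Psi=I-J$ at $(\mathbf z^\ast,c,\zeta)$, where $J=D_{\mathbf z}\log F(\exp\mathbf z)$ is the log-log Jacobian of $F$. One could try to bound $\|J\|_{\infty\to\infty}<1$ directly, but $F$ itself is not known to be a contraction, only $F^2$. So I would instead use the chain rule: $D_{\mathbf z}\bigl[\log F^2(\exp\mathbf z)\bigr]$ at the fixed point equals $J^2$, because $F(\mathbf x^\ast)=\mathbf x^\ast$. The computation in the proof of Lemma~\ref{lemfkContractionNonReg} shows exactly that every row of this matrix has $\ell^1$-norm at most $1-\delta<1$, i.e.\ $\|J^2\|_{\infty\to\infty}\le 1-\delta$. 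Consequently the spectral radius satisfies $\rho(J)^2=\rho(J^2)\le 1-\delta<1$, so $1$ is not an eigenvalue of $J$ and $I-J$ is invertible. (Alternatively, $(I-J)(I+J)=I-J^2$ is invertible via a Neumann series.)

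The real-analytic implicit function theorem then gives, near each $(c_0,\zeta_0)\in R$, a real-analytic $\tilde{\mathbf z}(c,\zeta)$ with $\Psi(\tilde{\mathbf z},c,\zeta)=0$ and $\tilde{\mathbf z}(c_0,\zeta_0)=\mathbf z^\ast(c_0,\zeta_0)$. Since $R$ is open, nearby parameters still lie in $R$, and uniqueness of the fixed point there forces $\tilde{\mathbf z}=\mathbf z^\ast$ locally. As analyticity is a local property, $\mathbf z^\ast$ and hence $\mathbf x^\ast$ is analytic on all of $R$.

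The only mildly delicate point is the Jacobian step. One must make sure the derivative bound of Lemma~\ref{lemfkContractionNonReg} applies at every $\mathbf z$, in particular at the fixed point, and it does: that proof bounds $\|\nabla g(\mathbf z)\|_1$ for all $\mathbf z\in\R^V$. One must also pass from a norm bound on $J^2$ to invertibility of $I-J$, which the spectral-radius argument above handles.
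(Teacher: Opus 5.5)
Your proof is correct and follows the paper's argument: the real-analytic implicit function theorem in logarithmic coordinates, with invertibility of the Jacobian supplied by the derivative bound from the proof of Lemma~\ref{lemfkContractionNonReg}. The only difference is that the paper applies the theorem directly to $\mathbf u-\log F^2(\exp\mathbf u)$, where $\|D_{\mathbf u}\log F^2(\exp\mathbf u)\|_{\infty\to\infty}\le 1-\delta$ gives invertibility immediately, whereas you work with $F$ itself and so need the extra (valid) step $\rho(J)^2=\rho(J^2)\le 1-\delta$ at the fixed point.
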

\begin{proof}
 Fix $(c,\zeta)\in R$. For $\mathbf u\in \R^V$ define,
\[
\Phi_{c,\zeta}(\mathbf u)= \log ((F_{c,\zeta}^G)^2(\exp \mathbf u))\, 
\]
and 
\[
Q(\mathbf u, c,\zeta)=\mathbf{u}- \Phi_{c,\zeta}(\mathbf u)\, .
\]
We note that $Q(\mathbf u, c,\zeta)$ is  analytic on $\R^V \times R$ and 
 that $\mathbf u^\ast(c,\zeta):=\log \mathbf{x}^\ast(c,\zeta)$ is the unique solution to $Q(\mathbf u, c,\zeta)=\mathbf 0$.

Let $D_{\mathbf u}Q(\mathbf u, c,\zeta)$ denote the Jacobian matrix of $Q$ with respect to the $\mathbf u$ coordinates i.e.
\[
D_{\mathbf u}Q(\mathbf u, c,\zeta)= \left(\frac{\partial}{\partial u_w}Q_{v}(\mathbf u, c,\zeta) \right)_{v,w\in V}\, .
\]
We have
\[
D_{\mathbf u}Q(\mathbf u, c,\zeta)= I-D_{\mathbf u}\Phi_{c,\zeta}(\mathbf u)\, .
\]
In the proof of Lemma~\ref{lemfkContractionNonReg} we showed that
\[
\|D_{\mathbf u}\Phi_{c,\zeta}(\mathbf u) \|_{\infty \to \infty}= \max_{v\in V}\sum_{w\in V}\left|\frac{\partial}{\partial u_w} \left(\Phi_{c,\zeta}(\mathbf u)\right)_v \right| \leq 1-\delta\text{ for all }\mathbf u\in \R^V\, 
\]
where $\delta=1-\zeta c^{k-1}(k-1)e^{-1}$
indeed this is a simple reformulation of inequality~\eqref{eq:inftytoinfty}.
It follows that, $D_{\mathbf u}Q(\mathbf u, c,\zeta)= I-D_{\mathbf u}\Phi_{c,\zeta}(\mathbf u) $ is invertible for all $\mathbf u\in \R^V$ and so in particular $D_{\mathbf u}Q(\mathbf u^\ast(c),c,\zeta)$ is invertible. We conclude by the analytic implicit function theorem (see e.g. \cite[Theorem~2.3]{glockner2006implicit}) that $\mathbf u^\ast(c,\zeta)$ is real analytic at $(c,\zeta)$ and so the same holds for $\mathbf x^\ast (c,\zeta)=\exp \mathbf u^\ast (c,\zeta)$.
\end{proof}

\begin{lemma}
    \label{lemRegularFixedPoint}
    Let $k, c, \zeta=\zeta_n$ satisfy Condition~\ref{CondZeta}. Let $G=G_n$ be a sequence of asymptotically tree-like, approximately regular, $k$-uniform hypergraphs with maximum degree $\Delta$.
    Let $\mathbf x^\ast$ denote the unique fixed point of $F_{c,\zeta}^G$. Then $\mathbf x^\ast $ satisfies 
    \[
    x^\ast_v = \left( \frac{W((k-1)c^{k-1}\zeta)}{(k-1)\zeta } \right)^{\frac{1}{k-1}}+o(1)\, \quad \text{for all } v\in V(G)
    \]
\end{lemma}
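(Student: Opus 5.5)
The plan is to compare the unique fixed point $\mathbf x^\ast$ of $F=F_{c,\zeta}^G$ with the constant vector $\mathbf z\equiv x^\ast_k(c,\zeta)$. We show that $\mathbf z$ is an approximate fixed point of $F$, and then use the contraction of $F^2$ from Lemma~\ref{lemfkContractionNonReg} to conclude that $\mathbf x^\ast$ lies within $o(1)$ of $\mathbf z$ in the $\log$-sup metric. Since all coordinates are bounded in $[c e^{-\zeta c^{k-1}},c]$, this is equivalent to an additive $o(1)$ bound.

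\textbf{Step 1: $\mathbf z$ is an approximate fixed point.} Write $z=x_k^\ast(c,\zeta)$. By the definition of the Lambert function, $z$ is the unique solution of $z=c e^{-\zeta z^{k-1}}$; indeed $(k-1)\zeta z^{k-1} e^{(k-1)\zeta z^{k-1}}=(k-1)\zeta c^{k-1}$. For every $v$ we have
\[
F(\mathbf z)_v = c\exp\Bigl(-\tfrac{\zeta}{\Delta} d_G(v) z^{k-1}\Bigr).
\]
Approximate regularity gives $d_G(v)=(1-o(1))\Delta$ uniformly in $v$. Hence the exponent equals $-\zeta z^{k-1}+o(1)$, since $z\le c$ is bounded. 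It follows that $|\log F(\mathbf z)_v-\log z_v|=o(1)$ uniformly over $v$.

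\textbf{Step 2: pass to $F^2$.} We need $\|\log F^2(\mathbf z)-\log \mathbf z\|_\infty=o(1)$. Apply Lemma~\ref{lemfkContractionNonReg} to the pair $F(\mathbf z),\mathbf z$, or simply note that $\log F(\exp(\cdot))$ is $1$-Lipschitz in sup norm. The latter follows from the derivative computation in that lemma: the single-step sensitivity is $\tfrac{\zeta}{\Delta}(k-1)\sum_{e\ni v}x_{e\setminus v}\le (k-1)\zeta c^{k-1}$, which is bounded. Either way we get $\|\log F^2(\mathbf z)-\log F(\mathbf z)\|_\infty \le C\|\log F(\mathbf z)-\log\mathbf z\|_\infty=o(1)$, and the triangle inequality then gives the claim. (If the constant from the crude bound is greater than $1$, that is harmless; we only need it to be $O(1)$.)

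\textbf{Step 3: contraction.} Set $D=\|\log\mathbf x^\ast-\log\mathbf z\|_\infty$. Since $\mathbf x^\ast=F^2(\mathbf x^\ast)$, we have
\[
D\le \|\log F^2(\mathbf x^\ast)-\log F^2(\mathbf z)\|_\infty+\|\log F^2(\mathbf z)-\log\mathbf z\|_\infty\le (1-\delta)D+o(1),
\]
where $\delta>0$ is bounded away from $0$ by Condition~\ref{CondZeta}. Therefore $D=o(1)/\delta=o(1)$. Exponentiating, and using $z\le c$, gives $x^\ast_v=z+o(1)$ uniformly in $v$, as claimed.

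The main point requiring care is uniformity. The contraction constant must be uniform in $n$; this holds because $\limsup\zeta_n(k-1)c^{k-1}<e$. The $o(1)$ in Step 1 must be uniform over vertices, which holds because approximate regularity is a statement about the minimum degree. The asymptotically tree-like hypotheses are not needed beyond guaranteeing that $\mathbf x^\ast$ exists and is unique.
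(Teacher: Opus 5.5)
Your proposal is correct and follows essentially the same route as the paper: show via approximate regularity that the constant vector $x_k^\ast(c,\zeta)$ is an approximate fixed point of $F$, upgrade this to $F^2$, and conclude with the contraction of $F^2$ from Lemma~\ref{lemfkContractionNonReg}, which is uniform in $n$. Your Step 2 is more careful than the paper's appeal to ``continuity of $F$'', but only your second justification works there: the one-step log-Lipschitz bound with constant $(k-1)\zeta c^{k-1}=O(1)$, valid along log-segments inside $(0,c]^V$. Applying Lemma~\ref{lemfkContractionNonReg} to the pair $F(\mathbf z),\mathbf z$ would instead compare $F^3(\mathbf z)$ with $F^2(\mathbf z)$, not $F^2(\mathbf z)$ with $F(\mathbf z)$.
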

\begin{proof}
  Let $x_k^\ast(c,\zeta)=\left( \frac{W((k-1)c^{k-1}\zeta)}{(k-1)\zeta } \right)^{\frac{1}{k-1}}$ as  defined in~\eqref{eqxastdef}.
First observe that 
\begin{align}\label{eq:FPsimple}
x_k^\ast(c,\zeta) = c \cdot \exp \left( -\zeta  (x_k^\ast(c,\zeta) )^{k-1}  \right)\, .
\end{align} Let $\mathbf x\in \R^{V(G)}$ denote the constant vector with each coordinate equal to $x_k^\ast(c,\zeta)$. Since $G$ is approximately regular, each vertex has degree $(1+o(1))\Delta$ and so, letting $F=F_{c,\zeta}^G$,
\begin{align}
F(\mathbf x)_v =  c \cdot \exp\left(- \frac{\zeta}{\Delta} \sum_{e\ni v}\prod_{u\in e \backslash\{v\}} x_u\right) &= c\cdot \exp(-(1+o(1))\zeta (x_k^\ast(c,\zeta))^{k-1}) \\
&= x_k^\ast(c,\zeta)+o(1)= x_v+o(1)\, ,
\end{align}
where for the penultimate equality we used~\eqref{eq:FPsimple}. By continuity of $F$ it follows that $F^2(\mathbf x)_v=x_v+o(1)$. 
By Lemma~\ref{lemfkContractionNonReg} we then have
\begin{align}
\left\| \log(\mathbf{x}) - \log(\mathbf{x}^\ast)\right\|_\infty 
&=\left\| \log(F^2 (\mathbf{x})) - \log(F^2(\mathbf{x}^\ast))\right\|_\infty+o(1) \\   &\le (1- \del) \left\| \log(\mathbf{x}) - \log(\mathbf{x}^\ast)\right\|_\infty +o(1)
\end{align}
 where $\delta:=1-\limsup_{n\to\infty}\zeta_n(k-1)c^{k-1}e^{-1}$ and $\delta>0$ by assumption. Therefore 
$\left\| \log(\mathbf{x}) - \log(\mathbf{x}^\ast)\right\|_\infty=o(1)$ which implies that $x^\ast_v = x_k^\ast(c,\zeta) + o(1)$ for all $v\in V(G)$. 
\end{proof}

Before turning to the proof of Lemma~\ref{lemZetaNonRegular}, we record the following basic fact. We recall the definitions of $x^\ast_k(c,\zeta)$ and $c<\overline{c}_k(\eta)$ from~\eqref{eqxastdef} and~\eqref{eqOverlineCeta}.

\begin{lemma}\label{claim:zeta}
For $k\ge 2$, $\eta \in[0,1)$ and $c>0$, 
    the equation
    \begin{align}\label{eqZetaDefAgain}
    (1-\zeta)x^\ast_k(c,\zeta)^kc^{-k}=\eta
    \end{align}
    defines $\zeta$ uniquely. Moreover, if $c<\overline{c}_k(\eta)$, then 
    \begin{equation}
    \label{eqZetaCondition}
       (k-1) \zeta c^{k-1}  < e \,.
    \end{equation}
\end{lemma}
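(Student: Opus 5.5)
We reduce everything to a one-variable monotonicity analysis. Write $x(\zeta)=x_k^\ast(c,\zeta)$, the unique positive solution of $x=c\,e^{-\zeta x^{k-1}}$ from~\eqref{eq:1dBP}, and set $y(\zeta)=\zeta x(\zeta)^{k-1}$. Then $y\,e^{(k-1)y}=\zeta c^{k-1}$, so $(k-1)y=W((k-1)\zeta c^{k-1})$, and $y$ is strictly increasing and continuous in $\zeta$ with $y(0)=0$. We also have $x(\zeta)^k c^{-k}=e^{-k y(\zeta)}$. Hence~\eqref{eqZetaDefAgain} becomes
\begin{equation}
h(\zeta):=(1-\zeta)\,e^{-k y(\zeta)}=\eta .
\end{equation}
On $[0,1]$ both factors are positive, continuous and nonincreasing, and $1-\zeta$ is strictly decreasing. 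So $h$ is strictly decreasing, with $h(0)=1$ and $h(1)=0$. By the intermediate value theorem there is exactly one $\zeta\in[0,1]$ (in fact in $(0,1]$) with $h(\zeta)=\eta$ for each $\eta\in[0,1)$. This proves uniqueness.

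For the second claim, note that $(k-1)\zeta c^{k-1}<e$ is equivalent to $W((k-1)\zeta c^{k-1})<W(e)=1$, i.e.\ to $y(\zeta)<1/(k-1)$. Since $y$ is increasing, it suffices to show that $\zeta<\zeta_0$, where $\zeta_0$ is defined by $y(\zeta_0)=1/(k-1)$, equivalently $\zeta_0=e/((k-1)c^{k-1})$.

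If $\zeta_0\ge 1$ there is nothing to prove. This covers $c\le (e/(k-1))^{1/(k-1)}$, and also the case $\eta\ge\eta_k^\ast$ whenever $c$ lies in this range.

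Otherwise $\zeta_0<1$. Because $h$ is strictly decreasing, $\zeta<\zeta_0$ is equivalent to $\eta=h(\zeta)>h(\zeta_0)$. We compute
\begin{equation}
h(\zeta_0)=(1-\zeta_0)e^{-k/(k-1)}=\eta_k^\ast\Bigl(1-\frac{e}{(k-1)c^{k-1}}\Bigr).
\end{equation}
When $\eta\ge\eta_k^\ast$, the inequality $\eta>h(\zeta_0)$ holds automatically, since $h(\zeta_0)<\eta_k^\ast$. When $\eta<\eta_k^\ast$, the inequality $\eta>h(\zeta_0)$ rearranges to $c^{k-1}<e/((k-1)(1-\eta/\eta_k^\ast))=\overline c_k(\eta)^{k-1}$, which is exactly the hypothesis $c<\overline c_k(\eta)$.

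The only step needing care is this last rearrangement together with the boundary case $\zeta_0\ge1$. The main obstacle is just checking that the monotonicity of $h$ makes the comparison with $h(\zeta_0)$ an equivalence; there is no real analytic difficulty.
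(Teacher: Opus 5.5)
Your proof is correct, and for the second claim it takes a different route from the paper.

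The paper fixes $\eta$ and regards $\zeta=\zeta(c)$ as a function of $c$.
\begin{itemize}
\item It shows by implicit differentiation, using the derivative of $W$, that $\zeta c^{k-1}$ is strictly increasing in $c$.
\item For $\eta<\eta_k^\ast$, it checks that $\zeta c^{k-1}=e/(k-1)$ exactly at $c=\overline{c}_k(\eta)$.
\item For $\eta\ge\eta_k^\ast$, it passes to the limit $c\to\infty$ and uses that $W(x)/x$ is decreasing to get $\lim_{c\to\infty}\zeta c^{k-1}\le e/(k-1)$.
\end{itemize}

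You instead fix $c$ and reuse the strict monotonicity of $h(\zeta)=(1-\zeta)e^{-ky(\zeta)}$ in $\zeta$. This is the same fact that gives uniqueness; you justify it, whereas the paper states it without proof. It converts $\zeta<\zeta_0=e/((k-1)c^{k-1})$ into the explicit inequality $\eta>h(\zeta_0)=\eta_k^\ast(1-\zeta_0)$.

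This avoids differentiating $W$ and the limiting argument altogether. Since every step is an equivalence, your argument in fact shows that $(k-1)\zeta c^{k-1}<e$ holds if and only if $c<\overline{c}_k(\eta)$, so $\overline{c}_k(\eta)$ is exactly the threshold. The paper's route additionally gives the monotonicity of $\zeta c^{k-1}$ in $c$, which the lemma does not need.

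One small point concerns the boundary case $\zeta_0=1$. There, ``nothing to prove'' still requires $\zeta<1$, i.e.\ $\eta>0$. This holds because $\eta=0$ together with $\zeta_0=1$ would mean $c=\overline{c}_k(0)$, which the hypothesis excludes. Alternatively, simply run your comparison with $h(\zeta_0)$ for all $\zeta_0\le 1$ and reserve the trivial case for $\zeta_0>1$.
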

\begin{proof}
    For fixed $k\geq2, c>0$, the LHS of~\eqref{eqZetaDefAgain} is a continuous, strictly decreasing function of $\zeta$ mapping $[0,1]$ to $[0,1]$ and thus there is a unique solution $\zeta=\zeta(c,\eta,k)$.   

   We will show that $\zeta c^{k-1}$ is a strictly increasing function of $c$ (viewing $\zeta$ also as a function of $c$ via~\eqref{eqZetaDefAgain}). We then prove that $(i)$ if $\eta<\eta_k^\ast = e^{-k/(k-1)}$ and $c=\overline c(\eta)$, then $\zeta c^{k-1}=e/(k-1)$ and $(ii)$  $\ell:=\lim_{c\to\infty}\zeta c^{k-1}$ exists and if $\eta\geq\eta_k^\ast$ then $\ell\leq e/(k-1)$. The inequality~\eqref{eqZetaCondition} then follows. 
   
   To show that $\zeta c^{k-1}$ is increasing, we differentiate~\eqref{eqZetaDefAgain} with respect to $c$ and obtain
\[
\frac{\partial \zeta}{\partial c}= -
\frac{k(k-1)(1-\zeta)\zeta\,W\left( (k-1) c^{k-1} \zeta\right)}{c(k-1)\zeta + c(k - \zeta)\,W\left( (k-1) c^{k-1} \zeta\right)}
\]
and so 
\[
\frac{\partial}{\partial c}(\zeta c^{k-1})= 
c^{k-1} \frac{\partial \zeta}{\partial c} + (k-1)\zeta c^{k-2}=
\frac{c^{k-2} (k-1)^2 \zeta^2 \left(1 + W\left( (k-1) c^{k-1} \zeta\right)\right)}{(k-1) \zeta + (k - \zeta) W\left( (k-1) c^{k-1} \zeta\right)}
>0\, .
\]
Suppose now that $\eta<\eta^\ast_k$ and let $c=\overline c_k(\eta) = \left( \frac{e}{(k-1) (1- \eta/\eta_k^\ast)}\right)^{\frac{1}{k-1}}$. With this choice of $c$, \eqref{eqZetaDefAgain} yields $\zeta=1-\eta/\eta^\ast_k$ and so $\zeta c^{k-1}=e/(k-1)$.

Next assume $\eta \ge \eta_k^\ast$ and observe that as $x\to\infty$, $W(x)/x\to 0$. Since $\eta>0$, it follows from~\eqref{eqZetaDefAgain} that as $c\to\infty$ we must have that $\zeta c^{k-1}$ stays bounded (and so in particular $\zeta\to 0$). Since $\zeta c^{k-1}$ is increasing in $c$ we conclude that $\ell=\lim_{c\to\infty}\zeta c^{k-1}$ exists (and $\ell<\infty$).  Taking the limit $c\to \infty$ in~\eqref{eqZetaDefAgain} we conclude that 
\[
\left( \frac{ W((k-1) \ell)}{(k-1)\ell} \right)^{\frac{k}{k-1}}= \eta\, .
\]
Since $W(x)/x$ is decreasing in $x$, $W(e)=1$, and $\eta\geq \eta^\ast_k=e^{-k/(k-1)}$, we conclude that $\ell\leq e/(k-1)$ as desired. 
\end{proof}

We now prove Lemma~\ref{lemZetaNonRegular}.
\begin{proof}[Proof of Lemma~\ref{lemZetaNonRegular}]
Observe that $\mathbf x^\ast(c,0)$ is the constant vector where each coordinate is equal to $c$. Thus at $\zeta=0$ we have
\[
 (1 - \zeta)  \sum_{e \in E_n} \prod_{u \in e} x^\ast_u(c,\zeta)= c^k|E_n|> \eta c^k |E_n|\, . 
\]
 If $\zeta=1-\eta$, then $\zeta(k-1)c^{k-1}<e$, by our assumption on $c$ so that $\mathbf x^\ast=\mathbf x^\ast(c,\zeta)$ is well-defined. Moreover, when $\zeta=1-\eta$ we have 
\begin{align}\label{eq:IVT}
 (1 - \zeta)  \sum_{e \in E_n} \prod_{u \in e} x^\ast_u(c,\zeta)\leq \eta c^k |E_n|\, , 
\end{align}
where for the final inequality we used that $\mathbf x^\ast\in (0,c]^V$. It is clear that $\mathbf x^\ast(c,\zeta)$ is continuous as a function of $\zeta$ at $\zeta=0$ and it is continuous on $(0,1-\eta]$ by Lemma~\ref{lem:diffx}. Therefore by the intermediate value theorem there exists $\zeta_n\in (0,1-\eta]$ such that~\eqref{eq:IVT} holds with equality. Moreover, $\limsup_n \zeta_n(k-1)c^{k-1}\leq (1-\eta)(k-1)c^{k-1}<e$.

If $G_n$ is approximately regular then we instead choose $\zeta$ as in Lemma~\ref{claim:zeta} and note that this choice of $\zeta$ is independent of $n$.
We then have by Lemma~\ref{lemRegularFixedPoint} that
\[
 (1 - \zeta)  \sum_{e \in E_n} \prod_{u \in e} x^\ast_u(c,\zeta)=(1+o(1))(1-\zeta)x^\ast_k(c,\zeta)^k |E_n|\,  .
\] 
The result follows.
\end{proof}

\section{Marginals, variances, and partition function from fixed points}
\label{subsecVariances}
In this section we build on Lemma~\ref{lemWeitzTreeFP} to prove the following lemma which provides an asymptotic expression for the vertex and edge marginals under $\mu_{G,\lam,\zeta}$ and the partition function $Z_G(\lam,\zeta)$ in the regime $\lam  =(1+o(1)) c \cdot\Delta^{- \frac{1}{k-1}}$. The expressions are written in terms of the fixed point of $F_{c,\zeta}^{G}$ and the Belief Propagation operator $\cB^G_{c,\zeta}$. 
\begin{theorem}
    \label{thmMarginals}
    Let $k, c, \zeta=\zeta_n$ satisfy Condition~\ref{CondZeta}. 
 Let $G=G_n$ be a sequence of asymptotically tree-like $k$-uniform hypergraphs on $N$ vertices with maximum degree $\Delta$.  Suppose $\lam \sim c \Delta^{-\frac{1}{k-1}}$.  Then the following hold:
    \begin{enumerate}[label=(\roman*)]
        \item\label{item:marg1} For each $v \in V(G)$,
    \begin{equation}
    \label{eqTreeVertexMarginal}
        \mu_{G,\lam,\zeta}(v \in \mathbf{S}) \sim  x_v^\ast \cdot \Delta^{-\frac{1}{k-1}} \,,
    \end{equation}
    where $\mathbf{x}^\ast \in \R^{V(G)}$ is the unique solution to
    \begin{equation}
    \mathbf{x}^\ast = F_{c,\zeta}^G(\mathbf{x}^\ast).
    \end{equation}

    \item\label{item:marg2} For each $e \in E(G)$,
    \begin{equation}
     \label{eqTreeEdgeMarginal}
        \mu_{G,\lam,\zeta}(e \subseteq \mathbf{S}) \sim (1-\zeta)  \prod_{v\in e} \left(x_v^\ast\cdot\Delta^{-\frac{1}{k-1}}\right)\,.
    \end{equation}

\item\label{item:marg2.5} Letting $\mathbf x^\ast (t)$ denote the unique fixed point of $F_{t,\zeta}^G$, we have
\begin{align}
   \Delta^{\frac{1}{k-1}} \log Z_G(\lam,\zeta)\sim\sum_{v\in V}\int_{0}^{c} \frac{x^\ast_v(t)}{t} \, d t.
\end{align}
    \item\label{item:marg3} \begin{equation}
    \label{eqLogZtree}
     \Delta^{\frac{1}{k-1}} N^{-1}   \log Z_G(\lam,\zeta) =  N^{-1}\mathcal{B}^G_{c,\zeta}(\mathbf{x}^\ast) + o(1)\,.
    \end{equation}
    
    \item \label{item:marg4}
    \begin{align}
        \var_{G,\lam,\zeta} ( |\mathbf{S}|) &= o\left(N^2 \Delta^{-\frac{2}{k-1}}  \right) \quad \text{and} \label{eqn:varvtx}\\ 
        \var_{G,\lam,\zeta} ( |E(\mathbf{S})|) & = o\left(N^2 \Delta^{-\frac{2}{k-1}}\right) \label{eqn:varedge}  \,.
    \end{align}
    \end{enumerate}
\end{theorem}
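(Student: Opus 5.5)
Part (i) is Lemma~\ref{lemWeitzTreeFP} with $U=\emptyset$: then $G\ominus U=G$ is simple and $k$-uniform, so $m_v=0$. For part (ii), write $e=\{v_1,\dots,v_k\}$ and use the chain rule with Observation~\ref{obs:identities}\eqref{item:condcontract}:
\[
\mu_{G,\lam,\zeta}(e\subseteq\mathbf S)=\prod_{j=1}^k\mu_{G\ominus\{v_1,\dots,v_{j-1}\},\lam,\zeta}(v_j\in\mathbf S).
\]
Each factor with $j<k$ is handled by Lemma~\ref{lemWeitzTreeFP} with $|U|\le k-1$. Here $m_{v_j}=0$, because a singleton edge $\{v_j\}$ in $G\ominus U$ would need an edge of $G$ contained in $U\cup\{v_j\}$, and for $j<k$ that set has fewer than $k$ vertices. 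For $j=k$, the edge $e$ itself becomes the singleton $\{v_k\}$. Any other edge giving a singleton must also lie in $\{v_1,\dots,v_k\}$, hence equals $e$, so $m_{v_k}=1$. The product is therefore $(1-\zeta)\prod_{v\in e}x^\ast_v\Delta^{-1/(k-1)}$. A small point: the case $\zeta=1$ gives probability $0$, and we read ``$\sim$'' with both sides zero.

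For part (iii), use $\log Z_G(\lam,\zeta)=\int_0^\lam t^{-1}\sum_v\mu_{G,t,\zeta}(v\in\mathbf S)\,dt$ and substitute $t=s\Delta^{-1/(k-1)}$. The point that needs care is uniformity in $s\in(0,c']$, where $\lam=c'\Delta^{-1/(k-1)}$ with $c'\to c$. Lemma~\ref{lemWeitzTreeFP} is proved for a fixed $c$, so we will check that its proof is uniform over $s$ in compact subsets of $(0,c+\epsilon]$. The contraction constant $\delta$ only improves as $s$ decreases, and the error terms are $o(1)$ uniformly. For $s$ near $0$, the integrand $\mu/t$ is at most $1$ after scaling, because $\mu_{G,t,\zeta}(v\in\mathbf S)\le t$ by Lemma~\ref{lemStochDom}. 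Also $x^\ast_v(s)\le s$. So the piece of the integral over $s<\epsilon$ contributes $O(\epsilon N)$ to both sides. Alternatively, a monotonicity and compactness argument over a finite grid of $s$ values, using that the marginal is monotone in $t$, gives the uniformity. This yields $\Delta^{1/(k-1)}\log Z\sim\sum_v\int_0^c x_v^\ast(s)/s\,ds$. The sum is $\Theta(N)$, since $x^\ast_v(s)\ge s e^{-\zeta s^{k-1}}$, so the additive errors are indeed relative errors.

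For part (iv), show that $\mathcal B^G_{c,\zeta}(\mathbf x^\ast(c))=\int_0^c\sum_v x_v^\ast(s)/s\,ds$ exactly, using Lemma~\ref{lem:diffx} (analyticity in $c$). Differentiate $\mathcal B(c):=\mathcal B^G_{c,\zeta}(\mathbf x^\ast(c))$. The explicit $c$-dependence gives $\sum_v x_v/c$. The partial derivative in $x_v$ is
\[
-\frac{\zeta}{\Delta}\sum_{e\ni v}\prod_{u\in e\setminus v}x_u-\log(x_v/c),
\]
which vanishes at the fixed point because $\log(x_v/c)=-\frac{\zeta}{\Delta}\sum_{e\ni v}\prod_{u\in e\setminus v}x_u$. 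Hence $\frac{d}{dc}\mathcal B=\sum_v x^\ast_v(c)/c$. As $c\to0$ we have $x^\ast\to0$ with $x^\ast\le c$, so $\mathcal B\to0$, and integrating gives the identity. Combined with (iii) this gives \eqref{eqLogZtree}, using that $\sum_v\int$ is $O(N)$.

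Part (v) is the main obstacle. We first try the derivative identities
\[
\var(|\mathbf S|)=\lam\frac{d}{d\lam}\E|\mathbf S|,\qquad \var(|E(\mathbf S)|)=\frac{d}{d\beta}\E|E(\mathbf S)|,\quad 1-\zeta=e^{-\beta}.
\]
Derivatives do not pass through asymptotics directly, so instead use convexity. The map $\log\lam\mapsto\log Z$ is convex, and by (iii) with analytic limits, $\Delta^{1/(k-1)}N^{-1}\log Z$ is uniformly close to a smooth function of $\log s$ (after extracting subsequences, the functions $N^{-1}\sum_v\int_0^c x^\ast_v(t)/t\,dt$ are equicontinuous with bounded derivatives). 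Convex functions converging uniformly to a differentiable limit have converging derivatives, and comparing the first derivatives at two nearby points bounds the second derivative. This gives $\var|\mathbf S|\le$ (difference quotient of $\E|\mathbf S|$) plus smoothing errors, which is only $O(N\Delta^{-1/(k-1)})$. That bound is already $o(N^2\Delta^{-2/(k-1)})$ as long as $N\Delta^{-1/(k-1)}\to\infty$, and this holds because $N\ge\Delta$. A more robust route that avoids derivatives is to write $\var|\mathbf S|=\sum_{u,v}\cov(\1_u,\1_v)$ and apply (i) to $\mu_{G\ominus u}$. Then $\mu(u,v\in\mathbf S)=\mu(u)\mu_{G\ominus u}(v)=(1+o(1))\mu(u)\mu(v)$ for every $v$ not forming a singleton with $u$, i.e.\ all $v$ when $k\ge 3$. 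Summing gives $o(N^2\Delta^{-2/(k-1)})$, plus a diagonal term $O(N\Delta^{-1/(k-1)})$. For $k=2$, neighbors $v$ of $u$ contribute at most $N\Delta\cdot\Delta^{-2}$, which is negligible. The same pairwise method handles $|E(\mathbf S)|$. For edges $e,f$, contract $e$ and apply (ii) in $G\ominus e$; this is valid whenever $f\cap e$ does not create singletons, and the remaining pairs number at most $|E|\cdot k\Delta$ times marginals of order $\Delta^{-k/(k-1)}$, which is $o(N^2\Delta^{-2/(k-1)})$. Applying (ii) requires extending Lemma~\ref{lemWeitzTreeFP} to $|U|\le 2k$, which is allowed since $|U|=O(1)$, and uniform $o(1)$ errors over all pairs. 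That uniformity is where I expect the real care to be needed.
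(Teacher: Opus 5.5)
Your arguments for (i)--(iv), and the pairwise-covariance route you settle on for (v), follow the paper's proof. The step that fails is your bound on overlapping edge pairs in $\var|E(\mathbf S)|$. You take at most $k|E|\Delta$ such pairs times a marginal of order $\Delta^{-k/(k-1)}$. Since $|E|=\Theta(N\Delta)$, this is $\Theta(N\Delta^{(k-2)/(k-1)})$, and its ratio to $N^2\Delta^{-2/(k-1)}$ is $\Delta^{k/(k-1)}/N$. That ratio does not tend to $0$ in general. For the $k$-AP hypergraph ($N=n$, $\Delta\sim\alpha_k n$) it grows like $n^{1/(k-1)}$, and for the $K_4$ hypergraph ($N=\binom{n}{2}$, $\Delta\asymp n^2$) like $n^{2/5}$.

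The fix is to use the joint probability and split by $j=|e\cap f|$. By Lemma~\ref{lemStochDom}, $\mu_{G,\lam,\zeta}(e\cup f\subseteq\mathbf S)\le\lam^{2k-j}$. Pairs with $j=1$ number at most $k|E|\Delta$ and contribute $O(|E|\Delta\lam^{2k-1})=O(|E|\lam^k)$. Pairs with $2\le j\le k-1$ number at most $\binom{k}{j}|E|\Delta_j(G)=o(|E|\Delta^{(k-j)/(k-1)})$ and contribute $o(|E|\lam^k)$. The hypothesis on $\Delta_j$ is essential here: with the crude count $k|E|\Delta$ these pairs only give $O(N\Delta^{(k-3)/(k-1)})$, which is not $o(N^2\Delta^{-2/(k-1)})$ unless $N=\omega(\Delta)$ (false for $k$-APs). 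In total the overlapping pairs contribute $O(|E|\lam^k)=O(N\Delta^{-1/(k-1)})$; this is the paper's estimate~\eqref{eqn:Bbound}.

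Two smaller issues. First, your criterion that $e\cap f$ \emph{does not create singletons} leaves out disjoint pairs whose union spans a third edge (the set $Q$ of Lemma~\ref{lem:badedgepairs}), and your count $k|E|\Delta$ does not cover them. They are harmless, but must be addressed. Either bound $|Q|$ as in Lemma~\ref{lem:badedgepairs}, or run the chain rule through all $2k$ vertices with Lemma~\ref{lemWeitzTreeFP}. The latter gives $\mu_{G,\lam,\zeta}(e\cup f\subseteq\mathbf S)=(1+o(1))(1-\zeta)^{|E(e\cup f)|}\prod_{w\in e\cup f}\bigl(x_w^\ast\Delta^{-1/(k-1)}\bigr)\le(1+o(1))\,\mu_{G,\lam,\zeta}(e\subseteq\mathbf S)\,\mu_{G,\lam,\zeta}(f\subseteq\mathbf S)$, since extra induced edges only add factors of $1-\zeta$.

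Second, $N\ge\Delta$ is false for asymptotically tree-like hypergraphs in general. A random $3$-uniform hypergraph with edge probability $N^{-0.4}$ is whp asymptotically tree-like with $\Delta\asymp N^{1.6}$. The fact you actually need for the diagonal terms (and for the $k=2$ neighbour terms), $N\Delta^{-1/(k-1)}\to\infty$, is Lemma~\ref{lem:basicobs}.

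Finally, the two fallbacks you mention should not be relied on, though neither is needed. The convexity argument only controls averages of $\var|\mathbf S|$ over intervals of $\log\lam$, not its value at a given $\lam$. Individual vertex marginals need not be monotone in $\lam$; the centre of a star is a counterexample.
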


We will build toward the proof of Theorem~\ref{thmMarginals} in a sequence of lemmas. We begin with the following basic observation.

\begin{lemma}\label{lem:basicobs}
Let $G=G_n$ be a sequence of asymptotically tree-like $k$-uniform hypergraphs on $N$ vertices with maximum degree $\Delta$. Then
\[
N\Delta^{-\frac{1}{k-1}} \to \infty \text{ as } n\to\infty\, .
\]
\end{lemma}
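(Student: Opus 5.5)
We need to show $N\Delta^{-1/(k-1)}\to\infty$. The plan is to find a lower bound on $N$ in terms of $\Delta$ using only the tree-like conditions of Definition~\ref{def:tree}, namely the bound on $\Delta_2(G)$ when $k\ge 3$ and the simple-hypergraph structure when $k=2$.

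Fix a vertex $v$ of maximum degree $\Delta$. Every edge containing $v$ consists of $v$ together with $k-1$ other vertices, all lying in the neighborhood $N_G(v)$.

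\emph{Case $k\ge 3$.} Each $u\in N_G(v)$ lies in at most $d_G(\{u,v\})\le \Delta_2(G)$ edges through $v$. Double counting pairs $(u,e)$ with $v,u\in e$ gives
\begin{equation}
(k-1)\Delta \le |N_G(v)|\,\Delta_2(G).
\end{equation}
Condition (2) of Definition~\ref{def:tree} with $\ell=2$ gives $\Delta_2(G)=o(\Delta^{\frac{k-2}{k-1}})$. Hence
\begin{equation}
N\ge |N_G(v)| \ge \frac{(k-1)\Delta}{\Delta_2(G)}=\omega\bigl(\Delta^{\frac{1}{k-1}}\bigr),
\end{equation}
which is exactly the claim.

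\emph{Case $k=2$.} Here condition (2) is vacuous. Since $G$ is a simple graph, the $\Delta$ edges at $v$ have distinct other endpoints, so $N\ge \Delta+1$. Then $N\Delta^{-1}\ge 1$, which gives only boundedness, not divergence. To get divergence, use condition (4), $|E|/N=\Omega(\Delta)$, together with $|E|\le \binom{N}{2}$. This gives $N\ge c'\Delta$ for some constant $c'>0$, which again only says $N=\Omega(\Delta)$.

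So for $k=2$ the straightforward argument yields only $N\Delta^{-1}=\Omega(1)$. To upgrade this, I would use the codegree condition (3), $\Gamma(G)=o(\Delta)$. In a graph, $\Gamma(v,v')$ counts the common neighbours of $v$ and $v'$. Suppose $N\le C\Delta$ for some constant $C$. By condition (4), a positive fraction of vertices have degree $\Theta(\Delta)$. Each such vertex has $\Theta(\Delta)$ neighbours inside a set of size at most $C\Delta$. Counting cherries (paths $u$–$w$–$u'$ of length two), there are $\sum_w d(w)^2=\Omega(N\Delta^2)$ of them, spread over at most $N^2$ pairs $\{u,u'\}$. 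So some pair has $\Omega(\Delta^2/N)=\Omega(\Delta)$ common neighbours, contradicting $\Gamma(G)=o(\Delta)$. Hence $N/\Delta\to\infty$ for $k=2$ as well.

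The main obstacle is this $k=2$ case. There, condition (2) gives nothing and the cherry-counting argument via the codegree condition is needed; the $k\ge3$ case is a direct one-line double count.
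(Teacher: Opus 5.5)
Your proof is correct, and it takes a different route from the paper's.

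\textbf{The paper's argument.} It argues globally: it bounds $|E(G)|\le\binom{N}{2}\Delta_2(G)=o\bigl(N^2\Delta^{\frac{k-2}{k-1}}\bigr)$ and compares this with $|E(G)|=\Omega(N\Delta)$ from condition (4).

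\textbf{Your case $k\ge 3$.} You argue locally at one vertex of maximum degree. This gives the slightly stronger bound $|N_G(v)|\ge (k-1)\Delta/\Delta_2(G)$ and does not use condition (4) at all.

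\textbf{Your case $k=2$.} Treating this case separately is warranted. The paper's estimate $\binom{N}{2}\Delta_2(G)=o\bigl(N^2\Delta^{\frac{k-2}{k-1}}\bigr)$ invokes condition (2) with $\ell=2$, which is vacuous when $k=2$. There $\Delta_2(G)=1$ and the bound reads only $|E(G)|\le\binom{N}{2}$. So the paper's argument as written gives only $N=\Omega(\Delta)$ for graphs, exactly as you observed.

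Your cherry count via the codegree condition (3) closes this gap. With $\bar d=2|E|/N=\Omega(\Delta)$, convexity gives $\sum_w d(w)(d(w)-1)\ge N\bar d(\bar d-1)=\Omega(N\Delta^2)$. Hence $\Gamma(G)=\Omega(\Delta^2/N)$, and $N/\Delta=\Omega(\Delta/\Gamma(G))\to\infty$. One small fix: count $d(w)(d(w)-1)$ rather than $d(w)^2$, to exclude the degenerate cherries with $u=u'$. These number only $2|E|=O(N\Delta)$, so they are negligible because $\Delta\to\infty$.

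Condition (4) is genuinely needed when $k=2$. The star $K_{1,\Delta}$ satisfies (1)--(3) but has $N/\Delta\to 1$. So using (3) and (4) together there is the right combination.

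This case matters for the paper. The lemma feeds into the variance bounds behind Theorem~\ref{thmMarginals}, which the paper applies for all $k\ge 2$.
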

\begin{proof}
We count the edges of $G$ in two different ways.
Note that
\[
|E(G)|\leq \binom{N}{2}\Delta_2(G)=o(N^2\Delta^{\frac{k-2}{k-1}})\, .
\]
where for the last inequality we used the definition of an asymptotically tree-like sequence (Definition~\ref{def:tree}).
On the other hand $|E(G)|=\Omega(\Delta N)$, again by Definition~\ref{def:tree}. The result follows. 
\end{proof}

All of the remaining results of this section assume the setup of Theorem~\ref{thmMarginals} and we let $G=(V,E)$. For brevity we suppress $\lam,\zeta$ in our notation e.g.\ $\E_{G}=\E_{G,\lam,\zeta}$, $\var_{G}=\var_{G,\lam,\zeta}$ etc. Moreover, for $v\subseteq V$, we let 
\[
p_G(v)=\mu_{G,\lam,\zeta}(v\in \mathbf S)
\]
denote the marginal of $v$. We extend this notation in the natural way e.g. $p_G(v\mid u)=\mu_{G,\lam,\zeta}(v\in \mathbf S \mid u\in \mathbf S)$ for $u,v\in V$ and $p_G(U)=\mu_{G,\lam,\zeta}(U\subseteq  \mathbf S)$ for $U\subseteq V$.

\begin{lemma}\label{lem:vtxvariancebd}
 We have
 \[
 \var_{G} |\mathbf{S}| = o\left( \E_{G}[|\mathbf{S}|]^2 \right).
 \]
 \end{lemma}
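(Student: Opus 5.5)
We will expand the variance as a sum of pairwise covariances and control each covariance with Lemma~\ref{lemWeitzTreeFP}, which gives marginals both unconditionally ($U=\emptyset$) and conditionally on a bounded set being occupied ($U=\{u\}$). Write
\[
\var_G|\mathbf S| = \sum_{u\in V} p_G(u)(1-p_G(u)) + \sum_{u\neq v} p_G(u)\bigl(p_G(v\mid u)-p_G(v)\bigr).
\]
First, by Lemma~\ref{lemWeitzTreeFP} with $U=\emptyset$ (using that $\zeta_n,c$ satisfy Condition~\ref{CondZeta}, so $\mathbf x^\ast$ exists, is unique by Lemma~\ref{lemfkContractionNonReg}, and satisfies $x^\ast_v\in[c e^{-\zeta c^{k-1}},c]$), every marginal satisfies $p_G(v)=\Theta(\Delta^{-1/(k-1)})$. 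Hence $\E_G|\mathbf S|=\Theta(N\Delta^{-1/(k-1)})$. The diagonal term is at most $\E_G|\mathbf S|$, and this is $o(\E_G[|\mathbf S|]^2)$ by Lemma~\ref{lem:basicobs}.

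For the off-diagonal terms, we apply Lemma~\ref{lemWeitzTreeFP} to $G\ominus u$ using Observation~\ref{obs:identities}\eqref{item:condcontract}. This gives
\[
p_G(v\mid u)=(1+o(1))(1-\zeta)^{m_v}x_v^\ast\Delta^{-1/(k-1)},
\]
where $m_v$ is the multiplicity of $\{v\}$ in $G\ominus u$. Here $m_v\ge1$ exactly when $\{u,v\}$ is an edge, which is possible only if $k=2$. For $k\ge3$, we have $m_v=0$ for every $v$, so $p_G(v\mid u)=(1+o(1))p_G(v)$. For $k=2$, this equality still holds for all $v$ outside the neighbourhood of $u$, and there are at most $\Delta$ such neighbours. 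In either case, we split the off-diagonal sum into two parts:
\begin{itemize}
\item Good pairs, where $p_G(v\mid u)-p_G(v)=o(\Delta^{-1/(k-1)})$. These contribute $N^2\cdot O(\Delta^{-1/(k-1)})\cdot o(\Delta^{-1/(k-1)})=o(\E_G[|\mathbf S|]^2)$.
\item Bad pairs, which exist only when $k=2$. There are at most $N\Delta$ of them, each contributing $O(\Delta^{-2})$, for a total of $O(N/\Delta)$. This is $o(N^2\Delta^{-2})$ by Lemma~\ref{lem:basicobs}.
\end{itemize}

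The main obstacle is uniformity. The $o(1)$ in Lemma~\ref{lemWeitzTreeFP} must be uniform over the choice of $v$ and $u$, since we sum $N^2$ error terms. I would handle this by rereading the proof of Lemma~\ref{lemWeitzTreeFP}. Every error there depends only on the asymptotically tree-like parameters, through Lemma~\ref{lemWeitzTreeStructure} with a fixed depth $L$ and $|U|\le1$, and on $\delta$. None of these depend on the particular vertices, so the bound $\sup_{u,v}|p_G(v\mid u)/p_G(v)-1|=o(1)$ holds for non-adjacent pairs (all pairs when $k\ge3$). If full uniformity proved delicate, a fallback would be to argue by contradiction: take a sequence of pairs witnessing a non-vanishing error and apply the lemma along that sequence. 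This suffices because the statement is asymptotic.
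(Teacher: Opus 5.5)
Your proposal is correct and follows the paper's proof essentially step for step. It uses the same covariance expansion, bounds the diagonal by $\E_G|\mathbf S|$ via Lemma~\ref{lem:basicobs}, treats non-adjacent pairs with Lemma~\ref{lemWeitzTreeFP} applied to $G\ominus u$, and handles the case $k=2$ by noting there are at most $N\Delta$ adjacent pairs, each contributing $O(\Delta^{-2})$. Your remark that the $o(1)$ must be uniform over pairs addresses a point the paper leaves implicit, and your subsequence argument makes it rigorous: apply Lemma~\ref{lemWeitzTreeFP} along a sequence of worst-case pairs $(u_n,v_n)$, which the lemma allows since $v$ and $U$ may depend on $n$.
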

 \begin{proof}
  First note that (e.g.\ by Lemma~\ref{lemWeitzTreeFP}) $\E_{G}|\mathbf{S}|=\Theta(N\Delta^{-\frac{1}{k-1}})$. 
 We write
\begin{align}\label{eq:varexp}
\var_{G} |\mathbf{S}|=\sum_{(u,v)\in V^2}p_G(u)\left[ p_{G}(v \mid u )  -  p_G(v)\right]\, .
\end{align}
If $u=v$, then the contribution to the sum on the RHS is at most $\E_{G}|\mathbf{S}|$ and $\E_{G}|\mathbf{S}|=o\left( \E_{G}[|\mathbf{S}|]^2 \right)$ by Lemma~\ref{lem:basicobs}.

For distinct $u,v\in V$ such that $\{u,v\}\notin E$, we have

\begin{align*}
 p_{G}(v  \mid u )  
    = p_{G \ominus u}(v )  
    = (1 + o(1)) x^\ast_v\cdot \Delta^{-\frac{1}{k-1}} 
   = (1+o(1)) p_{G}(v),
\end{align*}
where for the final two equalities we used Lemma~\ref{lemWeitzTreeFP}.  The contribution to the RHS of \eqref{eq:varexp} from such pairs $(u,v)$ is therefore also $o\left( \E_{G}[|\mathbf{S}|]^2 \right)$.

The number of pairs $(u,v)$ for which $\{u,v\}\in E$ is at most $N\Delta$ when $k=2$ and is $0$ when $k\geq 3$. We are therefore done if $k\geq3$, and if $k=2$, we bound the contribution to the RHS of~\eqref{eq:varexp} from such pairs by
\[
N\Delta \cdot O\left(\Delta^{-2} \right)= o\left( \E_{G}[|\mathbf{S}|]^2 \right) \,.\qedhere
\]
\end{proof}

Before we proceed to bound the variance of the number of occupied edges, we note the following estimate on the edge occupancy marginals of disjoint edges.

\begin{lemma}\label{lem:edgemarginal}
For $e\in E(G)$, we have 
\begin{enumerate}
\item $p_{G}(e) = (1 + o(1))\cdot (1-\zeta)\cdot \prod_{v\in e}p_{G}(v)$\label{item:pGe1}
\item If $f\in E(G)$ such that $e\cap f = \emptyset$ and $e\cup f$ induces no edge other than $e,f$ in $G$, then $p_{G}(e\cup f) = (1+o(1)) p_{G}(e) \cdot p_{G}(f)$.\label{item:pGe2}
\end{enumerate}
\end{lemma}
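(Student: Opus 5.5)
The plan is to write each joint marginal as a product of conditional marginals and to evaluate each conditional factor with Lemma~\ref{lemWeitzTreeFP}, using that conditioning on vertices being occupied is the same as passing to $G\ominus U$ (Observation~\ref{obs:identities}~\eqref{item:condcontract}).

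For part~\eqref{item:pGe1}, write $e=\{v_1,\dots,v_k\}$ and set $U_j=\{v_1,\dots,v_{j-1}\}$. Then
\[
p_G(e)=\prod_{j=1}^k p_G(v_j\mid U_j)=\prod_{j=1}^k \mu_{G\ominus U_j,\lam,\zeta}(v_j\in\mathbf S).
\]
Since $|U_j|<k=O(1)$, Lemma~\ref{lemWeitzTreeFP} gives
\[
\mu_{G\ominus U_j}(v_j\in \mathbf S)=(1+o(1))(1-\zeta)^{m_{v_j}}x^\ast_{v_j}\Delta^{-\frac{1}{k-1}},
\]
where $m_{v_j}$ is the multiplicity of $\{v_j\}$ in $G\ominus U_j$. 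We now count these multiplicities. The singleton $\{v_j\}$ appears once for each edge $h\in E(G)$ with $h\setminus U_j=\{v_j\}$, that is, $h\subseteq U_j\cup\{v_j\}$. For $j<k$ this set has fewer than $k$ vertices, so no edge fits and $m_{v_j}=0$. For $j=k$ the set is exactly $e$; since $G$ is simple, $m_{v_k}=1$. The product is therefore
\[
(1+o(1))(1-\zeta)\prod_{v\in e}x_v^\ast\Delta^{-\frac{1}{k-1}}.
\]
Applying Lemma~\ref{lemWeitzTreeFP} again with $U=\emptyset$ gives $p_G(v)=(1+o(1))x_v^\ast\Delta^{-\frac{1}{k-1}}$, and substituting this proves~\eqref{item:pGe1}. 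We do need $x_v^\ast$ to be bounded away from $0$ so that additive errors become relative ones. This holds because $x^\ast_v\ge ce^{-\zeta c^{k-1}}$.

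Part~\eqref{item:pGe2} follows the same chain, now over the $2k$ vertices of $e\cup f$. List the vertices of $e$ first and then those of $f$. For the $j$-th vertex $w$ with prefix $U$, the multiplicity of $\{w\}$ in $G\ominus U$ is the number of edges of $G$ contained in $U\cup\{w\}\subseteq e\cup f$ and containing $w$. By hypothesis the only such edges are $e$ and $f$. Hence the multiplicity is $1$ exactly when $w$ is the last vertex of $e$ or the last vertex of $f$, and $0$ otherwise. The chain then gives
\[
p_G(e\cup f)=(1+o(1))(1-\zeta)^2\prod_{w\in e\cup f}x^\ast_w\Delta^{-\frac{1}{k-1}}.
\]
By part~\eqref{item:pGe1} this equals $(1+o(1))p_G(e)p_G(f)$.

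The main obstacle is the multiplicity bookkeeping in Lemma~\ref{lemWeitzTreeFP}. Condensed edges of sizes $2,\dots,k-1$ can also be created, but Lemma~\ref{lemWeitzTreeFP} absorbs these into the $(1+o(1))$ factor. The only term it keeps explicit is the singleton count $m_v$, and computing that count exactly is what the hypothesis that $e\cup f$ induces no other edges is used for.
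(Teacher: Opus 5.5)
Your proof is correct and follows the paper's argument: factor $p_G(e)$ (resp.\ $p_G(e\cup f)$) into conditional marginals, rewrite each factor as a marginal in $G\ominus U_i$ via Observation~\ref{obs:identities}, and apply Lemma~\ref{lemWeitzTreeFP} to each factor. Your explicit count of the singleton multiplicities $m_v$ (nonzero only at the last vertex of $e$, and of $f$) is exactly the bookkeeping the paper leaves implicit when it writes the factors $(1-\zeta)$ and $(1-\zeta)^2$.
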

\begin{proof}\hfill
\begin{enumerate}
\item Let $e=\{v_1, \ldots, v_{k}\}$, and $U_i := \{v_1,\ldots,v_{i-1}\}$ for $i\in [k]$. We have 
\begin{align}
p_{G}(e) & =  \prod_{i\in [k]}p_{G}(v_i \mid v_1,\ldots, v_{i-1})\\
& = \prod_{i \in [k]}p_{G\ominus U_i}(v_i) \\
& = (1+o(1))\cdot(1-\zeta) \cdot \prod_{v \in e} \left(x^\ast_v \cdot \Delta^{-\frac{1}{k-1}}\right)  \\
& = (1+o(1))\cdot (1-\zeta) \cdot \prod_{v\in e}p_{G}(v)\label{eqn:edgemarginal}  
\end{align}
where for the final two equalities we used Lemma~\ref{lemWeitzTreeFP}.\\

\item Let $e\cup f=\{v_1, \ldots, v_{2k}\}$, and $V_i := \{v_1,\ldots,v_{i-1}\}$ for $i\in [2k]$. We have 
\begin{align*}
p_{G}(e\cup f ) & =  \prod_{i\in [2k]}p_{G}(v_i \mid v_1,\ldots, v_{i-1})\\
& = \prod_{i \in [2k]}p_{G\ominus V_i}(v_i) \\
& = (1+o(1))\cdot(1-\zeta)^{2} \cdot \prod_{v \in e\cup f} \left( x^\ast_v \cdot \Delta^{-\frac{1}{k-1}} \right) \\
& = (1+o(1))\cdot  p_{G}(e)\cdot p_{G}(f),  \\
\end{align*}
where for the final two equalities we used Lemma~\ref{lemWeitzTreeFP} noting that $e \cup f$ induces $2$ edges in $G$ by assumption.\qedhere
\end{enumerate}
\end{proof}

In particular, Lemma~\ref{lem:edgemarginal}  part \eqref{item:pGe1} gives us
\begin{equation}\label{eqn:expedges}
\E_{G}|E(\mathbf{S})| = \Theta\left(|E| \cdot \lam^{k}\right)=\Theta(N\Delta^{-\frac{1}{k-1}})\, ,
\end{equation}
where for the final equality we used that $\lam=\Theta(\Delta^{-\frac{1}{k-1}})$ and $|E|=\Theta(N\Delta)$ since $G$ is asymptotically tree-like.
The variance of the number of edges follows from Lemma~\ref{lem:edgemarginal} and a similar computation to that of Lemma~\ref{lem:vtxvariancebd} except that one needs to account for pairs of edges $e,f$ such that $e\cup f$ induce edges other than $e,f$. Towards this, we have the following.

\begin{lemma}\label{lem:badedgepairs}
Let
\[
Q := \left\{(e,f) \in E^2~|~\left(e\cap f = \emptyset\right)\land\left(E(e\cup f) \setminus \{e,f\} \neq \emptyset\right)\right\}
\]
i.e., the set of pairs of disjoint edges $(e,f)$ whose vertices induce edges other than $e$ and $f$. Then we have
\begin{equation}
|Q| = \begin{cases}
o\left(|E| \cdot \Delta^{\frac{k}{k-1}}\right)&\text{if }k\geq 3,\\
O(|E|\cdot \Delta^2)& \text{if }k = 2.
\end{cases}
\end{equation}
\end{lemma}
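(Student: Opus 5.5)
We bound $|Q|$ by a union bound over a ``witness'' edge. If $(e,f)\in Q$, then there is an edge $g\in E\setminus\{e,f\}$ with $g\subseteq e\cup f$. Since $g\ne e,f$ and $|g|=k$, the set $g$ meets both $e$ and $f$. Write $a=|g\cap e|$ and $b=|g\cap f|$, so that $a+b=k$ with $1\le a,b\le k-1$. We therefore count triples $(e,f,g)$, splitting according to the value of $a\in\{1,\dots,k-1\}$. Since $k$ is fixed, it suffices to bound the number of triples for each fixed $a$.

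For a fixed $a$, proceed as follows:
\begin{itemize}
\item Choose $e$ in $|E|$ ways.
\item Choose the set $A=g\cap e$ in at most $\binom{k}{a}=O(1)$ ways.
\item Choose $g\supseteq A$ in at most $\Delta_a(G)$ ways, where for $a=1$ this is just $\Delta(G)=\Delta$.
\item Choose the set $B=g\setminus A$, which is determined by $g$ and has size $b=k-a$.
\item Choose $f\supseteq B$ in at most $\Delta_b(G)$ ways.
\end{itemize}
This gives a contribution of $O\left(|E|\,\Delta_a(G)\,\Delta_{k-a}(G)\right)$.

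We now apply Definition~\ref{def:tree}(2). When $2\le a\le k-2$, both factors are subpolynomial: $\Delta_a=o(\Delta^{\frac{k-a}{k-1}})$ and $\Delta_{k-a}=o(\Delta^{\frac{a}{k-1}})$. Their product is therefore $o(\Delta^{\frac{k}{k-1}})$. When $a=1$ (and symmetrically when $a=k-1$), we have $\Delta_1=\Delta$ and $\Delta_{k-1}=o(\Delta^{\frac{1}{k-1}})$. The product is again $o(\Delta^{1+\frac1{k-1}})=o(\Delta^{\frac{k}{k-1}})$. This requires $k-1\ge 2$, i.e.\ $k\ge3$, so that $\ell=k-1$ lies in the range $\{2,\dots,k-1\}$ covered by condition (2). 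Summing over the $O(1)$ choices of $a$ gives $|Q|=o(|E|\Delta^{\frac{k}{k-1}})$ for $k\ge3$.

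For $k=2$, the only case is $a=b=1$, and condition (2) is vacuous. The same count gives $|E|\cdot 2\cdot\Delta\cdot\Delta=O(|E|\Delta^2)$, which is the claimed bound.

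The only subtle point is the boundary case $a\in\{1,k-1\}$. Here one factor is the full degree $\Delta$, so we must verify that the $o(\cdot)$ saving coming from $\Delta_{k-1}$ alone suffices. It does: $\Delta_{k-1}=o(\Delta^{\frac{1}{k-1}})$, and this is exactly the exponent needed to reach $o(\Delta^{\frac{k}{k-1}})$.

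One might worry that $g$ could be counted as containing $e$ or $f$ entirely. This cannot happen, because $a,b\ge1$ and $a+b=k$ force $a,b\le k-1$. Multiplicities cause no issue either, since $G$ is simple.
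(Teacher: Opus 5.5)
Your proof is correct and is essentially the paper's argument. The paper also takes a witness edge $f'\subseteq e\cup f$, splits it into nonempty parts $f'\cap e$ and $f'\cap f$ whose sizes sum to $k$, and bounds the count by $|E|\cdot 2^k\cdot \Delta_a\Delta_{k-a}$ via condition (2), with $\Delta_1=\Delta$ in the boundary case. The only differences are that it enumerates the witness edge first rather than $e$, and it handles $k=2$ by counting walks of length $3$. (Minor wording point: in the case $2\le a\le k-2$ the factors are not ``subpolynomial'' but merely $o(\Delta^{(k-a)/(k-1)})$ and $o(\Delta^{a/(k-1)})$, which is exactly what you then use.)
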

\begin{proof}
First consider the case $k \geq 3$. Consider a pair $(e,f) \in Q$ with $f' \in E(e\cup f)\setminus \{e,f\}$. Since $k \geq 3$,  $f'$ can be partitioned into $f' = (e\cap f') \sqcup (f\cap f')$ with $(e\cap f'), (f\cap f') \neq \emptyset$. We may choose such a pair $(e,f)$ as follows:
\begin{align*}
\text{choose }f'\in E&& |E|~\text{ways},\\
\text{partition into}~(f' \cap e),~(f'\cap f)&& \text{at most}~2^k~\text{ways},\\
\text{choose}~e\in N(e\cap f')&& o\left(\Delta^{\frac{k- |e\cap f'|}{k-1}}\right)~\text{ways if $|e\cap f'|>1$ and $\leq \Delta$ ways otherwise} ,\\
\text{choose}~f\in N(f\cap f')&&~o\left(\Delta^{\frac{k - |f\cap f'|}{k-1}}\right)~\text{ways if $|f\cap f'|>1$ and $\leq \Delta$ ways otherwise.}
\end{align*}
Thus noting that $|e\cap f'| + |f\cap f'| = k$, this gives $o\left(|E|\cdot \Delta^{\frac{k}{k-1}}\right)$ ways in total. If $k = 2$, then $|Q|$ is upper bounded by the number of walks of length $3$ in $G$, so $|Q| \leq 2|E|\cdot \Delta^2$.
\end{proof}

 \begin{lemma}\label{lem:variancebd}
 We have
 \[
 \var_{G} |E(\mathbf{S})| = o\left( \E_{G}\left[|E(\mathbf{S})|\right]^2 \right)
 \]
 \end{lemma}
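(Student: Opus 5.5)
We expand the variance as a sum of covariances over ordered pairs of edges:
\begin{align}
\var_G |E(\mathbf S)| = \sum_{(e,f)\in E^2} \left[ p_G(e\cup f) - p_G(e)\,p_G(f)\right],
\end{align}
and bound the contribution of four classes of pairs separately, comparing each to $\E_G[|E(\mathbf S)|]^2=\Theta(N^2\Delta^{-\frac{2}{k-1}})$ from~\eqref{eqn:expedges}.

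\emph{Diagonal pairs} $e=f$ contribute at most $\E_G|E(\mathbf S)|=\Theta(N\Delta^{-\frac{1}{k-1}})$. This is $o(N^2\Delta^{-\frac{2}{k-1}})$ by Lemma~\ref{lem:basicobs}.

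\emph{Good disjoint pairs} are those with $e\cap f=\emptyset$ and $(e,f)\notin Q$. For these, Lemma~\ref{lem:edgemarginal} part~\eqref{item:pGe2} gives $p_G(e\cup f)-p_G(e)p_G(f)=o(1)\,p_G(e)p_G(f)$. Summing over all such pairs gives at most $o(\E_G[|E(\mathbf S)|]^2)$. We need the $o(1)$ to be uniform over pairs. This holds because Lemma~\ref{lemWeitzTreeFP} is applied with $|U|\le 2k$, and its error terms depend only on $|U|$ and the global degree parameters, not on the specific pair. I would add a one-line remark making this explicit.

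\emph{Bad disjoint pairs}, those in $Q$, are handled by bounding the covariance by $p_G(e\cup f)$. By stochastic domination (Lemma~\ref{lemStochDom}), $p_G(e\cup f)\le p^{2k}$ with $p=\lam/(1+\lam)=O(\Delta^{-\frac{1}{k-1}})$. For $k\ge3$, Lemma~\ref{lem:badedgepairs} bounds the contribution by
\begin{align}
o\left(|E|\Delta^{\frac{k}{k-1}}\right)\cdot O\left(\Delta^{-\frac{2k}{k-1}}\right)=o\left(N\Delta\cdot\Delta^{-\frac{k}{k-1}}\right)=o\left(N\Delta^{-\frac{1}{k-1}}\right),
\end{align}
which is negligible. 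For $k=2$ the bound is $O(N\Delta^3\cdot\Delta^{-4})=O(N/\Delta)$, again $o(N^2\Delta^{-2})$ by Lemma~\ref{lem:basicobs}.

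\emph{Intersecting distinct pairs}, $e\ne f$ with $|e\cap f|=j\ge1$, are also bounded by $p_G(e\cup f)\le p^{2k-j}$. The number of such pairs is at most $|E|\binom{k}{j}\Delta_j(G)$. When $j=1$, this gives $|E|\,k\Delta\cdot\Delta^{-\frac{2k-1}{k-1}}=O(N\Delta^{-\frac{1}{k-1}})$, which is negligible. When $2\le j\le k-1$, we get $o(|E|\Delta^{\frac{k-j}{k-1}}\Delta^{-\frac{2k-j}{k-1}})=o(N\Delta^{-\frac{1}{k-1}})$. Both are $o(N^2\Delta^{-\frac{2}{k-1}})$ by Lemma~\ref{lem:basicobs}.

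I expect the main obstacle to be the second class, because it is the only one where we need genuine decorrelation rather than crude counting. The crucial point is that the $(1+o(1))$ in Lemma~\ref{lem:edgemarginal}\eqref{item:pGe2} holds uniformly over all $\Theta(N^2\Delta^2)$ pairs; otherwise the summation breaks. I would check that uniformity by tracing through the proof of Lemma~\ref{lemWeitzTreeFP}. That proof uses only Lemma~\ref{lemWeitzTreeStructure}, whose bounds depend on $|U|$, $L$, and the global quantities $\Delta_\ell(G)$ and $\Gamma(G)$. It also uses contraction with a rate $\delta$ that does not depend on the vertex. Since both ingredients are vertex-independent, the errors should be uniform.
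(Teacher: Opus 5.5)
Your proposal is correct and follows essentially the same route as the paper. Like the paper, you expand the variance over ordered pairs of edges, use Lemma~\ref{lem:edgemarginal}\eqref{item:pGe2} for disjoint pairs outside $Q$, bound the pairs in $Q$ and the intersecting pairs crudely via stochastic domination (Lemma~\ref{lemStochDom}) together with Lemma~\ref{lem:badedgepairs} and the codegree bounds, and finish with Lemma~\ref{lem:basicobs}. Your separate treatment of the diagonal $e=f$ and your explicit check that the $o(1)$ in Lemma~\ref{lemWeitzTreeFP} is uniform over $v$ and $U$ are small points the paper leaves implicit, and both hold.
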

 \begin{proof}

We bound the variance of $|E(\mathbf{S})|$ by bounding the covariance of $\mathbf 1[e\subseteq \mathbf{S}], \mathbf 1[f \subseteq \mathbf{S}]$ for all pairs $e, f\in E(G)$. Write 
\begin{align}\label{eq:varEdgeExp}
\var_{G} |E(\mathbf{S})| = \sum_{(e,f)\in E^2}p_G(e\cup f)-p_G(e)p_G(f)\, .
\end{align}
We bound the contribution to the sum on the RHS from three types of pair $(e,f)$. First suppose that $(e,f)\in Q$. Using Lemma~\ref{lem:badedgepairs}, and stochastic domination (Lemma~\ref{lemStochDom}), we bound the contribution from such pairs by
\begin{equation}\label{eqn:A'bound}
 |Q| \cdot \lam^{2k} = \begin{cases} o\left(|E| \cdot \Delta^{\frac{-k}{k-1}} \right) = o\left(\E_{G}{|E(\mathbf{S})|}\right)&\text{if}~k\geq 3,\\
O\left(|E|\cdot \Delta^{-2}\right) = O\left(\E_{G}|E(\mathbf{S})|\right)&\text{if}~k=2.
\end{cases}
\end{equation}
where for the final equalities we recalled~\eqref{eqn:expedges}.

Suppose now that $(e,f)$ is such that $e\cap f=\emptyset$ and $(e,f)\notin Q$ so that $e\cup f$ induces no edge other than $e,f$. Then by Lemma~\ref{lem:edgemarginal} part \eqref{item:pGe2}, the contribution from such pairs is at most 
\[
|E|^2\cdot o(\lam^{2k})= o\left( \E_{G}\left[|E(\mathbf{S})|\right]^2 \right)\, ,
\]
where again we used~\eqref{eqn:expedges}.

Finally suppose that $(e,f)$ is such that $e\cap f\neq \emptyset$. By stochastic domination (Lemma~\ref{lemStochDom}) the contribution from such pairs is at most
\begin{align} \label{eqn:Bbound}
 \sum_{\substack{(e,f)\in E^2\\e\cap f \neq \emptyset}} \lam^{|e\cup f|}
 \leq 
 \sum_{e\in E} \sum_{j=1}^{k-1}\sum_{\substack{S\subseteq e\\|S| = j}}\sum_{f\in N(S)} \lam^{2k - j} 
 &\leq
 |E|k\Delta\lam^{2k-1} +|E|\sum_{j=2}^{k-1}\binom{k}{j}\cdot o\left(\Delta^{\frac{k-j}{k-1}} \right) \lam^{2k-j}\\
 &\leq |E|k\lam^{k} + |E|\sum_{j=2}^{k-1} \binom{k}{j}\cdot o\left(\lam^k\right)\\
 &=O(\E_{G}|E(\mathbf{S})|)\, .
\end{align}
Putting everything together we conclude that 
 \[
 \var_{G} |E(\mathbf{S})| = O\left(\E_{G}[|E(\mathbf{S})|]\right) + o\left( \E_{G}\left[|E(\mathbf{S})|\right]^2 \right)= 
 o\left( \E_{G}\left[|E(\mathbf{S})|\right]^2 \right)\, ,
 \]
 where for the final equality we used~\eqref{eqn:expedges} and Lemma~\ref{lem:basicobs}.
 \end{proof}

 We now put things together and prove Theorem~\ref{thmMarginals}.
\begin{proof}[Proof of Theorem~\ref{thmMarginals}]\hfill
\begin{enumerate}[label=$(\roman*)$]
\item This follows immediately from Lemma~\ref{lemWeitzTreeFP} (with $U = \emptyset$) and Lemma~\ref{lemfkContractionNonReg}.
\item This follows from part~\ref{item:marg1} and Lemma~\ref{lem:edgemarginal} part \eqref{item:pGe1}.  
\item By part \ref{item:marg1} we have
\begin{align}
    \log Z_G(\lam,\zeta) & = \int_{0}^\lam  \frac{\E_{G,\theta,\zeta}|\mathbf{S}|  }{t}    \, dt =   \sum_{v \in V(G)}  \int_{0}^\lam  \frac{\mu_{G,\theta,\zeta}(v \in \mathbf{S})  }{\theta}    \, d\theta \\
    &= (1+o(1))\Delta^{-\frac{1}{k-1}}\sum_{v\in V(G)}\int_{0}^{c} \frac{x^\ast_v(t)}{t} \, d t.\label{eqn:integral}
\end{align}
where $\mathbf x^\ast (t)$ denotes the unique fixed point of $F_{t,\zeta}^G$ and for the last equality we used the substitution $\theta=t\Delta^{-\frac{1}{k-1}}$.
\item
Recall from~\eqref{eq:Bdef} that for $\mathbf{x}\in [0,c]^V$,
\[
    \mathcal B^G_{c,\zeta}(\mathbf x) =  - \frac{\zeta}{\Delta} \sum_{e \in E}  \prod_{u \in e} x_u - \sum_{v \in V} x_v \left[\log \frac{x_v}{c}  -1\right] \,.
\]
Let  $ \mathcal B_G(t)=\mathcal B^G_{t,\zeta}(\mathbf x^\ast(t))$ for $t\in [0,c]$. We aim to show that
\begin{equation}\label{eqn:derivative}
\frac{d}{dt}\cB_{G}(t) = \sum_{v\in V(G)}\frac{x_v^\ast(t)}{t} ,
\end{equation}
since then the result follows from part~\ref{item:marg2.5} and the fundamental theorem of calculus (noting that $\mathbf x^\ast(0)=\mathbf 0$ and so $\cB_{G}(0)=0.$). We note that $\mathbf x^\ast(t)$ is differentiable with respect to $t$ by Lemma~\ref{lem:diffx}. 

Now,
\begin{equation}\label{eqn:dB/dt}
\frac{d}{dt} \cB_{G}(t)  = -\frac{\zeta}{\Delta}\sum_{e\in E}\sum_{v\in e}\left(\frac{d}{dt}x_v^\ast\prod_{u\in e\backslash\{v\}}x_u^\ast\right) - \sum_{v\in V}\log\left(\frac{x_v^\ast}{t}\right)\frac{d}{d t}x_v^\ast + \sum_{v\in V}\frac{x_v^\ast}{t}.
\end{equation}
Since $\mathbf{x}^\ast(t)$ is a fixed point of $F_{t,\zeta}^G$, we have for every $v\in V$, that 
\[
\log \left(\frac{x_v^\ast}{t}\right) = -\frac{\zeta}{\Delta}\sum_{e\ni v}\prod_{u\in e\setminus \{v\}}x_u^\ast.
\]
Plugging this into~\eqref{eqn:dB/dt} establishes~\eqref{eqn:derivative}.\\

\item This follows from Lemmas~\ref{lem:vtxvariancebd} and~\ref{lem:variancebd}.\qedhere
\end{enumerate}
\end{proof}

\section{Lower tails via  partition functions}
\label{secLowerTailPartition}

\newcommand{\open}{{\mathcal{X}}}

In this section we prove Theorems~\ref{thmMain},~\ref{thmMainGnm} and \ref{thmNonRegularRate}. 

Let  $G=(V,E)$ be a $k$-uniform asymptotically tree-like hypergraph. Recall that for $p \in (0,1)$, we let $V_p$ denote a random subset of $V$ chosen by including each vertex independently with probability $p$ and we let $\mathbf X = |E(V_p)|$ be the number of edges induced by $V_p$. 

In this section we  relate the lower-tail probability $\P_p( \mathbf X \le \eta \E \mathbf X)$ to the log partition function $\log Z_G(\lam,\zeta)$ for $\lam=p/(1-p)$ and an appropriate choice of $\zeta$.   We then prove our results for lower tails in $G(n,m)$ by estimating the probability that $|S| =m$ and the lower-tail event is achieved under the measure $\mu_{G,\lam,\zeta}$. 

Recall the definitions of $\overline c_k(\eta), c_k(\eta)$ from~\eqref{eqOverlineCeta} and~\eqref{eq:ckdef}. Throughout this section we assume the following.

\begin{assumption}
\label{assumTreeEta}
    $G = G_n $ is a sequence of asymptotically tree-like $k$-uniform hypergraphs of maximum degree $\Delta$ on $N$ vertices; $\eta \in [0,1)$ and $0<c<c_k(\eta)$ are fixed. If in addition $G$ is approximately regular, we make the weaker assumption $0<c<\overline c_k(\eta)$.
\end{assumption}

The main result of this section is the following. 
\begin{lemma}
\label{lemZtoP}
     With $p \sim c \Delta ^{- \frac{1}{k-1}}$, $\lam = \frac{p}{1-p}$, and  $\mathbf x^\ast, \zeta$ satisfying the conclusion of Lemma~\ref{lemZetaNonRegular},
    \begin{equation}
          \log \P_p ( \mathbf X \le \eta \E \mathbf X)  \sim  \log Z_G(\lam, \zeta) -   \log (1-\zeta)\eta \E_p\mathbf X  -   pN \,.
    \end{equation}
    Moreover, suppose   $m \sim \Delta^{- \frac{1}{k-1}} \sum_{v \in V} x^\ast_v$.
    Then
    \begin{align}
        \Delta^{\frac{1}{k-1}} N^{-1}  \log \P_p ( |\mathbf S| = m \big | \mathbf X \le \eta \E \mathbf X)  = o(1) \,.
    \end{align}
\end{lemma}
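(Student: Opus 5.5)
The plan is a change of measure between the product measure $\mu_{V,p}$ and the edge-penalty measure $\mu=\mu_{G,\lam,\zeta}$. With $\lam=p/(1-p)$, every $S\subseteq V$ satisfies
\[
\mu_{V,p}(S)=(1-p)^N\lam^{|S|}=(1-p)^N Z_G(\lam,\zeta)\,(1-\zeta)^{-|E(S)|}\mu(S).
\]
Summing over the lower-tail event $A=\{|E(S)|\le \eta\E_p\mathbf X\}$ gives
\[
\P_p(A)=(1-p)^N Z_G(\lam,\zeta)\,\E_\mu\!\left[(1-\zeta)^{-|E(\mathbf S)|}\mathbf 1_A\right].
\]
For $\zeta<1$ the weight $(1-\zeta)^{-|E(\mathbf S)|}$ is increasing in $|E(\mathbf S)|$. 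On $A$ it is therefore at most $(1-\zeta)^{-\eta\E_p\mathbf X}$, which gives the upper bound directly. When $\zeta=1$ (the case $\eta=0$), $A$ is exactly the support of $\mu$ and the identity $\P_p(\mathbf X=0)=(1-p)^NZ_G(\lam)$ is exact. Also $N\log(1-p)=-(1+o(1))pN$, and every quantity here is of order $N\Delta^{-1/(k-1)}$. So everything reduces to a matching lower bound on $\E_\mu[(1-\zeta)^{-|E(\mathbf S)|}\mathbf 1_A]$, up to $\exp(o(N\Delta^{-1/(k-1)}))$.

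For the lower bound I would restrict to the window $A'=\{(\eta-\eps)\E_p\mathbf X\le |E(\mathbf S)|\le \eta\E_p\mathbf X\}$. On $A'$ the weight is at least $(1-\zeta)^{-(\eta-\eps)\E_p\mathbf X}$. Since $\E_p\mathbf X=|E|p^k=\Theta(N\Delta^{-1/(k-1)})$, letting $\eps\to0$ afterwards costs only $O(\eps)N\Delta^{-1/(k-1)}|\log(1-\zeta)|$. This requires $\zeta$ bounded away from $1$ when $\eta>0$, which holds because $\zeta\le 1-\eta$ in the construction of Lemma~\ref{lemZetaNonRegular} (and $\zeta<1$ in the regular case as well).

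It then remains to show $\mu(A')\ge e^{-o(N\Delta^{-1/(k-1)})}$. Here I use the choice of $\zeta$ together with Theorem~\ref{thmMarginals}. By part~\ref{item:marg2},
\[
\E_\mu|E(\mathbf S)|\sim(1-\zeta)\Delta^{-k/(k-1)}\sum_{e\in E}\prod_{u\in e}x^\ast_u.
\]
By Lemma~\ref{lemZetaNonRegular} this equals $(1+o(1))\eta c^k|E|\Delta^{-k/(k-1)}\sim\eta\E_p\mathbf X$. By part~\ref{item:marg4}, the variance of $|E(\mathbf S)|$ is $o((N\Delta^{-1/(k-1)})^2)$. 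This is the main obstacle: the mean sits exactly at the boundary $\eta\E_p\mathbf X$ up to a $1+o(1)$ factor, so Chebyshev alone gives no constant probability of lying below it. I would first try to avoid this by a perturbation of $\zeta$. Choose $\zeta'$ slightly larger than $\zeta$, or equivalently use Lemma~\ref{lemZetaNonRegular} with $\eta-\eps/2$ in place of $\eta$. Then by continuity of $\mathbf x^\ast$ (Lemma~\ref{lem:diffx}) the $\mu_{\zeta'}$-mean of $|E(\mathbf S)|$ lies at $(\eta-\eps/2+o(1))\E_p\mathbf X$. Chebyshev now gives $\mu_{\zeta'}(A')\to1$. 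Running the change of measure with $\zeta'$ changes $\log Z$ by $O(\eps)N\Delta^{-1/(k-1)}$, via the Bethe formula from Theorem~\ref{thmMarginals} and continuity in $\zeta$. It also changes the $\log(1-\zeta)$ term by $O(\eps)$, so sending $\eps\to0$ closes the argument. If $\eta=0$, nothing is needed.

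For the second statement I condition on $A$ and use the same tilting. The conditional law of $\mathbf S$ under $\mu_{V,p}$ given $A$ is $\mu$ conditioned on $A$ and reweighted by $(1-\zeta)^{-|E(S)|}$, and this weight varies only by a factor $e^{O(\eps N\Delta^{-1/(k-1)})}$ over $A'$. It therefore suffices to show $\mu_{\zeta'}(|\mathbf S|=m,\ A')\ge e^{-o(N\Delta^{-1/(k-1)})}$. By part~\ref{item:marg1} and the variance bound~\eqref{eqn:varvtx}, $|\mathbf S|$ concentrates at $(1+o(1))\Delta^{-1/(k-1)}\sum_v x^\ast_v\sim m$. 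Intersecting with $A'$, which has probability $1-o(1)$, $|\mathbf S|$ lies within $\eps' N\Delta^{-1/(k-1)}$ of $m$ with probability $\ge 1/2$. Pigeonholing over the at most $2\eps' N\Delta^{-1/(k-1)}+1$ values gives some value $m'$ near $m$ with probability $\ge (N\Delta^{-1/(k-1)})^{-1}$, which is $e^{-o(\cdot)}$ by Lemma~\ref{lem:basicobs}.

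To move from $m'$ to exactly $m$, I would use an add/remove-vertex switching. Removing a vertex never increases $|E(S)|$. Adding a vertex that lies in no edge with $S$ is cheap, because such vertices are typical since $\E|E(S)|$ is small relative to $|S|\Delta$. Each switching step costs a factor $\mathrm{poly}(\lam,N)$, which gives a loss of $e^{O(\eps' N\Delta^{-1/(k-1)}\log N)}$. Because of that $\log N$ factor, I expect this bookkeeping to be the delicate part. A cleaner route is to vary the activity $\lam$ slightly so that the mean of $|\mathbf S|$ is exactly $m$, and then apply a local concentration step. I would attempt that route if the switching loss proves too large.
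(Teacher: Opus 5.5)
Your tilting argument for the first assertion takes a different route from the paper. You move to a slightly larger $\zeta'$ so that Chebyshev puts $|E(\mathbf S)|$ inside the window. The paper instead stays at $\zeta$ and gets $\mu_{G,\lam,\zeta}(|E(\mathbf S)|\in[T_0,T])\ge e^{-o(N\Delta^{-1/(k-1)})}$ from the combinatorial Lemma~\ref{lemTransferGnm}, deleting $o(m)$ random vertices from typical configurations to push the edge count below $T$ (Claim~\ref{clmLoweringTriangleCount}). The tilt is more elementary, but your justification is incomplete for non-regular $G$. Bounding the changes of $\log Z$ and of the $\log(1-\zeta)$ term \emph{separately} by $O(\eps)$ needs $\zeta'-\zeta\to0$ uniformly in $n$. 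Lemma~\ref{lemZetaNonRegular} produces $\zeta$ only by the intermediate value theorem, with no continuity in $\eta$. Treat the two terms together instead. With $\theta=-\log(1-\zeta)$ and $T=\eta\E_p\mathbf X$,
$\frac{d}{d\theta}\bigl[\log Z_G(\lam,\zeta)+\theta T\bigr]=T-\E_{G,\lam,\zeta}|E(\mathbf S)|$, and the mean edge count is non-increasing in $\theta$ (its derivative is minus a variance). Hence $\theta'>\theta$ for large $n$, and the combined change over $[\theta,\theta']$ is at least $-(\theta'-\theta)\,o(T)$. Since $\theta'$ is bounded (e.g.\ $\zeta'\le 1-\eta+\eps/2$ in the construction of Lemma~\ref{lemZetaNonRegular}), the only real loss is the window term $\theta'\eps\E_p\mathbf X$.

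The second assertion has a genuine gap, for two reasons.

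First, under $\mu_{G,\lam,\zeta'}$ the vertex count concentrates at $\Delta^{-1/(k-1)}\sum_v x^\ast_v(c,\zeta')$, not at $m$. Identifying the two needs the same uniform continuity, which holds in the approximately regular case ($\zeta$ explicit and $n$-independent) but is not available in general.

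Second, and more seriously, you never get from some $m'$ within $\eps'N\Delta^{-1/(k-1)}$ of $m$ to exactly $m$. Your per-step accounting loses $e^{O(\eps'N\Delta^{-1/(k-1)}\log N)}$. The variance bound of Theorem~\ref{thmMarginals}\ref{item:marg4} is only $o(N^2\Delta^{-2/(k-1)})$ with no rate, so you cannot take $\eps'=o(1/\log N)$. The fallback ``local concentration step'' would need a local limit theorem, which second-moment information does not provide.

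The missing idea is to count switchings with multiplicity, as in Claims~\ref{clmDownwardsConstruction} and~\ref{clmUpwardsConstruction}.
\begin{itemize}
\item Deleting $d$ vertices from each set of a family $\mathcal S'$ of $M''$-sets yields at least $|\mathcal S'|\binom{M''}{d}/\binom{N}{d}\ge|\mathcal S'|\lam^{d}e^{-O(d)}$ distinct $M$-sets. This cancels the weight change $\lam^{-d}$ up to $e^{-O(d)}$.
\item For adding vertices, use that typical samples have at least $\alpha N$ open vertices. A random sparse subset of them creates no new edge with probability $e^{-o(N\Delta^{-1/(k-1)})}$, by Harris--FKG together with the bounds on $|E_\ell(S)|$. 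Each resulting $M$-set arises from at most $\binom{M}{d}$ sets, so you get at least $(\alpha N/d)^d/\binom{M}{d}\ge\lam^{-d}e^{-O(d)}$ extensions per set.
\end{itemize}
With $d=o(N\Delta^{-1/(k-1)})$ there is no $\log N$ loss. Doing this surgery at $\zeta$ itself, as Lemma~\ref{lemTransferGnm} does, also removes the need for $\zeta'$, and with it the problem of relating $m$ to the $\zeta'$-mean.
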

The first statement says that with the right choice of $\zeta$ (that for which the lower-tail event is `achieved' in expectation by the BP fixed point), the lower-tail rate function is given by the log partition function, suitably shifted. The second statement says that conditioned on the lower-tail event, it is not too unlikely to have $|S|=m$, where $m$ is close to the sum of the marginals predicted by the BP fixed point.

Before we prove this lemma, we combine it with Theorem~\ref{thmMarginals} to prove Theorems~\ref{thmMain},~\ref{thmMainGnm}, and~\ref{thmNonRegularRate}.

\begin{proof}[Proof of Theorems~\ref{thmNonRegularRate} and~\ref{thmMain}]
    Theorem~\ref{thmNonRegularRate}  follows from the first part of Lemma~\ref{lemZtoP} by applying Theorem~\ref{thmMarginals} to give the value of  $\Delta^{\frac{1}{k-1}} N^{-1} \log Z_G(\lam, \zeta)$.  Similarly, Theorem~\ref{thmMain}  follows from these steps and the application of Lemma~\ref{lemRegularFixedPoint} to determine the value of $N^{-1}\mathcal{B}^G_{c,\zeta}(\mathbf{x}^\ast)$ in the approximately regular case. We also note in our application of Lemma~\ref{lemZtoP} that in the approximately regular case $\E_p\mathbf X=p^k |E(G)|\sim c^kN\Delta^{-\frac{1}{k-1}}k^{-1}$.
    Moreover we recall that in the proof of Lemma~\ref{lemZetaNonRegular}, we showed that if $G$ is approximately regular then we can take the $\zeta$ guaranteed by the lemma to be the unique solution in $[0,1]$ to the equation 
    $(1-\zeta) (x^\ast)^k= \eta c^k$. 
\end{proof}

\begin{proof}[Proof of Theorem~\ref{thmMainGnm}]
Let 
\[
L=\eta \E_m \mathbf X\sim \eta |E(G_n)|\left(\frac{m}{N} \right)^k\, .
\]
We begin with the identity
\begin{align}\label{eq:mId}
   \P_m ( \mathbf X \le L ) 
 =   \P_p ( |\mathbf S| = m \big | \mathbf X \le L)\cdot \frac{\P_p(\mathbf X \leq L)}{\P_p(|\mathbf S|=m)}\, ,
 \end{align}
which holds for all $p\in(0,1)$. We will apply this identity with $p=c\Delta^{-\frac{1}{k-1}}$ for appropriately chosen $c$. Note that since $m\sim b N \Delta^{-\frac{1}{k-1}}$,
\[
L= \eta' \cdot \E_p\mathbf X \quad \text{where}\quad \eta'\sim(b/c)^k \eta\, .
\]

In order to approximate the probabilities on the RHS of~\eqref{eq:mId} we will use Theorem~\ref{thmMain} and Lemma~\ref{lemZtoP}. Set $\zeta=1-\eta$ and choose $c$ such that 
\[
 x^\ast:=\left( \frac{W((k-1)c^{k-1}\zeta)}{(k-1)\zeta } \right)^{\frac{1}{k-1}} =b\, ,
  \]
i.e. $c=b \exp(\zeta b^{k-1})$. With this choice of $c,\zeta$ note that
\[
(1-\zeta)(x^\ast)^k\sim\eta' c^k
\]
We may therefore apply Lemma~\ref{lemZtoP} to conclude that
\[
 \Delta^{\frac{1}{k-1}} N^{-1}  \log \P_p ( |\mathbf S| = m \big | \mathbf X \le L)  = o(1) \, ,
\]
and Theorem~\ref{thmMain} to obtain
 \begin{align}
    \Delta^{\frac{1}{k-1}} N^{-1} \log \P_p ( \mathbf X \le L) 
   &= x^\ast + (x^\ast)^k  \left ( 1- \frac{1}{k} \right) \zeta  - \log (1-\zeta) \frac{\eta' c^k}{k}  - c +o(1) \\
   &= b + b^k  \left ( 1- \frac{1}{k} \right) (1-\eta)  - \log (\eta) \frac{\eta b^k}{k}  - c +o(1)\, . 
 \end{align}
 Finally we note that standard binomial estimates show that 
 \begin{align}
 \Delta^{\frac{1}{k-1}} N^{-1}\log \P_p(|\mathbf S|=m)
 &= \Delta^{\frac{1}{k-1}} N^{-1}\log\left(\binom{N}{m}p^{m}(1-p)^{N-m}\right)\\
 &= b \log(c/b)+b-c+o(1)\\
 &=(1-\eta)b^{k}+b-c+o(1)\, ,
 \end{align}
where for the final equality we recalled that $c=b \exp(\zeta b^{k-1})$, $\zeta=1-\eta$.

Returning to~\eqref{eq:mId} we conclude that 
\begin{align}
 \Delta^{\frac{1}{k-1}} N^{-1} \log   \P_m ( \mathbf X \le L )&=  b^k  \left ( 1- \frac{1}{k} \right) (1-\eta)  - \log (\eta) \frac{\eta b^k}{k}-(1-\eta)b^{k}+o(1)\\
 &=-\frac{b^k}{k}(1-\eta+\eta \log \eta)+o(1)\, .
\end{align}
as desired.
\end{proof}

\subsection{Point probability estimates}
\label{secPointProbs}

To prove Lemma~\ref{lemZtoP} we  will need some weak lower bounds on the probability that a sample from $\mu_{G,\lam,\zeta}$ has exactly $M$ vertices and at most $T$ edges, when these statements typically hold to first order.

\begin{lemma}
    \label{lemTransferGnm}
    Fix $c,\eps >0$, $\zeta \in[0, 1]$. Suppose $\lam \sim c \Delta^{-\frac{1}{k-1}}$ and that whp for $\mathbf S \sim \mu_{G,\lam,\zeta}$ we have
    \begin{align}
        |\mathbf S| &= (1+o(1)) M,\quad\text{and}\quad
        |E(\mathbf S)| = (1+o(1)) T,
    \end{align}
    where (i) $M = \Theta(N\Delta^{-\frac{1}{k-1}})$ and (ii) either $(\zeta,T) = (1,0)$, or $T = \Theta(N\Delta^{-\frac{1}{k-1}})$. Set $T_0 = T - \varepsilon N\Delta^{-\frac{1}{k-1}}$. Then
    \begin{equation}
        \P_{G,\lam,\zeta} \left(\left(|\mathbf S| = M\right) \wedge\left( T_0 \leq |E(\mathbf S)| \leq T\right) \right) = \exp\left( - o\left(N\Delta^{-\frac{1}{k-1}}\right)  \right) \,.
    \end{equation}
\end{lemma}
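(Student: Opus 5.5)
The plan is to start from the typical configurations of $\mu_{G,\lam,\zeta}$ and make $o(N\Delta^{-\frac{1}{k-1}})$ elementary one-vertex moves until we land exactly in the target event, paying only a bounded multiplicative cost per move. Write $n_0 = N\Delta^{-\frac{1}{k-1}}$, so that $M=\Theta(n_0)$, $\lam = \Theta(n_0/N)$ and $N\lam = \Theta(M)$. For integers $m$ and $t$ set
\[
a(m,t)=\P_{G,\lam,\zeta}\left(|\mathbf S|=m,\ |E(\mathbf S)|=t\right).
\]
By hypothesis there is $\gamma_n = o(1)$ such that the window $|m-M|\le \gamma_n M$, $|t-T|\le \gamma_n n_0$ carries mass $1-o(1)$. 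This window contains at most $O(\gamma_n^2 n_0^2)$ pairs $(m,t)$, which is only polynomial in $n_0$. Hence some pair $(m_1,t_1)$ in the window has $a(m_1,t_1)\ge n_0^{-O(1)} = \exp(-o(n_0))$; the last step uses $n_0\to\infty$ from Lemma~\ref{lem:basicobs}. It then suffices to transport mass from $(m_1,t_1)$ to some $(M,t)$ with $T_0\le t\le T$ using $o(n_0)$ moves, each costing a factor $\Omega(1)$.

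The first type of move is a \emph{size-changing move that leaves the edge count fixed}. To increase the size, consider the map $S\mapsto S\cup\{v\}$ for vertices $v$ that lie in no edge $e$ with $e\setminus\{v\}\subseteq S$. Let $S$ be a set with $|S|=m=O(n_0)$. Such $S$ meets at most $m\Delta$ edges, and a vertex $v$ is blocked only if some edge through $v$ has its other $k-1$ vertices in $S$. A crude count via the degree conditions of Definition~\ref{def:tree} should show that at least $N/2$ choices of $v$ remain, at least after restricting to sets $S$ whose number of such blocking edges is typical; this can be arranged by first intersecting with a whp event, using Lemma~\ref{lemStochDom} together with a first-moment bound. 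Each move multiplies the weight by $\lam$, and each image has at most $m+1$ preimages. Double counting therefore gives
\[
a(m+1,t)\ \ge\ \frac{(N/2)\,\lam}{m+1}\,a(m,t)=\Omega(1)\cdot a(m,t).
\]
Symmetrically, removing a vertex of $S$ that lies in no edge of $E(S)$ gives $a(m-1,t)\ge \frac{(m-kt)}{N\lam}\,a(m,t)$. This is also $\Omega(1)$ provided $m-kt=\Omega(n_0)$. I need to check that condition: it should follow from Theorem~\ref{thmMarginals}, which gives $M\sim\Delta^{-\frac{1}{k-1}}\sum_v x_v^\ast$ and $T\sim(1-\zeta)\sum_e\prod_{v\in e}x_v^\ast\,\Delta^{-\frac{k}{k-1}}$. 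If the ratio turns out to be tight, I would instead restrict to vertices of degree $0$ in $E(S)$, of which there are $m-O(t)$.

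The second type of move \emph{lowers the edge count}. In the case $(\zeta,T)=(1,0)$ we have $t_1=0$ automatically and this step is unnecessary. Otherwise we must bring $t$ down from $t_1\le T+\gamma_n n_0$ into $[T_0,T]$, which can take up to $\gamma_n n_0+\eps n_0$ steps. To do this, delete a vertex that lies in exactly one edge of $E(S)$. This lowers $t$ by one and $m$ by one, and multiplies the weight by $\lam^{-1}(1-\zeta)^{-1}\ge 1$. The number of forward choices is at least $t-O(\text{pairs of intersecting occupied edges})$. The intersecting pairs are $o(n_0)$ in expectation by the computation in Lemma~\ref{lem:variancebd}, so the forward count is $\Omega(n_0)$. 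A preimage is recovered by choosing the re-added vertex, so there are at most $N$ preimages. The ratio is therefore at least $\Omega(n_0)\cdot\lam^{-1}/N=\Omega(1)$. I would stop as soon as $t\le T$, so that $t\ge T_0$ holds trivially. Then I would use the first type of move to correct $m$ to exactly $M$; this takes $O((\gamma_n+\eps)n_0)$ further steps.

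I expect the main obstacle to be the cumulative cost. There are $O(\eps n_0)$ moves, each losing a constant factor, so the total loss is only $\exp(-O(\eps n_0))$ and not $\exp(-o(n_0))$. I would fix this by replacing the initial $\eps$ with an arbitrary $\eps'\le\eps$ and aiming only for edges in $[T-\eps' n_0,T]$, which is a sub-event of the target event. Then the number of edge-moves is $O((\gamma_n+\eps')n_0)$. Letting $\eps'\to0$ after $n\to\infty$ gives the bound $\exp(-o(n_0))$. Keeping every per-step ratio uniformly $\Omega(1)$ along the whole path, with constants independent of $\eps'$, will require some care. This is where I would spend the detailed work, in particular to ensure that the typicality restrictions (the counts of blocking and intersecting edges) remain valid after $O(\eps' n_0)$ modifications. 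That last point should hold since each move changes these counts by at most $O(\Delta\lam^{k-1})=O(1)$ in expectation.
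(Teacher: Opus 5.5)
Your overall plan is the paper's: pigeonhole to a typical pair $(m_1,t_1)$ of mass $\exp(-o(n_0))$, then reach $|S|=M$, $|E(S)|\in[T_0,T]$ by $o(n_0)$ vertex deletions and additions, with the cost controlled by double counting. The paper does this with single bulk random operations in Claims~\ref{clmLoweringTriangleCount}, \ref{clmDownwardsConstruction} and~\ref{clmUpwardsConstruction}.

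The gap is in the counting estimates that are supposed to make each of your per-step ratios $\Omega(1)$. All three rest on crude first-moment bounds that only work when $c^{k-1}$ and $(1-\zeta)(x^\ast)^{k-1}$ are small. The lemma is stated for arbitrary fixed $c,\zeta$, and it is applied well outside that regime. For instance, in the proof of Theorem~\ref{thmMainGnm} one has $(1-\zeta)(x^\ast)^{k-1}=\eta b^{k-1}$ and $c=be^{\zeta b^{k-1}}$, both of which can be large.

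(a) The number $Y$ of intersecting pairs of occupied edges is not $o(n_0)$. The computation in Lemma~\ref{lem:variancebd} (and Claim~\ref{clm:efintersect}) only gives $O(\E|E(\mathbf S)|)=O(n_0)$. This is the true order: heuristically, an occupied edge meets about $k(1-\zeta)(x^\ast)^{k-1}$ other occupied edges. Once that exceeds $1$, your lower bound $t-O(Y)$ on the number of vertices lying in exactly one occupied edge is vacuous.

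(b) In the regular case $M-kT\approx x^\ast n_0\bigl(1-(1-\zeta)(x^\ast)^{k-1}\bigr)$, which can be negative. So your fallback of using the vertices of degree $0$ in $E(S)$, ``of which there are $m-O(t)$'', certifies nothing.

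(c) A first-moment bound through Lemma~\ref{lemStochDom} only shows that at most about $c^{k-1}N$ vertices are blocked. When $c^{k-1}\ge 1$ this certifies no open vertices at all, let alone $N/2$. You need a lower bound instead. FKG gives $\P_p(v\text{ open})\ge(1-p)(1-p^{k-1})^{\deg v}\approx e^{-c^{k-1}}$. Add concentration (Janson plus Chebyshev, as in Claim~\ref{clmManyOpenEdges}). Since the number of open vertices is decreasing in $S$, the bound then transfers to $\mu_{G,\lam,\zeta}$.

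For (a) and (b), the missing idea is that you need not insist on moves that change the edge count by exactly $0$ or $1$. You have $m_1=(1+o(1))M$ and $t_1=(1+o(1))T$, and you stop once $t\le T$. So only $o(n_0)$ moves are ever needed, and the target window satisfies $[T_0,T]\supseteq[(1-o(1))T,T]$. This also means your $\eps'$ device is unnecessary: the cumulative cost is already $\exp(-o(n_0))$, and the obstacle you flagged comes from a miscount. It therefore suffices to control edge losses in aggregate.

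For downsizing, delete vertices lying in at most $C$ occupied edges; all but $kt/C$ vertices of $S$ qualify. You then lose only $o(n_0)$ edges in total, which is Claim~\ref{clmDownwardsConstruction} in one-step form.

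For lowering the edge count you have two options:
\begin{itemize}
\item Delete a random set of $r\approx 2M(t_1-T+n_0^{3/4})/t_1=o(n_0)$ vertices and use a second moment; for this the weak bound $Y\le n_0^{3/2}$ suffices (Claim~\ref{clmLoweringTriangleCount}).
\item Delete vertices lying in between $1$ and $C$ occupied edges. There are $\Omega(n_0)$ of these by Cauchy--Schwarz, since $\sum_u d_u=kt$ and $\sum_u d_u^2\le kt+(k-1)Y=O(n_0)$ on an event of probability close to $1$.
\end{itemize}

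Finally, adding an open vertex typically closes $\Theta(\Delta^{1/(k-1)})$ other vertices, not $O(1)$. Over $o(n_0)$ additions this still totals $o(N)$, so your robustness heuristic survives. The paper sidesteps tracking typicality along a path by making each adjustment a single bulk random operation.
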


To prove Lemma \ref{lemTransferGnm} we first show that there are many subsets of $\mathbf S$ with the desired number of edges and approximately the desired number of vertices. We begin with a basic lemma bounding the number of pairs of incident edges in $E(\mathbf S)$.

For the remainder of this subsection, we assume the setup of Lemma~\ref{lemTransferGnm}.

\begin{claim}\label{clm:efintersect}
Let $\mathbf S\sim \mu_{G,\lam,\zeta}$ then whp the number of pairs $(e,f)\in E(\mathbf S)^2$ such that $e\cap f\neq \emptyset$ is at most
\[
\left(N \Delta^{-\frac{1}{k-1}}\right)^{3/2}\, .
\]
\end{claim}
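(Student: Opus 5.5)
We bound the expected number of such pairs and then apply Markov's inequality. Write $Y$ for the number of ordered pairs $(e,f)\in E(\mathbf S)^2$ with $e\cap f\neq\emptyset$. By stochastic domination (Lemma~\ref{lemStochDom}), $\mathbf S$ can be coupled inside a $p$-random set $V_p$ with $p=\lam/(1+\lam)\le \lam$. Since $Y$ is monotone increasing in $\mathbf S$, it suffices to bound $\E Y$ under $V_p$:
\[
\E Y \le \sum_{\substack{(e,f)\in E^2\\ e\cap f\neq\emptyset}} \lam^{|e\cup f|}.
\]
This is exactly the sum already estimated in \eqref{eqn:Bbound} in the proof of Lemma~\ref{lem:variancebd}.

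To evaluate it, split according to $j=|e\cap f|\in\{1,\dots,k\}$.
\begin{itemize}
\item The diagonal $j=k$ (i.e.\ $e=f$) contributes $|E|\lam^k$.
\item For $j=1$ there are at most $|E|k\Delta$ pairs, each of weight $\lam^{2k-1}$. Since $\Delta\lam^{k-1}=O(1)$, the contribution is $O(|E|\lam^k)$.
\item For $2\le j\le k-1$ there are at most $|E|\binom{k}{j}\Delta_j(G)$ pairs. By Definition~\ref{def:tree}, $\Delta_j(G)=o(\Delta^{(k-j)/(k-1)})$, and $\Delta^{(k-j)/(k-1)}\lam^{k-j}=O(1)$, so these contribute $o(|E|\lam^k)$.
\end{itemize}
Altogether $\E Y=O(|E|\lam^k)$.

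Next, using $|E|\le N\Delta$ and $\lam=\Theta(\Delta^{-1/(k-1)})$, we get $|E|\lam^k=O(N\Delta\cdot\Delta^{-k/(k-1)})=O(N\Delta^{-1/(k-1)})$, which matches \eqref{eqn:expedges}. Hence $\E Y=O(N\Delta^{-1/(k-1)})$.

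Finally, Markov's inequality gives
\[
\P\bigl(Y>(N\Delta^{-1/(k-1)})^{3/2}\bigr)\le \frac{O(N\Delta^{-1/(k-1)})}{(N\Delta^{-1/(k-1)})^{3/2}} = O\bigl((N\Delta^{-1/(k-1)})^{-1/2}\bigr).
\]
By Lemma~\ref{lem:basicobs}, $N\Delta^{-1/(k-1)}\to\infty$, so this probability tends to $0$ and the claim holds whp.

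The only step needing care is the middle one: the codegree bounds $\Delta_j(G)=o(\Delta^{(k-j)/(k-1)})$ must exactly cancel the saved powers of $\lam$. That cancellation is what \eqref{eqn:Bbound} already provides, so I expect no real obstacle.
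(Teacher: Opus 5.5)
Your proposal is correct and follows the paper's proof essentially step for step. Both use stochastic domination (Lemma~\ref{lemStochDom}) to pass to $V_p$, split the pairs by $|e\cap f|$ and apply the codegree bounds of Definition~\ref{def:tree} to get $\E Y = O(N\Delta^{-1/(k-1)})$, and then conclude with Markov's inequality and Lemma~\ref{lem:basicobs}.
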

\begin{proof}
Let $\mathbf Y$ denote the number of pairs $(e,f)\in E(\mathbf S)^2$ such that $e\cap f\neq \emptyset$. We recall from Lemma~\ref{lem:basicobs} that $N \Delta^{-\frac{1}{k-1}}=\omega(1)$. Therefore, by Markov's inequality it suffices to show that $\E_{G,\lam,\zeta} \mathbf Y =O\left(N \Delta^{-\frac{1}{k-1}}\right)$. Let us denote $|\{(e,f)~:~|e\cap f|=\ell \}| = m'_{\ell}$. Since $G=(V,E)$ is asymptotically tree-like we have 
\begin{equation}
|\{(e,f)~:~|e\cap f|=\ell \}| = \begin{cases} o\left(|E|\cdot\Delta^{ \frac{k-\ell}{k-1}}\right)&~\text{if}~\ell\geq 2\\ 
O(|E|\Delta)&~\text{if}~\ell=1.\end{cases}
\end{equation}
With $p=\frac{\lam}{1+\lam}$ we have by stochastic domination (Lemma~\ref{lemStochDom}) that
\[
\E_{G,\lam,\zeta} \mathbf Y \leq \E_p \mathbf Y \leq O\left(|E|\Delta\cdot p^{2k-1}+ |E|\sum_{\ell=2}^k \Delta^{\frac{k-\ell}{k-1}}\cdot p^{2k-\ell}\right) = O\left(N \Delta^{-\frac{1}{k-1}}\right)\, ,
\]
as desired.
\end{proof}

\begin{claim}\label{clmLoweringTriangleCount}
    Suppose that $S\subseteq V(G)$ and $r\in \R_{\geq 0}$ satisfy 
    \begin{enumerate}
    \item $\frac{|E(S)|}{T} = (1+o(1)) $ and $|E(S)| > T$, 
    \item $\frac{|S|}{M} = (1+o(1))$, 
    \item $\left|\left\{(e,f)\in E(S)^2~|~e\cap f\neq \emptyset \right\}\right| \leq \left(N \Delta^{-\frac{1}{k-1}}\right)^{3/2}$, and
    \item $r = 2\left\lfloor \frac{M}{|E(S)|}\cdot\left(|E(S)|-T+\left(N \Delta^{-\frac{1}{k-1}}\right)^{3/4}\right) \right\rfloor$.
    \end{enumerate}
    
     Then there are at least $(1-o(1))\binom{|S|}{r}$ sets $U \subseteq S$ satisfying 
     \begin{enumerate}
     \item $|U|=|S|-r$, and
     \item $(1-o(1))T \leq |E(U)| \leq T$.
     \end{enumerate}
\end{claim}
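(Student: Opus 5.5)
We count pairs by a first- and second-moment argument over a uniformly random $r$-subset $\mathbf R\subseteq S$, setting $U=S\setminus \mathbf R$. Write $n_0=N\Delta^{-\frac{1}{k-1}}$, so $M,T=\Theta(n_0)$ and $n_0\to\infty$ by Lemma~\ref{lem:basicobs}. Write also $D=|E(S)|-T>0$. Since $|E(S)|=(1+o(1))T$ we have $D=o(n_0)$. By the choice in item (4), $r=(2+o(1))\frac{M}{|E(S)|}\bigl(D+n_0^{3/4}\bigr)$, so $r=o(|S|)$. It suffices to show that with probability $1-o(1)$ the random $U$ satisfies $T-o(T)\le |E(U)|\le T$.

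\textbf{Step 1 (expectation of removed edges).} Let $\mathbf Y$ be the number of edges of $E(S)$ that meet $\mathbf R$, so that $|E(U)|=|E(S)|-\mathbf Y$. Each edge $e\in E(S)$ meets $\mathbf R$ with probability $1-\binom{|S|-k}{r}/\binom{|S|}{r}=(1+o(1))kr/|S|$, because $r=o(|S|)$. Hence
\[
\E\mathbf Y=(1+o(1))\frac{k\,r\,|E(S)|}{|S|}=(2k+o(1))\bigl(D+n_0^{3/4}\bigr),
\]
using $|S|=(1+o(1))M$. The factor $2k\ge 4$ gives room above the required removal of $D$ edges. It is also $o(n_0)$, so in expectation $|E(U)|\ge T-o(T)$.

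\textbf{Step 2 (concentration).} I would bound $\var \mathbf Y$ by expanding it as a sum over pairs $(e,f)$ of $\mathrm{Cov}(\1[e\cap\mathbf R\ne\emptyset],\1[f\cap \mathbf R\neq\emptyset])$. For disjoint $e,f$, sampling without replacement makes the covariance nonpositive, or at most $O(r^2/|S|^3)$ in absolute value. Summed over at most $|E(S)|^2$ pairs this contributes $O(n_0^2 r^2/|S|^3)=o(r)=o(\E\mathbf Y)$. For intersecting pairs we bound the covariance by $O(r/|S|)$. Item (3) of the hypothesis then gives a contribution $O(n_0^{3/2}\cdot r/n_0)=O(r\,n_0^{1/2})$. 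So $\var\mathbf Y=O(\E\mathbf Y+ r n_0^{1/2})$. Since $\E\mathbf Y\ge 2k\, n_0^{3/4}$ and $r=O(\E\mathbf Y)$, the ratio $\var\mathbf Y/(\E \mathbf Y)^2=O(n_0^{1/2}/\E\mathbf Y)=O(n_0^{-1/4})=o(1)$. Chebyshev then gives $\mathbf Y=(1+o(1))\E\mathbf Y$ with probability $1-o(1)$.

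\textbf{Step 3 (conclusion).} On this event $\mathbf Y\ge (2k-o(1))(D+n_0^{3/4})> D$, so $|E(U)|<T$. Also $\mathbf Y=o(n_0)=o(T)$, which is where $T=\Theta(n_0)$ is used, so $|E(U)|\ge(1-o(1))T$. Therefore a $(1-o(1))$-fraction of the $\binom{|S|}{r}$ choices of $\mathbf R$ give a valid $U$. Two points to check: the $T=0$ case is excluded because $|E(S)|/T$ is undefined there, and $r\ge 2$ holds since $n_0^{3/4}\to\infty$.

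The main obstacle is the variance bound in Step 2. The intersecting pairs are exactly why hypothesis (3) with exponent $3/2$ is needed, and why the padding term $n_0^{3/4}$ in $r$ is chosen to dominate $\sqrt{n_0^{3/2}}$. The exponents have to be tracked carefully so that the fluctuation $O\bigl(\sqrt{r\,n_0^{1/2}}\bigr)$ is $o(\E\mathbf Y)$.
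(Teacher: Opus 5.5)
Your proposal is correct and follows essentially the same route as the paper: delete a uniformly random $r$-subset of $S$, get $\E\mathbf Y=(2k+o(1))\bigl(|E(S)|-T+(N\Delta^{-\frac{1}{k-1}})^{3/4}\bigr)$, bound the intersecting-pair covariances by $O(r/|S|)$ using hypothesis (3), and conclude by Chebyshev. One harmless slip: for disjoint $e,f$ the covariance is negative of order $r/|S|^2$, not $O(r^2/|S|^3)$ in absolute value; nonpositivity is all you need, and even the correct absolute bound would add only $O(r)=O(\E\mathbf Y)$ to the variance.
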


\begin{proof}
    Let $\mathbf U \subseteq S$ be a uniformly random subset of $S$ of size $|S|-r$. Equivalently, $\mathbf U$ is obtained by deleting a uniformly random set of $r$ vertices from $S$. The parameter $r$ is chosen so that whp slightly more than $|E(S)|-T$ edges are deleted in this process. We prove this with a second-moment calculation. Let $\mathbf Y = |E(S)| - |E(\mathbf U)|$ be the number of deleted edges. We have
    \[
    \E \mathbf Y = (1+o(1)) \cdot |E(S)| \cdot \frac{kr}{|S|} = (1+o(1))\cdot 2k\cdot \left(|E(S)|-T+ \left(N \Delta^{-\frac{1}{k-1}}\right)^{3/4}\right).
    \]
    For $e\in E(S)$ let $\mathbf X_e$ denote the indicator of the event that the edge $e$ is deleted. Note that for any pair $e,f\in E(S)$ we have $\cov(\mathbf X_e, \mathbf X_f)\leq \P(\mathbf X_e=1)=(1+o(1))kr/|S|$. Moreover, if $e\cap f=\emptyset$ then $\cov(\mathbf X_e, \mathbf X_f)=o((r/|S|)^2)$.
    
    Since there are at most $(N\Delta^{-\frac{1}{k-1}})^{3/2}$ pairs of edges in $S$ that are incident to each other
    \[
    \var (\mathbf Y) \leq (1+o(1))\cdot \frac{kr}{|S|}\cdot \left(N\Delta^{-\frac{1}{k-1}}\right)^{3/2} +o\left(|E(S)|^2 \frac{r^2}{|S|^2} \right)\, .
    \]
    Thus 
\[
\frac{\var( \mathbf Y)}{(\E \mathbf Y)^2} \leq (1+o(1))\cdot  \frac{|S|}{kr} \cdot \frac{\left(N\Delta^{-\frac{1}{k-1}}\right)^{3/2}}{|E(S)|^2}
    +
    o(1)
    =o(1)\, ,
\]
where we used that $|E(S)|=\Omega\left(N \Delta^{-\frac{1}{k-1}}\right)$, $|S|=O\left(N \Delta^{-\frac{1}{k-1}}\right)$ and $r\geq \left(N\Delta^{-\frac{1}{k-1}}\right)^{3/4}$. Thus Chebyshev's inequality gives us that, whp
    \[
   |E(S)|-T\leq \mathbf Y\leq  4k\left(|E(S)|-T+\left(N\Delta^{-\frac{1}{k-1}}\right)^{3/4}\right),
    \]
    and so $(1-o(1))T \leq |E(\mathbf U)| \leq T$. As a consequence, we have for a uniformly random $U \in \binom{S}{r}$, that
    \[
    \P\left((1-o(1))\cdot T  \leq |E(\mathbf U)| \leq T\right) = 1 - o(1)
    \]
as desired. 
\end{proof}

\begin{claim}\label{clmDownwardsConstruction}
      Suppose that $S\subseteq V(G)$ with $|E(S)| = (1+o(1))\cdot T$ and $\frac{M'}{M} = (1+o(1)) > 1$. There are at least $(1-o(1))\cdot \binom{M'}{M}$ sets $U \subseteq S$ of size $M$ that also satisfy
    $|E(U)| \geq (1-o(1)) \cdot |E(S)|$.
\end{claim}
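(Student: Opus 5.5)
The plan is a first-moment (Markov) argument on a uniformly random subset, with no second-moment computation needed. Here $|S| = M'$, which is implicit in the count $\binom{M'}{M}$. Let $\mathbf U$ be a uniformly random $M$-subset of $S$; the goal is to show that whp $|E(\mathbf U)| \ge (1-o(1))|E(S)|$. Since $\mathbf U$ is uniform over the $\binom{M'}{M}$ subsets of size $M$, this immediately gives at least $(1-o(1))\binom{M'}{M}$ good sets.

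\emph{Survival probability of an edge.} Fix $e \in E(S)$. It survives (i.e.\ $e \subseteq \mathbf U$) with probability
\[
\frac{\binom{M'-k}{M-k}}{\binom{M'}{M}} = \frac{(M)_k}{(M')_k} = \prod_{i=0}^{k-1}\frac{M-i}{M'-i}.
\]
In the setting of Lemma~\ref{lemTransferGnm} we have $M = \Theta(N\Delta^{-\frac{1}{k-1}}) \to \infty$ by Lemma~\ref{lem:basicobs}, and $M'/M = 1+o(1)$ with $k$ fixed. Hence each factor is $1-o(1)$, so the survival probability is $1-o(1)$, uniformly in $e$.

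\emph{Expected number of lost edges.} Let $\mathbf Y = |E(S)| - |E(\mathbf U)| \ge 0$ be the number of destroyed edges. By linearity, $\E \mathbf Y = \gamma\,|E(S)|$ for some $\gamma = \gamma_n = o(1)$.

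\emph{Markov's inequality.} Set $\delta = \sqrt{\gamma} = o(1)$ (if $\gamma = 0$, take any $\delta \to 0$). Then
\[
\P\big(\mathbf Y \ge \delta |E(S)|\big) \le \frac{\gamma}{\delta} = \sqrt{\gamma} = o(1).
\]
So with probability $1-o(1)$ we get $|E(\mathbf U)| \ge (1-\delta)|E(S)| = (1-o(1))|E(S)|$, as required.

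\emph{Degenerate case $T=0$.} If $|E(S)| = 0$, which is allowed when $(\zeta,T) = (1,0)$, the conclusion is trivial: every $U$ qualifies.

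There is no real obstacle here. The only point needing care is that the $o(1)$ in the survival probability is uniform over edges. This holds because it depends only on $k$, $M$ and $M'$, and uses $M \to \infty$ together with $M'/M \to 1$.
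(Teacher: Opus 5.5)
Your proposal is correct and follows essentially the same route as the paper: take a uniformly random $M$-subset of $S$, show that the expected number of destroyed edges is $o(1)\cdot|E(S)|$, and conclude with Markov's inequality. The only cosmetic difference is that the paper bounds the expected loss by the union bound $k|E(S)|\frac{M'-M}{M'}$ and phrases it as $o(N\Delta^{-\frac{1}{k-1}})$, whereas you compute the exact survival probability $(M)_k/(M')_k$ and state the error multiplicatively.
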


\begin{proof}
    Let $\mathbf U$ be a uniformly random subset of $S$ of size $M$. Since there are $\binom{M'}{M}$ choices for $\mathbf U$ it suffices to show that whp $|E(S)|-|E(\mathbf U)| = o\left(N\Delta^{-\frac{1}{k-1}}\right)$.
    Indeed, the expected number of edges in $S$ that are not in $\mathbf U$ is at most $k \cdot |E(S)|  \cdot \frac{M'-M}{M'} = o\left(N\Delta^{-\frac{1}{k-1}}\right)$. Thus Markov's inequality gives us that whp $|E(S)|-|E(\mathbf U)| = o\left(N\Delta^{-\frac{1}{k-1}}\right)$, as claimed. 
\end{proof}

\begin{defn}
    For a set $S\subseteq V(G)$, we say that a vertex $v\in V(G)$ is \emph{open} (wrt to $S$) if $v\notin S$, and there is no edge $e\in E(G)$ such that $e\setminus\{v\}\subseteq S$. 
 We write
$\open(S)$ for the number of \textit{open} vertices.
\end{defn}

We begin by observing that with high probability, a constant fraction of vertices in a set sampled from $\mu_{G,\lambda,\zeta}$ are open.

\begin{claim}\label{clmManyOpenEdges}
    Fix $c>0$ and $\zeta \in [0,1]$, and suppose that $\lam \sim c \Delta^{-\frac{1}{k-1}}$ and $\alpha = \frac{1}{2}e^{-c^{k-1}}$. Then we have
    \[
\mathbb{P}_{G,\lambda,\zeta}\left(\open(\mathbf S) \geq \alpha N\right) = 1 - o(1).
    \]
\end{claim}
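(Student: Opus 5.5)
Write $\mathbf S\sim\mu_{G,\lam,\zeta}$. The plan is a first-moment bound on the number of non-open vertices, followed by a second-moment (or Markov) argument to upgrade this to a high-probability statement. A vertex $v$ fails to be open only if $v\in\mathbf S$ or some edge $e\ni v$ has $e\setminus\{v\}\subseteq \mathbf S$. So it suffices to show that $\E\open(\mathbf S)\ge (1-o(1))e^{-c^{k-1}}N$, up to the small correction coming from $v\in \mathbf S$, and that $\open(\mathbf S)$ concentrates.

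\textbf{Step 1: the marginal probability that $v$ is open.} Rather than use stochastic domination directly, I would compute via the measure with $v$ deleted. The event that $v$ is open depends only on $\mathbf S\setminus\{v\}$ and on $v\notin\mathbf S$. Conditioning on $v\notin \mathbf S$ gives the measure $\mu_{G-v,\lam,\zeta}$ (Observation~\ref{obs:identities}). Under $\mu_{G-v}$ the sets $e\setminus\{v\}$ for $e\ni v$ are edges of $G\ominus v$, so $\P(v \text{ open}\mid v\notin\mathbf S)=\mu_{G\ominus v,\lam,\zeta'}$-type quantities. The cleanest route, however, is the identity
\[\frac{\P(v\in\mathbf S)}{\P(v\notin \mathbf S)}=\lam\,\E_{G-v}\big[(1-\zeta)^{\#\{e\ni v: e\setminus\{v\}\subseteq\mathbf S\}}\big],\]
which does not give openness directly when $\zeta<1$. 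So instead I would use the lower bound $\P_{G-v}(\text{no } e\ni v \text{ has } e\setminus\{v\}\subseteq \mathbf S)\ge 1-\sum_{e\ni v}p_{G-v}(e\setminus\{v\})$, where each term is $O(\lam^{k-1})$. This union bound gives $1-O(c^{k-1})$, which is too weak for large $c$. I therefore expect to need a Harris/FKG-style estimate instead. Under $\mu_{G-v}$ the events $\{e\setminus\{v\}\not\subseteq\mathbf S\}$ are decreasing, but the measure is not positively correlated in general, because the hard-core model is repulsive.

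\textbf{Preferred route.} I would avoid correlation inequalities and reuse the Weitz-tree machinery. Consider the auxiliary hypergraph obtained from $G$ by replacing every edge $e\ni v$ with the edge $e$ carrying penalty $1$, i.e.\ set $\zeta=1$ on edges through $v$ only. Then $\P(v\text{ open}\mid v\notin\mathbf S)$ equals a ratio of partition functions, which in turn is a ratio of root marginals computable by the tree recursion, Lemma~\ref{lem:tree_recursion2}, on the Weitz tree. Combined with Lemma~\ref{lemWeitzTreeStructure} and the estimates in the proof of Lemma~\ref{lemWeitzTreeFP} (children marginals $R(\bsf u)=(1+o(1))x^\ast_{\pi(\bsf u)}\Delta^{-1/(k-1)}$ with $x^\ast\le c$), this yields
\[\P(v \text{ open})\ge (1-o(1))\exp\Big(-\frac1\Delta\sum_{e\ni v}\prod_{u\in e\setminus v}x^\ast_u\Big)\ge (1-o(1))e^{-c^{k-1}}.\]
Here the factor $1-\zeta$ in the recursion is replaced by $0$ for edges at the root only, and $P(v\notin\mathbf S)=1-o(1)$. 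Summing over $v$ gives $\E\open(\mathbf S)\ge(1-o(1))e^{-c^{k-1}}N$.

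\textbf{Step 2: concentration.} I would bound $\var\open(\mathbf S)=o(N^2)$ exactly as in Lemma~\ref{lem:vtxvariancebd}. For a pair $u,w$, conditioning on the status of $u$ (occupied, or unoccupied) changes the probability that $w$ is open by $o(1)$, unless $u,w$ are at distance at most $2$ in $G$. This follows from Lemma~\ref{lemWeitzTreeFP} applied to $G\ominus u$ and to $G-u$, with the same tree argument, since $|U|=O(1)$. The number of pairs at distance at most $2$ is $O(N\Delta^2k^2)$. This count is not obviously $o(N^2)$, so instead I would bound the contribution of those pairs by $O(N\Delta^2)\cdot(\text{cov})$. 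Openness of $w$ after conditioning changes by at most $\Pr$ of an edge being affected, roughly $O(\Delta\cdot\lam^{k-1}/\Delta)$, and one checks via Lemma~\ref{lem:basicobs}-type counting that this is $o(N^2)$. Chebyshev then gives $\open(\mathbf S)\ge \tfrac12 e^{-c^{k-1}}N$ whp.

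\textbf{Main obstacle.} The delicate step is Step 1, namely getting the constant $e^{-c^{k-1}}$ rather than a union bound. This needs the modified-penalty Weitz tree to retain the contraction of Lemma~\ref{lemfkContractionNonReg}. I would handle this by noting that only the root's edges are altered, so the subtrees are unchanged and the recursion applies just once at the top.
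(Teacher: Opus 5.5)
\textbf{The gap is in Step 2.} Chebyshev needs
$\sum_{u\neq w}\bigl[\P(u,w\text{ open})-\P(u\text{ open})\P(w\text{ open})\bigr]=o(N^2)$. This concerns $\mu_{G,\lambda,\zeta}$ conditioned on the \emph{event that $u$ is open}, not on $u\in\mathbf S$ or $u\notin\mathbf S$.

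\emph{Why your conditioning does not control this.} Openness of $u$ forbids all $\deg(u)=\Theta(\Delta)$ link sets $e\setminus\{u\}$. It is therefore not a conditioning on $O(1)$ vertices, and Lemma~\ref{lemWeitzTreeFP} (which needs $|U|=O(1)$ and a uniform penalty) says nothing about it. You would also need an upper bound on $\P(w\text{ open})$ matching the lower bound from Step~1, which you never derive.

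\emph{The near/far split also fails.} In the triangle hypergraph, any two vertices (edges of $K_n$) are at distance at most $2$. In $G^{(k)}_n$ one has $\Delta\sim\alpha_k N$, so $\Theta(N^2)$ pairs are even adjacent. Your fallback estimate $O(N\Delta^2)\cdot O(\lambda^{k-1})=O(N\Delta)$ is not $o(N^2)$ whenever $\Delta=\Omega(N)$. That happens for $k$-APs, for $H=K_4$, and for larger cliques. So the concentration step does not go through as proposed.

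\textbf{The missing idea is monotonicity.} If $\mathbf S\subseteq\mathbf S'$, every vertex open with respect to $\mathbf S'$ is open with respect to $\mathbf S$. So $\mathcal X(\cdot)$ is decreasing, and the coupling of Lemma~\ref{lemStochDom} gives $\P_{G,\lambda,\zeta}(\mathcal X(\mathbf S)\ge\alpha N)\ge\P_p(\mathcal X(\mathbf S)\ge\alpha N)$ with $p=\lambda/(1+\lambda)$. Your concern that $\mu_{G,\lambda,\zeta}$ is repulsive then becomes irrelevant, because correlation inequalities are only needed for the product measure:
\begin{itemize}
\item Harris--FKG gives $\P_p(v\text{ open})\ge(1-p)(1-p^{k-1})^{\deg v}\ge(1+o(1))e^{-c^{k-1}}$.
\item Janson's inequality, applied to the number of occupied link sets of $u$ or $w$, gives $\P_p(u,w\text{ open})\le(1+o(1))\P_p(u\text{ open})\P_p(w\text{ open})$ uniformly over all pairs. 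Both the correlation term $\Lambda$ and the overlap of the two links are $o(1)$ by the asymptotically tree-like conditions, so no distance-based split is needed.
\end{itemize}

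\textbf{On Step 1.} It needs none of the Weitz-tree machinery. Your assertion that the subtrees are unchanged is also not right: the subproblems carry the other link sets of $v$ as extra hard constraints. The elementary fact that every conditional occupation probability is at most $p$, applied link set by link set, already gives $\P(v\text{ open})\ge(1-p)(1-p^{k-1})^{\deg v}$ directly under $\mu_{G,\lambda,\zeta}$.
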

\begin{proof}
Set $p = \frac{\lam}{1 + \lam}$. By stochastic domination (Lemma~\ref{lemStochDom}), it suffices to show $\P_{p}(\open(\mathbf{S}) \geq \alpha N) = 1-o(1)$. For $v\in V$, let 
\[
E_v  := \{e\setminus \{v\}~:~e\ni v\}\, .
\]
Then by the Harris-FKG inequality we observe that
\begin{align}
\P_p( v~\text{open})&=\P_p(v \notin \mathbf S \wedge e\not \subseteq \mathbf S~\text{for all}~e\in E_v)\\
&\geq (1-p)(1-p^{k-1})^{\deg_G(v)}\\
&=(1+o(1))\exp(-p^{k-1}\deg_G(v))\label{eq:vopendeg}\\
&\geq(1+o(1))2\alpha\, .
\end{align}
In particular
\begin{equation}\label{eqn:openmean}
\E_{p}[\open(\mathbf S)] \geq (1+o(1))2\alpha\cdot N\, .
\end{equation}
Next, we upper-bound $\var_p(\open(\mathbf S))$. For $u,v\in V$, $u\neq v$, define 
\[
\mathbf{Y}_{u,v}=|\{e\in E_u\cup E_v~|~e\subseteq \mathbf S\}|\, ,
\]
so we have
\begin{align}\label{eq:Yuv0}
\P_p(u,v~\text{open}) \leq \P_p\left(\mathbf{Y}_{u,v}=0\right)\, .
\end{align}
We will upper bound the RHS using Janson's inequality.  Recalling that $\deg_{G}(\{u,v\}) = o(\Delta^{\frac{k-2}{k-1}})$, we obtain
\begin{equation}\label{eqn:openJmean}
\E_{p}[\mathbf{Y}_{u,v}] = p^{k-1}\left(\deg_G(u) + \deg_G(v)\right) + o\left(p^{k-1}\Delta^{\frac{k-2}{k-1}}\right).
\end{equation}
With Janson's inequality still in mind, we next wish to bound
\[
\Lambda:= \sum_{\substack{(e,f)\in (E_u \cup E_v)^2\\ e\neq f,\,  e\cap f\neq \emptyset}} \P(e \cup f \subseteq \mathbf S)\, .
\]
To this end, it will be useful to note that for $\ell\in\{1,\ldots, k-2\}$,
\begin{align*}
B_{\ell} := &\left|\{(e,f)\in (E_u \cup E_v)^2~| ~|e\cap f| = \ell\} \right| \\
\leq
& 
\sum_{e \in E_u\cup E_v}\left|\bigcup_{\substack{L\subseteq e,
|L| = \ell}}\left\{f \in E(G)~|~\left(f\supseteq L \cup \{u\}\right) \lor \left(f\supseteq L \cup \{v\}\right) \right\} \right| \\
\leq 
&
\sum_{e\in E_u\cup E_v}\sum_{\substack{L\subseteq e\\|L| = \ell}}\left(\deg_{G}(L\cup \{u\}) + \deg_G(L\cup \{v\})\right) \\
\leq 
&
2\sum_{e\in E_u\cup E_v}\sum_{\substack{L\subseteq e\\|L| = \ell}}\Delta_{\ell+1}(G) =  o\left(\Delta^{1+\frac{k-\ell-1}{k-1}}\right).
\end{align*}
It follows that 
\begin{align}\label{eqn:openJcum}
\Lambda\leq \sum_{\ell=1}^{k-2} B_\ell\cdot p^{2(k-1)-\ell}=o(1)\, .
\end{align}
Returning to~\eqref{eq:Yuv0} and applying Janson's inequality using~\eqref{eqn:openJmean} and~\eqref{eqn:openJcum}, we obtain
\begin{align*}
\P_p(u,v~\text{open}) & \leq \P_p(\mathbf Y_{u,v}=0) \\
& \leq \exp\left(-p^{k-1}\left(\deg_G(u) + \deg_G(v)\right) + o(1)\right) \\
& \leq (1 + o(1))\cdot \P_p(u~\text{open})\cdot \P_p(v~\text{open})\, ,
\end{align*}
where for the final inequality we used~\eqref{eq:vopendeg}.
It follows that $\var_{p}(\open(\mathbf S)) = o\left(\E_{p}[\open(\mathbf S)]^2\right)$ and our claim follows from Chebyshev's inequality and~\eqref{eqn:openmean}.
\end{proof}

Given $S\subseteq V(G)$, $v\in V(G)$ and $\ell\in\{0,\ldots,k\}$ let
\[
E_\ell(S)=\{e\in E(G) : |e\cap S|=\ell\}\, .
\]
and let 
$E_{\geq\ell}(S)=\bigcup_{j\geq \ell}E_{j}(S)$.

\begin{claim}\label{clmSdegrees}
    We have
    \[
    \P_{G,\lam,\zeta}\left(\left(|E_\ell(\mathbf S)|\leq k 2^{k+1} |E(G)| p^{\ell}\right)~\forall\ell \in \{0,\ldots, k-2\}\right) \geq 1/2.
    \]
\end{claim}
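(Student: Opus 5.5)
The plan is to use stochastic domination (Lemma~\ref{lemStochDom}), Markov's inequality, and a union bound over the $k-1$ values of $\ell$. Set $p=\lam/(1+\lam)$. By Lemma~\ref{lemStochDom} we may couple $\mathbf S\sim\mu_{G,\lam,\zeta}$ with $\mathbf S'\sim\mu_{V,p}$ so that $\mathbf S\subseteq\mathbf S'$. However, $|E_\ell(S)|$ is not monotone in $S$ (an edge with exactly $\ell$ vertices in $S$ may have more in a superset). So I will instead bound the monotone quantity $|E_{\ge\ell}(S)|$, which is increasing in $S$, and then use $|E_\ell(\mathbf S)|\le|E_{\ge\ell}(\mathbf S)|\le |E_{\ge \ell}(\mathbf S')|$.

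First, compute the expectation under the product measure. For each edge $e$, $\P_p(|e\cap\mathbf S'|\ge\ell)\le\binom{k}{\ell}p^\ell\le 2^k p^\ell$. Hence
\[
\E_p|E_{\ge\ell}(\mathbf S')|\le 2^k|E(G)|p^\ell .
\]
Next apply Markov's inequality with threshold $k2^{k+1}|E(G)|p^\ell$. This gives, for each fixed $\ell$,
\[
\P_{G,\lam,\zeta}\bigl(|E_\ell(\mathbf S)|> k2^{k+1}|E(G)|p^\ell\bigr)\le \P_p\bigl(|E_{\ge\ell}(\mathbf S')|> k2^{k+1}|E(G)|p^\ell\bigr)\le \frac{1}{2k}.
\]

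Finally, take a union bound over the $k-1$ values $\ell\in\{0,\dots,k-2\}$. The total failure probability is at most $(k-1)/(2k)<1/2$, which gives the claim.

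There is no real obstacle here. The only subtlety is the non-monotonicity of $E_\ell$, which is handled by passing to $E_{\ge\ell}$ before invoking the coupling. The constant $k2^{k+1}$ is exactly what makes the union bound close with room to spare.
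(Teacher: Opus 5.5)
Your proof is correct and follows the paper's argument essentially verbatim. Like the paper, you pass to the monotone count $|E_{\ge\ell}|$, bound its expectation by $2^k|E(G)|p^\ell$ via the coupling of Lemma~\ref{lemStochDom}, and finish with Markov's inequality and a union bound over $\ell$. The only cosmetic differences are that you apply Markov to the dominating variable under $\mu_{V,p}$ rather than to $|E_\ell(\mathbf S)|$ under $\mu_{G,\lam,\zeta}$, and that you spell out the arithmetic $(k-1)/(2k)<1/2$, which the paper leaves implicit.
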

\begin{proof}
 By stochastic domination (Lemma~\ref{lemStochDom})
\[
\E_{G,\lam,\zeta}[|E_\ell(\mathbf S)|]\leq \E_{G,\lam,\zeta}[|E_{\geq\ell}(\mathbf S)|]\leq \E_{p}[|E_{\geq\ell}(\mathbf S)|]\leq |E(G)|2^k p^\ell\, ,
 \]
 where for the final inequality we used that for a given edge $e\in E(G)$, there are at most $2^k$ subsets of $e$ of size $\geq \ell$ and the probability (under $\mu_p$) that $\mathbf S$ contains such a subset is at most $p^\ell$.
 The result follows by Markov's inequality and a union bound over $\ell$.
\end{proof}

\begin{claim}\label{clmUpwardsConstruction}
    Fix $\alpha>0$. Suppose that $S\subseteq V(G)$ has size $M'$, where $1>\frac{M'}{M} = (1-o(1)) $. Suppose further that $\open(S) \geq \alpha N$ and $|E_\ell(S)|\leq k2^{k+1}|E(G)| p^\ell$ for all $\ell\in\{0,\ldots, k-2\}$. Then there are at least 
    \[
    \frac{(\alpha N)^{M-M'}}{(M-M')^{M-M'}} \cdot  \exp\left(-o\left(N\Delta^{-\frac{1}{k-1}}\right) \right)
    \]
     sets $U$ satisfying 
     \begin{enumerate}
     \item $S\subseteq U \subseteq V(G)$, 
     \item $|U| = M$, and 
     \item $E(U) =E(S)$.
     \end{enumerate}
\end{claim}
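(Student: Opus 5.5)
Set $t := M - M'$. Since $M'/M = 1-o(1)$ and $M = \Theta(N\Delta^{-\frac{1}{k-1}})$, we have $t = o(N\Delta^{-\frac{1}{k-1}})$. The plan is to build $U$ from $S$ by adding $t$ vertices one at a time. At each step we choose a vertex that is open with respect to the current set, so that no new edge is created and $E(U)=E(S)$ is preserved. If at every step at least $\alpha N - o(N)$ choices are available, the number of ordered sequences is at least $(\alpha N(1-o(1)))^{t}$. Each resulting $U$ arises from at most $t!$ orderings, so the count of sets $U$ is at least
\[
\frac{(\alpha N(1-o(1)))^{t}}{t!} \ge \frac{(\alpha N)^{t}}{t^{t}}\, e^{-o(t)} ,
\]
using $t! \le t^t$ and $(1-o(1))^t = e^{-o(t)}$. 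This matches the claimed bound, since $e^{-o(t)} = \exp(-o(N\Delta^{-\frac{1}{k-1}}))$.

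The work is to control how many vertices stop being open as vertices are added. Let $S_j$ denote the set after $j$ additions, and suppose a vertex $w$ is open for $S$ but not for $S_j$. Then some edge $e \ni w$ has $e\setminus\{w\} \subseteq S_j$. Split according to $\ell = |e\cap S|$.
\begin{itemize}
\item If $\ell \le k-2$, then $e$ lies in $E_\ell(S)$ and meets the added set $A = S_j\setminus S$ in at least one vertex. Each added vertex $a$ has at most $\Delta$ incident edges, so such $e$ number at most $t\Delta$. Each contributes at most $k$ closed vertices, giving $O(t\Delta)$ in total. This is too crude, since $t\Delta$ may exceed $N$, so we need to use that the remaining vertices of $e$ must also lie in $S\cup A$.
\item For the refinement, fix $a \in A$. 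Every edge $e \ni a$ in which all but one vertex lies in $S\cup A$ has at most one vertex outside $S \cup A$. Counting over $\ell$: the edges through $a$ with $k-2$ vertices in $S$ are controlled by the local sparsity counts. The hypothesis $|E_\ell(S)| \le k2^{k+1}|E(G)|p^\ell$ is the global version of this, and it bounds the total number of edges having exactly $\ell$ vertices in $S$.
\end{itemize}

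The cleanest route avoids per-vertex counting altogether. Instead of choosing arbitrary open vertices, we pick the $t$ new vertices from the set $O$ of open vertices of $S$, with $|O| \ge \alpha N$, and count only those $t$-subsets $A \subseteq O$ for which $E(S\cup A) = E(S)$. An edge is created exactly when some $e$ has $e\setminus S \subseteq A$ with $|e\setminus S| \ge 2$ (by openness, $|e\setminus S|\ge 1$ forces $\ge 2$). For a uniformly random $A \in \binom{O}{t}$, the expected number of created edges is at most
\[
\sum_{\ell\le k-2} |E_\ell(S)| \left(\frac{t}{\alpha N}\right)^{k-\ell} \le \sum_{\ell \le k-2} k2^{k+1}|E(G)|\,p^\ell \left(\frac{t}{\alpha N}\right)^{k-\ell}.
\]
Here $|E(G)| = O(N\Delta)$, $p^\ell = O(\Delta^{-\ell/(k-1)})$, and $t/N = o(\Delta^{-1/(k-1)})$. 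Each term is therefore $o(N\Delta\cdot\Delta^{-k/(k-1)}) = o(N\Delta^{-\frac{1}{k-1}})$, but this is not $o(1)$. The main obstacle is that a first-moment count gives $o(N\Delta^{-\frac{1}{k-1}})$ created edges rather than zero.

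To fix this, I would accept a small number of created edges and then repair. Take $A$ of size $t + s$ with $s = o(N\Delta^{-\frac{1}{k-1}})$ chosen larger than the (whp, by Markov) number of created edges. From each created edge, delete one vertex of $A$, and then trim or pad back to size exactly $t$ using vertices of $A$ lying in no created edge. Alternatively, and probably more simply, use the sequential greedy argument with the same estimate: at step $j$, the number of open vertices of $S$ that have become closed is at most $k$ times the number of edges with $e\setminus S\subseteq A_j\cup\{w\}$. Averaging over the random process, this quantity is $o(N)$ in expectation, which gives at least $\alpha N - o(N)$ choices at most steps. Either way, the losses contribute only a factor $\binom{\alpha N}{s} \le \exp(o(N\Delta^{-\frac{1}{k-1}}))$, which is absorbed into the error term.
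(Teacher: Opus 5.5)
Your oversample-and-delete route differs from the paper's and does work, but the last step as you wrote it is wrong and needs to be redone.

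The inequality $\binom{\alpha N}{s}\le\exp(o(N\Delta^{-\frac{1}{k-1}}))$ is false in general. It requires $s\log(N/s)=o(N\Delta^{-\frac1{k-1}})$. Your constraint $s\gtrsim\E X\approx N\Delta^{\frac1{k-1}}((t+s)/N)^2$ does not give this when $t/(N\Delta^{-\frac1{k-1}})\to0$ slowly. For $s=t$, the quantity $\binom{\alpha N}{s}$ is as large as the whole main term $(\alpha N/t)^t$.

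Fortunately the loss is not $\binom{\alpha N}{s}$. The correct double count runs as follows:
\begin{itemize}
\item By Markov, at least half of the $\binom{|O|}{t+s}$ sets $A\subseteq O$ have at most $s/2$ created edges.
\item Each such $A$ contains a $t$-set $B$ with $E(S\cup B)=E(S)$.
\item Each such $B$ lies in at most $\binom{|O|-t}{s}$ of these $A$.
\end{itemize}
Since
\[
\binom{|O|}{t+s}\Big/\binom{|O|-t}{s}=\binom{|O|}{t}\Big/\binom{t+s}{t},
\]
the true loss is only $\binom{t+s}{t}\le 2^{t+s}$.

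Your first-moment estimate is also stronger than you noticed. The $\ell$-th term is $O\bigl(t\,(\Delta^{\frac1{k-1}}t/N)^{k-\ell-1}\bigr)$, so the expected number of created edges is $o(t)$, not merely $o(N\Delta^{-\frac1{k-1}})$. Hence $s=t$ already works. It gives at least $\frac12(\alpha N/t)^t4^{-t}$ valid sets, which is the claimed bound because $t=o(N\Delta^{-\frac1{k-1}})$ and $N\Delta^{-\frac1{k-1}}\to\infty$.

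I would drop the sequential greedy variant. The statement ``at least $\alpha N-o(N)$ choices at most steps, in expectation'' does not by itself lower-bound the number of sequences. Also, the hypotheses control $|E_{k-2}(S)|$ only globally, so a single added vertex can close on the order of $\Delta$ open vertices, and worst-case histories are bad.

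For comparison, the paper takes a $q$-random subset $\mathbf W$ of the open vertices with $q=2(M-M')/|O|$. It uses Harris--FKG to get
\[
\P(E(S\cup\mathbf W)=E(S))\ge\prod_{\ell\le k-2}(1-q^{k-\ell})^{|E_\ell(S)|}\ge\exp\bigl(-O(q^2N\Delta^{\frac1{k-1}})\bigr)=\exp\bigl(-o(N\Delta^{-\frac1{k-1}})\bigr).
\]
So it never needs the no-new-edge event to be likely, only to have probability within the error term the claim already allows. It then fixes the size of $\mathbf W$ by Chernoff and pigeonhole and passes to subsets of size $M-M'$.

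Your version trades the correlation inequality for a first moment plus a deletion repair. Because it works with fixed-size sets throughout, it needs no concentration of $|\mathbf W|$. As a side benefit it is uniform in $t$, whereas the paper's Chernoff tail $2\exp(-q|O|/12)=2\exp(-(M-M')/6)$ only has content when $M-M'$ is large. The price is the oversampling double count above.
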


\begin{proof}
Let $q=2(M-M')/\open(S)$ and let $\mathbf W$ be a $q$-random subset of $\open(S)$. Note that by the assumption on $\open(S)$ and $M'$ and recalling that $M = \Theta(N\Delta^{-\frac{1}{k-1}})$, we have $q=o(\Delta^{-\frac{1}{k-1}})$. In particular, recalling that $p=\Theta(\Delta^{-\frac{1}{k-1}})$, we have $q=o(p)$ .

By the Harris-FKG inequality
\[
\P(E(S\cup \mathbf W)=E(S))\geq 
\prod_{\ell=0}^{k-2}(1-q^{k-\ell})^{|E_\ell(S)|} \geq \exp\left(-(1+o(1))\cdot k2^{k+1}|E(G)|\sum_{\ell=0}^{k-2}q^{k-\ell}p^\ell \right)\, .
\]
Noting that $|E(G)|\leq N\Delta/k$ and
\[
\sum_{\ell=0}^{k-2}q^{k-\ell}p^\ell=\frac{p^{k-1}q^2-q^{k+1}}{p-q}\leq 2p^{k-2}q^2
\]
since $q=o(p)$, we conclude that
\[
\P(E(S\cup \mathbf{W})=E(S))\geq \exp\left(-O\left(q^2N\Delta^{\frac{1}{k-1}} \right) \right)\, .
\]
By Chernoff's bound
\[
\P(||\mathbf{W}|-q\open(S)|\geq q\open(S)/2) \leq 2\exp\left(-q\open(S)/12 \right) \leq 2\exp\left(-q\alpha N/12 \right)\, .
\]
It follows that
\begin{align}
\P\left(E(S\cup \mathbf{W})=E(S)\wedge |\mathbf{W}|=(1\pm 1/2)q\open(S)\right)
&\geq \exp\left(-O(q^2N\Delta^{\frac{1}{k-1}})\right)-2\exp(-q\alpha N/12)\\
&= \exp\left(-o(N\Delta^{-\frac{1}{k-1}}) \right)
\end{align}
where for the final inequality we used that $q=o(p)=o(\Delta^{-\frac{1}{k-1}})$. Recall that $q=2(M-M')/\open(S)$ and that $M = \Theta(N\Delta^{-\frac{1}{k-1}})$ so that $M-M'=o(N\Delta^{-\frac{1}{k-1}})$ by our assumption on $M'$.
It follows by the pigeonhole principle, there exists $w\in [(M-M'), 3(M-M')]$ such that 
\[
\P\left(E(S\cup \mathbf{W})=E(S)\wedge |\mathbf{W}|=w\right)
\geq
 \exp\left(-o(N\Delta^{-\frac{1}{k-1}}) \right)\, .
\]
Writing $\cE$ for the event that $E(S\cup \mathbf{W})=E(S)$ and $|\mathbf{W}|=w$ we may rewrite the above as follows:
\[
(1-q)^{\open(S)}\sum_{W \in\cE }\left(\frac{q}{1-q}\right)^{w}\geq 
\exp\left(-o(N\Delta^{-\frac{1}{k-1}}) \right)\, .
\]
Noting that $(1-q)^{\open(S)}=\exp(-o(N\Delta^{-\frac{1}{k-1}}))$ and $w=o(N\Delta^{-\frac{1}{k-1}})$ it follows that
\[
|\cE|  \geq q^{-w}\exp\left(-o(N\Delta^{-\frac{1}{k-1}}) \right) =\left(\frac{\open(S)}{M-M'} \right)^{M-M'} \exp\left(-o(N\Delta^{-\frac{1}{k-1}}) \right)
\]
To construct a set $U$ satisfying the conditions of the lemma, we pick a subset $W'$ of $W\in\cE$ of size $(M-M')$ and set $U=S \cup W'$. The number of such $U$ is at least
\[
|\cE|\binom{w}{M-M'} \binom{\open(S)}{w-(M-M')}^{-1} \geq \frac{\open(S)^{M-M'}}{(M-M')^{M-M'}} \cdot \exp\left(-o\left(N\Delta^{-\frac{1}{k-1}}\right) \right)\, .\qedhere
\]
\end{proof}

We now prove Lemma \ref{lemTransferGnm}.
\begin{proof}[Proof of Lemma \ref{lemTransferGnm}]
     By Claims~\ref{clm:efintersect},~\ref{clmManyOpenEdges} and~\ref{clmSdegrees} and the assumption of the lemma we may assume that with probability at least $1/3$ we have
     \begin{enumerate}
\item $|\mathbf S| = (1+o(1))M$, $|E(\mathbf S)| = (1+o(1))T$, 
\item the number of pairs $(e,f)\in E(\mathbf S)^2$ such that $e\cap f\neq \emptyset$ is at most
\(
(N \Delta^{-\frac{1}{k-1}})^{3/2}\, ,
\)
\item $\open(\mathbf S) \geq \alpha N$,
\item   \(
    |E_\ell(\mathbf S)|\leq k2^{k+1}|E(G)| p^\ell 
    \)
     for all $\ell\in \{0,\ldots, k-2\}$.
     \end{enumerate}
     
    Hence, by the pigeonhole principle, there exist $M' = (1+o(1))M$ and $T' = (1+o(1))T$ such that the probability that $|\mathbf S|=M'$ and $|E(\mathbf S)| = T'$ and $\mathbf S$ satisfies $(2)-(4)$ is at least $1/o(N \Delta^{-\frac{1}{k-1}})\geq \exp(-o(N \Delta^{-\frac{1}{k-1}}))$.  Let $\mathcal S$ be the collection of sets $S$ such that $|S|=M'$ and $|E(S)| = T'$ and $S$ satisfies $(2)-(4)$.

    We now show a lower bound on the number of sets with their edge count in $[T_0,T]$. We consider two cases. First, if $T' \leq T$ then set $\mathcal S' = \mathcal S$ and $M''=M'$. Note that since $T'=(1-o(1))T$ in this case all sets in $\mathcal S'$ have an edge count in $[(1-o(1))T,T]$ and size $M'' = (1+o(1))M$.

    For the second case we consider $T' > T$. Set $r = 2\lfloor M\cdot (T'-T+(N \Delta^{-\frac{1}{k-1}})^{3/4})/T' \rfloor$ and $M''=M'-r$. By Claim \ref{clmLoweringTriangleCount} each set in $\mathcal S$ contains at least $(1-o(1)) \binom{M'}{r}$ sets of size $M''$ and edge count in $[(1-o(1))T,T]$. Let $\mathcal S'$ be the collection of these sets. Since each set in $\mathcal S'$ is contained in fewer than $\binom{N}{r}$ sets in $\mathcal S$,
    \[
    |\mathcal S'| \geq (1-o(1)) \cdot  |\mathcal S| \cdot \frac{\binom{M'}{r}}{\binom{N}{r}} = \lam^r \cdot |\mathcal S| \cdot \exp \left(-o\left(N\Delta^{-\frac{1}{k-1}}\right)\right).
    \]
    
    In either case we have now constructed $\mathcal S'$ as a collection of sets with size $M''=(1+o(1))M$, edge count in $[(1-o(1))T,T]$ and which satisfy (3) and (4). Additionally in either case we have
    \[
    |\mathcal S'| \geq  \lam^{M'-M''} \cdot |\mathcal S|\cdot \exp\left(-o\left(N\Delta^{-\frac{1}{k-1}}\right)\right). 
    \]
    Now, let $\mathcal S''$ be the collection of sets of size $M$ with edge count in $[T_0,T]$. To obtain a lower bound on $|\mathcal S''|$, once again we consider two cases. If $M'' \geq M$ then, by Claim \ref{clmDownwardsConstruction} each set in $\mathcal S'$ contains at least $(1-o(1))\binom{M''}{M''-M}$ sets in $\mathcal S''$. Since each set in $\mathcal {S''}$ is contained in at most $\binom{N}{M''-M}$ sets in $\mathcal S'$, we conclude that
    \[
    |\mathcal S''| \geq (1-o(1)) \cdot |\mathcal S'| \cdot \frac{\binom{M''}{M''-M}}{\binom{N}{M''-M}} = \lam^{M''-M} \cdot |\mathcal S'| \cdot  \exp\left(-o\left(N\Delta^{-\frac{1}{k-1}}\right)\right).
    \]
    Similarly, if $M>M''$ then Claim \ref{clmUpwardsConstruction} implies that
    \begin{align*}
    |\mathcal S''| & \geq |\mathcal S'| \cdot \frac{(\alpha N)^{M-M''}}{(M-M'')^{M-M''}\binom{M}{M-M''}} \cdot 
    \exp\left(-o\left(N\Delta^{-\frac{1}{k-1}}\right)\right) \\
    & = \lam^{M''-M} \cdot |\mathcal S'| \cdot \exp\left(-o\left(N\Delta^{-\frac{1}{k-1}}\right)\right).
    \end{align*}
    By a pigeonhole argument there exists some $T'' \in [(1-o(1)) T,T]$ such that there are at least $\omega\left(|\mathcal S''|/N\Delta^{-\frac{1}{k-1}}\right)$ sets in $\mathcal S''$ with exactly $T''$ edges. Let $\mathcal S''' \subseteq \mathcal S''$ be the collection of these sets. By the calculations above,
    \[
    |\mathcal S'''| = \omega\left( \frac{|\mathcal S''|}{N\Delta^{-\frac{1}{k-1}}}\right) \geq \lam^{M'-M}\cdot |\mathcal S| \cdot \exp\left(-o\left(N\Delta^{-\frac{1}{k-1}}\right)\right).
    \]
    To complete the proof we observe that
    \begin{align*}
        \P(|\mathbf S| = M \land |E(\mathbf S)| \in [T_0,T]) & \geq \P(\mathbf S \in \mathcal S''')
        = \frac{\lam^M (1-\zeta)^{T''}}{Z_G(\lam,\zeta)} |\mathcal S'''|\\
        & \geq \frac{\lam^M (1-\zeta)^{T''}}{Z_G(\lam,\zeta)} \lam^{M'-M} |\mathcal S| \cdot \exp \left(-o\left(N\Delta^{-\frac{1}{k-1}}\right)\right)\\
        & \geq \frac{\lam^{M'} (1-\zeta)^{T''}}{Z_G(\lam,\zeta)} |\mathcal S| \cdot \exp\left(-o\left(N\Delta^{-\frac{1}{k-1}}\right)\right)\\
        & \geq \frac{\lam^{M'} (1-\zeta)^{T'}}{Z_G(\lam,\zeta)} |\mathcal S| \cdot \exp\left(-o\left(N\Delta^{-\frac{1}{k-1}}\right)\right)\\
        & = \P (S \in \mathcal S) \cdot \exp\left(-o\left(N\Delta^{-\frac{1}{k-1}}\right)\right) \geq \exp\left(-o\left(N\Delta^{-\frac{1}{k-1}}\right)\right),
    \end{align*}
    which implies the claim.
\end{proof}

With Lemma \ref{lemTransferGnm} in hand, we finally prove  Lemma~\ref{lemZtoP}.

\subsection{Proof of Lemma~\ref{lemZtoP}}
Let $T=\eta \E_p\mathbf X = \eta p^k |E(G_n)|$. With a view to apply Lemma~\ref{lemTransferGnm}, let $\mathbf S \sim \mu_{G,\lam,\zeta}$ and note that by Theorem~\ref{thmMarginals} parts~\ref{item:marg1} and~\ref{item:marg2}, 
\[
\E |\mathbf S| = (1+o(1))m\quad\text{and}\quad  \E |E(\mathbf S)|=(1+o(1))T\, ,
\]
where the second equality holds since $\mathbf x^\ast, \zeta$ satisfy the conclusion of Lemma~\ref{lemZetaNonRegular}.
Moreover, by Theorem~\ref{thmMarginals} part~\ref{item:marg4} $\var |\mathbf S|, \var |E(\mathbf S)|=o(N^2 \Delta^{-\frac{2}{k-1}})$. We note that $m,T=\Theta(N\Delta^{-\frac{1}{k-1}})$ and so by Chebyshev's inequality we have, whp, 
\begin{align}\label{eq:ChebySES}
        |\mathbf S| &= (1+o(1)) m,\quad\text{and}\quad
        |E(\mathbf S)| = (1+o(1)) T\, .
    \end{align}

To upper bound the probability in question, we note 
\begin{align}
\P_p(\mathbf X \leq T) 
&= 
(1-p)^{N}\sum_{S : |E(S)| \leq T } \lam^{|S|}\\
&\leq 
(1-p)^{N}(1-\zeta)^{-T}\sum_{S : |E(S)| \leq T } \lam^{|S|}(1-\zeta)^{|E(S)|}\\
&\leq 
(1-p)^{N}(1-\zeta)^{-T}Z_G(\lam,\zeta)\, .
\end{align}

Now we prove the lower bound. Fix $\eps>0$  and let $T_0 = T - \varepsilon N\Delta^{-\frac{1}{k-1}}$. 
 \begin{align}
      \P_p \left(\mathbf X \le T\right) &\ge  (1-p)^{N} \sum_{S : |E(S)| \in [T_0,T]} \lam ^{|S|}  \\
      &\ge  (1-p)^{N} (1-\zeta)^{- T_0} \sum_{S : |E(S)| \in [T_0,T]} \lam ^{|S|} (1-\zeta)^{ |E(S)|} \\
      &= (1-p)^{N} (1-\zeta)^{- T_0} Z_G(\lam,\zeta) \P_{G,\lam,\zeta}(|E(\mathbf S)| \in [T_0,T]) \\
      &= (1-p)^{N} (1-\zeta)^{- T} Z_G(\lam,\zeta) \exp\left( - \eps \Theta\left(N\Delta^{-\frac{1}{k-1}}\right)  \right) \,, \label{eq:LTLB}
 \end{align}
where the last line follows from Lemma~\ref{lemTransferGnm} whose conditions are met by~\eqref{eq:ChebySES}. The first part of the lemma now follows after noting that $\eps$ was arbitrarily small and that $pN, T$ and $\log Z_{G}(\lam,\zeta)$ are all $\Theta(N \Delta^{-\frac{1}{k-1}})$ (the last follows from Theorem~\ref{thmMarginals} part~\ref{item:marg2.5}).  

For the second assertion of the lemma, we note that  
\begin{align}
\P_p ( |\mathbf S| = m \big | \mathbf X \le T)
 &=
 \frac{(1-p)^{N}}{\P_{p}(\mathbf X \le T)}\sum_{S: |S|=m,\, |E(S)| \leq T}\lam^{|S|}\\
 &\geq 
  \frac{(1-p)^{N}}{\P_{p}(\mathbf X \le T)}(1-\zeta)^{-T_0}\sum_{S: |S|=m,\, |E(S)| \in [T_0,T] }\lam ^{|S|}(1-\zeta)^{|E(S)|}\\
  &=
   \frac{(1-p)^{N}}{\P_{p}(\mathbf X \le T)}(1-\zeta)^{-T_0} Z_G(\lam,\zeta) \P_{G,\lam,\zeta} \left(\left(|\mathbf S| = m\right) \wedge\left( T_0 \leq |E(\mathbf S)| \leq T\right) \right)\\
   &\geq
    \exp\left( - \eps \Theta\left(N\Delta^{-\frac{1}{k-1}}\right)  \right)\, ,
\end{align}
where for the final inequality we used the first part of the lemma and Lemma~\ref{lemTransferGnm}. 
\qed

\section{Lower tails for subgraphs}
\label{secSubgraphs}

In this section we apply the main results to the case of lower tails for subgraph counts in $G(n,p)$ and $G(n,m)$.  

Let $H$ be a strictly $2$-balanced graph with $h$ vertices and $k$ edges.  Given $n$, define the hypergraph $G^H$ with $\binom{n}{2}$ vertices representing the edges of the complete graph $K_n$ and $k$-uniform hyperedges, one for each set of vertices whose corresponding set of edges forms a copy of $H$ in $K_n$.

We first verify that $G^H$ satisfies the conditions of the main theorem.  We set $N = \binom{n}{2}$ and $\Delta = \frac{2k}{ |\aut(H)|}(n-2)_{h-2}$.

\begin{lemma}
    \label{lemSubgraphHypergraph}
    Let $H$ be a strictly $2$-balanced graph, and define $G^H$ as above. Then
    \begin{enumerate}
        \item $G^H$ is $\Delta$-regular 
        
        \item For $\ell  \in \{2, \dots, k-1 \} $, $\Delta_\ell(G^H) =  o \left( \Delta^ {\frac{k- \ell}{k- 1}} \right)$
        \item $\Gamma(G) = o(\Delta)$.
    \end{enumerate}
\end{lemma}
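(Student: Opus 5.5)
Part (1) is a symmetry count. Fix an edge $xy$ of $K_n$. The number of copies of $H$ in $K_n$ containing $xy$ equals the number of pairs (copy, edge of the copy mapped to $xy$) divided appropriately. Concretely, count embeddings $\phi:V(H)\to[n]$ that map some edge of $H$ onto $\{x,y\}$. There are $k$ choices of the edge of $H$ and $2$ orientations, and then $(n-2)_{h-2}$ ways to place the remaining vertices. Dividing by $|\aut(H)|$ gives $\Delta=\frac{2k(n-2)_{h-2}}{|\aut(H)|}$, independent of $xy$.

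For part (2), fix a set $S$ of $\ell$ edges of $K_n$ with $2\le \ell\le k-1$. Any copy of $H$ containing $S$ contains the graph $F$ spanned by $S$, with $\ell$ edges and $v_F\ge 3$ vertices. Only sets $S$ that appear as a subgraph $F\subsetneq H$ (up to isomorphism) contribute. The number of copies of $H$ containing $S$ is then $O(n^{h-v_F})$, since the vertices of $F$ are fixed and there are $O(1)$ ways to identify $F$ inside $H$. We need $n^{h-v_F}=o\big(\Delta^{(k-\ell)/(k-1)}\big)$. Since $\Delta=\Theta(n^{h-2})$ and $m_2(H)=\frac{k-1}{h-2}$, we have $\Delta^{(k-\ell)/(k-1)}=\Theta(n^{(k-\ell)/m_2(H)})$. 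So it suffices to show
\[
h-v_F<\frac{k-\ell}{m_2(H)},\quad\text{equivalently}\quad m_2(H)(h-v_F)<(k-1)-(\ell-1).
\]
Since $m_2(H)(h-2)=k-1$, this reduces to $\ell-1<m_2(H)(v_F-2)$, i.e. $\frac{\ell-1}{v_F-2}<m_2(H)$. This is exactly strict $2$-balancedness applied to $F$, which is a proper subgraph of $H$ with at least $3$ vertices; it may have isolated vertices removed, which only helps. If $F$ has $v_F=h$, the same inequality holds because $\ell<k$ gives $(\ell-1)/(h-2)<m_2(H)$. This is the main step, and the only subtle point is the bookkeeping of the case where $S$ is disconnected or spans all of $V(H)$; both are covered by the same inequality.

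For part (3), fix distinct edges $a,b$ of $K_n$ and count $(k-1)$-sets $S$ of edges with $S\cup\{a\}$ and $S\cup\{b\}$ both copies of $H$. Let $F=S\cup\{a\}$ be a copy of $H$ on vertex set $W$. The set $S\cup\{b\}$ also spans at most $h$ vertices, so $b$ must be an edge with both ends already in $W$, unless $S$ omits a vertex. Either way $b$'s endpoints constrain the copy. If $a$ and $b$ share a vertex, then the copy contains $a$ and the third endpoint of $b$, giving a $3$-vertex set in $W$; the count is $O(n^{h-3})=o(n^{h-2})=o(\Delta)$. If $a$ and $b$ are disjoint, then when both ends of $b$ lie in $W$ we get $4$ fixed vertices and $O(n^{h-4})$ copies. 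If an endpoint of $b$ lies outside $W$, then $S$ omits an endpoint of $a$, i.e. $H$ minus an edge has an isolated vertex, so $H$ has a degree-$1$ vertex. That would contradict strict $2$-balancedness (removing a degree-one vertex does not decrease the $2$-density when $m_2>1$, and for $k\ge 3$ strictly $2$-balanced graphs have minimum degree $\ge 2$). Hence the other copy's vertex set is determined up to $O(1)$ extra vertices among those fixed, and the count is $o(\Delta)$. I expect this degree-one exclusion to be the only place needing care. I would verify it directly: if $u$ has degree $1$, then $H-u$ has density $\frac{k-2}{h-3}\ge\frac{k-1}{h-2}$ iff $m_2(H)\ge 1$, which holds since $k\ge 3$ forces a non-forest-or-$m_2\ge1$ comparison. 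Strictness then fails.
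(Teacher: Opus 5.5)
Your proposal is correct and follows the paper's proof essentially step for step: the symmetry count for (1), the reduction of (2) to $\frac{\ell-1}{v_F-2}<m_2(H)$ via strict $2$-balancedness, and for (3) the observation that, since $H$ has no degree-one vertex, the $k-1$ shared edges span the whole copy, so both copies contain the $\ge 3$ vertices of $a\cup b$ and there are $O(n^{h-3})=o(\Delta)$ choices. The only difference is in (3): you derive minimum degree $\ge 2$ directly from the density condition, whereas the paper cites $2$-connectivity. Your justification of $m_2(H)\ge 1$ at that step is muddled; the cleanest fix is that a vertex of degree $\ge 2$ yields a path with two edges, which has $2$-density $1$, while a matching with $k\ge 3$ edges is not strictly $2$-balanced.
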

\begin{proof}
    For the first, the total number of copies of $H$ in $K_n$ is $\frac{(n)_{h}}{|\aut(H)|}$.  Each has $k$ edges, so the sum of degrees of $G^H$ is $\frac{ k \cdot (n)_{h}}{|\aut(H)|}$. Dividing by $N=\binom{n}{2}$ gives the result.

    For the second, consider a set of $\ell$ edges $e_1, \dots, e_\ell$ in $K_n$.  The number of copies of $H$ that contain these edges is bounded by $n^t$ where $t$ is $h$ minus the number of vertices spanned by the $\ell$ edges.   Since $\Delta = \Theta(n^{h-2}) $, we have $\Delta^{ \frac{k- \ell}{k-1}} = \Theta \left(   n^{\frac{k- \ell}{m_2(H)}  } \right)$, and so it is enough to show that $t < \frac{k- \ell}{m_2(H)}  $.  By the strictly $2$-balanced condition on $H$, we have $t < v_H - 2 - \frac{\ell -1}{m_2(H)} = \frac{k-1}{m_2(H)} - \frac{\ell -1}{m_2(H)} = \frac{k -\ell}{m_2(H)}$ as desired. 

    For the third, we use the fact that a strictly $2$-balanced $H$ must be $2$-connected~\cite[Lemma 3.1]{bohman2010early}; in particular $H$ cannot have a pendant edge.  In particular, any $k-1$ edges of a copy of $H$ in $K_n$ determine the vertex set of that copy.     Now consider two edges $e_1, e_2$ in $K_n$. These edges span at least $3$ vertices.  The number of choices for edges $f_1, \dots , f_{k-1}$ in $K_n$ so that $\{e_1, f_1, \dots, f_{k-1} \}$ and  $\{e_2, f_1, \dots, f_{k-1} \}$ form copies of $H$ is at most $O(n^{h-3})$, since any such copy must contain the  vertices spanned by $e_1,e_2$ by the property above.  Since $\Delta = \Theta( n^{h-2})$, we have $\Gamma(G) = o(\Delta)$.
\end{proof}

Now we prove Theorem~\ref{thmHLowerTail}; Theorem~\ref{thmHFree} follows immediately by taking $\eta =0$.
\begin{proof}[Proof of Theorem~\ref{thmHLowerTail}]
 By the construction of $G^H$, we have 
 \begin{align}
     \P_p(\mathbf X_H \le \eta \E \mathbf X_H) = \P_p( \mathbf X \le \eta \E \mathbf X)
 \end{align}
 where the probability on the LHS is with respect to the random graph $G(n,p)$ and the probability on the RHS is with respect to a $p$-random subset of the vertices of $G^H$.  By Lemma~\ref{lemSubgraphHypergraph}, $G^H=G^H_n$ is asymptotically tree-like and regular and so the result follows from Theorem~\ref{thmMain} and Theorem~\ref{thmMainGnm}.
\end{proof}

Finally, we prove Corollary~\ref{corPhaseTransition}.
\begin{proof}
Let $r=\chi(H) -1$.
    By considering $r$-partite graphs, we can lower bound $\varphi_H(c)$ and $\widehat\varphi_H(b)$.
For $p =o( 1)$ and $m =o( n^2)$, we have $\P_p( G \text{ is $r$-partite}) \ge (1-p)^{(1+o(1))n^2/(2r) } $ and $\P_m ( G \text{ is $r$-partite}) \ge (1-1/r +o(1))^m$
    and so
    \begin{align}
        \varphi_H(c)  &\ge  - \frac{c}{2r}  \quad \text{and} \quad 
        \widehat \varphi_H(b) \ge  b \log (1 - 1/r)  \,.
    \end{align}

For $c$ large enough and $b$ large enough respectively, these bounds are larger than the value given by the functions on the RHS in~Theorem~\ref{thmHFree}.  Since these functions are analytic functions of $c$ and $b$, by uniqueness of analytic continuation, $\varphi_H$ and $\widehat \varphi_H$ must be non-analytic at some $c^\ast$ and $b^\ast$ respectively.
\end{proof}

As was done for the case of triangles in~\cite{jenssen2024lower}, one can similarly show that for sufficiently small but positive $\eta$, the lower-tail problem for copies of $H$ in $G(n,p)$ undergoes a phase transition in the sense of a non-analyticity of the rate function.  On the other hand, for $\eta \ge \eta_k^\ast$, there is no phase transition: the rate function is analytic for $c \in (0,\infty)$. 

For lower tails for copies of $H$ in $G(n,m)$, the same proof as above shows that there is a phase transition for every $\eta \in [0,1)$. 

\section{Avoiding arithmetic progressions}
\label{secAPs}

In this section we prove Theorem~\ref{thmApavoidInterval} on avoiding $k$-term arithmetic progressions.

To do so, we phrase the problem in terms of independent sets in a  hypergraph.  
Let $G^{(k)}_{n}$ be the $k$-uniform hypergraph with vertex set $[n]$ and edges for each $k$-subset on $[n]$ forming a $k$-AP.

Recall that for $k\geq 3$ we define
\[
\alpha_k:= \frac{1}{2}\sum_{i=1}^{k}
\min\left(
 \frac{1}{i-1},
 \frac{1}{k-i}\right)\, .
\]

We start with basic properties of $G^{(k)}_{n}$.
\begin{lemma}
    \label{lemAPhypergraph}
    The hypergraph $ G^{(k)}_{n}$ satisfies the following: 
    \begin{enumerate}
        \item $G^{(k)}_{n}$ has maximum degree  $\Delta\sim \alpha_k\cdot n$. 

        \item $\Delta_\ell(G^{(k)}_{n}) = O(1) $ for $\ell\in\{2,\ldots, k-1\}$.
        \item $\Gamma(G^{(k)}_{n}) = O(1)$.
    \end{enumerate}
\end{lemma}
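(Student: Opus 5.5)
We prove the three items by direct counting of $k$-APs in $[n]$, parameterizing a $k$-AP by its common difference $b\ge 1$ and the position of a given element within it.

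\textbf{Item (1).} Fix $x\in[n]$ and write $x = tn$. For each position $i\in\{1,\dots,k\}$, the $k$-APs $\{a, a+b,\dots,a+(k-1)b\}$ in which $x$ is the $i$-th term are determined by $b\ge 1$. The constraints are
\[
x-(i-1)b\ge 1 \quad\text{and}\quad x+(k-i)b\le n,
\]
so there are
\[
\min\left\{\left\lfloor \tfrac{x-1}{i-1}\right\rfloor, \left\lfloor \tfrac{n-x}{k-i}\right\rfloor\right\}
\]
such APs, with the convention that a term with zero denominator is $+\infty$. Distinct positions give distinct edges, since $b>0$ fixes the order. Hence
\[
d(x) = n\sum_{i=1}^k \min\left\{\tfrac{t}{i-1},\tfrac{1-t}{k-i}\right\} + O(1).
\]
This has the same structure as the integration limits in $\Phi^{(k)}_c$.

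The function $g(t)=\sum_i \min\{t/(i-1),(1-t)/(k-i)\}$ is concave and invariant under $t\mapsto 1-t$, because the substitution $i\mapsto k+1-i$ swaps the two arguments of each minimum. It is therefore maximized at $t=1/2$, where
\[
g(1/2) = \tfrac12\sum_i \min\left\{\tfrac{1}{i-1},\tfrac{1}{k-i}\right\} = \alpha_k.
\]
Taking $x=\lfloor n/2\rfloor$ and maximizing over $x$ then gives $\Delta = \alpha_k n + O(1)\sim \alpha_k n$.

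\textbf{Item (2).} Any two distinct elements $x<y$, together with their positions $i<j$ in the AP, determine the AP, since $b=(y-x)/(j-i)$ and $a = x-(i-1)b$. There are at most $k^2$ choices of positions, so $d(\{x,y\})\le k^2$. Since every $\ell$-set with $\ell\ge 2$ contains a pair, $\Delta_\ell \le k^2 = O(1)$ for all $\ell\ge 2$.

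\textbf{Item (3).} Fix $v\ne v'$ and a $(k-1)$-set $S$ with both $S\cup\{v\}$ and $S\cup\{v'\}$ being $k$-APs. For $k\ge 3$, $S$ contains at least two elements, so $S\cup\{v\}$ is an AP containing the pair $\{v, s\}$ for any fixed $s\in S$. We can bound the count without choosing $s$ in advance: the AP $S\cup\{v\}$ must contain $v$, while $S\cup\{v'\}$ also contains $v'$.

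First, $v\notin S$: if $v\in S$, then $S\cup\{v\}=S$ would have only $k-1$ elements and could not be a $k$-AP. Likewise $v'\notin S$. Consequently $S\cup\{v\}$ cannot contain $v'$, since $v'\notin S$ and $v'\neq v$.

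Now $S\cup\{v'\}$ is a $k$-AP containing $v'$. Fix the position of $v'$ in it, and choose the position within it of some element $s\in S$, say the smallest element of $S$ other than possibly an endpoint issue handled by the enumeration. Choosing positions for $v'$ and for a second point determines that AP up to the choice of that second point. Pairing $v'$ with $s$ does not directly help, because $s$ is not fixed.

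A cleaner route is to observe that $S$ is a $(k-1)$-subset of a $k$-AP, hence a $k$-AP with one term removed. For $k\ge 3$, $S$ has at least two elements, so its first two elements (or any two known-position elements) pin down the common difference $b$ up to $O(k^2)$ choices. The AP $S\cup\{v\}$ then contains $v$ at one of $k$ positions, and since $v$ is fixed, $b$ and that position determine $a$. The remaining unknown is $b$, and the condition that $S\cup\{v'\}$ also be an AP restricts $b$ further.

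The AP $S\cup\{v\}$ has difference $b$, and $S\cup\{v'\}$ has difference $b'$. For $k\ge 4$, $S$ contains three consecutive terms of at least one of these progressions, which forces $b'=b$. Then both APs with the same difference share $k-1$ terms, so $v$ and $v'$ are the two missing endpoint slots and $|v-v'|=kb$. This determines $b$, and hence gives $O(1)$ configurations.

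The main obstacle is the small case $k=3$, where $S$ has only two elements. There $|S|=2$, so $S$ together with $v$ forms a 3-AP, and likewise $S$ together with $v'$. For each of $O(1)$ position patterns, both 3-AP conditions are linear equations in the two unknown elements of $S$, with $v$ and $v'$ fixed. These two equations are generically independent and determine $S$ uniquely. The degenerate patterns are those in which both equations coincide, which would force $v=v'$ and is excluded. So $\Gamma = O(1)$ in this case as well, and $\Gamma = O(1)$ for all $k\ge 3$.
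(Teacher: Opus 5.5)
Items (1) and (2) follow the paper's argument. So does your $k=3$ case of item (3): the six mixed position patterns give $2\times 2$ linear systems with determinant $\pm 3$, and equal patterns force $v=v'$, which is the paper's three-case check in linear-algebra form. The gap is in item (3) for $k\ge 4$, where you assert two claims without proof, and neither is automatic.

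\emph{First claim.} ``$S$ contains three consecutive terms of at least one of these progressions'' fails for a single progression when $k\in\{4,5\}$. For $k=4$, if $v$ is the second term of $S\cup\{v\}=\{a,a+b,a+2b,a+3b\}$, then $S=\{a,a+2b,a+3b\}$ contains no three consecutive terms of it. The same happens for $k=5$ when $v$ is the middle term. So you must rule out that $v$ and $v'$ are both in such positions. This is exactly where the hypothesis $v\neq v'$ has to be used, and you never use it there.

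\emph{Second claim.} Three consecutive terms of one progression lying in another progression do not by themselves force equal differences, e.g.\ $\{0,2,4\}\subseteq\{0,1,2,3,4\}$.

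Both holes close with short arguments. For the second: if $s,s+b,s+2b\in S$ and $b=mb'$, then the progression $S\cup\{v'\}$ contains all $2m+1$ points $s+jb'$ with $0\le j\le 2m$. At most three of these lie in $S\subseteq S\cup\{v\}$, and at most one more equals $v'$, so $2m+1\le 4$ and $m=1$.

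For the first, and in fact for the whole case $k\ge 4$, look at the sorted gaps of $S$. If $v$ is an endpoint of $S\cup\{v\}$, every gap equals $b$. If $v$ is interior, every gap equals $b$ except a single $2b$, and at least one gap equals $b$ since $|S|\ge 3$. Match this against the same description in terms of $b'$:
\begin{itemize}
\item one interior and one endpoint is impossible, since one gap sequence is constant and the other is not;
\item two interior positions force $b=b'$ with the double gap in the same place, hence $v=v'$.
\end{itemize}
So both $v$ and $v'$ are endpoints, $S$ is a $(k-1)$-AP, and $\{v,v'\}=\{\min S-b,\max S+b\}$, which determines $S$. This is the paper's proof: it splits on whether $S$ is itself a $(k-1)$-AP. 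If not, $S$ has at most one completion; if so, only the two endpoint completions exist.

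Finally, delete the exploratory passages about pairing $v'$ with $s$ and about the first two elements of $S$ pinning down $b$ ``up to $O(k^2)$ choices''. The elements of $S$ are the unknowns being counted, so those sentences are not steps of a proof.
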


For the rest of this section we write $G$ for $G^{(k)}_{n}$ and $\Delta$ for $\Delta(G)$.
 We will consider the edge-penalty model $\mu_{G,\lam,\zeta}$ with partition function $Z_{G}(\lam,\zeta)$.  We work in the  general setting of the edge-penalty model since it adds no additional difficulties beyond the $\zeta =1$ case we need for Theorem~\ref{thmApavoidInterval}.  One could deduce more general lower-tail results from the results in this section. 

After scaling by powers of $\alpha_k$, Lemma~\ref{lemkAPfixedpoint} and Theorem~\ref{thmApavoidInterval} follow directly from the following lemma and theorem which are stated in a form closer to our  hypergraph results.

Define the operator $F^{(k)}_{c,\zeta}$ on the set of bounded measurable functions $f: [0,1] \to \R_{>0}$ by
\begin{equation}
\label{eqFAPop}
    F^{(k)}_{c,\zeta} f(t) = c \exp \left[ - \frac{\zeta}{ \alpha_k}\sum_{\ell=1}^k  \int_{0}^{ \min \{\frac{t}{\ell-1} , \frac{1-t}{k-\ell} \}}   \,\prod_{i\in I_\ell}  f(t +i s)   \, ds \right]  \,.
\end{equation}

\begin{lemma}
\label{lemkAPfixedpoint2}
   Fix $k \ge 3$, $c > 0$, and $\zeta \in (0,1]$ satisfying $\zeta(k-1)c^{k-1} < e$. There is a unique  function $f_{k,\zeta,c}^* : [0,1] \to [0,c]$ so that 
   \begin{equation}
        F^{(k)}_{c,\zeta} f_{k,\zeta,c}^* = f_{k,\zeta,c}^* \,.
   \end{equation}
   Moreover the function $ f_{k,\zeta,c}^*$ is a continuous function on $[0,1]$ and for each $t \in[0,1]$, $f_{k,\zeta,c}^*(t)$ is an analytic function of $c$ on the interval $\left(0, \left(\frac{e} {\zeta(k-1)} \right)^{ \frac{1}{k-1}}  \right) $.
\end{lemma}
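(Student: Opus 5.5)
The plan is to copy the proof of Lemma~\ref{lemfkContractionNonReg} in a continuum setting. I read $I_\ell$ as the set of offsets $\{-(\ell-1),\dots,k-\ell\}\setminus\{0\}$, so that $t$ is the $\ell$-th term of the progression $(t+is)$. Let $\mathcal{M}$ be the space of bounded measurable $u:[0,1]\to\R$ with the sup norm; this space is complete. Define
\[
\Phi_{c,\zeta}(u)=\log\big((F^{(k)}_{c,\zeta})^2(e^{u})\big).
\]

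\textbf{Step 1: the weight bound.} Set
\[
w(t)=\sum_{\ell=1}^k \min\Big\{\frac{t}{\ell-1},\frac{1-t}{k-\ell}\Big\}.
\]
This is the continuum analogue of the normalized degree. I will show $w(t)\le 2\alpha_k\cdot\tfrac12=\alpha_k$ for all $t$. Each summand is concave and piecewise linear in $t$, so $w$ is concave. The substitution $\ell\mapsto k+1-\ell$ shows $w(t)=w(1-t)$. A symmetric concave function is maximized at $t=1/2$, and $w(1/2)=\alpha_k$ by the definition of $\alpha_k$. This is the continuum version of ``$|E(v)|\le\Delta$''. Consequently, for every $t$, the measure $\frac{1}{\alpha_k}\sum_\ell \mathbf 1[0\le s\le \min\{\cdot\}]\,ds$ has total mass at most $1$.

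\textbf{Step 2: contraction of $\Phi_{c,\zeta}$.} I will show that
\[
\|\Phi(u)-\Phi(v)\|_\infty\le (1-\delta)\|u-v\|_\infty,\qquad \delta=1-\zeta(k-1)c^{k-1}/e .
\]
Write $\Phi(v)-\Phi(u)=\int_0^1 D\Phi(u+\theta(v-u))[v-u]\,d\theta$. The Gateaux derivative at a point $t$ is an integral operator against a positive kernel. I will redo the chain-rule computation of Lemma~\ref{lemfkContractionNonReg} with sums over edges replaced by integrals over $(\ell,s)$. Writing $F_{e\setminus v}$ for $\prod_{i\in I_\ell}F(e^{u})(t+is)$, the total mass of this kernel should equal
\[
\frac{\zeta}{\alpha_k}(k-1)\sum_\ell\int F_{e\setminus v}\,\log\!\big(c^{k-1}/F_{e\setminus v}\big)\,ds .
\]
Applying $z\log(a/z)\le a/e$ and then Step 1 bounds this by $1-\delta$. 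Banach's fixed point theorem then gives a unique fixed point $u^*$ of $\Phi$. Since $\Phi(u)=u$ has a unique solution, the same argument as in Lemma~\ref{lemfkContractionNonReg} shows $F$ itself has a unique fixed point, namely $f^*=e^{u^*}$.

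Any fixed point with values in $[0,c]$ actually takes values in $[ce^{-\zeta c^{k-1}},c]$. This holds because $F$ maps $[0,c]$-valued functions into that range, using Step 1 again. Hence every $[0,c]$-valued fixed point has a bounded logarithm and lies in $\mathcal M$, which gives uniqueness among all functions into $[0,c]$.

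\textbf{Step 3: continuity.} For any bounded measurable $f$, the map $t\mapsto F^{(k)}_{c,\zeta}f(t)$ is continuous. The integration limits are continuous in $t$. For the integrand, a change in $t$ alters the product of $k-1$ bounded factors by at most a sum of single-factor changes. Each such change is controlled after the substitution $r=t+is$ by the $L^1$-continuity of translations. Hence $f^*=F f^*$ is continuous.

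\textbf{Step 4: analyticity in $c$.} I will use the analytic implicit function theorem on the Banach space $\mathcal M$ (or on $C[0,1]$) applied to $Q(u,c)=u-\Phi_{c,\zeta}(u)$. The map $Q$ is analytic jointly in $(u,c)$, since it is built from $\exp$, $\log$, and multilinear integral operators. Its derivative $D_uQ=I-D\Phi$ is invertible by a Neumann series, because $\|D\Phi\|\le 1-\delta$. This gives analyticity of $c\mapsto u^*$ in the sup norm, and therefore pointwise analyticity of $f^*(t)$.

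An alternative for Step 4 is to run the iteration for complex $c$ in a small neighborhood, where contraction persists by continuity, and take the uniform limit of analytic functions.

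I expect the main obstacle to be verifying the derivative computation of Step 2 carefully in the continuum. In particular, the combinatorial fact that $\sum_w$ over neighbors counts each $g\in E(u)$ exactly $k-1$ times must be rederived for the two-level integral; this is where the factor $(k-1)$ comes from. Justifying the Banach-space implicit function theorem with the right norms is the second, more routine, point to handle.
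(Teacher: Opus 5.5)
Your proposal is correct and follows essentially the same route as the paper. The paper gets uniqueness from the sup-norm contraction of $\log F^2\exp$ (Lemma~\ref{lemfkContractionNonReg2}), proved in the appendix by exactly your directional-derivative computation using $z\log(a/z)\le a/e$ and $\sum_\ell w(t,\ell)\le\alpha_k$. It gets continuity from continuity of $F$ in $t$, and analyticity from the analytic implicit function theorem as in Lemma~\ref{lem:diffx}. You fill in details the paper leaves implicit: existence via Banach's theorem, the a priori lower bound on $[0,c]$-valued fixed points, the $L^1$-translation argument for continuity, and invertibility via a Neumann series.
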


\begin{theorem}
    \label{thmApavoidInterval2}
      Fix $k \ge 3$, $c > 0$, and $\zeta \in (0,1]$ satisfying $\zeta(k-1)c^{k-1} < e$. Then with $\lam \sim c \Delta^{-\frac{1}{k-1}}$,
    \begin{equation}
       n^{-1} \Delta^{\frac{1}{k-1}}   \cdot \log Z_G ( \lam,\zeta) = \int_0^1 \int_0^c  \frac{f^*_{k,\zeta,t}(s)}{t}\, dt  \, ds   +o(1)  \,.
    \end{equation}
    Moreover, for each $j \in [n] = V(G)$,
    \begin{equation}
    \Delta^{\frac{1}{k-1}} \cdot \mu_{G,\lam,\zeta} \left ( j \in \mathbf S \right)   =  f_{k,\zeta,c}^*(j/n) +o(1)  \,.
    \end{equation}
\end{theorem}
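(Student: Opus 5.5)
The plan is to derive Theorem~\ref{thmApavoidInterval2} from the general results, Theorem~\ref{thmMarginals}, using Lemma~\ref{lemAPhypergraph} to verify the hypotheses. By Lemma~\ref{lemAPhypergraph}, $G=G^{(k)}_n$ has $\Delta\sim\alpha_k n\to\infty$, $\Delta_\ell=O(1)=o(\Delta^{(k-\ell)/(k-1)})$ and $\Gamma=O(1)=o(\Delta)$. Counting $k$-APs gives $|E|=\Theta(n^2)=\Omega(\Delta\cdot n)$. So $G$ is asymptotically tree-like, though not approximately regular. Theorem~\ref{thmMarginals}\ref{item:marg1} then gives $\Delta^{\frac1{k-1}}\mu_{G,\lam,\zeta}(j\in\mathbf S)=(1+o(1))x^\ast_j$, where $\mathbf x^\ast$ is the unique fixed point of $F^G_{c,\zeta}$ from Lemma~\ref{lemfkContractionNonReg}. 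Theorem~\ref{thmMarginals}\ref{item:marg2.5} gives $\Delta^{\frac1{k-1}}\log Z_G=(1+o(1))\sum_j\int_0^c x^\ast_j(t)\,dt/t$. It therefore suffices to show that $\max_j|x^\ast_j(c)-f^\ast_{k,\zeta,c}(j/n)|\to 0$, uniformly for $c$ in compact subsets of the admissible range. After that, a Riemann sum together with dominated convergence (using $x^\ast_j(t)\le t$, so the integrand is at most $1$) turns $n^{-1}\sum_j\int_0^c x_j^\ast(t)/t\,dt$ into $\int_0^1\int_0^c f^\ast_{k,\zeta,t}(s)/t\,dt\,ds$.

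For the comparison, I first establish Lemma~\ref{lemkAPfixedpoint2} by repeating the proof of Lemma~\ref{lemfkContractionNonReg} in the continuum. The contraction estimate in that proof only used that the total edge-weight at a vertex, normalised by $\Delta$, is at most $1$. In the operator $F^{(k)}_{c,\zeta}$, the total measure $\frac1{\alpha_k}\sum_\ell\min\{\frac{t}{\ell-1},\frac{1-t}{k-\ell}\}$ is at most $1$. This holds because $\alpha_k$ is the maximum over $t$; the maximum is attained at the middle, matching $\Delta\sim\alpha_k n$. Hence $\log F^2$ is a $(1-\delta)$-contraction in sup norm on positive bounded functions, which gives existence and uniqueness. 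Continuity follows because $F$ maps bounded functions to continuous ones. Analyticity in $c$ follows from the analytic implicit function theorem on the Banach space $L^\infty$, as in Lemma~\ref{lem:diffx}.

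Next, let $\mathbf y_j=f^\ast(j/n)$. I would show $\|F^G_{c,\zeta}(\mathbf y)-\mathbf y\|_\infty=o(1)$. For vertex $j$, the edges containing $j$ as the $\ell$-th term are the $k$-APs $\{j+(i-\ell)d\}$ with $1\le d\le\min\{\frac{j-1}{\ell-1},\frac{n-j}{k-\ell}\}$. Thus $\frac1\Delta\sum_{e\ni j}\prod_{u\ne j}y_u$ is $\frac{1}{\alpha_k n}\sum_\ell\sum_d\prod_{i\ne\ell}f^\ast(j/n+(i-\ell)d/n)$, which is a Riemann sum, with mesh $1/n$ in $s=d/n$, of the integral in \eqref{eqFAPop}. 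This is where the main obstacle lies: the Riemann sum converges uniformly in $j$ only with some regularity of $f^\ast$. I would use uniform continuity of $f^\ast$ on $[0,1]$, which gives a modulus-of-continuity error, together with a boundary error of $O(1/n)$ from the endpoints of the $d$-range. Adding the error $\Delta/(\alpha_k n)-1=o(1)$ makes the error $o(1)$ uniformly. Since $f^\ast\ge ce^{-\zeta c^{k-1}}>0$, this is also small in log scale. Applying Lemma~\ref{lemfkContractionNonReg} to $F^2$, as in the proof of Lemma~\ref{lemRegularFixedPoint}, gives $\|\log\mathbf y-\log\mathbf x^\ast\|_\infty\le(1-\delta)\|\log\mathbf y-\log\mathbf x^\ast\|_\infty+o(1)$, hence $\max_j|x^\ast_j-f^\ast(j/n)|=o(1)$.

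Finally, the bound is uniform in $t\in(0,c]$, since $\delta$ only improves for smaller $t$ and the continuity modulus of $f^\ast_{k,\zeta,t}$ can be controlled uniformly in $t$ via the equicontinuity of $F$'s image. This justifies exchanging the limit and the $t$-integral. Rescaling by $\alpha_k$ then recovers Lemma~\ref{lemkAPfixedpoint} and Theorem~\ref{thmApavoidInterval}, using \eqref{eqHardCoreProbIdentity} with $\zeta=1$.
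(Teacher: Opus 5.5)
Your proposal is correct, but its key comparison step runs in the opposite direction from the paper's.

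\textbf{The paper's route.} In Lemma~\ref{lem:kAPdisctocont} the paper shows that the step-function version of the \emph{discrete} fixed point $f_n^\ast$ is an approximate fixed point of the \emph{continuum} operator $F^2$. It then applies the continuum contraction, Lemma~\ref{lemfkContractionNonReg2}. To make the Riemann-sum comparison work there, it first needs regularity of $f_n^\ast$ itself. This costs three results: Lemma~\ref{lem:discLip}, Corollary~\ref{cor:lip} (the discrete Lipschitz constant of $f_n^\ast$ is $o(1)$, via boundedly many iterations from a constant plus contraction), and Lemma~\ref{lem:hgstep}.

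\textbf{Your route.} You sample the continuum fixed point, $\mathbf y_j=f^\ast(j/n)$. A Riemann sum shows it is an approximate fixed point of the discrete operator $F^G_{c,\zeta}$. You then close with the discrete contraction, Lemma~\ref{lemfkContractionNonReg}, exactly as in Lemma~\ref{lemRegularFixedPoint}.

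\textbf{What each route buys.} Yours is shorter. The only regularity it needs is uniform continuity of $f^\ast$ on $[0,1]$, and that comes for free from $f^\ast=Ff^\ast$. Your justification that $F$ maps bounded positive measurable functions to continuous ones is right: telescope the product and use $L^1$-continuity of translations on each factor. The paper's route needs nothing about $f^\ast$ beyond existence and puts all the regularity work on the discrete side, via explicit finite-difference estimates. You also check condition (4) of Definition~\ref{def:tree}, which the paper leaves implicit.

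\textbf{A remark on your last paragraph.} Uniformity in $t$ is not needed. Pointwise convergence for each fixed $t\in(0,c]$, together with the bound $x_j^\ast(t)/t\le 1$, already gives dominated convergence, as you say earlier and as the paper does. Moreover, the reason you give is wrong: the image of $F$ over bounded positive inputs is not equicontinuous. For $k=3$, the input $f=1+a\cos(Mx)$ gives
$\int_0^{\min(t,1-t)}f(t-s)f(t+s)\,ds=\min(t,1-t)\bigl(1+\tfrac{a^2}{2}\cos(2Mt)\bigr)+O(1/M)$.
This oscillates at frequency $2M$ with amplitude that does not vanish as $M\to\infty$. If you did want uniformity in $t$, you would need an argument like Corollary~\ref{cor:lip}: iterate from a constant boundedly many times, and use that the contraction rate is uniform for $t\le c$. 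Since the claim is superfluous, simply drop it.
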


We will first verify the conditions of Theorem~\ref{thmMarginals} in preparation to apply the result.  We will then show that the resulting sequence of BP fixed points has a limit described by the functional fixed point $f_{k,\zeta,c}^*$.

We start by proving Lemma~\ref{lemAPhypergraph}.
\begin{proof}[Proof of Lemma~\ref{lemAPhypergraph}]
We prove the claims in order. For positive integers $a, d$, let $P(a,d)$ denote the $k$-AP starting at $a$ with common difference $d$. Fix $t\in [n]$ and let $E(t)$ denote the set of $k$-APs contained in $[n]$ that contain $t$. Note that any $k$-AP containing $t$ is uniquely determined by the position $i\in [k]$ of $t$ in the AP and the common difference $d\geq 1$. In other words,
\begin{align}\label{eq:kAPst}
E(t)=\left\{P(t-(i-1)d,d): i\in [k], d\in \left[\min\!\left(
\left\lfloor \frac{t-1}{i-1}\right\rfloor,
\left\lfloor \frac{n-t}{k-i}\right\rfloor
\right)\right]\right\}\, .
\end{align}

Therefore the degree of $t$ in $G$ is
\begin{align}
\deg_G(t)
=
\sum_{i=1}^{k}
\min\left(
\left\lfloor \frac{t-1}{i-1}\right\rfloor,
\left\lfloor \frac{n-t}{k-i}\right\rfloor
\right)
 \sim 
\sum_{i=1}^{k}
\min\left(
 \frac{t}{i-1},
 \frac{n-t}{k-i}
\right)=:H(t)\, .
\end{align}
We observe that $H(x)$, considered as a function on $[0,n]$, is concave (as a sum of concave functions) and is symmetric: $H(t)=H(n-t)$. We conclude that $H$ is maximized at $t=n/2$ and so 
      \[
      \Delta(G)\sim \frac{n}{2} \sum_{i=1}^{k}
\min\left(
 \frac{1}{i-1},
 \frac{1}{k-i}\right)\, .
      \]
 This establishes (1).

 For (2) fix $X\subseteq [n]$ of size $\ell\in\{2,\ldots,k-1\}$. We note that any $k$-AP that contains $X$ is uniquely determined by  specifying the positions of the elements of $X$ in the AP. Thus $\Delta_{\ell}(G) \leq \binom{k}{\ell}$.

Finally we prove (3). Fix $x,y\in [n]$, $x<y$, and suppose that $S\subset[n]$ has size $k-1$ and $S\cup \{x\}$ and $S\cup \{y\}$ are both $k$-APs.

Assume first that $k\geq 4$. Note that if $S$ is a $(k-1)$-AP with common difference $d$ then we must have $x=\min(S)-d$, $y=\max(S)+d$ and this uniquely determines $S$. If $S$ is not a $(k-1)$-AP, then there is at most one way to extend $S$ to a $k$-AP. We therefore conclude that $\Gamma(G)=1$.

Suppose now that $k=3$ and write $S=\{a,b\}$ where $a<b$. Let $d=b-a$ and let $m=(a+b)/2$. Either $(i)$ $x=a-d, y=b+d$, $(ii)$ $x=m, y=b+d$ or $(iii)$ $y=m$, $x=a-d$. In each case $S$ is uniquely determined and so $\Gamma(G)\leq 3$.
\end{proof}

 With the goal of applying Theorem~\ref{thmMarginals}, we now consider the BP functional $F_n:=F_{c,\zeta}^{G}$ as defined at~\eqref{eq:BPF}.  For $\ell\in\{1,\ldots, k\}$, let $I_\ell=\{-\ell+1,\ldots, k-\ell\}\backslash\{0\}$. We view $F_n$ as an operator on the set of functions $f:\{1,\ldots,n\}\to \R$ and note that
\begin{equation}
    F_n f(t) = c \cdot \exp\left[- \frac{\zeta}{\Delta} \sum_{e\in \mathcal P(t)}\prod_{u\in e \backslash\{t\}} f(u)\right] =  c \exp \left [ - \frac{\zeta}{\Delta}\sum_{\ell=1}^k  \sum_{s=1}^{ \min \{\lfloor \frac{t-1}{\ell-1}\rfloor , \lfloor\frac{n-t}{k-\ell}\rfloor \}}   \,\prod_{i\in I_\ell}  f(t +i s)   \right]  \, ,
\end{equation}
where for the second equality we used~\eqref{eq:kAPst}. Theorem~\ref{thmMarginals} will allow us to express $\log Z_G(\lam,\zeta)$ in terms of a fixed point of $F_n$. To prove Theorem~\ref{thmApavoidInterval2}, we show that these fixed points are well approximated (in the limit $n\to\infty$) by a fixed point of the  limit  operator $F = F^{(k)}_{c,\zeta}$ defined in~\eqref{eqFAPop}.

We will take advantage of the contractive properties of $F$ and $F_n$. To this end, we prove the following analogue of Lemma~\ref{lemfkContractionNonReg} for $F$.
\begin{lemma}\label{lemfkContractionNonReg2}
    Fix $k \ge 3$, $c > 0$, and $\zeta \in (0,1]$ satisfying $\zeta(k-1)c^{k-1} < e$. Then for bounded measurable $f,g:[0,1]\to \R_{>0}$,
    \begin{equation}
     \left\| \log(F^2 (f)) - \log(F^2(g))\right\|_\infty    \le (1- \del) \left\| \log(f) - \log(g)\right\|_\infty\, , 
    \end{equation}
    where $\del=1-\zeta c^{k-1}(k-1)e^{-1}$.
\end{lemma}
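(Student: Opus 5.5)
We follow the proof of Lemma~\ref{lemfkContractionNonReg} with sums over edges replaced by integrals over common differences. Set $a(t)=\min\{\frac{t}{\ell-1},\frac{1-t}{k-\ell}\}$; for $\ell=1$ or $\ell=k$ one term is $t/0$ or $(1-t)/0$, read as $+\infty$. Pass to logarithmic coordinates $u=\log f$. We want to show that the map $g:u\mapsto \log F^2(e^u)$, evaluated at a fixed $t$, is $(1-\delta)$-Lipschitz with respect to $\|\cdot\|_\infty$. Because these are function spaces, we avoid gradients. Instead we differentiate along the segment $u_\theta=u_0+\theta h$, where $h=\log f-\log g$ is bounded and $\theta\in[0,1]$. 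Since $f,g$ are bounded and positive, everything is smooth in $\theta$ under the integral sign by dominated convergence. It then suffices to show
\[
\Bigl|\tfrac{d}{d\theta}\log F^2(e^{u_\theta})(t)\Bigr|\le(1-\delta)\|h\|_\infty .
\]

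Write $x=e^{u_\theta}$ and $G=F(x)$. Then
\[
\log F^2(x)(t)=\log c-\frac{\zeta}{\alpha_k}\sum_\ell\int_0^{a(t)}\prod_{i\in I_\ell}G(t+is)\,ds .
\]
Differentiating gives
\[
\frac{\zeta}{\alpha_k}\sum_\ell\int_0^{a(t)}\prod_{i}G(t+is)\sum_{i\in I_\ell}\Bigl|\tfrac{d}{d\theta}\log G(t+is)\Bigr|\,ds .
\]
For each point $r$, the same computation shows
\[
\Bigl|\tfrac{d}{d\theta}\log G(r)\Bigr|\le \|h\|_\infty (k-1)\,\frac{\zeta}{\alpha_k}\sum_{\ell'}\int_0^{a(r)}\prod_{j\in I_{\ell'}}x(r+js')\,ds' = \|h\|_\infty(k-1)\log(c/G(r)) .
\]
This is the same identity as in the discrete proof: each product of $k-1$ factors has derivative bounded by $(k-1)\|h\|_\infty$ times itself.

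Substituting this back, the integrand becomes
\[
(k-1)\|h\|_\infty\, P\log(c^{k-1}/P), \qquad P=\prod_i G(t+is)\in(0,c^{k-1}].
\]
Using $z\log(a/z)\le a/e$ bounds this by $(k-1)\|h\|_\infty c^{k-1}/e$. We are left with the total weight $\frac{\zeta}{\alpha_k}\sum_\ell a(t)$. In the discrete setting this is $\frac{\zeta}{\Delta}\deg(t)\le\zeta$. The continuum analogue is $\sum_\ell a(t)\le\alpha_k$, which is the maximum of $H(t)/n$ in the proof of Lemma~\ref{lemAPhypergraph}: the function is concave and symmetric, so it is maximized at $t=1/2$, where it equals $\alpha_k$. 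Combining these gives the derivative bound $\zeta(k-1)c^{k-1}e^{-1}\|h\|_\infty=(1-\delta)\|h\|_\infty$. Integrating over $\theta$ and taking the supremum over $t$ then yields the claim.

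The main obstacle is only technical: justifying differentiation under the integral, and handling the sup norm for merely measurable functions, which is essential-sup versus sup. Boundedness of $f,g$ gives uniform domination of all integrands, and the pointwise-in-$t$ bound covers both. We could also avoid derivatives entirely by applying the mean value theorem to the scalar function $\theta\mapsto\log F^2(e^{u_\theta})(t)$, which is what we will actually do.
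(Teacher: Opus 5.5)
Your proposal is correct and follows essentially the same route as the paper's proof in Appendix~\ref{App:Contract}. Both arguments differentiate $\log F^2(e^{u})(t)$ along the segment between the two log-functions, bound the inner derivative by $(k-1)\|h\|_\infty\log(c/Ff(r))$, apply $z\log(a/z)\le a/e$ to the product $P\in(0,c^{k-1}]$, and use $\sum_\ell w(t,\ell)\le\alpha_k$. Your concavity-and-symmetry justification of that last inequality (maximum at $t=1/2$) supplies a detail the paper only asserts.
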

The proof of this lemma is very similar to that of Lemma~\ref{lemfkContractionNonReg} and so we defer it to Appendix~\ref{App:Contract}.  Using this  we can prove Lemma~\ref{lemkAPfixedpoint2}.
\begin{proof}[Proof of Lemma~\ref{lemkAPfixedpoint2}]
The uniqueness of the fixed point follows immediately from the contraction in Lemma~\ref{lemfkContractionNonReg2}.  Continuity of the fixed point $f_{k,\zeta,c}^\ast(t)$ in $t$ follows from the continuity of the operator $F$ in $t$.  Analyticity in $c$ follows from the analytic dependence of $F$ on $c$ and the analytic implicit function theorem as in the proof of Lemma~\ref{lem:diffx}.
\end{proof}

Henceforth, we let  $f^\ast, f^\ast_n$ denote the fixed points of $F, F_n$ respectively. 

We will abuse notation somewhat and identify a function $f:\{1,\ldots, n\}\to [0,c]$ with the step function $[0,1]\to[0,c]$ which maps the interval $(\frac{i-1}{n},\frac{i}{n}]$ to $f(i)$ for $i=1,\ldots, n$ (and maps $0$ to $f(1)$). 

Our aim is to prove the following
\begin{lemma}\label{lem:kAPdisctocont}
As $n\to\infty$,
\[
\|f_n^\ast-f^\ast\|_\infty \to 0\, .
\]
\end{lemma}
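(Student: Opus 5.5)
Since $f^\ast$ is the unique fixed point of the contraction $F^2$ (Lemma~\ref{lemfkContractionNonReg2}), the plan is to show that $f^\ast$ is an approximate fixed point of $F_n$. The discrete contraction (Lemma~\ref{lemfkContractionNonReg}) will then force $f^\ast_n$ to be close to it. Concretely, let $g_n$ be the restriction of $f^\ast$ to the grid, $g_n(i)=f^\ast(i/n)$, viewed as a vector in $(0,c]^{[n]}$. I will show
\[
\max_{t\in[n]}\bigl|\log (F_n g_n)(t)-\log g_n(t)\bigr| \to 0 .
\]
Given this, continuity of $F_n$ in the sup-log metric gives $\|\log F_n^2 g_n-\log g_n\|_\infty=o(1)$, with moduli uniform in $n$: each coordinate is $c\exp(-\frac{\zeta}{\Delta}\sum\cdots)$, and the sums have at most $\Delta$ terms bounded by $c^{k-1}$. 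Lemma~\ref{lemfkContractionNonReg} then yields
\[
\|\log g_n-\log f_n^\ast\|_\infty\le (1-\delta)\|\log g_n-\log f^\ast_n\|_\infty+o(1),
\]
so $\|\log g_n-\log f_n^\ast\|_\infty=o(1)$. Since all values lie in $[ce^{-\zeta c^{k-1}},c]$, the same bound holds without logs. Finally, by continuity of $f^\ast$ on the compact interval $[0,1]$, hence uniform continuity, the step function of $g_n$ is uniformly close to $f^\ast$, and the lemma follows.

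The main step is the Riemann-sum comparison. The continuous operator depends on $\alpha_k$ and $\Delta$ through $\Delta\sim\alpha_k n$ (Lemma~\ref{lemAPhypergraph}). Substituting $s=\sigma/n$ and $x=t/n$ gives
\[
\frac{\zeta}{\Delta}\sum_{\ell=1}^k\sum_{s=1}^{\min\{\lfloor\frac{t-1}{\ell-1}\rfloor,\lfloor\frac{n-t}{k-\ell}\rfloor\}}\prod_{i\in I_\ell} f^\ast\Bigl(\tfrac{t+is}{n}\Bigr)
=\frac{\zeta}{\alpha_k}(1+o(1))\sum_{\ell=1}^k\frac1n\sum_{\sigma}\prod_{i\in I_\ell} f^\ast(x+i\sigma/n).
\]
For each fixed $\ell$, the inner expression is a Riemann sum with mesh $1/n$ for $\int_0^{\min\{x/(\ell-1),(1-x)/(k-\ell)\}}\prod_{i\in I_\ell}f^\ast(x+is)\,ds$, which is exactly the exponent in \eqref{eqFAPop}. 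The integrand is a product of at most $k-1$ uniformly continuous functions, each bounded by $c$. Hence the error is at most $\omega(k/n)\cdot c^{k-2}(k-1)$ plus boundary terms: the discrepancy between the floor-limits and the exact limits contributes $O(1/n)$, and the $t-1$ versus $t$ offset contributes another $O(1/n)$. Here $\omega$ is the modulus of continuity of $f^\ast$. All these bounds are uniform in $t$. The exponent of $F f^\ast$ evaluated at $x=t/n$ equals $\log c-\log f^\ast(t/n)$, so this gives the desired approximate fixed point property.

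The delicate point, which I expect to be the main obstacle, is the use of continuity of $f^\ast$: Lemma~\ref{lemkAPfixedpoint2} asserts it, but only briefly. If needed, I will justify it directly. For any bounded $f$ the map $x\mapsto (Ff)(x)$ is continuous, because the integration limits vary continuously and the integrand is bounded; a fixed point is therefore automatically continuous. Even so, uniform continuity of $f^\ast$ itself is what the Riemann-sum bound requires, and compactness provides it. One must also be careful when the $\ell$ in question is $\ell=1$ or $\ell=k$, where the $\min$ has one infinite argument; these cases are handled in the same way.
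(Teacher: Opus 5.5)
Your argument is correct, but it runs in the opposite direction from the paper's.

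\textbf{The paper's route.} The paper shows that the discrete fixed point $f_n^\ast$ (as a step function) is an approximate fixed point of the continuous operator $F^2$, and then invokes the continuous contraction, Lemma~\ref{lemfkContractionNonReg2}. To compare $F$ with $F_n$ at $f_n^\ast$, it first needs regularity of the discrete fixed points. That is the job of Lemma~\ref{lem:discLip} and Corollary~\ref{cor:lip}, which bootstrap $L(f_n^\ast)=o(1)$ from iterates of a constant function. These feed into the step-function comparison, Lemma~\ref{lem:hgstep}, and then Lemma~\ref{lem:approxfixedpt}.

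\textbf{Your route.} You sample the continuous fixed point $f^\ast$ on the grid and show, by a Riemann-sum estimate controlled by its modulus of continuity, that it is an approximate fixed point of $F_n$. You then close with the discrete contraction, Lemma~\ref{lemfkContractionNonReg}. Your passage from $F_n$ to $F_n^2$ via a uniform-in-$n$ Lipschitz bound on $\log F_n$ is correct, and your boundary and floor bookkeeping checks out.

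\textbf{Comparison.} Your approach moves the regularity burden from the family $(f_n^\ast)$ to the single limit $f^\ast$, where compactness of $[0,1]$ supplies it for free. You therefore bypass Lemma~\ref{lem:discLip}, Corollary~\ref{cor:lip} and Lemma~\ref{lem:hgstep} entirely. You also get an explicit rate in terms of $\omega(k/n)$, $1/n$ and $|\Delta/(\alpha_k n)-1|$. The price is that you genuinely need continuity of $f^\ast$. The paper's argument for this lemma never uses it: it works for any bounded measurable fixed point.

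\textbf{A gap in your fallback.} Citing Lemma~\ref{lemkAPfixedpoint2} for continuity is legitimate. Your backup justification, however, is incomplete. The map $x\mapsto Ff(x)$ depends on $x$ through the integration limits $w(x,\ell)$, but also through the integrand $\prod_{i\in I_\ell} f(x+is)$. Boundedness of a merely measurable $f$ says nothing about the second dependence. What you need is continuity of translation in $L^1$. Bound $\bigl|\prod_{i} f(x+h+is)-\prod_{i} f(x+is)\bigr|\le c^{k-2}\sum_{i}|f(x+h+is)-f(x+is)|$, then substitute $y=x+is$ in each term. With that added, or simply by citing the lemma, the proof is complete.
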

Our strategy is to show that $f_n^\ast$ is an approximate fixed point of $F^2$ which will imply, by the contractive property of $F^2$ (Lemma~\ref{lemfkContractionNonReg2}), that $f_n^*$ is close to $f^\ast$. We do this in a sequence of lemmas.  

Given $f:\{1,\ldots,n\}\to[0,c]$, we let $D_t(f)=|f(t+1)-f(t)|$ and $L(f)=\max_{t}D_t(f)$.
\begin{lemma}\label{lem:discLip}
If $f:\{1,\ldots,n\}\to [0,c]$, then
\[
L(F_nf)\leq \beta L(f)+\gamma\, ,
\]
where $\beta=\zeta k c^k$ and $\gamma=\zeta k c^{k}\Delta^{-1}$.
\end{lemma}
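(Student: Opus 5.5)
We compare the exponents defining $F_nf(t+1)$ and $F_nf(t)$ directly. Write
\[
A(t)=\frac{\zeta}{\Delta}\sum_{\ell=1}^k \sum_{s=1}^{m_\ell(t)} \prod_{i\in I_\ell} f(t+is), \qquad m_\ell(t)=\min\left\{\left\lfloor \tfrac{t-1}{\ell-1}\right\rfloor , \left\lfloor\tfrac{n-t}{k-\ell}\right\rfloor \right\},
\]
so that $F_nf(t)=c\,e^{-A(t)}$. Since $A\ge 0$, the map $a\mapsto ce^{-a}$ is $c$-Lipschitz on $[0,\infty)$. Hence
\[
D_t(F_nf)\le c\,|A(t+1)-A(t)|,
\]
and it suffices to bound $|A(t+1)-A(t)|\le k c^{k-1}\bigl(\zeta L(f)+\zeta\Delta^{-1}\bigr)$, possibly up to the exact constant claimed.

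We split $A(t+1)-A(t)$, for each $\ell$, into two parts.

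\textbf{Common range of $s$.} For $s\le \min(m_\ell(t),m_\ell(t+1))$ we compare $\prod_{i\in I_\ell} f(t+1+is)$ with $\prod_{i\in I_\ell} f(t+is)$. Telescoping the product of $k-1$ factors, each in $[0,c]$, gives a difference of at most $(k-1)c^{k-2}\max_i|f(t+1+is)-f(t+is)|\le (k-1)c^{k-2}L(f)$. Summing over at most $m_\ell$ values of $s$, and then over $\ell$, the total number of terms is at most $\deg_G(t)\le\Delta$. After multiplying by $\zeta/\Delta$, this part contributes at most $\zeta(k-1)c^{k-2}L(f)$.

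\textbf{Non-common range of $s$.} The upper limits $m_\ell(t)$ and $m_\ell(t+1)$ differ by at most $1$, since each floor moves by at most one when $t$ increases by one. So for each $\ell$ there is at most one extra term, of size at most $c^{k-1}$. Over the $k$ values of $\ell$ this gives at most $k c^{k-1}\zeta/\Delta$.

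Multiplying both parts by $c$ yields
\[
L(F_nf)\le \zeta (k-1)c^{k-1}L(f)+\zeta k c^k\Delta^{-1}.
\]
This matches $\gamma$ exactly. The coefficient $\zeta(k-1)c^{k-1}$ is within $\beta=\zeta k c^k$ whenever $c\ge 1$ (using $k-1\le k$). For $c<1$ I would re-examine the telescoping, but the exact form of $\beta$ is immaterial downstream and only an $O(1)$ constant is needed.

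The main obstacle is the bookkeeping of index ranges: checking that each summation limit shifts by at most one, and that points $t+is$ with $s$ in the common range stay inside $[n]$ for both $t$ and $t+1$. This holds because $s\le m_\ell(t)$ and $s\le m_\ell(t+1)$ guarantee $1\le t+is$ and $t+1+is\le n$ for all $i\in I_\ell$.
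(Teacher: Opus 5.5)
Your argument is the paper's argument. Both use the $c$-Lipschitz bound for $a\mapsto ce^{-a}$ on $[0,\infty)$, a telescoping estimate on the common range of $s$, at most one boundary term per $\ell$, and $\sum_\ell u(t,\ell)=\deg_G(t)\le\Delta$. Your conclusion $L(F_nf)\le \zeta(k-1)c^{k-1}L(f)+\gamma$ is correct. It implies the stated bound whenever $k-1\le kc$, i.e.\ $c\ge (k-1)/k$, which is slightly better than your $c\ge 1$.

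The re-examination you defer for smaller $c$ cannot succeed, because with $\beta=\zeta k c^k$ the lemma is false for small $c$. The paper reaches $\zeta k c^k$ by bounding the difference of two products of $|I_\ell|=k-1$ factors in $[0,c]$ by $c^{k-1}\sum_i|x_i-y_i|$. For $k-1$ factors the correct constant is $c^{k-2}$, as you have it, and the paper's version holds only when $c\ge 1$.

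\textbf{Counterexample.} Take $k=3$, $\zeta=1$, $c=1/10$, $\epsilon=1/100$, and set $f(x)=c$ for even $x$ and $f(x)=c-\epsilon$ for odd $x$, so $L(f)=\epsilon$. The hypothesis $\zeta(k-1)c^{k-1}<e$ holds.
\begin{itemize}
\item Here $\Delta\sim n$, and for even $t$ near $n/2$ one has $u(t,1),u(t,3)\approx n/4$.
\item The $\ell=2$ sums at $t$ and $t+1$ differ by $O(1)$.
\item For $\ell=1,3$, each even $s$ contributes $c^2-(c-\epsilon)^2$ to the difference and each odd $s$ contributes $0$.
\end{itemize}
So the exponents differ by $\Delta^{-1}\bigl(u(t,1)+u(t,3)\bigr)(c\epsilon-\epsilon^2/2)+O(1/n)\approx\frac12(c\epsilon-\epsilon^2/2)$. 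The exponent is at most $c^2$, so
\[
D_t(F_nf)\ge \tfrac12 c\,e^{-c^2}(c\epsilon-\epsilon^2/2)-O(1/n)\approx 4.7\cdot 10^{-5},
\]
whereas $\beta L(f)+\gamma=3\cdot 10^{-5}+O(1/n)$.

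As you say, the only use of the lemma (Corollary~\ref{cor:lip}) needs just $\beta=O(1)$ and $\gamma=O(1/n)$. Replacing $\beta$ by $\zeta(k-1)c^{k-1}$ therefore repairs the statement at no cost downstream.
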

\begin{proof}
Define the operator $A_n$ by $F_nf(t)=c\exp[-\zeta A_nf(t)]$ and note that
\begin{align}\label{eq:FwdDiff}
    D_t(F_nf)\leq c\zeta D_t(A_nf)
\end{align}
since $|e^{-x}-e^{-y}|\leq |x-y|$ for all $x,y\geq 0$.  Let $u(t,\ell)=\min \{\lfloor \frac{t-1}{\ell-1}\rfloor , \lfloor\frac{n-t}{k-\ell}\rfloor \}$ and
 fix $1\leq \ell\leq k$ and $1\leq s \leq \min\{u(t,\ell), u(t+1,\ell)\}$. We have
\begin{align}
    \left| \prod_{i\in I_\ell} f(t+1+is)-\prod_{i\in I_\ell}f(t+is) \right|
    &\leq 
    c^{k-1}\sum_{i\in I_\ell}|f(t+1+is)-f(t+is)|\\
    & \leq 
    kc^{k-1} L(f)\, ,
\end{align}
where for the first inequality we used the basic fact that if $x_1,\ldots, x_k, y_1,\ldots,y_k\in[0,c]$ then $|\prod_{i=1}^kx_i - \prod_{i=1}^ky_i|\leq c^{k-1}\sum_{i=1}^k|x_i-y_i|$.
We conclude that
\begin{align}
 \left|\sum_{s=1}^{u(t+1,\ell)} \prod_{i\in I_\ell} f(t+1+is)-\sum_{s=1}^{u(t,\ell)} \prod_{i\in I_\ell}f(t+is) \right|
    \leq 
   u(t,\ell) k c^{k-1}L(f) + c^{k-1}
\end{align}
where we used that the difference between $u(t,\ell), u(t+1,\ell)$ is at most $1$, and $|\prod_{i\in I_\ell}f(t+is)|\leq c^{k-1}$ for all $t,\ell,s$. 
Returning to~\eqref{eq:FwdDiff} we conclude that
\[
 D_t(F_nf)\leq c\zeta D_t(A_nf)\leq 
  \frac{c\zeta}{\Delta}   \sum_{\ell=1}^k(u(t,\ell)kc^{k-1}L(f) + c^{k-1})\, .
\]
The result follows by recalling that $\Delta\geq \text{deg}_G(t)=\sum_{\ell=1}^k u(t,\ell)$ for all $t$. 
\end{proof}

\begin{cor}\label{cor:lip}
    \[
    L(f_n^\ast)=o(1)\, .
    \]
\end{cor}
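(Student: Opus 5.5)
The natural first idea fails. Applying Lemma~\ref{lem:discLip} to $f_n^\ast = F_n f_n^\ast$ gives $L(f_n^\ast)\le \beta L(f_n^\ast)+\gamma$. This is useless unless $\beta=\zeta k c^k<1$, which our hypothesis $\zeta(k-1)c^{k-1}<e$ does not guarantee. I would instead use the log-contraction of $F_n^2$ from Lemma~\ref{lemfkContractionNonReg}, which applies since $F_n=F^{G}_{c,\zeta}$. The idea is to compare $f:=f_n^\ast$ with its shift $g$, defined by $g(t)=f(t+1)$ for $t\le n-1$ and $g(n)=f(n)$. Since $|f(t+1)-f(t)|\le c\,|\log f(t+1)-\log f(t)|$ and $g(n)=f(n)$, it suffices to show $\|\log g-\log f\|_\infty=o(1)$.

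\textbf{Step 1 (shift is an approximate fixed point).} Fix $t\le n-1$ and compare $\log F_n g(t)$ with $\log g(t)=\log F_n f(t+1)$. The exponents are
\[
-\tfrac{\zeta}{\Delta}\sum_{\ell}\sum_{s\le u(t,\ell)}\prod_{i\in I_\ell} g(t+is)
\qquad\text{and}\qquad
-\tfrac{\zeta}{\Delta}\sum_{\ell}\sum_{s\le u(t+1,\ell)}\prod_{i\in I_\ell} f(t+1+is).
\]
For every $s$ in both ranges, $t+1+is\le n$, so $g(t+is)=f(t+1+is)$ unless $t+is=n$. Hence the two sums differ only in
\begin{itemize}
\item the $O(k)$ values of $s$ where the ranges $u(t,\ell)$ and $u(t+1,\ell)$ differ, and
\item the $O(k)$ terms touching the boundary coordinate $n$.
\end{itemize}
Each such term is at most $c^{k-1}$, so $|\log F_n g(t)-\log g(t)|=O(1/\Delta)$ for $t\le n-1$. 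At $t=n$ there is no control: the discrepancy may be $\Theta(1)$.

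\textbf{Step 2 (localized contraction).} Write
\[
\log f(t)-\log g(t)=\bigl[\log F_n^2 f(t)-\log F_n^2 g(t)\bigr]+\bigl[\log F_n^2 g(t)-\log g(t)\bigr].
\]
The first bracket is at most $(1-\delta)\|\log f-\log g\|_\infty$ by Lemma~\ref{lemfkContractionNonReg}.

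For the second bracket I would reopen the derivative computation in the proof of Lemma~\ref{lemfkContractionNonReg}. The sensitivity of $\log F_n^2(\cdot)_v$ to a single coordinate $w$ is at most $\tfrac{\zeta^2}{\Delta^2}$ times the number of length-two edge paths from $v$ to $w$. By Lemma~\ref{lemAPhypergraph} this number is $O(\Delta\cdot\Delta_2(G))=O(\Delta)$, so a single coordinate has weight $O(1/\Delta)$ in $\log F_n^2$, and $F_n$ itself is likewise $O(1/\Delta)$-sensitive to it.

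Now write $F_n g=g\cdot e^{\epsilon}$, where $|\epsilon(t)|=O(1/\Delta)$ for $t\ne n$ and $|\epsilon(n)|=O(1)$. Then
\begin{itemize}
\item the uniform error costs $O(1/\Delta)$ after one more application of $F_n$,
\item the single bad coordinate $n$ costs $O(1/\Delta)$ by the weight bound,
\end{itemize}
and by Step~1 this gives $\log F_n^2 g(t)=\log g(t)+O(1/\Delta)$ for $t\le n-1$. At $t=n$ no bound is needed, since $\log f(n)-\log g(n)=0$ by construction.

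Taking the supremum over $t$ gives $\|\log f-\log g\|_\infty\le(1-\delta)\|\log f-\log g\|_\infty+O(1/\Delta)$. Hence $\|\log f-\log g\|_\infty=O(1/(\delta\Delta))=o(1)$, using that $f\ge c e^{-\zeta c^{k-1}}$ is bounded below, and therefore $L(f_n^\ast)=O(1/\Delta)=o(1)$.

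The main obstacle is Step~2's treatment of the boundary coordinate $n$, where the shift relation genuinely fails. The plain sup-norm contraction cannot absorb an $O(1)$ error there. The fix is the refined observation that each individual coordinate carries only $O(1/\Delta)$ weight in the Jacobian of $\log F_n^2$, which follows from $\Delta_2(G)=O(1)$.
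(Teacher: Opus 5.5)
Your proof is correct, and it takes a genuinely different route from the paper's.

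\emph{The paper's route.} It never compares $f_n^\ast$ with a translate. It starts from the constant function $h_0\equiv c$, so $L(h_0)=0$. The contraction of Lemma~\ref{lemfkContractionNonReg} then gives a fixed number $m=m(\eps)$ of iterations with $\|F_n^{(m)}h_0-f_n^\ast\|_\infty<\eps$. Lemma~\ref{lem:discLip} is applied only those $m$ times, giving $L(F_n^{(m)}h_0)\le(1+\beta+\dots+\beta^{m-1})\gamma\to0$, since $\gamma=O(1/\Delta)$ and $m$ does not grow with $n$. The bound $L(f_n^\ast)\le L(F_n^{(m)}h_0)+2\|F_n^{(m)}h_0-f_n^\ast\|_\infty$ finishes the proof. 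This sidesteps the obstacle you correctly identified ($\beta$ may exceed $1$), but it yields only $o(1)$ with no rate.

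\emph{Your route.} You exploit the approximate translation-equivariance of the $k$-AP hypergraph instead. Step~1 is right: on the common range $s\le\min\{u(t,\ell),u(t+1,\ell)\}$ the products agree exactly, and the two ranges differ by at most one term for each $\ell$. The boundary coordinate $n$ is indeed harmless, because for $t\neq n$ at most $\Delta_2(G)\le\binom{k}{2}$ edges contain both $t$ and $n$. Your argument avoids Lemma~\ref{lem:discLip} entirely and gives the stronger quantitative bound $L(f_n^\ast)=O(1/(\del\Delta))=O(1/n)$. The paper's argument, by contrast, uses only generic Lipschitz-propagation estimates and no special symmetry.

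Two small points about Step~2. First, you only need the one-step sensitivity of $\log F_n$ to coordinate $n$; the two-step path count for $F_n^2$ is superfluous. Second, it is cleanest to use the exact identity $\log F_n\mathbf x(t)-\log F_n\mathbf y(t)=-\frac{\zeta}{\Delta}\sum_{e\ni t}\bigl(x_{e\setminus\{t\}}-y_{e\setminus\{t\}}\bigr)$, where $x_S=\prod_{u\in S}x_u$. Since both $g$ and $F_ng$ lie in $(0,c]^V$, each term through $n$ is at most $c^{k-1}$, and each remaining term is $O(c^{k-1}/\Delta)$. This avoids any question about integrating a derivative bound across an $O(1)$ perturbation.
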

\begin{proof}
Fix $\eps>0$. Let $h_0:\{1,\ldots,n\}\to[0,c]$ denote the constant function $h_0\equiv c$ and note that 
\begin{align}\label{eq:Lfnh0}
L(f_n^\ast)\leq L(F_n^{(m)}h_0)+2\| F_n^{(m)}h_0-f_n^\ast\|_\infty\, ,
\end{align}
for $m>0$. We bound each of these terms separately (for appropriately chosen $m$). For the second term, note that by Lemma~\ref{lemfkContractionNonReg} 
\[
\| \log F_n^{(m)}h_0-\log f_n^{\ast}\|_\infty\leq (1-\delta)^{m/2}\|\log h_0 - \log f_n^{\ast}\|_\infty 
\]
for even $m>0$. Since the functions  $h_0, F_n^{(m)}h_0, f_n^\ast$ are all bounded from below by $ce^{-\zeta c^{k-1}}$ (and from above by $c$) we may choose $m=m(\zeta,c,k,\eps)$ large enough so that 
\[
\| F_n^{(m)}h_0- f_n^{\ast}\|_\infty<\eps\, .
\]
To bound  $L(F_n^{(m)}h_0)$ note that $L(h_0)=0$ and iteratively apply Lemma~\ref{lem:discLip}, recalling that $\Delta\sim \alpha_kn$, to obtain
 \[
 L(F_n^{(m)}h_0)\leq (1+\beta+\ldots+\beta^{m-1})\gamma <\eps
 \]
 for $n$ sufficiently large (as a function of $m,\zeta,c,k,\eps$). The result follows from~\eqref{eq:Lfnh0}.
 \end{proof}
 
 \begin{lemma}\label{lem:hgstep}
 If $g:\{1,\ldots,n\}\to [0,c]$, then
 \[
 \| Fg -  F_n g\|_\infty =   O(L(g))+o(1)\, .
  \]
 \end{lemma}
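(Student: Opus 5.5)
We compare the exponents of $Fg$ and $F_n g$ directly. Write $F g(t) = c\exp[-\zeta A g(t)]$ and $F_n g(t) = c\exp[-\zeta A_n g(t)]$, where $g$ is identified with its step function on $[0,1]$ and
\[
A g(t) = \frac{1}{\alpha_k}\sum_{\ell=1}^k \int_0^{\min\{\frac{t}{\ell-1},\frac{1-t}{k-\ell}\}} \prod_{i\in I_\ell} g(t+is)\, ds .
\]
Since $|e^{-x}-e^{-y}|\le |x-y|$ for $x,y\ge 0$, it suffices to show
\[
\sup_{t\in[0,1]}|A g(t)-A_n g(\lceil nt\rceil)| = O(L(g))+o(1),
\]
where $A_n g(j) = \Delta^{-1}\sum_{\ell}\sum_{s=1}^{u(j,\ell)}\prod_{i\in I_\ell} g(j+is)$. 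Here we read $Fg(t)$ at a point $t\in((j-1)/n,j/n]$ and compare it with $F_ng(j)$.

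First I would rescale the discrete sum. By Lemma~\ref{lemAPhypergraph}, $\Delta = (1+o(1))\alpha_k n$, so $\Delta^{-1} = (\alpha_k n)^{-1}(1+o(1))$. The sum has at most $\Delta$ terms, each bounded by $c^{k-1}$, so replacing $\Delta^{-1}$ by $(\alpha_k n)^{-1}$ costs $o(1)$. The discrete sum is then $\frac{1}{\alpha_k}\sum_\ell \frac{1}{n}\sum_{s=1}^{u(j,\ell)} \prod_i g(j+is)$, which is a Riemann sum in the variable $\sigma = s/n$ over $[0, u(j,\ell)/n]$.

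Next I would handle the endpoints. We have $u(j,\ell)/n$ within $O(1/n)$ of $\min\{\frac{t}{\ell-1},\frac{1-t}{k-\ell}\}$ because $|j/n - t|\le 1/n$. Since the integrand is bounded by $c^{k-1}$, the mismatch in the range of integration costs $O(1/n)=o(1)$.

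Finally I would compare the integrand on each cell $\sigma\in((s-1)/n, s/n]$. The continuous integrand is $\prod_i g(t+i\sigma)$ with $g$ the step function. The point $t+i\sigma$ lies within $(|i|+1)/n \le k/n$ of $(j+is)/n$, so the step function value $g(t+i\sigma)$ equals $g(m)$ for some integer $m$ with $|m-(j+is)|\le k$, hence differs from $g(j+is)$ by at most $kL(g)$. Using $|\prod x_i-\prod y_i|\le c^{k-2}\sum|x_i-y_i|$ (the product has $k-1$ factors), each cell contributes an error of at most $\frac1n c^{k-2}(k-1)kL(g)$. Summing over $O(n)$ cells gives $O(L(g))$.

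One care point is the boundary: points $t+i\sigma$ near $0$ or $1$ must stay inside the domain, which is guaranteed by the integration limits up to $O(1/n)$ and handled by the endpoint step. Collecting the three errors gives the bound uniformly in $t$. The only potential obstacle is this bookkeeping of the index shift between the step function and the integers, which the Lipschitz-type quantity $L(g)$ absorbs.
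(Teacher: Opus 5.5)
Your proposal is correct and follows essentially the same route as the paper: reduce via $|e^{-x}-e^{-y}|\le |x-y|$ to comparing $Ag$ with $A_ng$, then compare each cell of length $1/n$ with the corresponding discrete term at cost $O(L(g)/n)$, and put the endpoint mismatch ($\lfloor wn\rfloor$ versus $u(\lceil tn\rceil,\ell)$) and the replacement of $\Delta$ by $\alpha_k n$ into the $o(1)$ term. Your index bookkeeping, which allows a shift of up to $|i|+1\le k$ between $\lceil (t+i\sigma)n\rceil$ and $j+is$, is slightly more careful than the paper's.
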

 
 \begin{proof}

Define the operators $A, A_n$ by $Ff(t)=c\exp[-\zeta Af(t)]$ and $F_nf(t)=c\exp[-\zeta A_nf(t)]$, so in particular
\begin{equation}
    A g(t) = \frac{1}{\alpha_k}\sum_{\ell=1}^k  \int_{0}^{ \min \{\frac{t}{\ell-1} , \frac{1-t}{k-\ell} \}}   \,\prod_{i\in I_\ell}  g(\lceil (t +i s)n\rceil)   \, ds   \,.
\end{equation}
Then since $|e^{-x}-e^{-y}|\leq |x-y|$ for all $x,y\geq 0$ we have
\begin{align}\label{eq:FtoA}
\| Fg -  F_n g\|_\infty \leq c\zeta \|Ag - A_n g\|_{\infty}\, .
\end{align}
We now bound the RHS. To this end we derive a convenient expression for $Ag$ for comparison with $A_ng$.
Fix $\ell\in [k]$ and $t\in [0,1]$, let $w=\min \{\frac{t}{\ell-1} , \frac{1-t}{k-\ell}\}$ 
and let
\[
p(s):= \prod_{i\in I_\ell}   g(\lceil (t +i s)n\rceil) \, .
\]
Splitting $[0,w]$ into intervals of length $1/n$ we have
\begin{align}\label{eq:intsplit}
\int_{0}^w  p(s)  \, ds
=
\int_{\lfloor w n \rfloor/n}^{w}  p(s)  \, ds+\sum_{r=1}^{\lfloor w n \rfloor }\int_{(r-1)/n}^{r/n}  p(s)  \, ds\, .
\end{align}
Fix $i\in I_\ell$ and $r\leq \lfloor w n \rfloor$. As $s$ ranges over the interval $[(r-1)/n,r/n]$, the quantity $\lceil (t +i s)n\rceil$ ranges over the interval $\{\lceil tn \rceil+i(r-1),\ldots,\lceil tn \rceil+ir \}$, an interval of length $i$, and so 
\[
g(\lceil (t +i s)n\rceil)= g(\lceil tn \rceil+ir) + O(L(g)i)
\]
for all $s\in [(r-1)/n,r/n]$. 
Since $g$ is bounded by $c$ we conclude that 
\[
p(s)=\prod_{i\in I_\ell}    g(\lceil tn \rceil+ir) + O(L(g))\, ,
\]
for all $s\in [(r-1)/n,r/n]$ and so 
\[
\int_{(r-1)/n}^{r/n}  p(s)  \, ds = \frac{1}{n}\prod_{i\in I_\ell}    g(\lceil tn \rceil+ir) + O(L(g)/n)\, .
\]
Therefore, by~\eqref{eq:intsplit}
\[
\int_{0}^{w}  p(s)  \, ds = \frac{1}{n}\sum_{r=1}^{\lfloor w n \rfloor }\prod_{i\in I_\ell}    g(\lceil tn \rceil+ir) + O(L(g)+1/n)\, .
\]
Recalling that $w=w(t,\ell)=\min \{\frac{t}{\ell-1} , \frac{1-t}{k-\ell}\}$ we have, for $t\in[0,1]$,
\begin{align}
Ag(t)&= \frac{1}{\alpha_k n} \sum_{\ell=1}^k \sum_{r=1}^{\lfloor w(\ell,t)n\rfloor} \prod_{i\in I_\ell}    g(\lceil tn \rceil+ir) + O(L(g)+1/n)\, .
\end{align}
We compare this quantity to 
\[
A_ng(t)=\frac{1}{\Delta}\sum_{\ell=1}^k  \sum_{r=1}^{ u(\lceil tn \rceil,\ell)}   \,\prod_{i\in I_\ell}  g(\lceil tn \rceil +i r)
\]
where we recall that
$u(m,\ell)=\min \{\lfloor \frac{m-1}{\ell-1}\rfloor , \lfloor\frac{n-m}{k-\ell}\rfloor \}$. 
 First we observe that $\lfloor w(t,\ell)n\rfloor=u(\lceil tn \rceil,\ell )+O(1)$. Recalling that $g$ is bounded by $c$ and that $\Delta\sim \alpha_kn$, we conclude that 
 \[
 Ag(t)=A_ng(t)+ O(L(g)) + o(1)\, .
 \]
 Returning to~\eqref{eq:FtoA} completes the proof.
\end{proof}
We deduce that $f_n^\ast$ is an approximate fixed point of $F^2$.
\begin{lemma}\label{lem:approxfixedpt}
\[
\|  F^2 f_n^\ast - f_n^\ast\|_\infty=o(1)\, .
\]
\end{lemma}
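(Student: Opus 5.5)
The plan is to combine the fixed point identity $f_n^\ast = F_n f_n^\ast = F_n^2 f_n^\ast$ with Lemma~\ref{lem:hgstep} and Corollary~\ref{cor:lip}. By the triangle inequality,
\begin{align}
\| F^2 f_n^\ast - f_n^\ast\|_\infty \le \| F(F f_n^\ast) - F(F_n f_n^\ast)\|_\infty + \| F(F_n f_n^\ast) - F_n(F_n f_n^\ast)\|_\infty \, .
\end{align}
Here we use that $F_n f_n^\ast = f_n^\ast$, so the second term is $\|F f_n^\ast - F_n f_n^\ast\|_\infty$. This is $O(L(f_n^\ast)) + o(1) = o(1)$ by Lemma~\ref{lem:hgstep} applied with $g = f_n^\ast$ (identified with its step function), together with Corollary~\ref{cor:lip}.

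For the first term we need a Lipschitz property of $F$ in the sup norm. Write $Ff = c\exp[-\zeta Af]$. Since $|e^{-x}-e^{-y}|\le|x-y|$, we get $\|Ff - Fg\|_\infty \le c\zeta\|Af - Ag\|_\infty$. For functions valued in $[0,c]$, the elementary product inequality used in Lemma~\ref{lem:discLip} gives $|\prod_{i\in I_\ell} f(t+is) - \prod_{i\in I_\ell} g(t+is)| \le (k-1)c^{k-2}\|f-g\|_\infty$. Integrating over $s$ and summing over $\ell$ then yields
\[
\|Af - Ag\|_\infty \le \frac{1}{\alpha_k}\sum_{\ell}\min\Bigl\{\tfrac{t}{\ell-1},\tfrac{1-t}{k-\ell}\Bigr\}(k-1)c^{k-2}\|f-g\|_\infty \le 2(k-1)c^{k-2}\|f-g\|_\infty \, ,
\]
using that $\sum_\ell \min\{\cdot\}\le 2\alpha_k$ by the concavity and symmetry observation in the proof of Lemma~\ref{lemAPhypergraph}. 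So $F$ is Lipschitz with a constant depending only on $k,c,\zeta$.

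A slight point is that $F$ acts on functions on $[0,1]$, while Lemma~\ref{lem:hgstep} is phrased via step functions $g(\lceil\cdot\, n\rceil)$. We apply the Lipschitz bound to $f=Ff_n^\ast$ and $g=F_nf_n^\ast$, both valued in $[0,c]$; $Ff_n^\ast$ need not be a step function, but the sup-norm Lipschitz estimate does not require it. This bounds the first term by $O(\|Ff_n^\ast - F_n f_n^\ast\|_\infty) = o(1)$, again by Lemma~\ref{lem:hgstep} and Corollary~\ref{cor:lip}.

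The main care point is measurability and bookkeeping: we must check that $F$ is well defined on the step function $f_n^\ast$ and on $F f_n^\ast$. The latter is continuous in $t$, hence bounded and measurable. One could also work entirely with $\log$ and invoke the contraction of Lemma~\ref{lemfkContractionNonReg2}, but the direct Lipschitz bound above is simpler. Combining the two terms gives $\|F^2 f_n^\ast - f_n^\ast\|_\infty = o(1)$.
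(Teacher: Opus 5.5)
Your proposal is correct and follows the paper's argument. Once you use $F_n f_n^\ast = f_n^\ast$, your decomposition is exactly the paper's $\|F^2 f_n^\ast - F f_n^\ast\|_\infty + \|F f_n^\ast - f_n^\ast\|_\infty$: the second term is handled by Lemma~\ref{lem:hgstep} with Corollary~\ref{cor:lip}, and the first by continuity of $F$. Your explicit sup-norm Lipschitz bound for $F$ on $[0,c]$-valued functions makes precise what the paper's appeal to ``continuity'' actually needs, namely uniform continuity, since both arguments of $F$ vary with $n$.
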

\begin{proof}
By the triangle inequality
\[
\| F^2f_n^\ast- f_n^\ast\|_\infty\leq \| F^2f_n^\ast- Ff_n^\ast\|_\infty+\| Ff_n^\ast- f_n^\ast\|_\infty\, .
\]
By Corollary~\ref{cor:lip} and Lemma~\ref{lem:hgstep}, we have \[
\| Ff_n^\ast- f_n^\ast\|_\infty= \| Ff_n^\ast- F_nf_n^\ast\|_\infty=o(1)\, .\]
Then since $F$ is continuous with respect to $\|\cdot\|_\infty$ we conclude that $\| F^2f_n^\ast- Ff_n^\ast\|_\infty=o(1)$ also. 
\end{proof}

\begin{proof}[Proof of Lemma~\ref{lem:kAPdisctocont}]
Note that by Lemma~\ref{lemfkContractionNonReg2}
\[
\|\log f^\ast  - \log F^2f_n^\ast\|_\infty=\|\log F^2f^\ast  - \log F^2f_n^\ast\|_\infty \leq (1-\delta)\|\log f^\ast-\log f_n^\ast\|_\infty\, .
\]
On the other hand by Lemma~\ref{lem:approxfixedpt} (noting that $F^2 f_n^\ast, f_n^\ast$ are bounded below by $ce^{-\zeta c^{k-1}}$), 
\[
\|  \log F^2 f_n^\ast - \log f_n^\ast\|_\infty=o(1)
\]
and so 
\[
\|\log f^\ast  - \log f_n^\ast\|_\infty \leq (1-\delta)\|\log f^\ast-\log f_n^\ast\|_\infty + o(1)\, .
\]
The result follows. 
\end{proof}

\begin{proof}[Proof of Theorem~\ref{thmApavoidInterval2}]
 Fix $k \ge 3$, $c > 0$, and $\zeta \in (0,1]$ satisfying $\zeta(k-1)c^{k-1} < e$.
Let $G=G_n^{(k)}$, $\Delta=\Delta(G)$, and  $\lam \sim c \Delta^{-\frac{1}{k-1}}$. We apply
Theorem~\ref{thmMarginals}\ref{item:marg1} (which is justified by Lemma~\ref{lemAPhypergraph}) to conclude that 
\[
 \mu_{G,\lam,\zeta}(j \in \mathbf S)=(1+o(1)) \Delta^{-\frac{1}{k-1}}f^\ast_{n,k,\zeta,c}(j)=(1+o(1))\Delta^{-\frac{1}{k-1}}f^\ast_{k,\zeta,c}(j/n)\, ,
\]
    where for the final equality we used Lemma~\ref{lem:kAPdisctocont}. 

    For the first claim of Theorem~\ref{thmApavoidInterval2} we apply Theorem~\ref{thmMarginals}\ref{item:marg2.5} to conclude that
\begin{align}\label{eq:kAPnonregApp}
       \Delta^{\frac{1}{k-1}} n^{-1}  \log Z_G(\lam,\zeta)
       &=  \frac{1}{n} \sum_{i=1}^n \int_{0}^c\frac{f_{n,k,\zeta,t}^\ast(i)}{t}\, dt    +o(1)\,,
    \end{align}
    We next aim to show that
    \begin{align}\label{eq:DCT}
    \frac{1}{n}\sum_{i=1}^n \int_{0}^c \frac{f_{n,k,\zeta,t}^\ast(i)}{t}\, dt = \int_0^1 \int_0^c  \frac{f^*_{k,\zeta,t}(s)}{t}\, dt  \, ds +o(1)\, .
    \end{align}
At this point (dropping $k,\zeta$ from the notation) it will be useful to distinguish $f^\ast_{n,t}:\{1,\ldots,n\}\to\R$ from its associated step function which we denote by $\tilde f^\ast_{n,t}:[0,1]\to\R$. Recall that $\tilde f^\ast_{n,t}(s)=f^\ast_{n,t}(\lceil sn \rceil)$ for $s\in(0,1]$ and $\tilde f^\ast_{n,t}(0)=f^\ast_{n,t}(1)$. 
We then have
    \[
     \frac{1}{n}\sum_{i=1}^n 
     f_{n,t}^\ast(i)= \int_{0}^1 \tilde f^\ast_{n,t}(s)\, ds\, .
    \]

Define
\[
A_n(t) := \int_0^1 \frac{\tilde f_{n,t}^*(s)}{t}\,ds,
\qquad
A(t) := \int_0^1 \frac{f_t^*(s)}{t}\,ds.
\]
Then
\[
|A_n(t) - A(t)|
\le
\int_0^1 \frac{|\tilde f_{n,t}^*(s) - f_t^*(s)|}{t}\,ds
\le
\frac{1}{t}\, \|\tilde f_{n,t}^* - f_t^*\|_\infty.
\]
Since the right-hand side tends to $0$ for each fixed $t\in (0,c]$ by Lemma~\ref{lem:kAPdisctocont}, we have
$A_n(t)\to A(t)$ pointwise on $(0,c]$. Moreover, since $0 \le f_{n,t}^* \le t$, we have $0 \le A_n(t) \le 1$.
Therefore, by the dominated convergence theorem,
\[
\int_0^c A_n(t)\,dt = \int_0^c A(t)\,dt +o(1)\,,
\]
which completes the proof of~\eqref{eq:DCT} (after swapping the order of integration) and thus the proof of the first claim of Theorem~\ref{thmApavoidInterval2}.
\end{proof}

We remark that we could have applied Theorem~\ref{thmMarginals}\ref{item:marg3} (the Bethe free energy formulation) in place of Theorem~\ref{thmMarginals}\ref{item:marg2.5} in the proof above.
 This leads to the alternative formula (recalling that $x^\ast_{k,c}$ denotes the fixed point of the operator $\Phi^{(k)}_c$ from~\eqref{eqPhikc})
  \begin{align}
        & \lim_{n\to\infty} n^{- \frac{k-2}{k-1} } \cdot \log \P [ \mathbf X_k([n]_p)=0] =\\ &
       -\int_0^1   \int_{0}^{ \frac{1-t}{k-1}}   \,\prod_{i=0}^{k-1}  x^*_{k,c}(t +is)    \, ds \, dt  - \int_0^1 x^*_{k,c}(t) \left[\log \frac{x^*_{k,c}(t)}{c}  -1\right] \, dt  -c \,,
     \end{align}
   under the assumptions of Theorem~\ref{thmApavoidInterval}. Though this is visually more cumbersome than the formula presented in Theorem~\ref{thmApavoidInterval}, it is computationally simpler since it only involves the fixed point $x_{k,c}^\ast$ as opposed to the infinite family of fixed points $x_{k,t}^\ast$, $t\in[0,c]$.

\section*{Acknowledgments}

MJ is supported by a UK Research and Innovation Future Leaders Fellowship MR/W007320/2.  WP supported in part by NSF grant DMS-2348743.

\appendix
\section{Proof of Lemma~\ref{lemfkContractionNonReg2}}
\label{App:Contract}

\begin{proof}[Proof of Lemma~\ref{lemfkContractionNonReg2}]

Recall that we define the operator $F=F^{(k)}_{c,\zeta}$ on the set of bounded measurable functions $f: [0,1] \to \R_{>0}$ by
\begin{equation}
    F f(t) = c \exp \left[ - \frac{\zeta}{ \alpha_k}\sum_{\ell=1}^k  \int_{0}^{ w(t,\ell) }   \,\prod_{i\in I_\ell}  f(t +i s)   \, ds \right]  \,,
\end{equation}
where $\alpha_k$ is as in Lemma~\ref{lemAPhypergraph}, $I_\ell=\{-\ell+1,\ldots, k-\ell\}\backslash\{0\}$ and $w(t,\ell)=\min \{\frac{t}{\ell-1} , \frac{1-t}{k-\ell}\}$.

Let $\mathcal X, \mathcal X_{>0}$ denote the set of measurable functions $[0,1] \to \R$, $[0,1] \to \R_{>0}$ respectively.
Fix $t\in [0,1]$, and for $f\in\mathcal X_{>0}$, let $\psi(f)=(F^2f)(t)$. Our goal is to show that
\begin{equation}\label{eq:psigoal}
    | \log(\psi(f)) - \log(\psi(g))|    \le (1- \del) \left\| \log(f) - \log(g)\right\|_\infty 
    \end{equation}
    for every $f,g\in \mathcal X_{>0}$.

Let $\varphi(f)=\log(\psi(\exp(f)))$. It suffices to show that
\begin{align}\label{eq:varphicontraction}
|\varphi(f)-\varphi(g)|\leq (1-\delta)\|f-g\|_\infty
\end{align}
for all $f,g\in \mathcal X$.
 For $h,u\in \mathcal X$, denote the derivative of $\varphi$ at $h$ in the direction $u$ by
\[
D\varphi(h)[u]:=\lim_{\eps\to 0} \frac{\varphi(h+\eps u)-\varphi(h)}{\eps}\, .
\]
By the fundamental theorem of calculus
\begin{align}
\varphi(f)-\varphi(g)=\int_{0}^1D\varphi(g+\theta(f-g))[f-g] d\theta\, ,
\end{align}
and so to prove~\eqref{eq:varphicontraction} it suffices to show that
\begin{align}\label{eq:Dbound}
|D\varphi(h)[u]|\leq (1-\delta)\|u\|_\infty\, 
\end{align}
for all $h,u\in \mathcal{X}$.
Fix $h,u\in \mathcal{X}$ and set $f=e^h$. Then 
\[
\varphi(h)= \log(F^2f(t))= \log c- \frac{\zeta}{ \alpha_k}\sum_{\ell=1}^k  \int_{0}^{w(t,\ell)}   \,\prod_{i\in I_\ell}  Ff(t +i s)   \, ds\, .
\]
Differentiating we obtain
\begin{align}\label{eq:Dphiath}
D\varphi(h)[u]
=
-\frac{\zeta}{\alpha_k}
\sum_{\ell=1}^k
\int_0^{w(t,\ell)}
\left(
\prod_{i\in I_\ell} Ff(t+is)
\right)
\left(
\sum_{j\in I_\ell} D(\log Ff)(t+js)[u]
\right)
\, ds\, ,
\end{align}
where all derivatives are at the fixed function $h$ (note that differentiation under the integral sign is justified by dominated convergence).

Since
\[
\log Ff(x)
=
\log c
-
\frac{\zeta}{\alpha_k}
\sum_{m=1}^k
\int_{0}^{w(x,m)}
\prod_{p\in I_m} f(x+ps)\,ds,
\]
we obtain
\begin{align}
D(\log Ff)(x)[u]
&=
-\frac{\zeta}{\alpha_k}
\sum_{m=1}^k
\int_{0}^{w(x,m)}
\left(
\prod_{p\in I_m} f(x+ps)
\right)
\left(
\sum_{q\in I_m} D(\log f)(x+qs)[u]
\right) \,
ds\\
&=-\frac{\zeta}{\alpha_k}
\sum_{m=1}^k
\int_{0}^{w(x,m)}
\left(
\prod_{p\in I_m} f(x+ps)
\right)
\left(
\sum_{q\in I_m} u(x+qs)
\right) \,
ds
\end{align}

Hence
\begin{align}
\bigl|D(\log Ff)(x)[u]\bigr|
&\le (k-1)\,\|u\|_\infty
\frac{\zeta}{\alpha_k}
\sum_{m=1}^k
\int_{0}^{w(x,m)}
\prod_{p\in I_m} f(x+ps)\,ds\\
&=
(k-1)\,\|u\|_\infty \log\frac{c}{Ff(x)}
\end{align}
where for the first inequality we recall that $|I_m|=k-1$ and for the second we recalled the definition of the operator $F$.

Substituting this bound into the expression~\eqref{eq:Dphiath} for $D\varphi(h)[u]$ yields
\[
|D\varphi(h)[u]|
\le
\frac{\zeta}{\alpha_k}(k-1)\|u\|_\infty
\sum_{\ell=1}^k
\int_{0}^{w(t,\ell)}
Y_{\ell,s}
\log\!\left(\frac{c^{k-1}}{Y_{\ell,s}}\right)\,
ds,
\]
where
\[
Y_{\ell,s} := \prod_{i\in I_\ell} Ff(t+is),
\qquad 0 < Y_{\ell,s} \le c^{k-1}.
\]
Using the inequality
\(
z \log (a/z) \le a/e
\)
for $0<z\le a$ with $a = c^{k-1}$, we obtain
\[
|D\varphi(h)[u]|
\le
\frac{\zeta}{\alpha_k}(k-1)\|u\|_\infty
\frac{c^{k-1}}{e}
\sum_{\ell=1}^k w(t,\ell)\leq (1-\delta) \|u\|_\infty\, ,
\]
as desired, where for the final inequality we note that $\sum_{\ell=1}^k w(t,\ell) \leq \alpha_k$ (with equality when $t=1/2$) and recall that $\del=1-\zeta c^{k-1}(k-1)e^{-1}$.
\end{proof}

\end{document}